%
%
%
%
%
%
\RequirePackage{fix-cm}
\documentclass[envcountsame,smallextended]{svjour3}       
\smartqed  
\usepackage{amsthm}
\usepackage{graphicx}
\usepackage[top=35mm, bottom=25mm, left=20mm, right=20mm]{geometry}
\usepackage[numbers,sort&compress]{natbib}

%
%
%
%
%

\usepackage{amsmath}
\usepackage{amsfonts}
\usepackage{amssymb}
\usepackage{enumitem}
\usepackage{graphicx}
\usepackage{subcaption}
\usepackage{tikz}
\usepackage{pgfplots}
\usepackage{multicol}
\usepackage{hyperref}
\usepackage{xcolor}
\usepackage{cancel}
\usepackage{ulem}
\usepackage[active]{srcltx}

\numberwithin{equation}{section}

\usepackage{algorithm}%
\usepackage{algorithmicx}%
\usepackage{algpseudocode}%

\newcommand\R{\mathbb{R}}
\newcommand\Rinf{\overline{\mathbb{R}}}
\newcommand\inter[1]{ {\rm \textbf{int}}(#1)} 
\newcommand\dom[1]{ \bs{{\rm dom}}(#1)} 
\newcommand\Dom[1]{ \bs{{\rm Dom}}(#1)} 
\newcommand\dist{ \bs{{\rm dist}}} 
\newcommand\gf{\varphi} 
\newcommand\gh{\psi} 
\newcommand\fgam[3]{#1_{#3}^{#2}}

\newcommand\fgamepsk[3]{#1^{#2,\varepsilon_k}_{#3}}

\newcommand\prox[3]{ \bs{{\rm prox}}_{#2#1}^{#3}}

\newcommand\ov[1]{\overline{#1}}
\newcommand\mb{\mathbf{B}}
\newcommand\bs[1]{\boldsymbol{#1}}
\newcommand\argmin[1]{\bs{\arg\min}_{#1}}
\newcommand\argmint[1]{\mathop{\bs{\arg\min}}\limits_{#1}}
\newcommand\Nz{\mathbb{N}_0}
\newcommand\fv{\widehat{\gf}}

\begin{document}
\title{ItsDEAL: Inexact two-level smoothing descent algorithms for weakly convex optimization}


\author{Alireza Kabgani         \and
        Masoud Ahookhosh 
}

\institute{A. Kabgani, M. Ahookhosh \at
              Department of Mathematics, University of Antwerp, Antwerp, Belgium. \\
              \email{alireza.kabgani@uantwerp.be, masoud.ahookhosh@uantwerp.be}      
              \\
              The Research Foundation Flanders (FWO) research project G081222N and UA BOF DocPRO4 projects with ID 46929 and 48996 partially supported the paper's authors.
}

\date{Received: date / Accepted: date}

\maketitle

\begin{abstract}
This paper deals with nonconvex optimization problems via a two-level smoothing framework in which the high-order Moreau envelope (HOME) is applied to generate a smooth approximation of weakly convex cost functions. As such, the differentiability and weak smoothness of HOME are further studied, as is necessary for developing inexact first-order methods for finding its critical points.
Building on the concept of the inexact two-level smoothing optimization (ItsOPT), the proposed scheme offers a versatile setting, called Inexact two-level smoothing DEscent ALgorithm (ItsDEAL),
for developing inexact first-order methods: (i) solving the proximal subproblem approximately to provide an inexact first-order oracle of HOME at the lower-level; (ii) developing an upper inexact first-order method at the upper-level. In particular, parameter-free inexact descent methods (i.e., dynamic step-sizes and an inexact nonmonotone Armijo line search) are studied that effectively leverage the weak smooth property of HOME. Although the subsequential convergence of these methods is investigated under some mild inexactness assumptions, the global convergence and the linear rates are studied under the extra Kurdyka-\L{}ojasiewicz (KL) property. In order to validate the theoretical foundation, preliminary numerical experiments for robust sparse recovery problems are provided which reveal a promising behavior of the proposed methods.
 \keywords{Weakly convex optimization \and ItsOPT smoothing framework \and High-order Moreau envelope \and Inexact high-order proximal operator \and Kurdyka- \L{}ojasiewicz function \and  Robust sparse recovery}
 \subclass{90C26 \and 90C25 \and 90C06 \and 65K05 \and 49J52 \and 49J53}
\end{abstract}

\section{Introduction}\label{sec:intro}
Let us consider the optimization problem
\begin{equation}\label{eq:mainproblemconv2}
     \mathop{\bs\min}\limits_{x\in \mathbb {R}^n}\  \gf(x),
\end{equation}
where $\gf: \R^n \to \Rinf:=\R\cup\{+\infty\}$ is a proper, lower semicontinuous (lsc), and  \textit{weakly convex}  function (Definition~\ref{def:weakconvex}) that is possibly nonsmooth and admits at least one minimizer  $x^*\in \R^n$ with the corresponding minimal value $\gf^*$. 
The class of {\it weakly convex functions} appears in modeling of a variety of problems in many application domains such as machine learning, signal and image processing, statistics, optimal control, 
and beyond; see, e.g., \cite{Bohm21,Goujon2024Learning,Kungurtsev2021zeroth,Liao2024error,Montefusco2013fast,Pougkakiotis2023Zeroth,Yang2019Weakly}. As opposed to convex optimization, requiring a certain curvature in the cost function, weakly convex optimization relaxes this constraint, accommodating a broader class of functions including many interesting nonconvex functions; see, e.g., \cite{Atenas2023,Bohm21,Davis2018,Rahimi2024}. As an interesting example, a composition of a convex function with a smooth mapping with Lipschitz Jacobian is a weakly convex function~\cite{Davis2018,Rahimi2024}. Consequently, the development of theoretically strong and numerically efficient algorithms for this broad class of functions is of significant practical relevance.

A major challenge in addressing the weakly convex optimization problem of the form \eqref{eq:mainproblemconv2} lies in handling the nonconvexity (i.e., the existence of local and global minima and maxima and saddle points) and the nonsmoothness of the cost function. The classical methods for dealing with such functions are {\it subgradient methods}; see, e.g., \cite{Atenas2023,Davis2018,Davis2019Stochastic,Li20Low,Rahimi2024,Yang2019Weakly}. An alternative approach involves employing {\it smoothing techniques}, which approximate the original nonsmooth cost function with a smooth, tractable counterpart, thus facilitating the application of {\it first-order methods}. Thanks to the numerical efficiency of gradient-based schemes compared to the subgradient-based methods, the smoothing techniques have received much attention in the last few decades; see, e.g., \cite{Ahookhosh21,Beck12,ben2006smoothing,bertsekas2009nondifferentiable,Bot15,Bot2020,Moreau65,nesterov2005smooth,patrinos2013proximal,Stella17,Themelis18, themelis2019acceleration,themelis2020douglas}.

Among all smoothing techniques, the Moreau envelope is particularly interesting due to its nice continuity and differentiability properties; see, e.g., \cite{Moreau65}. Indeed, it generates a smooth approximation of the original cost function by employing a proximal operator, i.e., specifically, the {\it proximal-point operator} and the {\it Moreau envelope} with respect to a parameter $\gamma>0$ are given by
\begin{equation}\label{eq:prox2}
\prox{\gf}{\gamma}{} (x):=\argmint{y\in \R^n} \left(\gf(y)+\frac{1}{2\gamma}\Vert x- y\Vert^2\right),
    \quad
    \fgam{\gf}{}{\gamma}(x):=\mathop{\bs{\inf}}\limits_{y\in \R^n} \left(\gf(y)+\frac{1}{2\gamma}\Vert x- y\Vert^2\right).
\end{equation}
As is clear from the definition of the Moreau envelope, the proximal operator should be computed in a closed form, which is possible for simple convex (e.g., \cite[Chapter~6]{beck2017first}) and weakly convex (e.g., \cite{Bohm21}) functions. However, if the underlying function $\varphi$ includes complicated terms, then the proximity operator \eqref{eq:prox2} can be only computed approximately, i.e., the Moreau envelope is not well-defined that is the main difficulty of applying Moreau envelope for complicated functions. Moreover, the corresponding proximal-point methods can only be applied inexactly; see, e.g., \cite{Khanh24weak,Salzo12} and references therein.

Recently, an {\it inexact two-level smoothing optimization framework} (ItsOPT) was introduced in \cite{Kabgani24itsopt}, which leverages the {\it high-order proximal operator} (HOPE) and its corresponding {\it high-order Moreau envelope} (HOME) 
(see Definition~\ref{def:Hiorder-Moreau env} and Algorithm~\ref{alg:itsopt}). 
ItsOPT is a two-level framework involving two levels of methods: (i) at the lower level, the high-order proximal auxiliary problems are solved inexactly to produce an inexact oracle for HOME; (ii) at the upper level, an inexact first- or second-order method is developed to minimize HOME. This offers a flexible framework for developing inexact two-level smoothing methods, where the numerical experiments indicate that applying power other than two can lead to a more effective algorithmic method; cf. \cite{Kabgani24itsopt}. As such, we here present an inexact two-level smoothing descent framework that can be used for developing inexact descent methods for minimizing weakly convex functions via HOME.

\vspace{-5mm}
\subsection{{\bf Contribution}}\label{sec:contribution}
Our contributions can be summarized as follows:
\begin{description}
 \item[(i)] (\textbf{Further properties of high-order Moreau envelope}) We further study the fundamental and differential properties of the high-order Moreau envelope (HOME) as a tool for constructing smooth approximations of nonsmooth weakly convex functions. Unlike the traditional Moreau envelope via the regularization with power two (see \eqref{eq:prox2}), HOME incorporates arbitrary-order regularization terms (see Definition~\ref{def:Hiorder-Moreau env} with $p>1$), leading to a much more flexible smoothing approach that can be adapt to the geometry of the original cost function. In particular, we establish the boundedness (see Proposition~\ref{prop:findtau:lip} and Corollary~\ref{cor:findtau:lip}), the differentiability, and the weak smoothness (see Theorems~\ref{th:dif:dif} and~\ref{th:dif:weak} and Corollary~\ref{cor:dif:weak}). As such, leveraging the weak convexity, this ensures a weakly smooth approximation of the cost in a big ball, i.e., if $x^0$ is located in this ball, one can develop first-order methods generating a sequence of iterations staying in this ball. 
     
 \item[(ii)] \textbf{(Inexact two-level smoothing descent algorithm)} 
 We introduce an Inexact two-level smoothing DEscent ALgorithm (ItsDEAL; see Algorithm~\ref{alg:fram:nonmono}), which serves as a generic algorithmic strategy for developing first-order descent methods via HOME. At the lower level, solving high-order proximity problems approximately, we provide an inexact oracle for HOME, and in the upper level, we employ this inexact information to construct inexact descent methods. In particular, we generate descent directions for HOME in Lemma~\ref{lem:disdir} via relationships between these directions and the inexact gradient. Building on the H\"{o}lderian smoothness of HOME, we propose a H\"olderian inexact gradient descent algorithm (HiGDA) in Algorithm~\ref{alg:ingrad} and its parameter-free version in Algorithm~\ref{alg:ingrad2}, which does not need to know the H\"{o}lderian parameters and enhancing its practicality for real-world scenarios. Additionally, we introduce an Inexact DEscent Armijo Line Search (IDEALS) in Algorithm~\ref{alg:first}, a robust strategy that dynamically adapts the step-size to ensure convergence. Besides well-definedness, the subsequential convergence of the sequence generated by ItsDEAL is investigated under some reasonable assumptions on inexactness of the proximity operator. Moreover, the global and linear convergence are verified under extra Kurdyka-{\L}ojasiewicz (KL) property.

 \item[(iii)] \textbf{(Application to robust sparse recovery)}
  The robust sparse recovery is modeled as a summation of the data fidelity with $\ell_1$ loss and a weakly convex regularize (see Subsection~\ref{sec:sparseRec} for detailed discussion). Then, we conduct preliminary experiments to evaluate the performance of several ItsDEAL-type methods for this set of problems. It turns out that the numerical behavior of our methods is promising compared to the subgradient-based methods that are popular to deal with such problems.

\end{description}

\subsection{{\bf Organization}}\label{sec:contribution}
The remainder of this paper is structured as follows. We provide essential preliminaries and notations in Section~\ref{sec:prelim}. In Section~\ref{sec:home}, we discuss high-order Moreau envelopes and highlight some key new properties of the HOME. Additionally, we analyze the differentiability and smoothness characteristics of HOME under weak convexity assumptions. In Section \ref{sec:itsopt}, we introduce a two-level smoothing framework and present several algorithms. In Section \ref{sec:numerical}, we provide a preliminary numerical investigation of our approach in sparse recovery. Finally, Section \ref{sec:conc} delivers our concluding remarks.

\section{Preliminaries and notations}
\label{sec:prelim}

Throughout the paper, $\R^n$ denotes the $n$-dimensional \textit{Euclidean space}, and
$\Vert \cdot \Vert$ and $\langle \cdot, \cdot \rangle$ represent the \textit{Euclidean norm} and \textit{inner product}, respectively.
We define $\Nz := \mathbb{N} \cup \{0\}$, where $\mathbb{N}$ is the set of \textit{natural numbers}. 
To denote an \textit{open ball} with center $\ov{x} \in \R^n$ and radius $r>0$, we will use the symbol $\mb(\ov{x}; r)$.
Let $\inter{C}$ denote the \textit{interior} of a set $C \subseteq \R^n$. 
We define the distance from $x \in \R^n$ to a set $C$ by $\dist(x, C) := \bs\inf_{y \in C} \Vert y - x \Vert$.

The \textit{effective domain} of a function $\gh: \R^n \to \Rinf := \R \cup \{+\infty\}$ is $\dom{\gh} := \{x \in \R^n \mid \gh(x) < +\infty\}$, and it is said to be proper if $\dom{\gh} \neq \emptyset$. 
We define the \textit{sublevel set} of $\gh$ at $\lambda \in \R$ as $\mathcal{L}(\gh, \lambda) := \{x \in \R^n \mid \gh(x) \leq \lambda\}$. 
With the notation $\argmin{x \in C} \gh(x)$, we denote the set of {\it minimizers} of $\gh$ on $C \subseteq \R^n$.
The function $\gh$ is \textit{lower semicontinuous} (lsc) at $\ov{x} \in \R^n$ if $\bs\liminf_{k \to \infty} \gh(x^k) \geq \gh(\ov{x})$ for any sequence $x^k \to \ov{x}$. It is \textit{coercive} if $\bs\lim_{\Vert x \Vert \to +\infty} \gh(x) = +\infty$. For a set-valued map $T: \R^n \rightrightarrows \R^n$, its domain is $\Dom{T} := \{x \in \R^n \mid T(x) \neq \emptyset\}$.

For $p > 1$, the gradient of $\frac{1}{p} \Vert x \Vert^p$ is given by $\nabla \left(\frac{1}{p} \Vert x \Vert^p \right) = \Vert x \Vert^{p-2} x$, adopting the convention $\frac{0}{0} = 0$ when $x = 0$. A point $\ov{x}$ is a \textit{limiting point} of a sequence $\{x^k\}_{k \in \mathbb{N}}$ if $x^k \to \ov{x}$.  It is a \textit{cluster point} if a subsequence $x^j \to \widehat{x}$ for some infinite subset $J \subseteq \mathbb{N}$.

Let us define the function $\kappa: (1, 2]\to (0, +\infty)$ given by
\begin{equation}\label{eq:formofKs:def}
\kappa(t):=\left\{
   \begin{array}{ll}
     \frac{(2+\sqrt{3})(t-1)}{16} & t\in (1, \widehat{t}], \\[0.2cm]
      \frac{2+\sqrt{3}}{16}\left(1-\left(3-\sqrt{3}\right)^{1-t}\right) ~~& t\in [\widehat{t},2),
      \\[0.2cm]
      1 & t=2,
   \end{array}\right.
\end{equation}
where $\widehat{t}$ is the solution of the equation
$\frac{t(t-1)}{2}=1-\left[1+\frac{(2-\sqrt{3})t}{t-1}\right]^{1-t}$, on $(1, 2]$,
and is determined numerically as $\widehat{t} \approx 1.3214$. By abuse of notation, we set $\kappa_t:=\kappa(t)$.
\begin{fact}[Basic inequality]\label{lem:findlowbounknu:lemma}\cite[Lemma~2]{Kabgani24itsopt}
Let $a, b\in \R^n$, $r>0$, and $p\in (1,2]$.
Then, for any $a, b\in \mb(0; r)$,
\begin{equation}\label{findlowbounknu:eq1}
\langle \Vert a\Vert^{p-2}a - \Vert b\Vert^{p-2}b, a-b\rangle\geq \kappa_pr^{p-2}\Vert a - b\Vert^2.
\end{equation}
\end{fact}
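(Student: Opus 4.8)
The plan is to recognize that the vector field $F(x):=\Vert x\Vert^{p-2}x$ on the left-hand side is precisely the gradient of the convex function $g(x):=\tfrac1p\Vert x\Vert^p$, so that \eqref{findlowbounknu:eq1} is nothing but a \emph{local strong monotonicity} estimate for $\nabla g$ on the ball $\mb(0;r)$. Accordingly, I would prove it via the fundamental theorem of calculus along the segment joining $b$ to $a$, combined with a uniform lower bound on the smallest eigenvalue of $\nabla^2 g$ over that segment. Writing $x_t:=b+t(a-b)$ for $t\in[0,1]$, the convexity of $\mb(0;r)$ guarantees $x_t\in\mb(0;r)$, i.e.\ $\Vert x_t\Vert<r$; this elementary geometric fact is what will ultimately produce the factor $r^{p-2}$.

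The first concrete step is the Hessian computation. For $x\neq 0$ one has
\[
\nabla^2 g(x)=\Vert x\Vert^{p-2}I+(p-2)\Vert x\Vert^{p-4}xx^{\top},
\]
so that for an arbitrary direction $v$,
\[
\langle \nabla^2 g(x)v,v\rangle=\Vert x\Vert^{p-2}\Vert v\Vert^2+(p-2)\Vert x\Vert^{p-4}\langle x,v\rangle^2 .
\]
Since $p\le 2$ renders the second term nonpositive, the Cauchy--Schwarz bound $\langle x,v\rangle^2\le\Vert x\Vert^2\Vert v\Vert^2$ yields the pointwise eigenvalue estimate
\[
\langle \nabla^2 g(x)v,v\rangle\ge (p-1)\,\Vert x\Vert^{p-2}\Vert v\Vert^2 ,
\]
which identifies $(p-1)\Vert x\Vert^{p-2}$ as the least eigenvalue of $\nabla^2 g(x)$ and is the key ingredient.

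I would then integrate along the segment. Assuming for the moment that $[b,a]$ avoids the origin, the fundamental theorem of calculus gives
\[
\langle F(a)-F(b),a-b\rangle=\int_0^1\langle\nabla^2 g(x_t)(a-b),a-b\rangle\,dt\ge (p-1)\Vert a-b\Vert^2\int_0^1\Vert x_t\Vert^{p-2}\,dt .
\]
Because $\Vert x_t\Vert<r$ while $s\mapsto s^{p-2}$ is nonincreasing on $(0,\infty)$ for $p\le 2$, one has $\Vert x_t\Vert^{p-2}\ge r^{p-2}$ pointwise, hence $\int_0^1\Vert x_t\Vert^{p-2}\,dt\ge r^{p-2}$ and therefore $\langle F(a)-F(b),a-b\rangle\ge (p-1)r^{p-2}\Vert a-b\Vert^2$. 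Finally I would check the elementary inequality $\kappa_p\le p-1$ on all of $(1,2]$ by inspecting the three branches of \eqref{eq:formofKs:def}: on the first branch $\kappa_p=\tfrac{2+\sqrt3}{16}(p-1)<p-1$ since $\tfrac{2+\sqrt3}{16}<1$; on the second branch $\kappa_p<\tfrac{2+\sqrt3}{16}<p-1$ (using $p-1\ge\widehat t-1\approx0.32$); and $\kappa_2=1=p-1$ at $p=2$. Thus \eqref{findlowbounknu:eq1} follows a fortiori, in fact with the sharp constant $p-1$.

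The main obstacle is the single delicate point in the argument above: when the open segment passes through the origin, $g$ fails to be twice differentiable there, so the integral representation requires justification. I expect to resolve this by a smoothing argument, replacing $g$ by $g_\varepsilon(x):=\tfrac1p(\Vert x\Vert^2+\varepsilon)^{p/2}$, which is $C^\infty$ and obeys the same Hessian lower bound uniformly in $\varepsilon>0$; establishing the inequality for $\nabla g_\varepsilon$ and letting $\varepsilon\downarrow 0$ recovers the claim by continuity of $F$. Alternatively, since $p>1$ forces $p-2>-1$, the radial singularity $\Vert x_t\Vert^{p-2}$ is integrable and $t\mapsto F(x_t)$ is absolutely continuous on $[0,1]$, which legitimizes the fundamental theorem of calculus directly. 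Either route disposes of the only genuine difficulty, the remainder being the routine computations indicated above.
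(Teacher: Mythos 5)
Your proof is correct. Note first that the paper itself contains no proof of this statement: it is imported as a Fact from \cite[Lemma~2]{Kabgani24itsopt}, so the only comparison available is with that external argument, whose outcome --- the piecewise constant $\kappa_p$ of \eqref{eq:formofKs:def}, with its numerically determined breakpoint $\widehat{t}\approx 1.3214$ and the exotic values $\tfrac{(2+\sqrt3)(t-1)}{16}$ and $(3-\sqrt3)^{1-t}$ --- strongly suggests a direct elementary estimation quite different from yours. Your route (strong convexity of $g(x)=\tfrac1p\Vert x\Vert^p$ on the ball via the eigenvalue bound $\nabla^2 g(x)\succeq (p-1)\Vert x\Vert^{p-2}I\succeq (p-1)r^{p-2}I$ for $0<\Vert x\Vert<r$, then integration along the segment) is sound at every step: the Hessian formula and the Cauchy--Schwarz step are valid precisely because $p-2\le 0$; the monotonicity of $s\mapsto s^{p-2}$ correctly converts $\Vert x_t\Vert<r$ into $\Vert x_t\Vert^{p-2}\ge r^{p-2}$; the only delicate point, a segment through the origin, is legitimately handled by either of your two devices (the smoothing $g_\varepsilon(x)=\tfrac1p(\Vert x\Vert^2+\varepsilon)^{p/2}$ obeys the same Hessian bound uniformly in $\varepsilon$, and alternatively $p-2>-1$ makes the radial singularity integrable, so $t\mapsto F(x_t)$ is absolutely continuous); and your three-branch verification of $\kappa_p\le p-1$ is accurate, the middle branch following from $\kappa_p<\tfrac{2+\sqrt3}{16}\approx 0.233<\widehat{t}-1\approx 0.321\le p-1$ and the endpoint from $\kappa_2=1$. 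What your approach buys is both economy and a sharper constant: you obtain the monotonicity inequality with $(p-1)r^{p-2}$ --- in fact your segment argument even gives $(p-1)\max\{\Vert a\Vert,\Vert b\Vert\}^{p-2}\Vert a-b\Vert^2$, since $\Vert x_t\Vert\le\max\{\Vert a\Vert,\Vert b\Vert\}$ --- of which the stated bound is a weakening, so the elaborate piecewise formula for $\kappa_p$ plays no role in your argument beyond the one-line check that it never exceeds $p-1$. The trade-off is that your proof says nothing about why the cited reference works with the particular constant $\kappa_p$ (presumably an artifact of the technique used there), but for the purposes of this paper, which only ever invokes \eqref{findlowbounknu:eq1} as a lower bound, your stronger and cleaner estimate is a perfectly valid substitute.
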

For $a, b>0$  and $p \in [0,1]$, the following inequality holds
\begin{equation}\label{eq:intrp:p01}
(a+b)^p\leq a^p+b^p.
\end{equation}
A proper function $\gh: \R^n \to \Rinf$  is said to be \textit{Fr\'{e}chet differentiable} at $\ov{x}\in \inter{\dom{\gh}}$ with \textit{Fr\'{e}chet derivative}  
$\nabla \gh(\ov{x})$
 if 
\[
\mathop{\bs\lim}\limits_{x\to \ov{x}}\frac{\gh(x) -\gh(\ov{x}) - \langle \nabla \gh(\ov{x}) , x - \ov{x}\rangle}{\Vert x - \ov{x}\Vert}=0.
\]
The \textit{Fr\'{e}chet subdifferential} and \textit{Mordukhovich subdifferential} of $\gh$ at $\ov{x}\in \dom{\gh}$ are  defined as (\cite{Mordukhovich2018})
\[
\widehat{\partial}\gh(\ov{x}):=\left\{\zeta\in \R^n\mid~\mathop{\bs\liminf}\limits_{x\to \ov{x}}\frac{\gh(x)- \gh(\ov{x}) - \langle \zeta, x - \ov{x}\rangle}{\Vert x - \ov{x}\Vert}\geq 0\right\},
\]
and
\[
\partial \gh(\ov{x}):=\left\{\zeta\in \R^n\mid~\exists x^k\to \ov{x}, \zeta^k\in \widehat{\partial}\gh(x^k),~~\text{with}~~\gh(x^k)\to \gh(\ov{x})~\text{and}~ \zeta^k\to \zeta\right\}.
\]
A proper function $\gh: \R^n \to\Rinf$  has a \textit{$\nu$-H\"{o}lder continuous gradient} on $C\subseteq \dom{\gh}$ with $\nu\in (0, 1]$ if it is Fr\'{e}chet differentiable and  there exists a constant $L_\nu\geq 0$ such that
\begin{equation}\label{eq:nu-Holder continuous gradient}
\Vert \nabla \gh(y)- \nabla \gh(x)\Vert \leq L_\nu \Vert y-x\Vert^\nu, \qquad \forall x, y\in C.
\end{equation}
The class of such functions is denoted by $\mathcal{C}^{1, \nu}_{L_\nu}(C)$, and they are called weakly smooth. 
We use $\mathcal{C}^{k}(C)$ to denote the class of functions that are $k$-times continuously differentiable on $C$, where 
$k\in \mathbb{N}$.
\begin{fact}[H\"olderian descent lemma]\label{fact:holder:declem}\cite{Nesterov15univ}
Let $C\subseteq \dom{\gh}$ and $\gh\in \mathcal{C}^{1, \nu}_{L_{\nu}}(C)$. For each $x, y\in C$ satisfying $[x,y]:=\{x+t(y-x)\mid t\in [0,1]\}\subseteq C$, the following holds
\begin{equation}\label{upperbounf for c1,alp}
\gh(y) \leq \gh(x) + \langle \nabla \gh(x), y - x \rangle +\frac{L_{\nu}}{1 + \nu}\Vert x - y \Vert ^{1+\nu}.
\end{equation}
\end{fact}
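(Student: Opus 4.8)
The plan is to reduce the multivariate statement to a one-dimensional integration along the segment $[x,y]$ and then exploit the H\"olderian bound \eqref{eq:nu-Holder continuous gradient} on the gradient increment. First I would introduce the scalar function $g:[0,1]\to\R$ defined by $g(t):=\gh(x+t(y-x))$. Since $[x,y]\subseteq C$ and $\gh\in \mathcal{C}^{1,\nu}_{L_{\nu}}(C)$ is Fr\'echet differentiable along this segment, the chain rule gives that $g$ is differentiable with $g'(t)=\langle \nabla\gh(x+t(y-x)),\, y-x\rangle$; moreover $t\mapsto \nabla\gh(x+t(y-x))$ is continuous (being H\"older continuous by hypothesis), so $g'$ is continuous and the fundamental theorem of calculus applies on $[0,1]$.

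Next I would write $\gh(y)-\gh(x)=g(1)-g(0)=\int_0^1 g'(t)\,dt$ and subtract off the linear term to obtain
\[
\gh(y)-\gh(x)-\langle \nabla\gh(x),\,y-x\rangle=\int_0^1 \langle \nabla\gh(x+t(y-x))-\nabla\gh(x),\,y-x\rangle\,dt.
\]
The key step is then to bound the integrand pointwise: by the Cauchy--Schwarz inequality followed by the H\"older estimate \eqref{eq:nu-Holder continuous gradient} applied to the pair $x+t(y-x),\,x\in C$, the integrand does not exceed $\Vert \nabla\gh(x+t(y-x))-\nabla\gh(x)\Vert\,\Vert y-x\Vert \le L_{\nu}\,\Vert t(y-x)\Vert^{\nu}\,\Vert y-x\Vert = L_{\nu}\, t^{\nu}\,\Vert y-x\Vert^{1+\nu}$. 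Finally I would integrate this majorant over $[0,1]$, using $\int_0^1 t^\nu\,dt=\tfrac{1}{1+\nu}$, which yields exactly the claimed inequality \eqref{upperbounf for c1,alp}.

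I do not expect a genuine obstacle in this argument: the only points requiring care are the regularity justifications, namely that Fr\'echet differentiability of $\gh$ along the segment produces a well-defined and \emph{continuous} derivative $g'$, so that both the fundamental theorem of calculus and the passage to the pointwise bound inside the integral are legitimate. These are routine consequences of the $\mathcal{C}^{1,\nu}_{L_{\nu}}(C)$ hypothesis together with the assumption $[x,y]\subseteq C$, which is precisely what guarantees that the intermediate argument $x+t(y-x)$ remains in the region where the H\"older bound is valid for every $t\in[0,1]$.
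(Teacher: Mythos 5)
Your proof is correct: the paper states this result as a Fact with a citation to \cite{Nesterov15univ} and gives no proof of its own, and your argument — parametrizing the segment, applying the fundamental theorem of calculus to $g(t)=\gh(x+t(y-x))$, bounding the integrand via Cauchy--Schwarz and the $\nu$-H\"older gradient estimate, and integrating $t^\nu$ to produce the factor $\tfrac{1}{1+\nu}$ — is precisely the standard argument found in the cited reference. The hypothesis $[x,y]\subseteq C$ is used exactly where you invoke it, so there is nothing to fix.
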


Next, we introduce the Kurdyka-\L{}ojasiewicz (KL) property, which plays a crucial role in global and linear convergence analysis.
\cite{absil2005convergence,Ahookhosh21,attouch2010proximal,Attouch2013,bolte2007lojasiewicz,Bolte2007Clarke,Bolte2014,li2023convergence,Li18,Yu2022}.
\begin{definition}[Kurdyka-\L{}ojasiewicz property]\label{def:kldef}
Let $\gh: \R^n \to \Rinf$ be a proper lsc function. We say that $\gh$ satisfies the \textit{Kurdyka-\L{}ojasiewicz (KL) property} at $\ov{x}\in\Dom{\partial \gh}$, if there exist constants $r>0$, $\eta\in (0, +\infty]$, and a desingularizing function $\phi$ such that
\begin{equation}\label{eq:intro:KLabs}
\phi'\left(\vert\gh(x) - \gh(\ov{x})\vert\right)\dist\left(0, \partial \gh(x)\right)\geq 1,
\end{equation}
whenever $x\in \mb(\ov{x}; r)\cap\Dom{\partial \gh}$ with $0<\vert \gh(x)-\gh(\ov{x})\vert<\eta$, where $\phi:[0, \eta)\to [0, +\infty)$ is a concave, continuous function satisfying $\phi(0)=0$, continuously differentiable on $(0, \eta)$, and $\phi'>0$ on $(0, \eta)$.
We further say that $\gh$ satisfies the KL property at $\ov{x}$ with an \textit{exponent} $\theta$ if, $\phi(t) = c t^{1 - \theta}$ for some $\theta\in [0, 1)$ and a constant $c>0$.
\end{definition}

We say that $\gh$ satisfies the KL property at $\ov{x} \in \Dom{\partial \gh}$ with the quasi-additivity property,
if there exists a constant $c_{\phi} > 0$ such that the desingularizing function $\phi$ satisfies 
\[
\left[\phi'(x+y)\right]^{-1}\leq c_\phi \left[\left(\phi'(x)\right)^{-1}+\left(\phi'(y)\right)^{-1}\right], \qquad\forall x, y\in (0, \eta)~\text{with}~ x+y<\eta.
\]
If $\gh$ satisfies the KL property at $\ov{x}$ with an exponent $\theta$, then $\gh$ also satisfies the KL property with the quasi-additivity property with $c_{\phi} = 1$ \cite{li2023convergence}.

The following result recalls the uniformized KL property.

%
\begin{fact}[Uniformized KL property]\label{lem:Uniformized KL property}\cite[Lemma 7]{Kabgani24itsopt}
Let $\gh:\R^n\to \Rinf$ be a proper lsc function that is constant on a nonempty compact set $C\subseteq \Dom{\partial \gh}$.
If $\gh$ satisfies the KL property with the quasi-additivity property at each point of $C$, then there exist $r>0$, $\eta>0$, and a desingularizing function $\phi$, which satisfies the quasi-additivity property, such that for all $\ov{x}\in C$ and $x\in X$, where
\[
X:=\Dom{\partial \gh}\cap\left\{x\in \R^n \mid \dist(x, C)<r\right\}\cap\left\{x\in \R^n \mid 0<\vert \gh(x) - \gh(\ov{x})\vert<\eta\right\},
\]
we have
\begin{equation}\label{eq:lem:UKL:a}
\phi'\left(\vert \gh(x) - \gh(\ov{x})\vert\right)\dist\left(0, \partial \gh(x)\right)\geq 1.
\end{equation}
\end{fact}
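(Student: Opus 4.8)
The plan is to run the standard compactness argument behind the uniformized KL property and then, in the decisive step, to show that the desingularizing function obtained by \emph{summing} the local ones still carries the quasi-additivity property. Since $\gh$ is constant on $C$, I write $\gh^\ast := \gh(\ov{x})$ for the common value. For each $u \in C$ the KL-with-quasi-additivity hypothesis supplies a radius $\varepsilon_u>0$, a level $\eta_u>0$, and a desingularizing function $\phi_u$ with quasi-additivity constant $c_u$ for which \eqref{eq:intro:KLabs} holds on $\mb(u;\varepsilon_u)\cap\Dom{\partial \gh}$ whenever $0<|\gh(x)-\gh^\ast|<\eta_u$. The open balls $\{\mb(u;\varepsilon_u)\}_{u\in C}$ cover the compact set $C$, so I extract a finite subcover $\mb(u_1;\varepsilon_1),\dots,\mb(u_N;\varepsilon_N)$ with data $\eta_i,\phi_i,c_i$, and I set $\eta:=\min_{1\le i\le N}\eta_i$. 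Because $C$ is compact and contained in the open set $U:=\bigcup_{i=1}^N\mb(u_i;\varepsilon_i)$, the continuous map $x\mapsto\dist(x,U^c)$ attains a strictly positive minimum on $C$; choosing any $r>0$ below this minimum forces $\{x:\dist(x,C)<r\}\subseteq U$.

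Next I would assemble the uniform desingularizing function as $\phi:=\sum_{i=1}^N\phi_i$ on $[0,\eta)$. As a finite sum of concave, continuous functions vanishing at $0$, of class $\mathcal{C}^1$ on $(0,\eta)$ with strictly positive derivatives, $\phi$ is again a desingularizing function, and $\phi'=\sum_{i=1}^N\phi_i'\ge\phi_i'$ pointwise for every $i$. To verify \eqref{eq:lem:UKL:a}, fix $\ov{x}\in C$ and $x\in X$. Then $\dist(x,C)<r$ places $x$ in some ball $\mb(u_i;\varepsilon_i)$, while $0<|\gh(x)-\gh^\ast|<\eta\le\eta_i$ and $x\in\Dom{\partial \gh}$; hence the local inequality at $u_i$ yields $\phi_i'(|\gh(x)-\gh^\ast|)\,\dist(0,\partial \gh(x))\ge1$, and the domination $\phi'\ge\phi_i'$ upgrades this to the required uniform estimate.

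The hard part will be showing that $\phi=\sum_i\phi_i$ retains the quasi-additivity property, since this is genuinely \emph{false} for a sum of quasi-additive functions evaluated at arbitrary positive reals. The resolution exploits the concavity of each $\phi_i$: its derivative $\phi_i'$ is nonincreasing, so for $x,y\in(0,\eta)$ with $x+y<\eta$ and, say, $x\le y$, the inequalities $a_i:=\phi_i'(x)\ge\phi_i'(y)=:b_i$ hold \emph{simultaneously} for all $i$ (the orderings cannot cross). Rewriting each local condition as $\phi_i'(x+y)\ge c_i^{-1}\,\tfrac{a_ib_i}{a_i+b_i}$ and using $\tfrac{a_ib_i}{a_i+b_i}\ge\tfrac{b_i}{2}$, I obtain with $c_\ast:=\max_i c_i$, $A:=\sum_i a_i=\phi'(x)$, and $B:=\sum_i b_i=\phi'(y)$,
\[
\phi'(x+y)=\sum_{i=1}^N\phi_i'(x+y)\ \ge\ \frac{1}{c_\ast}\sum_{i=1}^N\frac{a_ib_i}{a_i+b_i}\ \ge\ \frac{1}{2c_\ast}\sum_{i=1}^N b_i\ =\ \frac{B}{2c_\ast}\ \ge\ \frac{1}{2c_\ast}\cdot\frac{AB}{A+B},
\]
where the last step uses $\tfrac{AB}{A+B}\le B$. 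Inverting gives $[\phi'(x+y)]^{-1}\le 2c_\ast\big[(\phi'(x))^{-1}+(\phi'(y))^{-1}\big]$, and the case $y\le x$ is symmetric; thus $\phi$ is quasi-additive with $c_\phi=2\max_i c_i$, completing the proof.
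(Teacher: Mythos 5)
There is no in-paper proof to compare against here: the paper imports this statement as a Fact, citing \cite[Lemma~7]{Kabgani24itsopt}, and never proves it. Judged on its own, your proof is correct. The covering construction ($\phi:=\sum_{i=1}^N\phi_i$, $\eta:=\min_i\eta_i$, and $r>0$ chosen below $\min_{c\in C}\dist(c,U^c)$ so that the $r$-neighborhood of $C$ sits inside the union of the KL balls) is exactly the classical uniformization argument of Bolte--Sabach--Teboulle, which is also what the cited source adapts; the passage from the local inequality at $u_i$ to the uniform one via $\phi'\geq\phi_i'$ is sound, and the fact that $\gh\equiv\gh^\ast$ on $C$ makes the inequality independent of the choice of $\ov{x}\in C$. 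The only genuinely delicate point is the one you flag: quasi-additivity of the sum. Your treatment of it is correct and complete: concavity makes every $\phi_i'$ nonincreasing, so for $x\leq y$ the inequalities $a_i:=\phi_i'(x)\geq\phi_i'(y)=:b_i$ hold for all $i$ simultaneously; the local quasi-additivity applies because $x+y<\eta\leq\eta_i$, giving $\phi_i'(x+y)\geq c_i^{-1}a_ib_i/(a_i+b_i)\geq b_i/(2c_i)$; and summing, then using $AB/(A+B)\leq B$ with $A=\phi'(x)$, $B=\phi'(y)$, yields the quasi-additivity constant $c_\phi=2\max_i c_i$ after inversion. One cosmetic remark: Definition~\ref{def:kldef} permits $\eta_u=+\infty$, so $\eta=\min_i\eta_i$ may be $+\infty$; this is harmless (the conclusion only gets stronger, and one may always replace $\eta$ by any finite positive value), but a one-line remark to that effect would make the statement ``there exist $r>0$, $\eta>0$'' match your construction literally.
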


\subsection{{\bf Class of weakly convex functions}}\label{sec:weakConvFunc}
Here, we introduce the class of $\rho$-weakly convex functions and describe some of their fundamental properties, where Nurminskii introduced this class of function in \cite{nurminskii1973quasigradient}. 

\begin{definition}[Weak convexity]\label{def:weakconvex}
A proper function $\gh: \R^n\to \Rinf$ is said to be \textit{$\rho$-weakly convex} for $\rho>0$ if, 
$\gh+\frac{\rho}{2}\Vert \cdot\Vert^2$ is a convex function.
\end{definition}



The class of weakly convex functions is considerably simple, yet surprisingly broad such that it encompasses many practical problems from application domains; e.g., robust phase retrieval, covariance
matrix estimation, blind deconvolution, sparse dictionary learning, robust principal
component analysis, and conditional value at risk, censored block model; see, e.g., \cite[Section~2.1]{Davis2019Stochastic}, \cite[Section~2.1]{davis2019proximally}, and references therein.  

\begin{remark}[Smooth and nonsmooth subclasses of weakly convex functions]
    The class of weakly convex functions includes convex functions and $L$-smooth functions (i.e., smooth with Lipschitz continuous gradient). As another example, let us assume that $\gh: \R^n\to \R$ be a twice continuously differentiable function ($\gh\in \mathcal{C}^{2}$), and let $Q$ be a convex compact subset of $\R^n$. Setting $\lambda_{\bs\min, Q}:=\bs\min_{x\in Q} \lambda(\nabla^2 \gh(x))$, then $\gh(\cdot)+\tfrac{|\lambda_{\bs\min, Q}|}{2} \|\cdot\|^2$ is clearly convex. Moreover, if 
    $\gh: \R^n\to \R$ is $L$-smooth (i.e., $\gh\in\mathcal{C}^{1, 1}_{L}$) and $\gf: \R^n\to \Rinf$ is a proper, convex, and lsc function, $x\mapsto \gh(x)+\gf(x)$ is weakly convex. Furthermore, if 
    $\gh: \R^n\to \Rinf$ is a convex function and $\Psi:\R^m\to\R^n$ be a smooth mapping with Lipschitz continuous Jacobian, then the composite function $x\mapsto \gh(\Psi(x))$ is weakly convex; see. e.g., \cite[Lemma~4.2]{Drusvyatskiy19}.
\end{remark}

\begin{definition}[$\theta$-hypomonotone operator]
\label{def:hypoMonOper}
    Let $Q$ be a nonempty open subset of $\R^n$ and $\theta\in (0,1]$. A set-valued mapping 
    $T: Q \rightrightarrows\R^n$ is said to be $\theta$-hypomonotone at $x_0\in Q$, if there exists $\rho>0$ and $\delta>0$ such that for all $x,y\in \mb(x_0; \delta)$, $\ov{x}\in T(x)$, and $\ov{y}\in T(y)$ we have
    \begin{equation}
       \langle \ov{x}-\ov{y}, x-y\rangle\geq -\rho \|x-y\|^{1+\theta}.
    \end{equation}
    The operator $T$ is said to be $\theta$-hypomonotone, if it is $\theta$-hypomonotone at every $x\in Q$.
\end{definition}

The following characterization has been partially studied in recent articles \cite{Atenas2023,daniilidis2005filling,Davis2019Stochastic} that is a simple consequence of the above definition, which we present for completeness. 

\begin{proposition}[Weak convexity characterization]
\label{pro:weakConvChar}
    For a lsc function $\gh: \R^n\to\Rinf$ and the constant $\rho>0$, the following statements are equivalent:
\begin{enumerate}[label=(\textbf{\alph*}), font=\normalfont\bfseries, leftmargin=0.7cm]
\item \label{pro:weakConvChar:a} The function $\gh(\cdot)+\tfrac{\rho}{2} \|\cdot\|^2$ is convex;
\item \label{pro:weakConvChar:b} For any $y\in\R^n$, the function $\gh(\cdot)+\tfrac{\rho}{2} \|\cdot-y\|^2$ is convex;
\item \label{pro:weakConvChar:c} For any $x,y\in\R^n$ and $\lambda\in [0, 1]$,
        \begin{equation}
            \gh(\lambda x+(1-\lambda )y) \leq \lambda \gh(y)+(1-\lambda) \gh(y)+\frac{\rho\lambda(1-\lambda)}{2} \|x-y\|^2;
        \end{equation}
\item \label{pro:weakConvChar:c2} For all $x, y\in \R^n$ and $\lambda\in [0,1]$,
\[
\gh(\lambda x+(1-\lambda)y)\leq \lambda \gh(x)+(1-\lambda) \gh(y)+\frac{\rho}{2}\Vert x - y\Vert^{2};
\]

\item \label{pro:weakConvChar:d} For any $x, y\in \R^n$ where $\partial \gh(x)\neq \emptyset$, and for each $\zeta\in \partial \gh(x)$, it holds that
\[
\gh(y)\geq \gh(x)+ \langle \zeta , y-x\rangle-\frac{\rho}{2}\Vert y -x\Vert^{2};
\]
\item \label{pro:weakConvChar:e} $\partial \gh$ is $1$-hypomonotone, i.e., for any $x,y\in\R^n$ with $\partial \gh(x) \neq \emptyset$ and $\partial \gh(y) \neq \emptyset$,
        \begin{equation}\label{eq:subGradWeakConv}
            \langle \zeta_x-\zeta_y , x-y\rangle\geq -\rho\|x-y\|^2, \quad \forall \zeta_x\in\partial \gh(x), \quad \forall \zeta_y\in\partial \gh(y);
        \end{equation}
\item \label{pro:weakConvChar:f} If $\gh$ is additionally twice continuously differentiable (i.e., $\gh\in\mathcal{C}^{2}$), then the above conditions are equivalent to:
        \begin{equation}
            \nabla^2 \gh(\cdot) \succeq -\sigma I.
        \end{equation}
    \end{enumerate}
\end{proposition}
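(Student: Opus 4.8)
The plan is to collapse all seven conditions to the single fact that $g:=\gh+\tfrac{\rho}{2}\Vert\cdot\Vert^2$ is convex, which is exactly \ref{pro:weakConvChar:a}, and then to read off each remaining condition as a standard characterization of the convexity of $g$ rewritten in terms of $\gh$. The whole argument is powered by two elementary facts: the quadratic identity $\Vert\lambda x+(1-\lambda)y\Vert^2=\lambda\Vert x\Vert^2+(1-\lambda)\Vert y\Vert^2-\lambda(1-\lambda)\Vert x-y\Vert^2$, and the exact sum rule $\partial g(x)=\partial\gh(x)+\rho x$, which holds for the limiting subdifferential because $\tfrac{\rho}{2}\Vert\cdot\Vert^2$ is smooth.

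I would first dispatch the ``algebraic'' equivalences. Since $\tfrac{\rho}{2}\Vert\cdot-y\Vert^2$ differs from $\tfrac{\rho}{2}\Vert\cdot\Vert^2$ by an affine function, convexity is insensitive to the shift, giving \ref{pro:weakConvChar:a}$\Leftrightarrow$\ref{pro:weakConvChar:b}. Substituting $g=\gh+\tfrac{\rho}{2}\Vert\cdot\Vert^2$ into the Jensen inequality for $g$ and applying the quadratic identity produces precisely the modulated inequality \ref{pro:weakConvChar:c}, so \ref{pro:weakConvChar:a}$\Leftrightarrow$\ref{pro:weakConvChar:c}. In the $\mathcal{C}^2$ case, convexity of $g$ is $\nabla^2 g=\nabla^2\gh+\rho I\succeq 0$, i.e. $\nabla^2\gh\succeq-\rho I$, which is \ref{pro:weakConvChar:f} with $\sigma=\rho$. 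For \ref{pro:weakConvChar:d}, I would insert $\partial g(x)=\partial\gh(x)+\rho x$ into the subgradient inequality of the convex $g$ and use $\rho\langle x,y-x\rangle+\tfrac{\rho}{2}\Vert x\Vert^2-\tfrac{\rho}{2}\Vert y\Vert^2=-\tfrac{\rho}{2}\Vert x-y\Vert^2$; for \ref{pro:weakConvChar:e}, the same substitution in the monotonicity inequality of $\partial g$ gives $\langle\zeta_x-\zeta_y,x-y\rangle\ge-\rho\Vert x-y\Vert^2$.

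Two genuine subtleties remain, and these are where I expect the real work. First, the converse halves \ref{pro:weakConvChar:d}$\Rightarrow$\ref{pro:weakConvChar:a} and \ref{pro:weakConvChar:e}$\Rightarrow$\ref{pro:weakConvChar:a} are not algebra: they rest on the variational-analytic fact that, for a proper lsc function, a global subgradient inequality (respectively, monotonicity of the limiting subdifferential) already forces convexity. I would invoke this characterization from \cite{Mordukhovich2018} and the references cited with the statement rather than reprove it. Second, and most delicately, condition \ref{pro:weakConvChar:c2}: the implication \ref{pro:weakConvChar:c}$\Rightarrow$\ref{pro:weakConvChar:c2} is immediate because $\lambda(1-\lambda)\le 1$, but I would scrutinize the reverse with care, since a $\lambda$-independent quadratic modulus is in general strictly weaker than the sharp $\lambda(1-\lambda)$ one --- as a sanity check, a mildly concave quadratic such as $\gh=-2\rho\Vert\cdot\Vert^2$ already satisfies \ref{pro:weakConvChar:c2} (with equality at $\lambda=\tfrac12$) yet is not $\rho$-weakly convex. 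Recovering \ref{pro:weakConvChar:a} from \ref{pro:weakConvChar:c2} would therefore demand a self-improvement/dyadic-refinement argument together with lower semicontinuity, and I would confirm whether it actually closes or whether \ref{pro:weakConvChar:c2} is best read as a one-directional consequence of the sharper conditions; this is the step I expect to be the main obstacle, everything else being bookkeeping around $g$.
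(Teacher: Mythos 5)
Your route coincides, in substance, with the paper's own proof everywhere that proof is sound: the paper also reduces everything to convexity of $g:=\gh+\tfrac{\rho}{2}\Vert\cdot\Vert^2$, disposes of \ref{pro:weakConvChar:a}--\ref{pro:weakConvChar:c2} by the observation that $\gh(\cdot)+\tfrac{\rho}{2}\Vert\cdot-y\Vert^2$ differs from $g$ by an affine function, and then simply cites \cite[Theorem~3.1]{daniilidis2005filling} for the equivalence of \ref{pro:weakConvChar:a}, \ref{pro:weakConvChar:d}, \ref{pro:weakConvChar:e}, and \cite[Theorem~2.1.4]{Nesterov2018} for \ref{pro:weakConvChar:f}. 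Your explicit forward computations (the sum rule $\partial g(x)=\partial\gh(x)+\rho x$ plus the quadratic identity) combined with citing the converses from the variational-analysis literature is the same division of labor, just written out in more detail; likewise the Hessian argument for \ref{pro:weakConvChar:f}.

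The point of divergence is \ref{pro:weakConvChar:c2}, and there your suspicion is correct: this is not a gap in your proposal but a genuine flaw in the proposition and in the paper's proof, which declares the equivalence among \ref{pro:weakConvChar:a}--\ref{pro:weakConvChar:c2} ``immediate'' without distinguishing the $\lambda(1-\lambda)$-modulus of \ref{pro:weakConvChar:c} from the constant modulus of \ref{pro:weakConvChar:c2}. Your counterexample checks out: for $\gh=-2\rho\Vert\cdot\Vert^2$ the identity $\Vert\lambda x+(1-\lambda)y\Vert^2=\lambda\Vert x\Vert^2+(1-\lambda)\Vert y\Vert^2-\lambda(1-\lambda)\Vert x-y\Vert^2$ gives
\begin{equation*}
\gh(\lambda x+(1-\lambda)y)=\lambda\gh(x)+(1-\lambda)\gh(y)+2\rho\lambda(1-\lambda)\Vert x-y\Vert^2,
\end{equation*}
and since $2\rho\lambda(1-\lambda)\leq\tfrac{\rho}{2}$, condition \ref{pro:weakConvChar:c2} holds (with equality at $\lambda=\tfrac12$), while $\gh+\tfrac{\rho}{2}\Vert\cdot\Vert^2=-\tfrac{3\rho}{2}\Vert\cdot\Vert^2$ is concave, so \ref{pro:weakConvChar:a} fails. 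Hence \ref{pro:weakConvChar:c2}$\Rightarrow$\ref{pro:weakConvChar:a} is false for the same constant $\rho$, and \ref{pro:weakConvChar:c2} must be read as the one-directional consequence you anticipated. The self-improvement argument you sketched does close, but only with a degraded constant: taking $\lambda=\tfrac12$ in \ref{pro:weakConvChar:c2} shows that $\gh+2\rho\Vert\cdot\Vert^2$ is midpoint convex, hence convex by lower semicontinuity, so \ref{pro:weakConvChar:c2} implies $4\rho$-weak convexity; your example shows the factor $4$ is sharp, since $-2\rho\Vert\cdot\Vert^2$ is $c$-weakly convex only for $c\geq 4\rho$. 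The correct repair is either to restate \ref{pro:weakConvChar:c2} with modulus $\tfrac{\rho\lambda(1-\lambda)}{2}$, to drop it from the list, or to weaken the proposition to equivalence up to rescaling of $\rho$. (Separately, note the paper's \ref{pro:weakConvChar:c} contains a typo, $\lambda\gh(y)$ in place of $\lambda\gh(x)$, and \ref{pro:weakConvChar:f} writes $\sigma$ where $\rho$ is meant.)
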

\begin{proof}
    Since $\gh(x)+\tfrac{\rho}{2} \|x-y\|^2=\gh(x)+\tfrac{\rho}{2} \|x\|^2+\tfrac{\rho}{2} 
    (\|y\|^2-2\langle y ,x\rangle)$ and the functions $x\mapsto \gh(x)+\tfrac{\rho}{2} \|x\|^2$
     and $x\mapsto \tfrac{\rho}{2} (\|y\|^2-2\langle y ,x\rangle)$ are convex, the equivalence among Assertions~\ref{pro:weakConvChar:a}-\ref{pro:weakConvChar:c2} is immediate. The equivalence among Assertions~\ref{pro:weakConvChar:a}, \ref{pro:weakConvChar:d}, and \ref{pro:weakConvChar:e} was established in \cite[Theorem~3.1]{daniilidis2005filling}. If $\gh\in\mathcal{C}^{2}$, Assertions~\ref{pro:weakConvChar:a} and \ref{pro:weakConvChar:f} are equivalent by \cite[Theorem~2.1.4]{Nesterov2018}.
\end{proof}

\begin{remark}
   \begin{enumerate}[label=(\textbf{\alph*}), font=\normalfont\bfseries, leftmargin=0.7cm]

        \item For $\rho>0$, if $\gh$ is $\rho$-weakly convex, it is $\ov{\rho}$-weakly convex for any 
        $\ov{\rho}\in (\sigma, \infty)$.
        \item When $\gh$ has a uniformly $L$-Lipschitz continuous gradient, then it is weakly convex with $\rho = L$.
        \item If $\gh$ is a proper, lsc, and prox-bounded (\cite[Definition~1.23]{Rockafellar09}) function with threshold $\gamma^\gh$, and prox-regular on $\R^n$ \cite{Poliquin96}, then it is weakly convex for each $\rho > 2\gamma^\gh$; see \cite[Proposition~7.8]{Bauschke18}.

        \item It has been already known that the class of weakly convex functions belongs to the classes of semismooth \cite{mifflin1977semismooth} and generalized differentiable \cite{norkin1980generalized} functions.
    \end{enumerate}
\end{remark}


\section{Further properties of high--order Moreau envelope}
\label{sec:home}
This section revisits the high-order proximal operator (HOPE) and its corresponding Moreau envelope (HOME). We summarize their key properties critical for this paper. Many of these results have been presented in \cite{Kabgani24itsopt}. We also demonstrate that, under certain conditions, it is possible to estimate an upper bound for the norm of elements in HOPE (see Proposition~\ref{prop:findtau:lip} and Corollary~\ref{cor:findtau:lip}). This estimate is benifitial for determining the differentiability region of HOME, as discussed in Subsection~\ref{sec:diff}.

We begin by restating the definitions of HOPE and HOME.
\begin{definition}[High-order proximal operator and Moreau envelope]\label{def:Hiorder-Moreau env}
Let $p>1$  and $\gamma>0$, and let $\gf: \R^n \to \Rinf$ be a proper function. 
The \textit{high-order proximal operator} (\textit{HOPE}) of $\gf$ of parameter $\gamma$,
$\prox{\gf}{\gamma}{p}: \R^n \rightrightarrows \R^n$, is given by
    \begin{equation}\label{eq:Hiorder-Moreau prox}
       \prox{\gf}{\gamma}{p} (x):=\argmint{y\in \R^n} \left(\gf(y)+\frac{1}{p\gamma}\Vert x- y\Vert^p\right),
    \end{equation}     
and the \textit{high-order Moreau envelope} (\textit{HOME}) of $\gf$ of parameter $\gamma$, 
$\fgam{\gf}{p}{\gamma}:\R^n\to \R\cup\{\pm \infty\}$, 
is given by
    \begin{equation}\label{eq:Hiorder-Moreau env}
    \fgam{\gf}{p}{\gamma}(x):=\mathop{\bs{\inf}}\limits_{y\in \R^n} \left(\gf(y)+\frac{1}{p\gamma}\Vert x- y\Vert^p\right).
    \end{equation}
\end{definition}

To ensure the nonemptiness of HOPE, as will be demonstrated in Fact~\ref{th:level-bound+locally uniform}, we first recall the high-order prox-bounded property.
\begin{definition}[High-order prox-boundedness]\label{def:s-prox-bounded}\cite[Definition~10]{Kabgani24itsopt}
A function $\gf:\R^n\to \Rinf$ is said to be \textit{high-order prox-bounded} with order $p$, for a given $p> 1$, 
if there exist $\gamma>0$ and $x\in \R^n$ such that
$\fgam{\gf}{p}{\gamma}(x)>-\infty$. 
The supremum of the set of all such $\gamma$ is denoted by $\gamma^{\gf, p}$ and is referred to as the threshold of high-order prox-boundedness for $\gf$.
\end{definition}

Some fundamental properties of HOME and HOPE that are essential for this paper are summarized in Fact~\ref{th:level-bound+locally uniform}.
\begin{fact}[Basic properties of HOME and HOPE]\label{th:level-bound+locally uniform}\cite[Theorem~12]{Kabgani24itsopt}
Let $p>1$ and let $\gf: \R^n\to \Rinf$ be a proper lsc function that is high-order prox-bounded with a threshold $\gamma^{\gf, p}>0$. 
Then, for each $\gamma\in (0, \gamma^{\gf, p})$,
\begin{enumerate}[label=(\textbf{\alph*}), font=\normalfont\bfseries, leftmargin=0.7cm]
\item \label{level-bound+locally uniform:proxnonemp} $\prox{\gf}{\gamma}{p}(x)$ is nonempty and compact and $\fgam{\gf}{p}{\gamma}(x)$ is finite for every $x\in \R^n$;
    
\item \label{level-bound+locally uniform:cononx} $\fgam{\gf}{p}{\gamma}$ is continuous on $\R^n$;

\item \label{level-bound+locally uniform2:conv} if $y^k\in \prox{\gf}{\gamma}{p}(x^k)$, with $x^k\to \ov{x}$, then the sequence $\{y^k\}_{k\in \mathbb{N}}$ is bounded. Furthermore, all cluster points of this sequence lie in $\prox{\gf}{\gamma}{p}(\ov{x})$.
     \end{enumerate}
\end{fact}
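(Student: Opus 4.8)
The plan is to regard HOME as the inf-projection of the parametrized integrand
$g(x,y) := \gf(y) + \frac{1}{p\gamma}\Vert x - y\Vert^p$ and to extract all three assertions from the classical theory of parametric minimization, once the decisive level-boundedness property is secured. First I would turn the high-order prox-boundedness hypothesis into a concrete power-$p$ lower bound on $\gf$: since $\gamma < \gamma^{\gf,p}$, I pick $\ov{\gamma}\in(\gamma,\gamma^{\gf,p})$ together with a point $x_0$ satisfying $\fgam{\gf}{p}{\ov{\gamma}}(x_0) > -\infty$, which furnishes a constant $c_0\in\R$ with $\gf(y) \geq -\frac{1}{p\ov{\gamma}}\Vert y - x_0\Vert^p + c_0$ for all $y\in\R^n$.

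The technical heart is to show that $g$ is level-bounded in $y$ locally uniformly in $x$. Fixing a bounded set of parameters, say $\Vert x\Vert\leq R$, I combine the lower bound above with the elementary estimates $\Vert x - y\Vert \geq \Vert y\Vert - R$ and $\Vert y - x_0\Vert \leq \Vert y\Vert + \Vert x_0\Vert$ to obtain, for $\Vert y\Vert > R$,
\[
g(x,y) \;\geq\; \tfrac{1}{p\gamma}\bigl(\Vert y\Vert - R\bigr)^p - \tfrac{1}{p\ov{\gamma}}\bigl(\Vert y\Vert + \Vert x_0\Vert\bigr)^p + c_0 .
\]
Because $\gamma < \ov{\gamma}$ gives $\frac{1}{p\gamma} > \frac{1}{p\ov{\gamma}}$, the leading $\Vert y\Vert^p$ contributions do not cancel, so the right-hand side tends to $+\infty$ as $\Vert y\Vert\to\infty$, uniformly over $\Vert x\Vert\leq R$. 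This is precisely where the exponent $p\neq 2$ demands care: instead of completing the square as in the quadratic case, I would simply compare the leading coefficients of the two competing $p$-th powers. I expect this uniform coercivity estimate to be the main obstacle, and everything downstream to be a routine appeal to parametric-minimization machinery.

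With level-boundedness in hand, the three conclusions fall out as follows. For \ref{level-bound+locally uniform:proxnonemp}, for each fixed $x$ the map $y\mapsto g(x,y)$ is proper, lsc, and level-bounded, so Weierstrass yields a nonempty set of minimizers, while finiteness of $\fgam{\gf}{p}{\gamma}(x)$ follows from the lower bound; compactness of $\prox{\gf}{\gamma}{p}(x)$ is then the statement that the argmin set is closed and bounded. For \ref{level-bound+locally uniform:cononx}, $\fgam{\gf}{p}{\gamma}$ is upper semicontinuous as a pointwise infimum of the continuous functions $x\mapsto g(x,y)$, and lower semicontinuous by the locally uniform level-boundedness (the inf-projection of an lsc, locally uniformly level-bounded integrand is lsc), hence continuous.

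Finally, for \ref{level-bound+locally uniform2:conv} I would use the same coercivity estimate along a sequence $x^k\to\ov{x}$: since the $x^k$ are eventually bounded, the uniform estimate confines the corresponding minimizers $y^k\in\prox{\gf}{\gamma}{p}(x^k)$ to a bounded set, giving boundedness of $\{y^k\}$. For the cluster-point claim, let $\widehat{y}$ be the limit of a subsequence; combining lower semicontinuity of $g$ with continuity of HOME from \ref{level-bound+locally uniform:cononx} gives
\[
g(\ov{x},\widehat{y}) \;\leq\; \liminf_{k} g(x^k, y^k) \;=\; \liminf_{k} \fgam{\gf}{p}{\gamma}(x^k) \;=\; \fgam{\gf}{p}{\gamma}(\ov{x}),
\]
so $\widehat{y}$ attains the infimum defining $\fgam{\gf}{p}{\gamma}(\ov{x})$, i.e. $\widehat{y}\in\prox{\gf}{\gamma}{p}(\ov{x})$. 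This is exactly outer semicontinuity of the HOPE mapping, completing the proof.
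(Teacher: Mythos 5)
This statement is recalled in the paper as a Fact from \cite[Theorem~12]{Kabgani24itsopt}, so there is no in-paper proof to compare against; your argument is correct and follows the same standard route as the cited source. Namely, you convert high-order prox-boundedness into a power-$p$ lower bound on $\gf$, establish level-boundedness of $(x,y)\mapsto \gf(y)+\frac{1}{p\gamma}\Vert x-y\Vert^p$ in $y$ locally uniformly in $x$ by comparing the leading coefficients $\frac{1}{p\gamma}>\frac{1}{p\ov{\gamma}}$, and then invoke the classical parametric-minimization (inf-projection) machinery to obtain nonemptiness/compactness of the proximal set, finiteness and continuity of the envelope, and boundedness plus outer semicontinuity of minimizers along convergent parameter sequences.
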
 

The following facts demonstrate that HOPE inherits the coercivity of the original function $\gf$ and establishes relationships between their sublevel sets.
\begin{fact}[Coercivity]\label{lem:hiordermor:coer} \cite[Proposition~13]{Kabgani24itsopt}
Let $p> 1$ and let $\gf: \R^n \to \Rinf$ be a proper and coercive function.
Then, for any $\gamma>0$, the following statements hold:
\begin{enumerate}[label=(\textbf{\alph*}), font=\normalfont\bfseries, leftmargin=0.7cm]
  \item\label{hiordermor:coer:coer}  $\fgam{\gf}{p}{\gamma}$ is coercive;
  \item \label{hiordermor:coer:bound} for each $\lambda\in \R^n$,  the sublevel set $\mathcal{L}(\fgam{\gf}{p}{\gamma}, \lambda)$ is bounded.
\end{enumerate}
\end{fact}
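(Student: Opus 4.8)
The plan is to establish part \ref{hiordermor:coer:coer} directly from the infimum defining $\fgam{\gf}{p}{\gamma}$, and then obtain part \ref{hiordermor:coer:bound} as an immediate consequence, since a coercive function automatically has bounded sublevel sets. The whole argument rests on a \emph{uniform lower bound} of the objective $\gf(y)+\frac{1}{p\gamma}\Vert x-y\Vert^p$ that holds for every $y\in\R^n$ and blows up as $\Vert x\Vert\to+\infty$.

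For part \ref{hiordermor:coer:coer}, I would fix an arbitrary $M>0$. First, using the coercivity of $\gf$, I would pick $R_1>0$ with $\gf(y)\ge M$ for all $\Vert y\Vert\ge R_1$, and record the finite lower bound $m_{R_1}:=\bs\inf_{\Vert y\Vert\le R_1}\gf(y)>-\infty$ (finite because $\gf$ is lsc on the compact ball $\ov{\mb(0;R_1)}$). Then, for any $x$ with $\Vert x\Vert>R_1$, I would split the infimum over $y$ into two regimes: if $\Vert y\Vert\ge R_1$ the penalty is nonnegative and coercivity gives $\gf(y)+\frac{1}{p\gamma}\Vert x-y\Vert^p\ge M$; while if $\Vert y\Vert<R_1$ the reverse triangle inequality yields $\Vert x-y\Vert>\Vert x\Vert-R_1>0$, so that $\gf(y)+\frac{1}{p\gamma}\Vert x-y\Vert^p> m_{R_1}+\frac{1}{p\gamma}(\Vert x\Vert-R_1)^p$. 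Taking the infimum over all $y$ across both regimes yields the key estimate
\[
\fgam{\gf}{p}{\gamma}(x)\ge\min\left\{M,\; m_{R_1}+\tfrac{1}{p\gamma}(\Vert x\Vert-R_1)^p\right\},\qquad\Vert x\Vert>R_1.
\]
Since $p>1$ and $\gamma>0$, the second term tends to $+\infty$ as $\Vert x\Vert\to+\infty$, so there is $R\ge R_1$ with $\fgam{\gf}{p}{\gamma}(x)\ge M$ whenever $\Vert x\Vert>R$. As $M>0$ was arbitrary, this gives $\bs\lim_{\Vert x\Vert\to+\infty}\fgam{\gf}{p}{\gamma}(x)=+\infty$, i.e.\ coercivity.

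For part \ref{hiordermor:coer:bound}, I would argue from part \ref{hiordermor:coer:coer} by contradiction: if some sublevel set $\mathcal{L}(\fgam{\gf}{p}{\gamma},\lambda)$ were unbounded, there would be a sequence $x^k$ in it with $\Vert x^k\Vert\to+\infty$, forcing $\fgam{\gf}{p}{\gamma}(x^k)\to+\infty$ and contradicting $\fgam{\gf}{p}{\gamma}(x^k)\le\lambda$; hence every sublevel set is bounded.

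The routine ingredients (reverse triangle inequality, monotonicity of $t\mapsto t^p$) are harmless; the one genuinely delicate point, and the step I would treat most carefully, is securing $m_{R_1}>-\infty$ on the ball. Coercivity by itself controls $\gf$ only \emph{outside} a ball, and without lower semicontinuity a proper coercive $\gf$ can be unbounded below on a bounded set (which would make HOME identically $-\infty$ and destroy coercivity). I would therefore invoke the standing lower semicontinuity of $\gf$ to ensure the infimum over the compact ball is attained and finite, which is precisely what makes the two-regime estimate uniform in $y$ and lets the penalty term drive coercivity.
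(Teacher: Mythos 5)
Your proof is correct, but note that there is nothing in this paper to compare it against: the statement is a \emph{Fact} imported without proof from \cite[Proposition~13]{Kabgani24itsopt}, so the only possible comparison is with that external reference. Judged on its own terms, your argument is sound: the two-regime split of the infimum at the ball $\mb(0;R_1)$ — coercivity of $\gf$ controlling the far regime, the penalty $\tfrac{1}{p\gamma}(\Vert x\Vert-R_1)^p$ plus the finite bound $m_{R_1}$ controlling the near regime — yields the lower bound $\fgam{\gf}{p}{\gamma}(x)\geq\min\{M,\,m_{R_1}+\tfrac{1}{p\gamma}(\Vert x\Vert-R_1)^p\}$, which gives part~\ref{hiordermor:coer:coer}, and part~\ref{hiordermor:coer:bound} then follows by the standard contradiction argument; in fact your proof needs only $p>0$ rather than $p>1$. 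The most substantive point in your write-up is the caveat about lower semicontinuity, and you are right about it: as literally stated, the Fact assumes only that $\gf$ is proper and coercive, and under those hypotheses alone the conclusion is false. For instance, with $n=1$, the function $\gf(0)=0$, $\gf(y)=-1/\vert y\vert$ for $0<\vert y\vert\leq 1$, and $\gf(y)=y^2$ for $\vert y\vert>1$ is proper and coercive, yet $\fgam{\gf}{p}{\gamma}\equiv-\infty$ for every $\gamma>0$, since sending $y\to 0$ drives $\gf(y)$ to $-\infty$ while the penalty stays bounded; both assertions of the Fact then fail. Lower semicontinuity (more precisely, boundedness below on bounded sets) is exactly what makes $m_{R_1}$ finite, and invoking it is harmless in context, since every application of this Fact in the paper (e.g., Fact~\ref{pro:rel:sublevel}, Remark~\ref{rem:levelsetch}, Assumption~\ref{assum:approx}) concerns proper, lsc, coercive functions — but strictly speaking your proof establishes the Fact under a mildly stronger hypothesis than the one displayed, and the statement itself would benefit from adding lsc explicitly.
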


\begin{fact}[Sublevel sets of $\gf$ and $\fgam{\gf}{p}{\gamma}$]\label{pro:rel:sublevel}\cite[Proposition 14]{Kabgani24itsopt}
 Let $p> 1$ and let $\gf: \R^n\to \Rinf$ be a proper, lsc, and coercive function. If for some $r>0$ and $\lambda\in \R$, we have $\mathcal{L}(\gf, \lambda)\subseteq \mb(0; r)$, then there exists a $\ov{\gamma}>0$ such that for each $\gamma\in (0, \ov{\gamma}]$, we have $\mathcal{L}(\fgam{\gf}{p}{\gamma}, \lambda)\subseteq \mb(0; r)$.
 \end{fact}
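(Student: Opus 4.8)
My plan is to prove the contrapositive form of the inclusion: I will exhibit a threshold $\ov{\gamma}>0$ such that for every $\gamma\in(0,\ov{\gamma}]$ and every $x$ with $\|x\|\ge r$ one has $\fgam{\gf}{p}{\gamma}(x)>\lambda$, which is exactly the statement $\mathcal{L}(\fgam{\gf}{p}{\gamma},\lambda)\subseteq\mb(0;r)$. Before the main estimate I would record three consequences of the hypotheses. Since $\gf$ is proper, lsc, and coercive, it is bounded below, so $m:=\bs\inf_{\R^n}\gf>-\infty$, and every sublevel set $\mathcal{L}(\gf,\lambda)$ is closed (by lower semicontinuity) and bounded (by coercivity), hence compact. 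Coercivity also forces $\gf$ to be high-order prox-bounded with threshold $\gamma^{\gf,p}=+\infty$, because the inner objective in \eqref{eq:Hiorder-Moreau env} is bounded below by $m$; consequently Fact~\ref{th:level-bound+locally uniform}\ref{level-bound+locally uniform:proxnonemp} ensures $\prox{\gf}{\gamma}{p}(x)\neq\emptyset$ for every $x\in\R^n$ and every $\gamma>0$. Finally, if $\mathcal{L}(\gf,\lambda)=\emptyset$ then $\fgam{\gf}{p}{\gamma}(x)\ge m>\lambda$ for all $x$ and the claim is vacuous, so I may assume $\mathcal{L}(\gf,\lambda)\neq\emptyset$.

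The geometric core is a strict gap. Because $\mathcal{L}(\gf,\lambda)$ is compact and lies inside the \emph{open} ball $\mb(0;r)$, the quantity $\rho_0:=\bs\max_{y\in\mathcal{L}(\gf,\lambda)}\|y\|$ is attained and satisfies $\rho_0<r$; I set $\delta:=r-\rho_0>0$. This positive slack is what will let a small parameter $\gamma$ lift the envelope above $\lambda$ everywhere outside the ball.

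Next I fix $\gamma>0$ and $x$ with $\|x\|\ge r$, pick a minimizer $y^*\in\prox{\gf}{\gamma}{p}(x)$, and use $\fgam{\gf}{p}{\gamma}(x)=\gf(y^*)+\tfrac{1}{p\gamma}\|x-y^*\|^p$. I then split on the location of $y^*$. If $y^*\notin\mathcal{L}(\gf,\lambda)$, i.e.\ $\gf(y^*)>\lambda$, then $\fgam{\gf}{p}{\gamma}(x)\ge\gf(y^*)>\lambda$ for \emph{every} $\gamma>0$. If instead $\gf(y^*)\le\lambda$, then $y^*\in\mathcal{L}(\gf,\lambda)$, so $\|y^*\|\le\rho_0$ and $\|x-y^*\|\ge\|x\|-\|y^*\|\ge r-\rho_0=\delta$; combining this with $\gf(y^*)\ge m$ yields
\[
\fgam{\gf}{p}{\gamma}(x)\ge m+\frac{\delta^p}{p\gamma}.
\]
Since the right-hand side blows up as $\gamma\downarrow0$ and is independent of $x$, it exceeds $\lambda$ once $\gamma$ is small enough; taking, say, $\ov{\gamma}:=\tfrac{\delta^p}{2p(\lambda-m)}$ when $\lambda>m$ (and any $\ov{\gamma}>0$ when $\lambda=m$) gives $\fgam{\gf}{p}{\gamma}(x)\ge 2\lambda-m>\lambda$ for all $\gamma\in(0,\ov{\gamma}]$. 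The two cases together deliver $\fgam{\gf}{p}{\gamma}(x)>\lambda$, completing the argument.

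The step I expect to demand the most care is securing the \emph{strict} inequality uniformly in $x$. Strictness in the first case depends on working with an attained minimizer $y^*$ (supplied by Fact~\ref{th:level-bound+locally uniform}) rather than with the raw infimum, whose value could approach $\lambda$ from above; and the uniformity of $\ov{\gamma}$ over all $x$ outside the ball rests on the lower bound $m+\delta^p/(p\gamma)$ being independent of $x$, which is itself a consequence of the strict compactness gap $\delta>0$. Once these points are settled, checking $\gamma^{\gf,p}=+\infty$ and the endpoint constant in $\ov{\gamma}$ is routine.
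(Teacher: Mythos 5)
Your proof is correct. There is nothing in the paper itself to compare it against: the statement is recorded as a Fact imported from \cite[Proposition~14]{Kabgani24itsopt}, with no proof reproduced in this paper, so your argument serves as a self-contained substitute for that citation. Your route --- proving the contrapositive $\|x\|\ge r \Rightarrow \fgam{\gf}{p}{\gamma}(x)>\lambda$, extracting the strict gap $\delta=r-\rho_0>0$ from compactness of $\mathcal{L}(\gf,\lambda)$ inside the open ball, and splitting on whether an attained prox point $y^*\in\prox{\gf}{\gamma}{p}(x)$ lies in $\mathcal{L}(\gf,\lambda)$ --- is the natural one, and your closing paragraph correctly flags the two delicate points: strictness needs an attained minimizer (the raw infimum would only yield $\ge\lambda$ in the first case), and the threshold $\ov{\gamma}$ is uniform in $x$ because $m$, $\lambda$, and $\delta$ do not depend on $x$. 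One small gap is worth closing explicitly: in the degenerate case $\mathcal{L}(\gf,\lambda)=\emptyset$ you assert $m>\lambda$, but emptiness of the sublevel set alone only gives $m\ge\lambda$; the strict inequality requires that $\gf$ attains its infimum, which does hold here since $\gf$ is proper, lsc, and coercive (its minimum over a sufficiently large closed ball is a global minimum). Without that observation, the scenario $m=\lambda$ with unattained infimum would leave only $\fgam{\gf}{p}{\gamma}(x)\ge m=\lambda$, which is not enough to exclude $x$ from $\mathcal{L}(\fgam{\gf}{p}{\gamma},\lambda)$. With that one line added, the argument is complete; your appeal to Fact~\ref{th:level-bound+locally uniform} for nonemptiness of the prox, justified by first noting $\gamma^{\gf,p}=+\infty$, is also valid.
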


We now demonstrate that, for a vector $x \in \R^n$, it is possible to obtain an upper bound for the norm of each element in $\prox{\gf}{\gamma}{p}(x)$. Specifically, we identify a radius $\tau$ such that $\prox{\gf}{\gamma}{p}(x) \subseteq \mb(0; \tau)$. From a numerical perspective, this bounds the search domain for finding an element of HOPE at $x$. The result is also theoretically significant for identifying the differentiability region of HOME (see Theorem~\ref{th:dif:dif}).
\begin{proposition}[Boundedness of HOPE: high-order prox-boundedness]\label{prop:findtau:lip} 
Let $p>1$ and let $\gf: \R^n\to \Rinf$ be a proper lsc function that is high-order prox-bounded with a threshold $\gamma^{\gf, p}>0$.
Then, 
\begin{equation*}\label{findtau:main}
\prox{\gf}{\gamma}{p}(x)=\argmint{\{y\in \R^n \mid \Vert y\Vert \leq\tau\}}\left(\gf(y)+\frac{1}{p\gamma}\Vert x- y\Vert^p\right),
\end{equation*}
for each $\gamma\in \left(0, 4^{1-p}\widehat{\gamma}\right)$,
$\widehat{\gamma}\in (0, \gamma^{\gf, p})$, 
and for any $r>0$, $x\in \mb(0; r)$,
and
$\tau:= \left(\frac{2r^p+p\gamma(\gf(0)-\ell_0)}{2^{1-p}-\ell p\gamma}\right)^{\frac{1}{p}}$,
where
$\ell=\frac{2^{p-1}}{p\widehat{\gamma}}$ and $\ell_0$ is a lower bound of $\gf(\cdot)+\ell\Vert \cdot\Vert^p$ on $\R^n$.
\end{proposition}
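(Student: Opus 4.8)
The plan is to reduce the asserted identity of the two minimizer sets to a single a~priori estimate: I will show that every $y^*\in\prox{\gf}{\gamma}{p}(x)$ satisfies $\Vert y^*\Vert\le\tau$. Granting this, the equality follows at once, because $\gamma<4^{1-p}\widehat\gamma<\widehat\gamma<\gamma^{\gf,p}$ makes Fact~\ref{th:level-bound+locally uniform}\ref{level-bound+locally uniform:proxnonemp} applicable, so the unconstrained HOPE is nonempty; every unconstrained minimizer is then feasible for the ball-constrained problem and hence solves it, while conversely the constrained minimum cannot undercut the unconstrained one. To obtain the estimate I would start from optimality, testing $y^*$ against the reference point $y=0$:
\[
\gf(y^*)+\frac{1}{p\gamma}\Vert x-y^*\Vert^p\le \gf(0)+\frac{1}{p\gamma}\Vert x\Vert^p\le \gf(0)+\frac{r^p}{p\gamma},
\]
where the last step uses $x\in\mb(0;r)$. (This is precisely where $\gf(0)$ enters $\tau$; if $\gf(0)=+\infty$ the bound is vacuous and $\tau=+\infty$, so I may assume $0\in\dom{\gf}$.)

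Next I would bound the left-hand side below by two terms proportional to $\Vert y^*\Vert^p$. For the function value, note that $\widehat\gamma\in(0,\gamma^{\gf,p})$ together with Fact~\ref{th:level-bound+locally uniform}\ref{level-bound+locally uniform:proxnonemp} at the origin makes $\fgam{\gf}{p}{\widehat\gamma}(0)$ finite, i.e.\ $\gf(\cdot)+\frac{1}{p\widehat\gamma}\Vert\cdot\Vert^p$ is bounded below on $\R^n$; since $\ell=\frac{2^{p-1}}{p\widehat\gamma}\ge\frac{1}{p\widehat\gamma}$, the larger function $\gf(\cdot)+\ell\Vert\cdot\Vert^p$ shares this lower bound, which I name $\ell_0$, giving $\gf(y^*)\ge\ell_0-\ell\Vert y^*\Vert^p$. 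For the proximal term, the convexity inequality $\Vert a+b\Vert^p\le 2^{p-1}(\Vert a\Vert^p+\Vert b\Vert^p)$, valid for $p\ge1$, applied to $y^*=(y^*-x)+x$ yields $\Vert x-y^*\Vert^p\ge 2^{1-p}\Vert y^*\Vert^p-r^p$.

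Substituting both bounds into the optimality inequality, collecting the $\Vert y^*\Vert^p$ terms, and multiplying through by $p\gamma>0$, I expect to arrive at
\[
\bigl(2^{1-p}-\ell p\gamma\bigr)\Vert y^*\Vert^p\le 2r^p+p\gamma\bigl(\gf(0)-\ell_0\bigr).
\]
The pivotal step --- and the only place the exact constants matter --- is the strict positivity of the leading coefficient $2^{1-p}-\ell p\gamma$: substituting $\ell=\frac{2^{p-1}}{p\widehat\gamma}$ turns this into $\gamma<2^{2-2p}\widehat\gamma=4^{1-p}\widehat\gamma$, exactly the admissible range for $\gamma$. Dividing by this positive quantity and taking $p$-th roots delivers $\Vert y^*\Vert\le\tau$. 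The main obstacle is therefore conceptual rather than computational: choosing the reference point $y=0$ and the auxiliary constant $\ell$ so that prox-boundedness supplies the lower bound $\ell_0$ and the threshold on $\gamma$ emerges cleanly; once the leading coefficient is seen to be positive, the remainder is routine algebra.
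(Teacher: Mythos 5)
Your proof is correct and follows essentially the same route as the paper's: testing the optimality of $y^*\in\prox{\gf}{\gamma}{p}(x)$ against $y=0$, bounding $\gf(y^*)\ge \ell_0-\ell\Vert y^*\Vert^p$, applying $\Vert y^*\Vert^p\le 2^{p-1}\left(\Vert x\Vert^p+\Vert x-y^*\Vert^p\right)$, and using $\gamma<4^{1-p}\widehat{\gamma}$ to ensure the leading coefficient $2^{1-p}-\ell p\gamma$ is positive before dividing. The only difference is cosmetic: you justify the existence of the lower bound $\ell_0$ self-containedly, from the finiteness of $\fgam{\gf}{p}{\widehat{\gamma}}(0)$ (Fact~\ref{th:level-bound+locally uniform}) together with $\ell\ge\frac{1}{p\widehat{\gamma}}$, whereas the paper cites an external proposition of \cite{Kabgani24itsopt} for this step.
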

\begin{proof}
Together with $\ell:=\frac{2^{p-1}}{p\widehat{\gamma}}$, the existence of $\ell_0\in \R$ such that $\gf(\cdot)+\ell\Vert \cdot\Vert^p$  is bounded from below by $\ell_0$ on $\R^n$ follows from \cite[Proposition 11]{Kabgani24itsopt}. For $x\in \R^n$ with $\Vert x\Vert< r$ and $y\in \prox{\gf}{\gamma}{p}(x)$, which is well-defined from 
Fact~\ref{th:level-bound+locally uniform}~$\ref{level-bound+locally uniform:proxnonemp}$, 
we have
 \begin{equation}\label{findtau:eq2}
-\ell \Vert y\Vert^p +\ell_0+\frac{1}{p\gamma}\Vert x- y\Vert^p\leq   \gf(y)+\frac{1}{p\gamma}\Vert x- y\Vert^p=\fgam{\gf}{p}{\gamma}(x)\leq \gf(0)+\frac{1}{p\gamma}\Vert x\Vert^p.
 \end{equation}
Since $\Vert y\Vert^p\leq 2^{p-1}\left(\Vert x\Vert^p+\Vert y-x\Vert^p\right)$, it is deduced from \eqref{findtau:eq2} that
\[
 -\ell\Vert y\Vert^p+\frac{1}{p\gamma}\left(2^{1-p}\Vert y\Vert^p-\Vert x\Vert^p\right)\leq\frac{1}{p\gamma}\Vert x\Vert^p+(\gf(0)-\ell_0).
\]
Thus, $(2^{1-p}-\ell p\gamma)\Vert y\Vert^p\leq 2\Vert x\Vert^p+p\gamma(\gf(0)-\ell_0)$, which leads to
$
\Vert y\Vert\leq\left(\frac{2\Vert x\Vert^p+p\gamma(\gf(0)-\ell_0)}{2^{1-p}-\ell p\gamma}\right)^{\frac{1}{p}}<\tau,
$
implying that for each $y\in \prox{\gf}{\gamma}{p}(x)$, $\Vert y\Vert\leq \tau$.
 \end{proof}

\begin{remark}\label{rem:prop:findtau:lip}
  \begin{enumerate}[label=(\textbf{\alph*}), font=\normalfont\bfseries, leftmargin=0.7cm]
    \item A result similar to Proposition~\ref{prop:findtau:lip} is also discussed in \cite[Theorem~3.1]{KecisThibault15}, which is set in a Banach space under different assumptions. While \cite[Theorem~3.1]{KecisThibault15} does not provide an explicit formula for the radius $\tau$, our work derives such a formula under the assumptions of Proposition~\ref{prop:findtau:lip}. This explicit formula is particularly valuable from a numerical perspective.
    
    \item Regarding the assumption $\widehat{\gamma} \in (0, \gamma^{\gf, p})$, note that, from Definition~\ref{def:s-prox-bounded}, $\gamma^{\gf, p}$ is the supremum of the set of all $\gamma > 0$ such that $\fgam{\gf}{p}{\gamma}(x) > -\infty$. However, it is possible that $\fgam{\gf}{p}{\gamma^{\gf, p}}(x) = -\infty$. Since we utilize \cite[Proposition~11]{Kabgani24itsopt}, it is necessary to consider some $\widehat{\gamma} < \gamma^{\gf, p}$ to ensure that $\fgam{\gf}{p}{\widehat{\gamma}}(x) > -\infty$.
    
    \item \label{rem:prop:findtau:lip:c} Regarding the formula obtained for $\tau$ in Proposition~\ref{prop:findtau:lip}, observe that as $\gamma \to 4^{1-p} \widehat{\gamma}$, it follows that $\tau \to +\infty$. Therefore, in some upcoming results, an additional upper bound on $\gamma$ may be imposed to avoid such cases (see Theorems~\ref{th:dif:dif} and~\ref{th:dif:weak}, and Corollary~\ref{cor:dif:weak}).
  \end{enumerate}
\end{remark}

The following result demonstrates that by imposing a lower-boundedness condition on $\gf$,
the need for retraction in selecting $\gamma$ can be eliminated, leading to a simpler expression for the radius $\tau$. 
 \begin{corollary}[Boundedness of HOPE: lower boundedness]\label{cor:findtau:lip}
Let $p>1$ and let $\gf: \R^n\to \Rinf$ be a proper lsc function that is bounded from below by $\ell_0\in \R$. 
Then,
\begin{equation*}\label{cor:findtau:main}
\prox{\gf}{\gamma}{p}(x)=\argmint{\{y \in \R^n \mid \Vert y\Vert \leq\tau\}}\left(\gf(y)+\frac{1}{p\gamma}\Vert x- y\Vert^p\right),
\end{equation*}
for any $\gamma>0$, $r>0$, 
$x\in  \mb(0; r)$,
and
$\tau:=2 r+\left(2^{p-1}p\gamma(\gf(0)-\ell_0)\right)^\frac{1}{p}$.
\end{corollary}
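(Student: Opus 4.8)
The plan is to mirror the argument of Proposition~\ref{prop:findtau:lip}, but to exploit the global lower bound so as to dispense with both the finite prox-boundedness threshold and the retraction imposed on $\gamma$. First I would record that a function bounded below by $\ell_0$ is automatically high-order prox-bounded (Definition~\ref{def:s-prox-bounded}) with threshold $\gamma^{\gf,p}=+\infty$, because $\fgam{\gf}{p}{\gamma}(x)\geq \ell_0>-\infty$ for every $\gamma>0$ and every $x\in\R^n$. Consequently, Fact~\ref{th:level-bound+locally uniform}~\ref{level-bound+locally uniform:proxnonemp} applies for all $\gamma>0$, guaranteeing that $\prox{\gf}{\gamma}{p}(x)$ is nonempty and compact, so the infimum defining HOME in \eqref{eq:Hiorder-Moreau env} is attained. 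One may assume $\gf(0)<+\infty$; otherwise $\tau=+\infty$ and the asserted identity is trivial.

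Next, I would fix $x\in\mb(0;r)$ and take an arbitrary $y\in\prox{\gf}{\gamma}{p}(x)$. Testing the infimum \eqref{eq:Hiorder-Moreau env} at the feasible point $y=0$ and using $\gf(y)\geq\ell_0$ gives
\[
\ell_0+\frac{1}{p\gamma}\Vert x-y\Vert^p\leq \gf(y)+\frac{1}{p\gamma}\Vert x-y\Vert^p=\fgam{\gf}{p}{\gamma}(x)\leq \gf(0)+\frac{1}{p\gamma}\Vert x\Vert^p,
\]
which after clearing $p\gamma$ yields $\Vert x-y\Vert^p\leq \Vert x\Vert^p+p\gamma(\gf(0)-\ell_0)$. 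Note that $\gf(0)-\ell_0\geq 0$ since $\ell_0$ is a lower bound of $\gf$, so the right-hand side is nonnegative and $\tau$ is a well-defined real number.

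The core computation then converts this into a bound on $\Vert y\Vert$. I would combine the triangle inequality $\Vert y\Vert\leq \Vert x\Vert+\Vert x-y\Vert$ with the convexity estimate $(a+b)^p\leq 2^{p-1}(a^p+b^p)$ to obtain $\Vert y\Vert^p\leq 2^{p-1}(\Vert x\Vert^p+\Vert x-y\Vert^p)\leq 2^p\Vert x\Vert^p+2^{p-1}p\gamma(\gf(0)-\ell_0)$. Taking $p$-th roots and invoking the subadditivity \eqref{eq:intrp:p01} with exponent $1/p\in(0,1)$ then gives $\Vert y\Vert\leq 2\Vert x\Vert+\left(2^{p-1}p\gamma(\gf(0)-\ell_0)\right)^{1/p}<\tau$, where the strict inequality uses $\Vert x\Vert<r$. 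Hence every element of $\prox{\gf}{\gamma}{p}(x)$ lies in the open ball $\mb(0;\tau)$.

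Finally, I would upgrade this norm bound to the claimed set identity. Since $\prox{\gf}{\gamma}{p}(x)$ is nonempty and contained in $\{y\mid \Vert y\Vert\leq\tau\}$, the global minimum of $y\mapsto \gf(y)+\frac{1}{p\gamma}\Vert x-y\Vert^p$ is attained inside this ball; therefore the minimum over the ball equals the unconstrained infimum $\fgam{\gf}{p}{\gamma}(x)$, and the constrained minimizer set equals $\prox{\gf}{\gamma}{p}(x)\cap\{y\mid\Vert y\Vert\leq\tau\}=\prox{\gf}{\gamma}{p}(x)$. The argument is essentially routine; the only point requiring mild care is selecting the elementary inequalities in the right order so that the constants assemble into exactly $\tau=2r+\left(2^{p-1}p\gamma(\gf(0)-\ell_0)\right)^{1/p}$, and no genuine obstacle arises beyond this bookkeeping.
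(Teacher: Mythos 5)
Your proposal is correct and follows essentially the same route as the paper's proof: test the envelope at $y=0$, use the lower bound $\gf(y)\geq\ell_0$, apply $\Vert y\Vert^p\leq 2^{p-1}(\Vert x\Vert^p+\Vert x-y\Vert^p)$, and finish with the subadditivity inequality \eqref{eq:intrp:p01} to assemble $\tau=2r+\left(2^{p-1}p\gamma(\gf(0)-\ell_0)\right)^{1/p}$. The only differences are cosmetic: you spell out the observations that lower boundedness gives $\gamma^{\gf,p}=+\infty$ (hence nonemptiness of HOPE for every $\gamma>0$) and that the norm bound upgrades to the stated set identity, both of which the paper leaves implicit.
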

\begin{proof}
Similar to the proof of Proposition~\ref{prop:findtau:lip}, we obtain
\[
\frac{1}{p\gamma}\left(2^{1-p}\Vert y\Vert^p-\Vert x\Vert^p\right)\leq\frac{1}{p\gamma}\Vert x\Vert^p+(\gf(0)-\ell_0),
\]
leading to
\[
\Vert y\Vert^p\leq 2^{p-1}\left(2\Vert x\Vert^p+p\gamma(\gf(0)-\ell_0)\right).
\]
From \eqref{eq:intrp:p01}, we obtain
\begin{align*}
\Vert y\Vert \leq \left[2^{p-1}\left(2\Vert x\Vert^p+p\gamma(\gf(0)-\ell_0)\right)\right]^\frac{1}{p}
&= \left[2^{p}\Vert x\Vert^p+2^{p-1}p\gamma(\gf(0)-\ell_0)\right]^\frac{1}{p}
\\&\leq 2 \Vert x\Vert +\left(2^{p-1}p\gamma(\gf(0)-\ell_0)\right)^\frac{1}{p},
\end{align*}
as $\Vert x\Vert<r$, leading to the desired result.
 \end{proof}

\subsection{{\bf Differentiability and weak smoothness of HOME}} \label{sec:diff}
 
This section focuses on proving the differentiability and weak smoothness properties of HOME under the assumption of weak convexity. Specifically, we will demonstrate that for any given radius $r>0$, by selecting an appropriate $\gamma>0$, we obtain $\fgam{\gf}{p}{\gamma}\in \mathcal{C}^{1}(\mb(0; r))$; see Theorem~\ref{th:dif:dif}. Furthermore, we will show that it possesses weak smoothness properties, cf. Theorem~\ref{th:dif:weak}. The presented results are specifically stated for the case where $p\in (1, 2]$, and the reason for this restriction is clarified through an example (see Example~\ref{ex:nondif:prox:p3}).

Let us begin with the subsequent key result that provides a necessary and sufficient condition for the differentiability of HOME.

\begin{fact}[Characterization of differentiability of HOME]\label{th:diffcharact}\cite[Proposition~24]{Kabgani24itsopt}(see also \cite[Proposition~3.1]{KecisThibault15})
Let $p > 1$, and let $\gf: \R^n \to \Rinf$ be a proper lsc function that is high-order prox-bounded with a threshold $\gamma^{\gf, p} > 0$. Then, for each $\gamma \in (0, \gamma^{\gf, p})$ and any open subset $U \subseteq \R^n$, the following statements are equivalent:
\begin{enumerate}[label=(\textbf{\alph*}), font=\normalfont\bfseries, leftmargin=0.7cm]
\item \label{th:diffcharact:diff} $\fgam{\gf}{p}{\gamma} \in \mathcal{C}^{1}(U)$;
\item \label{th:diffcharact:prox} $\prox{\gf}{\gamma}{p}$ is nonempty, single-valued, and continuous on $U$.
\end{enumerate}
Under these conditions, for any $x \in U$ and $y = \prox{\gf}{\gamma}{p}(x)$, we have $\nabla\fgam{\gf}{p}{\gamma}(x) = \frac{1}{\gamma} \Vert x - y \Vert^{p-2} (x - y)$.
\end{fact}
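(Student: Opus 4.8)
The plan is to prove the two implications separately and to read the gradient formula off the constructive direction. Throughout, write $m:=\fgam{\gf}{p}{\gamma}$ and $\phi(z):=\tfrac1p\Vert z\Vert^p$, so that $\nabla\phi(z)=\Vert z\Vert^{p-2}z$ is continuous on $\R^n$ for $p>1$ (indeed $\Vert\nabla\phi(z)\Vert=\Vert z\Vert^{p-1}\to0$ as $z\to0$). Note first that Fact~\ref{th:level-bound+locally uniform}\ref{level-bound+locally uniform:proxnonemp} already guarantees $\prox{\gf}{\gamma}{p}(x)\neq\emptyset$ for every $x$ when $\gamma\in(0,\gamma^{\gf,p})$, so the nonemptiness clause of \ref{th:diffcharact:prox} is automatic and only single-valuedness and continuity are genuinely at stake. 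For the implication \ref{th:diffcharact:prox}$\Rightarrow$\ref{th:diffcharact:diff}, fix $x_0\in U$ and let $y_0=\prox{\gf}{\gamma}{p}(x_0)$. Since $y_0$ is an actual minimizer, $m(x_0)=\gf(y_0)+\tfrac1\gamma\phi(x_0-y_0)$, whereas for every $x$ the suboptimality of $y_0$ gives the smooth majorant $m(x)\le\gf(y_0)+\tfrac1\gamma\phi(x-y_0)$. Subtracting and using the $\mathcal{C}^1$ expansion of $\phi$ yields the upper estimate $m(x)-m(x_0)\le\langle v_0,x-x_0\rangle+o(\Vert x-x_0\Vert)$ with $v_0:=\tfrac1\gamma\nabla\phi(x_0-y_0)=\tfrac1\gamma\Vert x_0-y_0\Vert^{p-2}(x_0-y_0)$.

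The matching lower estimate is the crux of the argument. For $x$ near $x_0$ put $y(x):=\prox{\gf}{\gamma}{p}(x)$ (single-valued by hypothesis). Testing the optimality of $y_0$ at $x_0$ against the competitor $y(x)$, namely $m(x_0)\le\gf(y(x))+\tfrac1\gamma\phi(x_0-y(x))$, and combining with $m(x)=\gf(y(x))+\tfrac1\gamma\phi(x-y(x))$, gives $m(x)-m(x_0)\ge\tfrac1\gamma\bigl[\phi(x-y(x))-\phi(x_0-y(x))\bigr]$. The mean value theorem applied to the $\mathcal{C}^1$ function $\phi$ produces a point $w(x)$ on the segment joining $x_0-y(x)$ and $x-y(x)$ with $\phi(x-y(x))-\phi(x_0-y(x))=\langle\nabla\phi(w(x)),x-x_0\rangle$. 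By the assumed continuity of $\prox{\gf}{\gamma}{p}$ we have $y(x)\to y_0$, so both endpoints of the segment converge to $x_0-y_0$, whence $w(x)\to x_0-y_0$ and, by continuity of $\nabla\phi$, $\nabla\phi(w(x))\to\nabla\phi(x_0-y_0)$. Splitting off the limiting term gives $m(x)-m(x_0)\ge\langle v_0,x-x_0\rangle+o(\Vert x-x_0\Vert)$. The two one-sided estimates sandwich $m$ and prove Fréchet differentiability at $x_0$ with $\nabla m(x_0)=v_0=\tfrac1\gamma\Vert x_0-y_0\Vert^{p-2}(x_0-y_0)$, which is exactly the claimed formula; since $x_0\mapsto y(x_0)$ and $z\mapsto\Vert z\Vert^{p-2}z$ are continuous, $\nabla m$ is continuous on $U$, i.e. $m\in\mathcal{C}^1(U)$.

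For \ref{th:diffcharact:diff}$\Rightarrow$\ref{th:diffcharact:prox}, fix $x\in U$ and let $y_i\in\prox{\gf}{\gamma}{p}(x)$ be any minimizer. The upper-bound step of the previous paragraph, run with $y_i$ in place of $y_0$, gives $m(x')-m(x)\le\langle v_i,x'-x\rangle+o(\Vert x'-x\Vert)$ with $v_i=\tfrac1\gamma\Vert x-y_i\Vert^{p-2}(x-y_i)$. Subtracting the Fréchet expansion $m(x')-m(x)=\langle\nabla m(x),x'-x\rangle+o(\Vert x'-x\Vert)$ and testing along $x'=x+t\,(v_i-\nabla m(x))$ with $t\downarrow0$ forces $v_i=\nabla m(x)$. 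Hence any two minimizers $y_1,y_2$ satisfy $\nabla\phi(x-y_1)=\nabla\phi(x-y_2)$, and since $\nabla\phi$ is the gradient of the strictly convex function $\phi$ (using $p>1$) it is injective, so $x-y_1=x-y_2$, i.e. $y_1=y_2$; thus $\prox{\gf}{\gamma}{p}$ is single-valued on $U$. Continuity is then immediate from Fact~\ref{th:level-bound+locally uniform}\ref{level-bound+locally uniform2:conv}: for $x^k\to\ov{x}$ in $U$ the images $y^k=\prox{\gf}{\gamma}{p}(x^k)$ are bounded and all their cluster points lie in the singleton $\prox{\gf}{\gamma}{p}(\ov{x})$, so $y^k\to\prox{\gf}{\gamma}{p}(\ov{x})$.

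I expect the lower first-order estimate in \ref{th:diffcharact:prox}$\Rightarrow$\ref{th:diffcharact:diff} to be the only delicate point; the remaining steps are bookkeeping. The essential ingredients there are the freedom to test minimality at $x_0$ against the moving minimizer $y(x)$, the $\mathcal{C}^1$ regularity of $z\mapsto\tfrac1p\Vert z\Vert^p$ valid for $p>1$, and the continuity of the prox map, which is precisely what drives $\nabla\phi(w(x))\to\nabla\phi(x_0-y_0)$ and thereby upgrades the sandwich into genuine differentiability with the stated gradient.
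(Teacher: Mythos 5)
The paper itself contains no proof of this statement: it is imported as a \emph{Fact}, with the proof deferred to \cite[Proposition~24]{Kabgani24itsopt} and \cite[Proposition~3.1]{KecisThibault15}. So your argument can only be judged on its own merits, and on those merits its structure is correct and is the standard route for envelope-differentiability characterizations. The two pillars are exactly right: for \ref{th:diffcharact:prox}$\Rightarrow$\ref{th:diffcharact:diff}, the smooth majorant $x'\mapsto \gf(y_0)+\frac{1}{p\gamma}\Vert x'-y_0\Vert^p$ touching $\fgam{\gf}{p}{\gamma}$ at $x_0$ gives the upper first-order estimate, while testing the infimum at $x_0$ against the moving minimizer $y(x)$, applying the mean value theorem to $\phi=\frac1p\Vert\cdot\Vert^p$, and invoking continuity of the prox map gives the matching lower estimate; the sandwich yields Fr\'echet differentiability with the stated gradient, and continuity of the gradient follows from continuity of the prox. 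Your preliminary observations---that $\phi\in\mathcal{C}^1(\R^n)$ for every $p>1$ (including at the origin, where $\Vert\nabla\phi(z)\Vert=\Vert z\Vert^{p-1}\to 0$), and that nonemptiness is automatic from Fact~\ref{th:level-bound+locally uniform}~\ref{level-bound+locally uniform:proxnonemp}---are precisely the points that need to be made, and the continuity argument via Fact~\ref{th:level-bound+locally uniform}~\ref{level-bound+locally uniform2:conv} (bounded sequence whose cluster points all lie in a singleton) is sound.

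One step, as literally written, fails and needs a one-line fix. In \ref{th:diffcharact:diff}$\Rightarrow$\ref{th:diffcharact:prox}, writing $m:=\fgam{\gf}{p}{\gamma}$ and $v_i:=\frac{1}{\gamma}\Vert x-y_i\Vert^{p-2}(x-y_i)$, the two expansions combine to $\langle \nabla m(x)-v_i,\,x'-x\rangle \le o(\Vert x'-x\Vert)$. To force $v_i=\nabla m(x)$ you must test along $x'=x+t\left(\nabla m(x)-v_i\right)$ with $t\downarrow 0$: the left-hand side then equals $t\Vert \nabla m(x)-v_i\Vert^2$, and dividing by $t$ gives $\Vert \nabla m(x)-v_i\Vert^2\le 0$. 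Your choice $x'=x+t\left(v_i-\nabla m(x)\right)$ makes the left-hand side equal to $-t\Vert \nabla m(x)-v_i\Vert^2$, so the inequality is vacuously true and forces nothing. This is a sign slip rather than a conceptual gap; with the corrected direction, the remainder (injectivity of $z\mapsto \Vert z\Vert^{p-2}z$, hence single-valuedness, hence continuity) goes through as you wrote it.
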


In general, when $\gf$ is a weakly convex function and $p>2$, there may not exist any $\gamma>0$ such that $\fgam{\gf}{p}{\gamma}$
 is differentiable. The following example, taken from \cite[Example~28]{Kabgani24itsopt}, illustrates this possibility.
 
\begin{example}[Nondifferentiability of HOME for $p > 2$]\label{ex:nondif:prox:p3}
Let $\gf: \R \to \R$ be defined by $\gf(x) = x^4 - x^2$, which is a weakly convex function. For any $p > 2$ and any $\gamma > 0$, the set
$\argmin{y \in \R}\left\{\Phi(y):=y^4 - y^2 + \frac{1}{p\gamma}|y|^p\right\}$ has two distinct elements; see \cite[Example~28]{Kabgani24itsopt}.
Thus, for any $p > 2$ and $\gamma > 0$, $\prox{\gf}{\gamma}{p}(\ov{x})$ is not single-valued. 
By Fact~\ref{th:diffcharact}, this non-uniqueness implies that
$\fgam{\gf}{p}{\gamma}$ is not differentiable at $\ov{x}$. 
Notice that, for $p \in (1, 2]$ and $y \in [-1, 1]$, it holds that $y^2 \leq \frac{1}{p\gamma} |y|^p$ for each $\gamma \leq \frac{1}{p}$. Additionally, it holds that $y^2 \leq y^4$ for $|y| > 1$. In both cases, we conclude that $\Phi(y) > 0 = \Phi(0)$ for every $y \neq 0$. Thus, $y = 0$ is the only minimizer of the function $\Phi$. The plots of the function $\Phi$ for two cases, $p = 1.5$ and $p = 3$, with $\gamma = 0.6$, are illustraed in Figure \ref{fig:nodifp>2}.
\end{example}

 \begin{figure}[H]
    \begin{subfigure}{0.42\textwidth}
        \centering
        \includegraphics[width=1\textwidth]{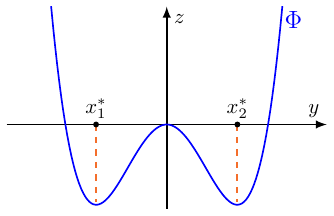}
        \caption{$p=3$ and $\gamma=0.6$}
    \end{subfigure}
    \qquad\qquad
    \begin{subfigure}{0.42\textwidth}
        \centering
        \includegraphics[width=1\textwidth]{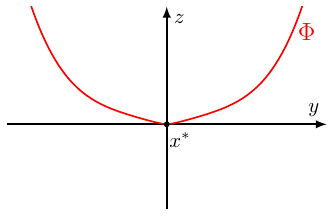}
        \caption{$p=1.5$ and $\gamma=0.6$}
    \end{subfigure}
\caption{Plot of the function $\Phi$ in Example \ref{ex:nondif:prox:p3} for various values of $p$.
             \label{fig:nodifp>2}}
\end{figure}

On the basis of Example~\ref{ex:nondif:prox:p3}, it is clear that for $p>2$ we cannot be hopeful about the differentiability of HOME. As such, in the rest of this study, we restrict our attention to the case $p\in (1, 2]$.

\begin{remark}\label{rem:boundongamma}
As described in Remark~\ref{rem:prop:findtau:lip}~$\ref{rem:prop:findtau:lip:c}$, to impose an upper bound on  the radius $\tau$ obtained in Proposition \ref{prop:findtau:lip}, we consider an upper bound for picking $\gamma$, denoted by $\gamma_{\bs\max}$, such that $\gamma_{\bs\max}$ is 
is sufficiently far away from
$4^{1-p}\widehat{\gamma}$, where $\widehat{\gamma}\in (0, \gamma^{\gf, p})$.
\end{remark}

 \begin{theorem}[Differentiability of HOME]\label{th:dif:dif}
Let $p \in (1,2]$, and let $\gf: \R^n \to \Rinf$ be a proper, lsc, $\rho$-weakly convex function that is high-order prox-bounded with threshold $\gamma^{\gf, p} > 0$. For any $r > 0$, we have that $\fgam{\gf}{p}{\gamma}\in \mathcal{C}^{1}(\mb(0; r))$ and that $\prox{\gf}{\gamma}{p}$ is single-valued and continuous on $\mb(0; r)$, provided $\gamma \in \left(0, \sigma \right)$, where
$\sigma:=\bs\min\left\{\gamma_{\bs\max}, \frac{\kappa_p}{\rho}(r+\widehat{\tau})^{p-2}\right\}$, 
$\gamma_{\bs\max}$ is as described in Remark \ref{rem:boundongamma},
$\widehat{\tau}= \left(\frac{2r^p+p\gamma_{\bs\max}(\gf(0)-l_0)}{2^{1-p}-lp\gamma_{\bs\max}}\right)^{\frac{1}{p}}$, $l=\frac{2^{p-1}}{p\widehat{\gamma}}$, where $\widehat{\gamma}\in (0, \gamma^{\gf, p})$,  and $l_0$ is a lower bound of $\gf(\cdot)+l\Vert \cdot\Vert^p$ on $\R^n$.
 \end{theorem}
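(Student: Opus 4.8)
The plan is to reduce everything to the differentiability characterization in Fact~\ref{th:diffcharact}, applied with $U=\mb(0;r)$. Since $4^{1-p}\le 1$ for $p\in(1,2]$, the hypothesis $\gamma<\gamma_{\bs\max}\le 4^{1-p}\widehat\gamma<\widehat\gamma<\gamma^{\gf,p}$ places $\gamma$ in the admissible range $(0,\gamma^{\gf,p})$, so Fact~\ref{th:level-bound+locally uniform}~\ref{level-bound+locally uniform:proxnonemp} already guarantees that $\prox{\gf}{\gamma}{p}(x)$ is nonempty and compact for every $x$. Hence the whole task is to show that $\prox{\gf}{\gamma}{p}$ is \emph{single-valued} and \emph{continuous} on $\mb(0;r)$; Fact~\ref{th:diffcharact} then delivers $\fgam{\gf}{p}{\gamma}\in\mathcal{C}^1(\mb(0;r))$. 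Continuity will come for free once single-valuedness is in hand: Fact~\ref{th:level-bound+locally uniform}~\ref{level-bound+locally uniform2:conv} says that whenever $x^k\to\ov{x}$ and $y^k\in\prox{\gf}{\gamma}{p}(x^k)$, the sequence $\{y^k\}$ is bounded with all cluster points in $\prox{\gf}{\gamma}{p}(\ov{x})$; if the latter is a singleton, this forces $y^k\to\prox{\gf}{\gamma}{p}(\ov{x})$.

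So the crux is single-valuedness, and first I would localize. By Proposition~\ref{prop:findtau:lip}, every $y\in\prox{\gf}{\gamma}{p}(x)$ with $x\in\mb(0;r)$ satisfies $\Vert y\Vert\le\tau(\gamma)$, where $\tau(\gamma)$ has a non-decreasing numerator (note $\gf(0)-l_0\ge 0$) and a positive, decreasing denominator on $(0,4^{1-p}\widehat\gamma)$, hence is nondecreasing in $\gamma$. Since $\gamma\le\gamma_{\bs\max}$, this gives the $\gamma$-independent bound $\Vert y\Vert\le\widehat\tau$, and therefore $\Vert x-y\Vert\le r+\widehat\tau$. The point of this step is to confine the relevant vectors to the fixed ball $\mb(0;r+\widehat\tau)$ on which the Basic inequality of Fact~\ref{lem:findlowbounknu:lemma} is valid.

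Now suppose $y_1,y_2\in\prox{\gf}{\gamma}{p}(x)$. Writing the first-order optimality condition for \eqref{eq:Hiorder-Moreau prox} and using $\nabla(\tfrac1p\Vert\cdot\Vert^p)=\Vert\cdot\Vert^{p-2}(\cdot)$, I obtain $\zeta_i:=\tfrac1\gamma\Vert x-y_i\Vert^{p-2}(x-y_i)\in\partial\gf(y_i)$ for $i=1,2$. The $\rho$-weak convexity of $\gf$, through the $1$-hypomonotonicity in Proposition~\ref{pro:weakConvChar}~\ref{pro:weakConvChar:e}, yields $\langle\zeta_1-\zeta_2,\,y_1-y_2\rangle\ge-\rho\Vert y_1-y_2\Vert^2$. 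On the other hand, setting $a:=x-y_1$ and $b:=x-y_2$, so that $a-b=-(y_1-y_2)$ and $a,b\in\mb(0;r+\widehat\tau)$, the Basic inequality gives
\[
\langle\zeta_1-\zeta_2,\,y_1-y_2\rangle
=-\tfrac1\gamma\big\langle\Vert a\Vert^{p-2}a-\Vert b\Vert^{p-2}b,\,a-b\big\rangle
\le-\tfrac{\kappa_p}{\gamma}(r+\widehat\tau)^{p-2}\Vert y_1-y_2\Vert^2 .
\]
Chaining the two estimates produces $\tfrac{\kappa_p}{\gamma}(r+\widehat\tau)^{p-2}\Vert y_1-y_2\Vert^2\le\rho\Vert y_1-y_2\Vert^2$; if $y_1\ne y_2$ this forces $\gamma\ge\tfrac{\kappa_p}{\rho}(r+\widehat\tau)^{p-2}\ge\sigma$, contradicting $\gamma<\sigma$. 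Hence $y_1=y_2$, establishing single-valuedness on $\mb(0;r)$, and the proof concludes as described above.

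I expect the main obstacle to be the careful alignment of the two monotonicity estimates so that the threshold $\sigma$ emerges cleanly: the hypomonotonicity constant $\rho$ of $\partial\gf$ must be dominated by the strong-monotonicity-type constant $\tfrac{\kappa_p}{\gamma}(r+\widehat\tau)^{p-2}$ coming from the $p$-power regularizer. This in turn hinges on first securing the \emph{uniform} radius $\widehat\tau$ (independent of the particular $\gamma\le\gamma_{\bs\max}$) so that both minimizers, and hence $a$ and $b$, lie in one fixed ball where Fact~\ref{lem:findlowbounknu:lemma} applies. Once this localization and the sign bookkeeping $a-b=-(y_1-y_2)$ are handled, the remainder is a routine invocation of the cited facts.
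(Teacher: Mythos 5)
Your proof is correct, and its core---the single-valuedness argument---coincides with the paper's: both localize HOPE into the fixed ball $\mb(0; r+\widehat{\tau})$ via Proposition~\ref{prop:findtau:lip} (using the same monotonicity of $\tau$ in $\gamma$, justified by $\gf(0)-\ell_0\geq 0$ and the positive decreasing denominator), write the optimality condition $\frac{1}{\gamma}\Vert x-y_i\Vert^{p-2}(x-y_i)\in\partial\gf(y_i)$, and play the $1$-hypomonotonicity of $\partial\gf$ (Proposition~\ref{pro:weakConvChar}) against the strong-monotonicity-type bound of Fact~\ref{lem:findlowbounknu:lemma}, with $\gamma<\frac{\kappa_p}{\rho}(r+\widehat{\tau})^{p-2}$ providing the decisive margin. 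Where you genuinely diverge is the continuity step. The paper runs these same two estimates with two \emph{distinct} base points $x_1\neq x_2$ and, instead of arguing by contradiction, extracts the quantitative H\"older bound $\Vert y_2-y_1\Vert\leq L_p\Vert x_1-x_2\Vert^{1/2}$ in \eqref{lochol:weak}, which yields single-valuedness (set $x_1=x_2$) and continuity in one stroke. You treat only the diagonal case $x_1=x_2$ and then recover bare continuity qualitatively: single-valuedness combined with the local boundedness and outer semicontinuity of the prox map (Fact~\ref{th:level-bound+locally uniform}~$\ref{level-bound+locally uniform2:conv}$) forces convergence of $\prox{\gf}{\gamma}{p}(x^k)$ to $\prox{\gf}{\gamma}{p}(\ov{x})$, since a bounded sequence whose only cluster point is a single point converges to it; this upgrade is valid. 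Your route is shorter and entirely sufficient for the statement as given, but it discards the modulus of continuity: the $1/2$-H\"older estimate \eqref{lochol:weak} is precisely what Theorem~\ref{th:dif:weak} and Corollary~\ref{cor:dif:weak} (weak smoothness of HOME, hence the H\"olderian descent lemma underlying all the algorithms in Section~\ref{sec:itsopt}) are built on, so the paper's quantitative detour is not wasted work, whereas your argument would need to be rerun with two base points to serve those later results.
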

  \begin{proof}
   By Fact~\ref{th:level-bound+locally uniform}~$\ref{level-bound+locally uniform:proxnonemp}$, we know that $\prox{\gf}{\gamma}{p}(x)$ is nonempty. Let $x_1, x_2 \in \mb(0; r)$, and assume that $y_j \in \prox{\gf}{\gamma}{p}(x_j)$ for $j = 1, 2$. From Proposition~\ref{pro:weakConvChar}~\ref{pro:weakConvChar:d} and $\frac{1}{\gamma}\Vert x_j-y_j\Vert^{p-2}(x_j-y_j) \in \partial \gf(y_j)$  for $j = 1, 2$, it holds that
\begin{equation*}
\gf(y_i)\geq \gf(y_j)+\frac{1}{\gamma}\Vert x_j-y_j\Vert^{p-2}\langle x_j-y_j , y_i-y_j\rangle-\frac{\rho}{2}\Vert y_i-y_j\Vert^2,
\end{equation*}
for $i, j=1, 2$ with $i\neq j$, 
which implies
\begin{align}\label{eq:a:th:dif:dif}
\langle \Vert x_2-y_2\Vert^{p-2} (x_2-y_2)- \Vert x_1-y_1\Vert^{p-2} (x_1-y_1) , y_2-y_1\rangle&\geq  -\gamma\rho\Vert y_1-y_2\Vert^2.
\end{align}
Since $\gamma<\gamma_{\bs\max}$, we have 
$\tau= \left(\frac{2r^p+p\gamma(\gf(0)-l_0)}{2^{1-p}-lp\gamma}\right)^{\frac{1}{p}}\leq \widehat{\tau}$. Thus, by Proposition~\ref{prop:findtau:lip},
$\Vert y_i\Vert\leq \tau\leq \widehat{\tau}$.
Consequently, $\Vert x_i -y_i\Vert\leq r_1:=r+\widehat{\tau}$ (for $i=1,2$) and $\Vert y_2-y_1\Vert\leq 2\widehat{\tau}$.
Setting $r_2:=\kappa_p r_1^{p-2}$, Fact~\ref{lem:findlowbounknu:lemma} yields
\begin{align}\label{eq:b:th:dif:dif}
\langle \Vert x_2-y_2\Vert^{p-2}(x_2-y_2) - \Vert x_1-y_1\Vert^{p-2}(x_1-y_1), (x_2-y_2)-(x_1-y_1)\rangle&\geq  r_2\Vert (x_2-y_2) - (x_1-y_1)\Vert^{2}.
\end{align}
Adding the inequalities \eqref{eq:a:th:dif:dif} and \eqref{eq:b:th:dif:dif}, we come to
\begin{align}\label{eq:c:th:dif:dif}
 r_2\Vert (x_2-y_2) - (x_1-y_1)\Vert^{2} -\gamma\rho\Vert y_1-y_2\Vert^2
 &\leq \langle \Vert x_2-y_2\Vert^{p-2}(x_2-y_2) - \Vert x_1-y_1\Vert^{p-2}(x_1-y_1), x_2-x_1\rangle\nonumber\\
 &\leq \left(\Vert x_2-y_2\Vert^{p-1}+\Vert x_1-y_1\Vert^{p-1}\right)\Vert x_ 2-x_1\Vert.
\end{align}
In addition, it holds that
\[
\Vert (x_2-y_2) - (x_1-y_1)\Vert^{2}\geq \Vert y_2-y_1\Vert^{2}-2\Vert y_2-y_1\Vert\Vert x_2-x_1\Vert\geq \Vert y_2-y_1\Vert^2-4\widehat{\tau}\Vert x_2-x_1\Vert.
\]
Together with \eqref{eq:c:th:dif:dif}, this implies
\[(r_2-\gamma\rho)\Vert y_2 - y_1\Vert^{2}\leq  (4 r_2\widehat{\tau}+2r_1^{p-1}) \Vert x_2-x_1\Vert.\]
Since $\gamma< \frac{\kappa_p}{\rho}(r+\widehat{\tau})^{p-2}$, we get $r_2-\rho\gamma>0$, leading to
\begin{equation}\label{lochol:weak}
\Vert y_2 - y_1\Vert\leq L_p  \Vert x_1-x_2\Vert^{\frac{1}{2}},
\end{equation}
where $L_p:=\left(\frac{4 r_2\widehat{\tau}+2r_1^{p-1}}{r_2-\rho\gamma}\right)^{\frac{1}{2}}$.
Therefore, \eqref{lochol:weak} confirms the single-valuedness and continuity of $\prox{\gf}{\gamma}{p}(x)$ for any $x\in \mb(0; r)$.  By Fact~\ref{th:diffcharact}, we conclude that
$\fgam{\gf}{p}{\gamma}\in \mathcal{C}^{1}(\mb(0; r))$.
 \end{proof}
 
The weak smoothness property of a function, providing the possibility to obtain a H\"olderian descent lemma (see Fact~\ref{fact:holder:declem}), is an essential result for developing first-order methods \cite{ahookhosh2019accelerated,Nesterov15univ,yashtini2016global} and further demonstrating a decrease in the cost function at each iteration. The following theorem shows that it is possible to obtain weak smoothness for HOME under the same assumptions as Theorem~\ref{th:dif:dif}.
 \begin{theorem}[Weak smoothness  of HOME]\label{th:dif:weak}
Let the assumptions of Theorem~\ref{th:dif:dif} hold. Then,
\begin{enumerate}[label=(\textbf{\alph*}), font=\normalfont\bfseries, leftmargin=0.7cm]
\item \label{th:dif:weak:a} it holds that
\begin{equation}\label{lochol:maineq:weak}
\Vert \prox{\gf}{\gamma}{p}(x_1) -  \prox{\gf}{\gamma}{p}(x_2) \Vert\leq L_p \Vert  x_1-x_2\Vert^{\frac{1}{2}},\qquad
\forall x_1, x_2\in \mb(0; r),
\end{equation}
for
\[
L_p:=\left(\frac{4 \kappa_p (r+\widehat{\tau})^{p-2}\widehat{\tau}+2(r+\widehat{\tau})^{p-1}}{\kappa_p (r+\widehat{\tau})^{p-2}-\rho\gamma}\right)^{\frac{1}{2}};
\]
\item \label{th:dif:weak:b}  it holds that
$\fgam{\gf}{p}{\gamma}\in\mathcal{C}^{1, \frac{p-1}{2}}_{\mathcal{L}_p}(\mb(0; r))$, i.e., 
  \begin{align*}
\left\Vert \nabla \fgam{\gf}{p}{\gamma}(x_2)-\nabla \fgam{\gf}{p}{\gamma}(x_1)\right\Vert \leq  \mathcal{L}_p\Vert x_2-x_1\Vert^{\frac{p-1}{2}},
\qquad \forall x_1, x_2\in \mb(0; r),
  \end{align*}
   where $\mathcal{L}_p:= \frac{2^{2-p}}{\gamma}\left((2r)^{\frac{1}{2}}+L_p\right)^{p-1}$.
\end{enumerate}
\end{theorem}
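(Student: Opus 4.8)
The plan is to handle the two assertions separately, observing that part~\ref{th:dif:weak:a} is essentially already contained in the proof of Theorem~\ref{th:dif:dif}, while part~\ref{th:dif:weak:b} is the genuinely new estimate built on top of it. For \ref{th:dif:weak:a}, I would note that since the hypotheses of Theorem~\ref{th:dif:dif} hold, $\prox{\gf}{\gamma}{p}$ is single-valued on $\mb(0;r)$, so for $x_1,x_2\in\mb(0;r)$ the unique elements $y_j=\prox{\gf}{\gamma}{p}(x_j)$ are exactly the points appearing in that proof. Inequality~\eqref{lochol:weak} then reads $\Vert y_2-y_1\Vert\leq L_p\Vert x_1-x_2\Vert^{1/2}$ with $L_p=\big(\tfrac{4r_2\widehat{\tau}+2r_1^{p-1}}{r_2-\rho\gamma}\big)^{1/2}$. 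Substituting the explicit values $r_1=r+\widehat{\tau}$ and $r_2=\kappa_p r_1^{p-2}=\kappa_p(r+\widehat{\tau})^{p-2}$ recorded in that proof reproduces verbatim the constant $L_p$ in the statement, so \ref{th:dif:weak:a} follows with no further work.

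For \ref{th:dif:weak:b}, the starting point is the gradient formula from Fact~\ref{th:diffcharact}: writing $u_j:=x_j-y_j$, we have $\nabla\fgam{\gf}{p}{\gamma}(x_j)=\tfrac{1}{\gamma}\Vert u_j\Vert^{p-2}u_j$, so that
\[
\left\Vert\nabla\fgam{\gf}{p}{\gamma}(x_2)-\nabla\fgam{\gf}{p}{\gamma}(x_1)\right\Vert
=\frac{1}{\gamma}\left\Vert\Vert u_2\Vert^{p-2}u_2-\Vert u_1\Vert^{p-2}u_1\right\Vert.
\]
The main analytic ingredient is the standard upper Hölder bound for the vector field $u\mapsto\Vert u\Vert^{p-2}u=\nabla(\tfrac{1}{p}\Vert u\Vert^p)$ in the regime $p\in(1,2]$, namely $\Vert\Vert a\Vert^{p-2}a-\Vert b\Vert^{p-2}b\Vert\leq 2^{2-p}\Vert a-b\Vert^{p-1}$. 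Applying this with $a=u_2$, $b=u_1$ reduces the problem to bounding $\Vert u_2-u_1\Vert^{p-1}$.

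The remaining step is purely a triangle-inequality computation. I would estimate $\Vert u_2-u_1\Vert\leq\Vert x_2-x_1\Vert+\Vert y_2-y_1\Vert$, bound the second term by part~\ref{th:dif:weak:a}, and bound the first via $\Vert x_2-x_1\Vert\leq(2r)^{1/2}\Vert x_2-x_1\Vert^{1/2}$ (since $x_1,x_2\in\mb(0;r)$ force $\Vert x_2-x_1\Vert\leq 2r$), yielding $\Vert u_2-u_1\Vert\leq\big((2r)^{1/2}+L_p\big)\Vert x_2-x_1\Vert^{1/2}$. Raising to the power $p-1$ and combining with the Hölder bound and the factor $1/\gamma$ delivers exactly $\mathcal{L}_p=\tfrac{2^{2-p}}{\gamma}\big((2r)^{1/2}+L_p\big)^{p-1}$. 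The only nonroutine point is the inequality $\Vert\Vert a\Vert^{p-2}a-\Vert b\Vert^{p-2}b\Vert\leq 2^{2-p}\Vert a-b\Vert^{p-1}$; this is a classical estimate for the $p$-power map (complementary to the lower bound in Fact~\ref{lem:findlowbounknu:lemma}), and I expect establishing or citing it with the sharp constant $2^{2-p}$ to be the one substantive obstacle, everything else being substitution and the triangle inequality.
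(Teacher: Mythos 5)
Your proposal is correct and follows essentially the same route as the paper: part~\ref{th:dif:weak:a} is read off from inequality~\eqref{lochol:weak} in the proof of Theorem~\ref{th:dif:dif}, and part~\ref{th:dif:weak:b} combines the gradient formula of Fact~\ref{th:diffcharact}, the upper H\"older bound $\Vert\,\Vert a\Vert^{p-2}a-\Vert b\Vert^{p-2}b\,\Vert\leq 2^{2-p}\Vert a-b\Vert^{p-1}$ for $p\in(1,2]$, and the triangle-inequality estimate $\Vert x_2-x_1\Vert\leq(2r)^{1/2}\Vert x_2-x_1\Vert^{1/2}$. The one ingredient you flag as a potential obstacle is handled in the paper by citation to \cite[Theorem~6.3]{Rodomanov2020}, applied to the function $\tfrac{1}{p\gamma}\Vert\cdot\Vert^{p}$, which yields exactly the constant $\tfrac{2^{2-p}}{\gamma}$ you use.
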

\begin{proof}
$\ref{th:dif:weak:a}$ We obtain the result directly from \eqref{lochol:weak}.
\\
$\ref{th:dif:weak:b}$
Using \cite[Theorem~6.3 ]{Rodomanov2020} and \eqref{lochol:maineq:weak}, for $x_1, x_2 \in  \mb(0; r))$, we get
  \begin{align*}
 \left\Vert \nabla \fgam{\gf}{p}{\gamma}(x_2)-\nabla \fgam{\gf}{p}{\gamma}(x_1)\right\Vert &=\left\Vert \nabla\left(\frac{1}{p\gamma}\Vert\cdot\Vert^{p}\right)\left(x_2 - \prox{\gf}{\gamma}{p}(x_2)\right)-\nabla\left(\frac{1}{p\gamma}\Vert\cdot\Vert^{p}\right)\left(x_1 - \prox{\gf}{\gamma}{p}(x_1)\right)\right\Vert\\
 &\leq \frac{2^{2-p}}{\gamma} \left\Vert (x_2-x_1) - (\prox{\gf}{\gamma}{p}(x_2)-\prox{\gf}{\gamma}{p}(x_1))\right\Vert^{p-1},
  \\
 &\leq \frac{2^{2-p}}{\gamma}\left(\Vert x_2-x_1\Vert + \Vert \prox{\gf}{\gamma}{p}(x_2)-\prox{\gf}{\gamma}{p}(x_1)\Vert\right)^{p-1}\\
 &\leq  \frac{2^{2-p}}{\gamma}\left(\Vert x_2-x_1\Vert^{\frac{1}{2}}\Vert x_2-x_1\Vert^{\frac{1}{2}} +L_p  \Vert x_2-x_1\Vert^{\frac{1}{2}}\right)^{p-1}\\
&\leq \frac{2^{2-p}}{\gamma}\left((2r)^{\frac{1}{2}}+L_p \right)^{p-1} \Vert x_2-x_1\Vert^{\frac{p-1}{2}},
  \end{align*}
yielding our desired result.
\end{proof}

\begin{corollary}[Differentiability and weak smoothness of HOME under lower-boundedness]\label{cor:dif:weak}
Let $p \in (1,2]$, and let $\gf: \R^n \to \Rinf$ be a proper, lsc, and $\rho$-weakly convex function that is
 bounded from below by $l_0$.
Let $r>0$ be given and $\gamma\in \left(0, \sigma\right)$, where
$\sigma=\min\left\{\gamma_{\bs\max},\frac{\kappa_p}{\rho}(r+\ov{\tau})^{p-2}\right\}$, $\gamma_{\bs\max}$ is as described in Remark \ref{rem:boundongamma}, and 
$\ov{\tau}=2 r+\left(2^{p-1}p\gamma_{\bs\max}(\gf(0)-\ell_0)\right)^\frac{1}{p}$.
Then, 
\begin{enumerate}[label=(\textbf{\alph*}), font=\normalfont\bfseries, leftmargin=0.7cm]
\item 
$\fgam{\gf}{p}{\gamma}\in \mathcal{C}^{1}(\mb(0; r))$ and $\prox{\gf}{\gamma}{p}$ is single-valued and continuous on $\mb(0; r)$;
\item we have
\begin{equation}\label{lochol:maineq:weak:cor}
\Vert \prox{\gf}{\gamma}{p}(x_1) -  \prox{\gf}{\gamma}{p}(x_2) \Vert\leq L_p\Vert  x_1-x_2\Vert^{\frac{1}{2}},\qquad
\forall x_1, x_2\in \mb(0; r),
\end{equation}
where
\[
L_p:=\left(\frac{4 \kappa_p (r+\ov{\tau})^{p-2}\ov{\tau}+2(r+\ov{\tau})^{p-1}}{\kappa_p (r+\ov{\tau})^{p-2}-\rho\gamma}\right)^{\frac{1}{2}};
\]
\item we have
$\fgam{\gf}{p}{\gamma}\in\mathcal{C}^{1, \frac{p-1}{2}}_{\mathcal{L}_p}(\mb(0; r))$, i.e., 
  \begin{align*}
\left\Vert \nabla \fgam{\gf}{p}{\gamma}(x_2)-\nabla \fgam{\gf}{p}{\gamma}(x_1)\right\Vert \leq  \mathcal{L}_p\Vert x_2-x_1\Vert^{\frac{p-1}{2}},
\qquad
\forall x_1, x_2\in \mb(0; r),
  \end{align*}
   where $\mathcal{L}_p:= \frac{2^{2-p}}{\gamma}\left((2r)^{\frac{1}{2}}+L_p\right)^{p-1}$.
\end{enumerate}
\end{corollary}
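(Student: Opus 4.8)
The plan is to follow verbatim the arguments of Theorems~\ref{th:dif:dif} and~\ref{th:dif:weak}, replacing the norm bound on HOPE coming from Proposition~\ref{prop:findtau:lip} by the sharper bound of Corollary~\ref{cor:findtau:lip}. First I would observe that lower boundedness is stronger than high-order prox-boundedness: since $\gf(y)+\frac{1}{p\gamma}\Vert x-y\Vert^p\geq \ell_0$ for every $x,y$ and every $\gamma>0$, we get $\fgam{\gf}{p}{\gamma}(x)\geq \ell_0>-\infty$, so $\gf$ is high-order prox-bounded with threshold $\gamma^{\gf,p}=+\infty$. Hence Facts~\ref{th:level-bound+locally uniform} and~\ref{th:diffcharact} are available for every $\gamma>0$, and in particular $\prox{\gf}{\gamma}{p}(x)$ is nonempty and compact. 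The role played by $\widehat{\tau}$ in the theorems is now played by $\ov{\tau}$; note that since $\gamma<\gamma_{\bs\max}$ and the radius in Corollary~\ref{cor:findtau:lip} is increasing in $\gamma$, one has $2r+(2^{p-1}p\gamma(\gf(0)-\ell_0))^{1/p}\leq \ov{\tau}$, so $\prox{\gf}{\gamma}{p}(x)\subseteq \mb(0;\ov{\tau})$ for all $x\in \mb(0;r)$, and this holds \emph{without} the retraction constraint $\gamma<4^{1-p}\widehat{\gamma}$ that was needed in Proposition~\ref{prop:findtau:lip}.

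For assertion (a), I would fix $x_1,x_2\in \mb(0;r)$ and $y_j\in\prox{\gf}{\gamma}{p}(x_j)$, and exploit the optimality condition $\frac{1}{\gamma}\Vert x_j-y_j\Vert^{p-2}(x_j-y_j)\in\partial\gf(y_j)$. Feeding this into the weak-convexity inequality of Proposition~\ref{pro:weakConvChar}~\ref{pro:weakConvChar:d} and summing over $i\neq j$ yields the hypomonotone estimate, the analogue of~\eqref{eq:a:th:dif:dif}. The bound $\Vert y_i\Vert\leq\ov{\tau}$ then gives $\Vert x_i-y_i\Vert\leq r+\ov{\tau}$ and $\Vert y_2-y_1\Vert\leq 2\ov{\tau}$, so the basic inequality Fact~\ref{lem:findlowbounknu:lemma} applies with $r_1:=r+\ov{\tau}$ and $r_2:=\kappa_p r_1^{p-2}$, giving the analogue of~\eqref{eq:b:th:dif:dif}. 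Adding the two inequalities and absorbing the cross term exactly as in Theorem~\ref{th:dif:dif} produces $(r_2-\rho\gamma)\Vert y_2-y_1\Vert^2\leq(4r_2\ov{\tau}+2r_1^{p-1})\Vert x_2-x_1\Vert$; the hypothesis $\gamma<\frac{\kappa_p}{\rho}(r+\ov{\tau})^{p-2}$ guarantees $r_2-\rho\gamma>0$, whence the H\"older-$\tfrac12$ estimate~\eqref{lochol:maineq:weak:cor} with the stated $L_p$. This delivers single-valuedness and continuity of $\prox{\gf}{\gamma}{p}$ on $\mb(0;r)$, and Fact~\ref{th:diffcharact} then yields $\fgam{\gf}{p}{\gamma}\in\mathcal{C}^1(\mb(0;r))$, establishing (a) and (b) simultaneously.

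For assertion (c), I would invoke the gradient formula $\nabla\fgam{\gf}{p}{\gamma}(x)=\nabla(\tfrac{1}{p\gamma}\Vert\cdot\Vert^p)(x-\prox{\gf}{\gamma}{p}(x))$ from Fact~\ref{th:diffcharact} together with the H\"older continuity of $\nabla(\tfrac{1}{p\gamma}\Vert\cdot\Vert^p)$ recorded in \cite[Theorem~6.3]{Rodomanov2020}, then chain these with the just-proved bound~\eqref{lochol:maineq:weak:cor} and the triangle inequality precisely as in the proof of Theorem~\ref{th:dif:weak}~\ref{th:dif:weak:b}, bounding $\Vert x_2-x_1\Vert^{1/2}\leq(2r)^{1/2}$ inside the ball. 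Since this is a corollary of the two theorems, I do not expect a genuine obstacle; the only point requiring care is the bookkeeping that replaces $\widehat{\tau}$ by $\ov{\tau}$ throughout and the verification that, thanks to lower boundedness, Corollary~\ref{cor:findtau:lip} supplies the HOPE bound for every admissible $\gamma$ without the auxiliary threshold $\widehat{\gamma}$ and its associated retraction, which is exactly what collapses the radius from the two-step $\widehat{\tau}$ to the closed form $\ov{\tau}$.
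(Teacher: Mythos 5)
Your proposal is correct and follows exactly the route the paper takes: the paper's proof of this corollary is simply the one-line citation of Corollary~\ref{cor:findtau:lip} together with Theorems~\ref{th:dif:dif} and~\ref{th:dif:weak}, and your write-up is precisely the faithful unpacking of that citation (lower boundedness gives $\gamma^{\gf,p}=+\infty$, Corollary~\ref{cor:findtau:lip} supplies the HOPE bound $\ov{\tau}$ for every $\gamma<\gamma_{\bs\max}$ without the retraction constraint, and the two theorems' arguments go through verbatim with $\ov{\tau}$ replacing $\widehat{\tau}$). No gaps.
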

\begin{proof}
It follows from Corollary~\ref{cor:findtau:lip} and Theorems~\ref{th:dif:dif}~and~\ref{th:dif:weak}.
\end{proof}

\section{ItsDEAL: Inexact two-level smoothing descent algorithms} \label{sec:itsopt}
This section is devoted to a generic inexact descent method using HOME to solve weakly convex optimization problems of the form \eqref{eq:mainproblemconv2}, which is aligned to the inexact two-level smoothing optimization framework (ItsOPT); cf. \cite{Kabgani24itsopt} that we will describe shortly. ItsOPT relies on a smoothing approximation of the original cost function and involves two levels: at the upper level, a first-order or a generalized second-order method is developed; at the lower level, the underlying proximal problem is solved inexactly, providing an inexact oracle for the upper level. We note that the auxiliary proximal problem can be solved approximately using some efficient methods such as subgradient methods \cite{Davis2018,Rahimi2024}, Bregman gradient \cite{Ahookhosh24}, or BELLA \cite{Ahookhosh21}. Next, we present ItsOPT in an algorithmic framework for the reader's benefit.  

\begin{algorithm}[H]
\caption{ItsOPT (Inexact two-level smoothing OPTimization algorithm)}\label{alg:itsopt}
\begin{algorithmic}[1]
\State\label{alg:itsopt:env}\textbf{Smoothing technique} Choose an envelope for generating a smooth variant of $\varphi$;
\State \textbf{Initialization} Start with $x^0\in \R^n$, $\gamma>0$, and set $k=0$;
\While{stopping criteria do not hold}
\State Find an approximated solution for the proximal auxiliary problem; \Comment{lower-level}
\State\label{alg:itsopt:orac}Generate the inexact first- or second-order oracle $\mathcal{O}(\varphi_\gamma,x^k)$; \Comment{lower-level}
\State\label{alg:itsopt:dir}Choose a direction $d^k\in \R^n$ using the inexact oracle $\mathcal{O}(\varphi_\gamma,x^k)$; \Comment{upper-level}
\State Find the step-size $\alpha_k>0$ and update $x^{k+1}:=x^k+\alpha_k d^k$;\Comment{upper-level}
\State $k=k+1$;
\EndWhile
\end{algorithmic}
\end{algorithm}

In this section, we employ HOME as a smooth approximation of the original cost and generate first-order descent methods (i.e., a H\"olderian inexact gradient descent algorithm and its adaptive variant and inexact descent algorithm with inexact Armijo line search) at the upper level, where the underlying proximal auxiliary problem is solved approximately in the lower level.

\subsection{Inexact oracle for HOME} 
\label{subsec:inexactorac}
As discussed, finding an element of HOPE is a challenging task. Since these elements are crucial for computing the gradient of HOME, it is necessary to establish a scheme to compute an approximation of the proximal operator. 
To address this, we impose conditions on $p$, the cost function $\gf$, and the process of computing a proximal approximation, as stated in the following assumption. In the remainder of our study, we assume the following assumptions, even if they are not explicitly mentioned.
\begin{assumption}\label{assum:approx} Let us consider problem \eqref{eq:mainproblemconv2}. We assume that
\begin{enumerate}[label=(\textbf{\alph*}), font=\normalfont\bfseries, leftmargin=0.7cm]
\item \label{assum:approx:coer weak con} $p\in (1, 2]$ and $\gf: \R^n\to \Rinf$ is a proper, lsc, and coercive function
 that is $\rho$-weakly convex and has at least one minimizer $x^*\in \R^n$;

\item \label{assum:approx:eps}  $\{\varepsilon_k\}_{k\in \Nz}$ and $\{\delta_k\}_{k\in \Nz}$ are two sequences of non-increasing positive scalars such that $\ov{\varepsilon}=\sum_{k=0}^{\infty} \varepsilon_k<\infty$ and $\delta_k\downarrow 0$;    

\item \label{assum:approx:aproxep} for a given $r>0$, $x^k\in \mb(0, r)$, appropriate $\gamma>0$ satisfying in the assumptions of Corollary~\ref{cor:dif:weak}, $\varepsilon_k> 0$, and $\delta_k> 0$, we can find
a \textit{prox approximation} $\prox{\gf}{\gamma}{p, \varepsilon_k}(x^k)$ such that
\begin{equation}\label{eq:ep-approx:dist}    
\Vert \prox{\gf}{\gamma}{p, \varepsilon_k}(x^k) - \prox{\gf}{\gamma}{p}(x^k)\Vert< \delta_k,
 \end{equation}    
 \begin{equation}\label{eq:ep-approx:oper}
\delta_k\leq \mu \Vert x^k -\prox{\gf}{\gamma}{p, \varepsilon_k}(x^k)\Vert,
\end{equation}
for $\mu>0$, and
\begin{equation}\label{eq:ep-approx:fun}
\gf(\prox{\gf}{\gamma}{p, \varepsilon_k}(x^k))+\frac{1}{p\gamma}\Vert x^k-\prox{\gf}{\gamma}{p, \varepsilon_k}(x^k)\Vert^p< \fgam{\gf}{p}{\gamma}(x^k)+\varepsilon_k.
\end{equation}
\end{enumerate}
\end{assumption}

 The inexact function value of  $\fgam{\gf}{p}{\gamma}(x^k)$ is define as
\begin{equation}\label{eq:approxFuncValueHOME}
\fgam{\gf}{p,\varepsilon_k}{\gamma}(x^k):=\gf(\prox{\gf}{\gamma}{p, \varepsilon_k}(x^k))+\frac{1}{p\gamma}\Vert x^k-\prox{\gf}{\gamma}{p, \varepsilon_k}(x^k)\Vert^p.
\end{equation}
 From Fact~\ref{th:diffcharact}, we know that if $\fgam{\gf}{p}{\gamma}$ is differentiable, then 
 $\nabla\fgam{\gf}{p}{\gamma}(x^k)=\frac{1}{\gamma}\Vert x^k-\prox{\gf}{\gamma}{p}(x^k)\Vert^{p-2}(x^k-\prox{\gf}{\gamma}{p}(x^k))$. Motivated by this fact,  if $\fgam{\gf}{p}{\gamma}$ is differentiable,  we define \textit{inexact gradient} $\nabla\fgam{\gf}{p,\varepsilon_k}{\gamma}(x^k)$ of $\nabla\fgam{\gf}{p}{\gamma}(x^k)$ as
\begin{equation}\label{eq:ep-gard-approx1}
\nabla\fgam{\gf}{p,\varepsilon_k}{\gamma}(x^k):=\frac{1}{\gamma}\Vert x^k-\prox{\gf}{\gamma}{p, \varepsilon_k}(x^k)\Vert^{p-2}(x^k-\prox{\gf}{\gamma}{p, \varepsilon_k}(x^k)).
\end{equation}
Let us note that the above-approximated computation of the proximity operator leads to an inexact oracle $\mathcal{O}(\varphi_\gamma,x^k)$ for HOME, where the inexact function value and the inexact gradients are given by \eqref{eq:approxFuncValueHOME} and \eqref{eq:ep-gard-approx1}, respectively. This inexact oracle of HOME will be used in the coming subsections to develop descent methods for minimizing HOME.

\begin{remark}\label{rem:relapprox} 
Let us consider Assumption~\ref{assum:approx}.
\begin{enumerate}[label=(\textbf{\alph*}), font=\normalfont\bfseries, leftmargin=0.7cm]
\item \label{rem:relapprox:a} Both \eqref{eq:ep-approx:dist} and \eqref{eq:ep-approx:fun} imply that if $\delta_k =0 $ or $\varepsilon_k=0$, 
 then $\prox{\gf}{\gamma}{p, \varepsilon_k}(x^k)=\prox{\gf}{\gamma}{p}(x^k)$. 

\item 
In \cite[Assumption~36]{Kabgani24itsopt}, a similar assumption to \eqref{eq:ep-approx:dist} is given as
\[
\dist\left(\prox{\gf}{\gamma}{p, \varepsilon_k}(x^k), \prox{\gf}{\gamma}{p}(x^k)\right)< \delta_k.
\]
From Corollary~\ref{cor:dif:weak}, $\prox{\gf}{\gamma}{p}$ is single-valued, which supports the validity of \eqref{eq:ep-approx:dist}.

\item \label{rem:relapprox:b} From \cite[Theorem 6.3]{Rodomanov2020}, and analogous to the proof of Theorem~\ref{th:dif:weak}, we can show
\begin{equation}\label{eq:rem:relapprox:b1}
  \left\Vert \nabla\fgam{\gf}{p,\varepsilon_k}{\gamma}(x^k)-\nabla\fgam{\gf}{p}{\gamma}(x^k)\right\Vert
 \leq \frac{2^{2-p}}{\gamma} \left\Vert \prox{\gf}{\gamma}{p, \varepsilon_k}(x^k)-\prox{\gf}{\gamma}{p}(x^k)\right\Vert^{p-1}.
\end{equation}
Combining \eqref{eq:ep-approx:dist} and \eqref{eq:rem:relapprox:b1} leads to
\begin{align}\label{eq:relapprox:relgardgrad}
\left\Vert \nabla\fgam{\gf}{p,\varepsilon_k}{\gamma}(x^k)-\nabla\fgam{\gf}{p}{\gamma}(x^k)\right\Vert \leq \frac{2^{2-p}}{\gamma}\delta_k^{p-1}.
\end{align}
Moreover, it follows from \eqref{eq:ep-approx:dist}, \eqref{eq:ep-approx:oper}, and \eqref{eq:rem:relapprox:b1} that
\begin{align*}
\left\Vert \nabla\fgam{\gf}{p,\varepsilon_k}{\gamma}(x^k)-\nabla\fgam{\gf}{p}{\gamma}(x^k)\right\Vert \leq \frac{2^{2-p}\mu^{p-1}}{\gamma} \Vert x^k -\prox{\gf}{\gamma}{p, \varepsilon_k}(x^k)\Vert^{p-1}
 \leq 2^{2-p}\mu^{p-1} \Vert \nabla\fgam{\gf}{p,\varepsilon_k}{\gamma}(x^k)\Vert,
  \end{align*}
i.e., for $c := 2^{2-p}\mu^{p-1} > 0$, 
  \begin{equation}\label{eq:relapprox:b}
     \left\Vert \nabla\fgam{\gf}{p,\varepsilon_k}{\gamma}(x^k)-\nabla\fgam{\gf}{p}{\gamma}(x^k)\right\Vert\leq c \Vert \nabla\fgam{\gf}{p,\varepsilon_k}{\gamma}(x^k)\Vert.
  \end{equation}
Notice that this inequality has been extensively studied in the context of inexact gradient methods; see \cite{byrd2012,Carter91,Khanh24,lan2016} and references therein.

 \item  \label{rem:relapprox:c}  Using Fact~\ref{fact:holder:declem} and Corollary~\ref{cor:dif:weak}, we get
 \begin{equation}\label{eq2:lem:ep-prox:p(1,2)}
\fgam{\gf}{p}{\gamma}(y) \leq \fgam{\gf}{p}{\gamma}(x) + \langle \nabla \fgam{\gf}{p}{\gamma}(x), y - x \rangle +\frac{2\mathcal{L}_p}{p+1}\Vert x - y \Vert ^{\frac{p+1}{2}}, \qquad
\forall x, y \in \mb(0; r),
\end{equation}
where $\mathcal{L}_p$ is defined in Corollary~\ref{cor:dif:weak}. Consequently, from \eqref{eq:ep-approx:fun} and \eqref{eq2:lem:ep-prox:p(1,2)}, for any $x, y \in \mb(0; r)$ and $\varepsilon_y, \varepsilon_x > 0$, we have
\begin{align*}
\fgam{\gf}{p,\varepsilon_y}{\gamma}(y)-\varepsilon_y\leq \fgam{\gf}{p,\varepsilon_x}{\gamma}(x)+\langle \nabla\fgam{\gf}{p}{\gamma}(x), y-x\rangle+\frac{2\mathcal{L}_p}{p+1}\Vert x - y \Vert ^{\frac{p+1}{2}}.
\end{align*}
\end{enumerate}
\end{remark}

If the function $\fgam{\gf}{p}{\gamma}$ is differentiable in a neighborhood of a given point $x^k \in \R^n$ with $ \nabla\fgam{\gf}{p}{\gamma}(x^k)\neq 0$, it is possible to identify a descent direction $d^k$, i.e.,
$\langle \nabla\fgam{\gf}{p}{\gamma}(x^k), d^k\rangle<0$.
However, as previously noted, computing $\nabla\fgam{\gf}{p}{\gamma}(x^k)$ may not be feasible. The subsequent result demonstrates that if certain relationships between a direction $d^k$ and the inexact gradient $\nabla\fgam{\gf}{p,\varepsilon_k}{\gamma}(x^k)$ are satisfied, then $d^k$ qualifies as a descent direction.

\begin{lemma}[Descent direction for $\fgam{\gf}{p}{\gamma}$]\label{lem:disdir}
Let Assumption~\ref{assum:approx} hold, let $\fgam{\gf}{p}{\gamma} \in \mathcal{C}^{1}(U)$ for some open set $U \subseteq \R^n$, and let $x^k \in U$ with $\nabla\fgam{\gf}{p}{\gamma}(x^k)\neq 0$ and  $d^k \in \R^n$. If any of the following conditions holds:
\begin{enumerate}[label=(\textbf{\alph*}), font=\normalfont\bfseries, leftmargin=0.7cm]
\item \label{lem:disdir:a} there exists some $c>0$ such that 
\begin{equation}\label{eq:disdirapp}
\langle \nabla\fgam{\gf}{p,\varepsilon_k}{\gamma}(x^k), d^k \rangle \leq -c \Vert \nabla\fgam{\gf}{p,\varepsilon_k}{\gamma}(x^k)\Vert  \Vert d^k\Vert, \quad \mu<\left(\frac{c}{2^{2-p}}\right)^{\frac{1}{p-1}},
\end{equation}
\item \label{lem:disdir:b}  there exists some $c_1, c_2>0$ and $\vartheta>0$, such that
\begin{equation}\label{eq:dir:th:conv:alg:first:tem}
\langle\nabla\fgam{\gf}{p,\varepsilon_k}{\gamma}(x^k), d^k\rangle\leq -c_1  \Vert \nabla\fgam{\gf}{p,\varepsilon_k}{\gamma}(x^k)\Vert^{1+\vartheta},
\qquad \Vert d^k\Vert\leq c_2 \Vert \nabla\fgam{\gf}{p,\varepsilon_k}{\gamma}(x^k)\Vert^\vartheta, \quad \mu<\left(\frac{c_1}{c_2 2^{2-p}}\right)^{\frac{1}{p-1}},
\end{equation} 
\end{enumerate}
then $d^k$ is a descent direction.
\end{lemma}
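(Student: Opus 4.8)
The plan is to transfer information from the computable inexact gradient to the unknown true gradient by means of the relative error bound \eqref{eq:relapprox:b} already established in Remark~\ref{rem:relapprox}~\ref{rem:relapprox:b}, namely
\[
\left\Vert \nabla\fgam{\gf}{p,\varepsilon_k}{\gamma}(x^k)-\nabla\fgam{\gf}{p}{\gamma}(x^k)\right\Vert\le 2^{2-p}\mu^{p-1}\left\Vert \nabla\fgam{\gf}{p,\varepsilon_k}{\gamma}(x^k)\right\Vert .
\]
Abbreviating $g:=\nabla\fgam{\gf}{p}{\gamma}(x^k)$ and $\widetilde g:=\nabla\fgam{\gf}{p,\varepsilon_k}{\gamma}(x^k)$, I would first record the non-degeneracy $\widetilde g\neq 0$: were $\widetilde g=0$, the displayed bound would force $\Vert g\Vert\le 0$, contradicting $g\neq 0$. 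The engine of both parts is then the splitting
\[
\langle g, d^k\rangle=\langle \widetilde g, d^k\rangle+\langle g-\widetilde g, d^k\rangle ,
\]
in which the first term is governed by the hypothesis imposed on $d^k$, while the second is an error term estimated by Cauchy--Schwarz together with the bound above.

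For part~\ref{lem:disdir:a}, I would bound $\langle g-\widetilde g, d^k\rangle\le \Vert g-\widetilde g\Vert\,\Vert d^k\Vert\le 2^{2-p}\mu^{p-1}\Vert\widetilde g\Vert\,\Vert d^k\Vert$ and combine it with the assumed angle condition $\langle\widetilde g,d^k\rangle\le -c\Vert\widetilde g\Vert\,\Vert d^k\Vert$ to obtain
\[
\langle g,d^k\rangle\le\bigl(2^{2-p}\mu^{p-1}-c\bigr)\Vert\widetilde g\Vert\,\Vert d^k\Vert .
\]
The threshold $\mu<\bigl(c/2^{2-p}\bigr)^{1/(p-1)}$ is exactly $2^{2-p}\mu^{p-1}<c$ (raising to the power $p-1>0$ preserves order), so the bracket is negative, and with $\Vert\widetilde g\Vert>0$, $d^k\neq 0$ we conclude $\langle g,d^k\rangle<0$.

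For part~\ref{lem:disdir:b}, I would run the identical splitting, using the main-term hypothesis $\langle\widetilde g,d^k\rangle\le -c_1\Vert\widetilde g\Vert^{1+\vartheta}$ and inserting the norm bound $\Vert d^k\Vert\le c_2\Vert\widetilde g\Vert^{\vartheta}$ into the error estimate, which gives $\langle g-\widetilde g,d^k\rangle\le 2^{2-p}\mu^{p-1}c_2\Vert\widetilde g\Vert^{1+\vartheta}$. Adding the two contributions yields
\[
\langle g,d^k\rangle\le\bigl(2^{2-p}\mu^{p-1}c_2-c_1\bigr)\Vert\widetilde g\Vert^{1+\vartheta},
\]
and the hypothesis $\mu<\bigl(c_1/(c_2\,2^{2-p})\bigr)^{1/(p-1)}$ is precisely $2^{2-p}\mu^{p-1}c_2<c_1$, making the coefficient negative and hence $\langle g,d^k\rangle<0$.

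I do not anticipate a genuine obstacle here: the only substantive ingredient is the relative-error inequality \eqref{eq:relapprox:b}, and everything else reduces to the triangle inequality and Cauchy--Schwarz. The single point demanding care is the calibration of the two thresholds on $\mu$, which must be verified to be \emph{exactly} the sign conditions $2^{2-p}\mu^{p-1}<c$ and $2^{2-p}\mu^{p-1}c_2<c_1$ that close the respective estimates; the minor auxiliary facts $\widetilde g\neq 0$ (handled above) and the tacit $d^k\neq 0$ are what guarantee the inequalities are strict.
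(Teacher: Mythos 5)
Your proposal is correct and follows essentially the same route as the paper: both split $\langle \nabla\fgam{\gf}{p}{\gamma}(x^k), d^k\rangle$ into the inexact-gradient term plus an error term, bound the error via Cauchy--Schwarz and \eqref{eq:relapprox:b}, and close each case with the sign conditions $2^{2-p}\mu^{p-1}<c$ and $2^{2-p}\mu^{p-1}c_2<c_1$. Your explicit verification that $\nabla\fgam{\gf}{p,\varepsilon_k}{\gamma}(x^k)\neq 0$ and the remark that $d^k\neq 0$ is needed for strictness are minor refinements the paper leaves tacit, but they do not change the argument.
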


\begin{proof} By expanding the inner product, we have
  \begin{align*}
 \langle \nabla\fgam{\gf}{p}{\gamma}(x^k), d^k\rangle& = \langle \nabla\fgam{\gf}{p}{\gamma}(x^k)-\nabla\fgam{\gf}{p,\varepsilon_k}{\gamma}(x^k)
+\nabla\fgam{\gf}{p,\varepsilon_k}{\gamma}(x^k), d^k\rangle\\
 &\leq \Vert  \nabla\fgam{\gf}{p}{\gamma}(x^k) - \nabla\fgam{\gf}{p,\varepsilon_k}{\gamma}(x^k)\Vert \Vert d^k\Vert
 +\langle\nabla\fgam{\gf}{p,\varepsilon_k}{\gamma}(x^k), d^k\rangle.
\end{align*}
From inequality \eqref{eq:relapprox:b}, it follows that
\begin{equation}\label{eq:lem:disdir}
  \langle \nabla\fgam{\gf}{p}{\gamma}(x^k), d^k\rangle  \leq 2^{2-p}\mu^{p-1} \Vert \nabla\fgam{\gf}{p,\varepsilon_k}{\gamma}(x)\Vert  \Vert d^k\Vert
 +\langle\nabla\fgam{\gf}{p,\varepsilon_k}{\gamma}(x^k), d^k\rangle.
\end{equation}
If $d^k$ satisfies \eqref{eq:disdirapp}, then from \eqref{eq:lem:disdir} and  $2^{2-p}\mu^{p-1}<c$, then
\[
\langle \nabla\fgam{\gf}{p}{\gamma}(x^k), d^k\rangle< c\Vert \nabla\fgam{\gf}{p,\varepsilon_k}{\gamma}(x)\Vert  \Vert d^k\Vert
 +\langle\nabla\fgam{\gf}{p,\varepsilon_k}{\gamma}(x^k), d^k\rangle\leq 0.
 \]
Moreover, if $d^k$ satisfies \eqref{eq:dir:th:conv:alg:first:tem}, then from \eqref{eq:lem:disdir} and  $2^{2-p}\mu^{p-1}c_2<c_1$, then
\begin{align*}
\langle \nabla\fgam{\gf}{p}{\gamma}(x^k), d^k\rangle\leq 2^{2-p}\mu^{p-1}c_2 \Vert \nabla\fgam{\gf}{p,\varepsilon_k}{\gamma}(x)\Vert^{1+\vartheta}
-c_1  \Vert \nabla\fgam{\gf}{p,\varepsilon_k}{\gamma}(x^k)\Vert^{1+\vartheta}<0,
\end{align*}
adjusting our desired result.
\end{proof}
\begin{remark}\label{rem:examofdisdir}
Let the assumptions of Lemma~\ref{lem:disdir} hold.
\begin{enumerate}[label=(\textbf{\alph*}), font=\normalfont\bfseries, leftmargin=0.7cm]
\item \label{rem:examofdisdir:a} In Assertion~\ref{lem:disdir:a} of Lemma~\ref{lem:disdir}, for $c\leq 1$ and for a given $x^k\in U$ with $\nabla\fgam{\gf}{p}{\gamma}(x^k)\neq 0$, if the direction $d^k$ is of the form
$d^k= -D_k \nabla\fgam{\gf}{p,\varepsilon_k}{\gamma}(x^k)$ with $D_k>0$, then $d^k$ satisfies \eqref{eq:disdirapp}. 
Thus, if $\mu<\left(\frac{c}{2^{2-p}}\right)^{\frac{1}{p-1}}$, then $\langle \nabla\fgam{\gf}{p}{\gamma}(x^k), d^k\rangle<0$.
The {\it steepest descent directions}, where $D_k=1$, {\it scaled gradient descent directions}, where $D_k\in [D_{\bs \min}, D_{\bs \max}]$, with
$D_{\bs \min}, D_{\bs \max}>0$, and the directions obtained in {\it Barzilai-Borwein gradient methods} \cite{Barzilai1988Two} are instances of directions that satisfy \eqref{eq:disdirapp}.
\item The conditions stated in \eqref{eq:dir:th:conv:alg:first:tem} with $\vartheta=1$, are well-known also as {\it sufficient descent conditions} and the direction $d^k$ satisfies it is said to be the {\it sufficient descent direction}. Evidently the directions mentioned in Assertion~\ref{rem:examofdisdir:a} are sufficient descent directions.
Additionally, under the presence of second-order information of HOME and conditions like {\it bounded eigenvalue condition}, it is possible to obtain sufficient descent directions for Newton and quasi-Newton’s methods \cite{Kabgani2024Newton}.
\end{enumerate}
\end{remark}

In Corollary \ref{cor:dif:weak}, we proved that for any given radius $r > 0$, by selecting an appropriate $\gamma>0$, we can ensure that $\fgam{\gf}{p}{\gamma} \in \mathcal{C}^{1}(\mb(0; r))$. 
When an iterative method starts from an initial solution $x^0 \in \mb(0; r)$, the next solution $x^1$, calculated using $x^0$ and a direction $d^0$, should remain within $\mb(0; r)$. This ensures that $\fgam{\gf}{p}{\gamma}$ is also differentiable at $x^1$.
For algorithms where the resulting sequence induces monotonicity in the corresponding sequence of function values, assuming that
$\mathcal{L}(\fgam{\gf}{p}{\gamma}, \fgam{\gf}{p}{\gamma}(x^0)) \subseteq \mb(0; r)$, we can guarantee that $\{x^k\}_{k \in \Nz} \subseteq \mb(0; r)$, which implies that $\fgam{\gf}{p}{\gamma}$ is differentiable at each $x^k$ for $k \in \Nz$. 
However, since our iterative methods rely on an inexact oracle, achieving monotonicity in the function values is generally unattainable (see the proof of Theorem~\ref{th:welldfalg}). Therefore, we need to select an appropriate fixed $\gamma$, along with carefully chosen step-sizes $\alpha_k$ and directions $d^k$ at each iteration $k$, to ensure that $\fgam{\gf}{p}{\gamma}$ remains differentiable in the new solution $x^{k+1}$. 
Remark \ref{rem:levelsetch} discusses the feasibility of determining the radius $r > 0$ and the value of $\gamma$ based on the initial solution $x^0$. In Assumption \ref{assum:rgamma2}, we outline the primary conditions for initializing the proposed algorithms. Furthermore, Algorithms~\ref{alg:ingrad} and~\ref{alg:ingrad2}, along with Theorem~\ref{th:welldfalg}, detail the process of selecting step-sizes and search directions to ensure that each new iteration remains within the differentiability region of $\fgam{\gf}{p}{\gamma}$.

\begin{remark}\label{rem:levelsetch}
Let $\gf: \R^n\to \Rinf$ be a proper, lsc, and coercive function, and let $x^0\in \R^n$ be given. We define
\[
\mathcal{S}_1:=\left\{x \mid \gf(x)\leq \gf(x^0)+2\ov{\varepsilon}\right\},
\]
where $\ov{\varepsilon}$ is defined in Assumption~\ref{assum:approx}~$\ref{assum:approx:eps}$.
Since $\gf$ is lsc and coercive, $\mathcal{S}_1$ is a compact set. Hence, we can choose a scalar $R>0$ such that $R>\max\{\Vert x\Vert \mid x\in \mathcal{S}_1\}$. 
From Fact~\ref{pro:rel:sublevel}, there exists some $\ov{\gamma}>0$ such that, for each $\gamma\in (0, \ov{\gamma}]$,
\begin{align*}
\mathcal{S}_2:&=\left\{x \mid \fgam{\gf}{p}{\gamma}(x)\leq \fgam{\gf}{p}{\gamma}(x^0)+2\bar{\varepsilon}\right\}
\\& \subseteq 
\left\{x \mid \fgam{\gf}{p}{\gamma}(x)\leq \gf(x^0)+2\bar{\varepsilon}\right\}
\subseteq \mb(0; R).
\end{align*}
Moreover , from Corollary~\ref{cor:findtau:lip}, for each $x\in \mb(0, R)$ and  $\gamma_{\bs\max}>\gamma>0$, 
where the motivation for choosing $\gamma_{\bs\max}$ is explained in Remark \ref{rem:boundongamma},
we have
$\prox{\gf}{\gamma}{p}(x)\subseteq \mb(0, \ov{\tau})$, where
$\ov{\tau}=2 R+\left(2^{p-1}p\gamma_{\bs\max}(\gf(0)-\ell_0)\right)^\frac{1}{p}$ and $\ell_0$ is a lower bound of $\gf(\cdot)$ on $\R^n$, which is well-defined due to the coercivity and lsc property of $\gf$.
Considering any $r\geq R$, from this discussion and Corollary~\ref{cor:dif:weak},
by choosing $\gamma\in \left(0, \ov{\sigma}\right)$, where
$\ov{\sigma}=\min\left\{\gamma_{\bs\max},\ov{\gamma},\frac{\kappa_p}{\rho}(r+\ov{\tau})^{p-2}\right\}$,
in addition to $\mathcal{S}_2\subseteq \mb(0; R)\subseteq \mb(0; r)$, we obtain that
$\fgam{\gf}{p}{\gamma}\in\mathcal{C}^{1, \frac{p-1}{2}}_{\mathcal{L}_p}(\mb(0; r))$.
\end{remark}

Regarding the discussion in Remark~\ref{rem:levelsetch}, in Subsection \ref{subsec:HiGDA}, we introduce the first approach for determining the radius $r$ and $\gamma$ based on the initial solution $x^0$, as motivated by Algorithms~\ref{alg:ingrad}~and~\ref{alg:ingrad2} in Assumption~\ref{assum:rgamma2}. In Subsection \ref{subsec:armijo}, inspired by Algorithm~\ref{alg:first}, we propose an alternative approach for selecting $r$ and its corresponding $\gamma$, as described in Assumption~\ref{assum:rgamma}.

Building on the framework given in Algorithm~\ref{alg:itsopt}, Assumption~\ref{assum:approx}, and Remark~\ref{rem:levelsetch}, we next introduce a generic inexact two-level smoothing descent algorithm, where some specific version of this algorithm will be studied in the coming sections.

\begin{algorithm}[H]
\caption{ItsDEAL (Inexact two-level Smoothing DEscent ALgorithm)}\label{alg:fram:nonmono}
\begin{algorithmic}[1]
\State\label{alg:fram:nonmono:init} \textbf{Initialization} Considering the notations in Remark \ref{rem:levelsetch}, for a given
 $x^0\in \R^n$ and appropriate $r\geq R$, select $\gamma\in \left(0, \ov{\sigma}\right)$ such that $\mathcal{S}_2\subseteq \mb(0; R)$ and
$\fgam{\gf}{p}{\gamma}\in\mathcal{C}^{1, \frac{p-1}{2}}_{\mathcal{L}_p}(\mb(0; r))$. Pick $\vartheta, c_1, c_2>0$ and $\mu<\left(\frac{c_1}{c_2 2^{2-p}}\right)^{\frac{1}{p-1}}$;
\While{stopping criteria do not hold}
    \State Find an approximated solution for the proximal auxiliary problem; \Comment{lower-level}
    \State\label{alg:itsopt:orac} Generate the inexact first- or second-order oracle $\mathcal{O}(\varphi_\gamma,x^k)$; \Comment{lower-level}
    \State Choose directions $d^k$ using $\mathcal{O}(\varphi_\gamma,x^k)$ such that \eqref{eq:dir:th:conv:alg:first:tem} holds.
    \State Select the step-size $\alpha_k>0$ such that it is bounded from below by $\varrho>0$ and bounded from above by $\varsigma>0$;
    \State\label{alg:fram:nonmono:newpoint} Generate $x^{k+1}=x^k+\alpha_k d^k$ such that $x^{k+1}\in \mathcal{S}_2$ and for some fixed $\widehat{\varrho}>0$,
    \begin{align}\label{eq:genstract:ineq}
    \fgam{\gf}{p,\varepsilon_{k+1}}{\gamma}(x^{k+1})\leq \fgam{\gf}{p,\varepsilon_k}{\gamma}(x^k)
        -\widehat{\varrho}\Vert\nabla\fgam{\gf}{p,\varepsilon_k}{\gamma}(x^k)\Vert^{1+\vartheta}
    +\varepsilon_{k+1}.
    \end{align}
    \State Set $k=k+1$;
\EndWhile
\end{algorithmic}
\end{algorithm}

\subsection{H\"olderian inexact gradient descent methods}
\label{subsec:HiGDA}

Here, we introduce two types of H\"olderian inexact gradient descent methods. These algorithms are motivated by the weak smooth property of HOME on open balls. In Algorithm~\ref{alg:ingrad}, we use a constant step-size. In this method, it is necessary to determine the value of $\mathcal{L}_p$ introduced in Corollary~\ref{cor:dif:weak}. 
On the one hand, determining $\mathcal{L}_p$ is challenging; on the other hand, we will demonstrate that incorporating a line search for determining the step-size can potentially enhance the performance of this method. Thus, we propose a parameter-free H\"olderian inexact gradient descent method in Algorithm~\ref{alg:ingrad2}.

\begin{algorithm}
\caption{HiGDA (H\"olderian inexact gradient descent algorithm)}\label{alg:ingrad}
\begin{algorithmic}[1]
\State \textbf{Initialization} Start with $x^0\in \R^n$ and set $k=0$. Choose $\gamma>0$ and $r>0$ such that
Assumption~\ref{assum:rgamma2} holds. Pick $\mu, c_1, c_2>0$.
\State Choose $\alpha\leq\min\left\{ \frac{\gamma^{\frac{2}{p-1}}}{c_2}, \left(\frac{(p+1)\left(c_1-2^{2-p}\mu^{p-1}c_2\right)}{2c_2^{\frac{p+1}{2}}\mathcal{L}_p}\right)^{\frac{2}{p-1}}\right\}$;
\While{stopping criteria do not hold}
\State Find an approximated solution for the proximal auxiliary problem; \Comment{lower-level}
\State\label{alg:itsopt:orac} Generate the inexact first- or second-order oracle $\mathcal{O}(\varphi_\gamma,x^k)$; \Comment{lower-level}
\State Choose $d^k\in \R^n$ such that satisfies \eqref{eq:th:welldfalg:dir};
\State\label{alg:ingrad:xk+1} Set $x^{k+1}=x^k+\alpha d^k$, $k=k+1$;
\EndWhile
\end{algorithmic}
\end{algorithm}

We consider the following assumption, necessary for proving the well-definedness of Algorithms~\ref{alg:ingrad}~and~\ref{alg:ingrad2}.
\begin{assumption}\label{assum:rgamma2}
Considering the notations in Remark~\ref{rem:levelsetch}, for a given $x^0\in \R^n$, we set $r=R+\left(\frac{R+\ov{\tau}}{1-\mu}\right)^2$ and choose $\gamma\in \left(0, \ov{\sigma}\right)$ such that $\mathcal{S}_2\subseteq \mb(0; R)$ and
$\fgam{\gf}{p}{\gamma}\in\mathcal{C}^{1, \frac{p-1}{2}}_{\mathcal{L}_p}(\mb(0; r))$.
\end{assumption}

\begin{theorem}[Well-definedness of Algorithm~\ref{alg:ingrad}]\label{th:welldfalg} 
Let Assumptions~\ref{assum:approx}~and~\ref{assum:rgamma2} hold and let $\{x^k\}_{k\in \Nz}$ be generated by Algorithm~\ref{alg:ingrad}. If there exist some $c_1, c_2>0$ such that each direction $d^k$ satisfies 
\begin{equation}\label{eq:th:welldfalg:dir}
\langle\nabla\fgam{\gf}{p,\varepsilon_k}{\gamma}(x^k), d^k\rangle\leq -c_1  \Vert \nabla\fgam{\gf}{p,\varepsilon_k}{\gamma}(x^k)\Vert^{\frac{p+1}{p-1}},
\qquad \Vert d^k\Vert\leq c_2 \Vert \nabla\fgam{\gf}{p,\varepsilon_k}{\gamma}(x^k)\Vert^{\frac{2}{p-1}}, \quad \mu<\left(\frac{c_1}{c_2 2^{2-p}}\right)^{\frac{1}{p-1}},
\end{equation}
then Algorithm~\ref{alg:ingrad} is well-defined.
\end{theorem}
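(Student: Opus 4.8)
The plan is to proceed by induction on $k$, proving the stronger statement that $x^k\in\mathcal{S}_2\subseteq\mb(0;R)\subseteq\mb(0;r)$ for every $k\in\Nz$, so that HOME is differentiable at each iterate, the inexact oracle is meaningful, and the ItsDEAL structural inequality \eqref{eq:genstract:ineq} holds at every step. The base case $x^0\in\mathcal{S}_2$ is immediate from the definition of $\mathcal{S}_2$. For the inductive step I assume $x^0,\dots,x^k\in\mathcal{S}_2$ and carry out three tasks: (A) show the new point cannot leave $\mb(0;r)$; (B) establish \eqref{eq:genstract:ineq}; (C) deduce $x^{k+1}\in\mathcal{S}_2$.

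For (A), I would first derive a uniform a priori bound on the inexact gradient. Since $x^k\in\mb(0;R)$ and $\prox{\gf}{\gamma}{p}(x^k)\in\mb(0;\ov{\tau})$ by Remark~\ref{rem:levelsetch}, combining the distance estimate \eqref{eq:ep-approx:dist} with the self-bounding condition \eqref{eq:ep-approx:oper} gives $(1-\mu)\Vert x^k-\prox{\gf}{\gamma}{p,\varepsilon_k}(x^k)\Vert\leq R+\ov{\tau}$; here the requirement $\mu<1$ (consistent with the definition of $r$ in Assumption~\ref{assum:rgamma2}) is used to invert $1-\mu$. Together with the gradient formula \eqref{eq:ep-gard-approx1} this yields $\Vert\nabla\fgam{\gf}{p,\varepsilon_k}{\gamma}(x^k)\Vert\leq\frac{1}{\gamma}\left(\frac{R+\ov{\tau}}{1-\mu}\right)^{p-1}$. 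Plugging this into the norm bound of \eqref{eq:th:welldfalg:dir} and using the first cap $\alpha\leq\gamma^{2/(p-1)}/c_2$ gives $\Vert x^{k+1}-x^k\Vert=\alpha\Vert d^k\Vert\leq\left(\frac{R+\ov{\tau}}{1-\mu}\right)^2$, whence $\Vert x^{k+1}\Vert\leq R+\left(\frac{R+\ov{\tau}}{1-\mu}\right)^2=r$. In particular $x^{k+1}\in\mb(0;r)$ and, by convexity of the ball, $[x^k,x^{k+1}]\subseteq\mb(0;r)$, so the weak-smoothness estimates of Corollary~\ref{cor:dif:weak} apply along the segment.

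For (B) I would apply the inexact H\"olderian descent estimate of Remark~\ref{rem:relapprox}~\ref{rem:relapprox:c} with $x=x^k$ and $y=x^{k+1}=x^k+\alpha d^k$, and bound the cross term $\langle\nabla\fgam{\gf}{p}{\gamma}(x^k),d^k\rangle$ via \eqref{eq:lem:disdir} together with the two conditions in \eqref{eq:th:welldfalg:dir}. The crucial algebraic observation is that the exponents coincide, $1+\frac{2}{p-1}=\frac{2}{p-1}\cdot\frac{p+1}{2}=\frac{p+1}{p-1}$, so every contribution collects into a single multiple of $\Vert\nabla\fgam{\gf}{p,\varepsilon_k}{\gamma}(x^k)\Vert^{(p+1)/(p-1)}$ with coefficient $\alpha\big[\frac{2\mathcal{L}_p}{p+1}c_2^{(p+1)/2}\alpha^{(p-1)/2}-(c_1-2^{2-p}\mu^{p-1}c_2)\big]$. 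The hypothesis $\mu<(c_1/(c_2 2^{2-p}))^{1/(p-1)}$ makes $c_1-2^{2-p}\mu^{p-1}c_2>0$, while the second cap on $\alpha$ forces the bracket to be nonpositive; setting $\widehat{\varrho}:=\alpha\big[(c_1-2^{2-p}\mu^{p-1}c_2)-\frac{2\mathcal{L}_p}{p+1}c_2^{(p+1)/2}\alpha^{(p-1)/2}\big]\geq 0$ then produces exactly \eqref{eq:genstract:ineq} with $\vartheta=\frac{2}{p-1}$. For (C) I would sum \eqref{eq:genstract:ineq} over $0,\dots,k$ (valid since it holds at every earlier step), telescoping and using $\sum_j\varepsilon_j\leq\ov{\varepsilon}$ to obtain $\fgam{\gf}{p,\varepsilon_{k+1}}{\gamma}(x^{k+1})\leq\fgam{\gf}{p,\varepsilon_0}{\gamma}(x^0)+\ov{\varepsilon}$; combining this with \eqref{eq:ep-approx:fun} at $x^0$ and the elementary bound $\fgam{\gf}{p}{\gamma}(x^{k+1})\leq\fgam{\gf}{p,\varepsilon_{k+1}}{\gamma}(x^{k+1})$ from \eqref{eq:approxFuncValueHOME} gives $\fgam{\gf}{p}{\gamma}(x^{k+1})\leq\fgam{\gf}{p}{\gamma}(x^0)+2\ov{\varepsilon}$, i.e.\ $x^{k+1}\in\mathcal{S}_2$, closing the induction.

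I expect the main obstacle to be task (A): confining the iterates to the differentiability ball. Because monotonicity of the (inexact) function values fails, the descent inequality alone cannot keep the sequence bounded, so the radius $r$ must be inflated a priori exactly as in Assumption~\ref{assum:rgamma2}, guaranteeing that one step starting from $\mb(0;R)$ cannot escape $\mb(0;r)$. Making this quantitative rests on the uniform bound for $\Vert\nabla\fgam{\gf}{p,\varepsilon_k}{\gamma}(x^k)\Vert$, which in turn depends delicately on the self-bounding inexactness \eqref{eq:ep-approx:oper} and on the explicit prox radius $\ov{\tau}$; the two caps defining the admissible $\alpha$ serve the genuinely different roles of containment and sufficient decrease, and the proof must respect both simultaneously.
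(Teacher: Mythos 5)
Your proposal is correct and follows essentially the same route as the paper's proof: the same a priori containment argument via Corollary~\ref{cor:findtau:lip}, \eqref{eq:ep-approx:dist}--\eqref{eq:ep-approx:oper} and the cap $\alpha\leq\gamma^{2/(p-1)}/c_2$, the same H\"olderian descent estimate with matching exponents $\tfrac{p+1}{p-1}$ controlled by the second cap on $\alpha$, and the same accumulation of the $\varepsilon_k$'s to keep the iterates in $\mathcal{S}_2$. The only difference is presentational: you phrase the argument as a formal induction with tasks (A)--(C), while the paper carries out the first step explicitly and concludes ``by repeating this approach''; both also share the same implicit requirement $\mu<1$ needed to invert $1-\mu$.
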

\begin{proof}
Since $x^0\in \mb(0; R)\subseteq \mb(0; r)$, the function $\fgam{\gf}{p}{\gamma}$ is differentiable at $x^0$. 
From Algorithm~\ref{alg:ingrad}, we obtain $x^{1}=x^0+\alpha d^0$, i.e., 
\begin{align}\label{eq:b:th:welldfalg}
\Vert x^{1}\Vert &\leq \Vert x^0\Vert +\alpha\Vert d^0\Vert
\nonumber\\& \leq  \Vert x^0\Vert +\gamma^{\frac{2}{p-1}}\Vert \nabla\fgam{\gf}{p,\varepsilon_0}{\gamma}(x^0)\Vert^{\frac{2}{p-1}}
\nonumber\\& =  \Vert x^0\Vert +\gamma^{\frac{2}{p-1}}\gamma^{\frac{2}{1-p}}\Vert  x^0-\prox{\gf}{\gamma}{p, \varepsilon_0}(x^0)\Vert^2
=  \Vert x^0\Vert +\Vert  x^0-\prox{\gf}{\gamma}{p, \varepsilon_0}(x^0)\Vert^2.
\end{align}
Moreover, by \eqref{eq:ep-approx:dist} and \eqref{eq:ep-approx:oper}, we get
\begin{align*}
\Vert x^0-\prox{\gf}{\gamma}{p, \varepsilon_0}(x^0)\Vert
&\leq \Vert x^0-\prox{\gf}{\gamma}{p}(x^0)\Vert+\Vert\prox{\gf}{\gamma}{p}(x^0)-\prox{\gf}{\gamma}{p, \varepsilon_0}(x^0)\Vert\\
&\leq \Vert x^0-\prox{\gf}{\gamma}{p}(x^0)\Vert+\delta_0\\
&\leq \Vert x^0-\prox{\gf}{\gamma}{p}(x^0)\Vert+\mu \Vert x^0-\prox{\gf}{\gamma}{p, \varepsilon_0}(x^0)\Vert.
\end{align*}
Together with Corollary~\ref{cor:findtau:lip}, this ensures
\begin{equation}\label{eq:a:th:welldfalg}
\Vert x^0-\prox{\gf}{\gamma}{p, \varepsilon_0}(x^0)\Vert \leq \frac{\Vert x^0-\prox{\gf}{\gamma}{p}(x^0)\Vert}{1-\mu} 
\leq \frac{\Vert x^0\Vert+\Vert \prox{\gf}{\gamma}{p}(x^0)\Vert}{1-\mu} < \frac{R+\ov{\tau}}{1-\mu}.
\end{equation}
From \eqref{eq:b:th:welldfalg} and \eqref{eq:a:th:welldfalg}, we obtain
$\Vert x^{1}\Vert <R+\left(\frac{R+\ov{\tau}}{1-\mu}\right)^2$.
Hence, by Assumption~\ref{assum:rgamma2}, $x^{1}\in \mb(0; r)$ and $\fgam{\gf}{p}{\gamma}$ is also differentiable at $x^1$.
As such, it follows from Remark~\ref{rem:relapprox}~$\ref{rem:relapprox:b}$~and~$\ref{rem:relapprox:c}$ that
\begin{align*}
\fgam{\gf}{p,\varepsilon_1}{\gamma}(x^{1})&
\leq \fgam{\gf}{p,\varepsilon_0}{\gamma}(x^0)+\langle \nabla\fgam{\gf}{p}{\gamma}(x^0), x^{1}-x^0\rangle +\frac{2\mathcal{L}_p}{p+1}\Vert x^{1}-x^0 \Vert ^{\frac{p+1}{2}}+\varepsilon_{1}
\\&= \fgam{\gf}{p,\varepsilon_0}{\gamma}(x^0) +\alpha
\langle \nabla\fgam{\gf}{p}{\gamma}(x^0), d^0\rangle 
+\frac{2\mathcal{L}_p}{p+1} \alpha^{\frac{p+1}{2}}\Vert d^0\Vert ^{\frac{p+1}{2}}+\varepsilon_{1}
\\&\leq \fgam{\gf}{p,\varepsilon_0}{\gamma}(x^0)+\alpha \left(
2^{2-p}\mu^{p-1} \Vert \nabla\fgam{\gf}{p,\varepsilon_0}{\gamma}(x^0)\Vert  \Vert d^0\Vert
 +\langle\nabla\fgam{\gf}{p,\varepsilon_0}{\gamma}(x^0), d^0\rangle\right)+\frac{2\mathcal{L}_p}{p+1} \alpha^{\frac{p+1}{2}}\Vert d^0\Vert ^{\frac{p+1}{2}}+\varepsilon_{1}
 \\&\leq \fgam{\gf}{p,\varepsilon_0}{\gamma}(x^0)+\alpha \left(
2^{2-p}\mu^{p-1}c_2 \Vert \nabla\fgam{\gf}{p,\varepsilon_0}{\gamma}(x)\Vert^{\frac{p+1}{p-1}}
 -c_1  \Vert \nabla\fgam{\gf}{p,\varepsilon_0}{\gamma}(x^0)\Vert^{\frac{p+1}{p-1}}\right)
 \\&\qquad+\frac{2\mathcal{L}_p}{p+1} \alpha^{\frac{p+1}{2}}c_2^{\frac{p+1}{2}}\Vert \nabla\fgam{\gf}{p,\varepsilon_0}{\gamma}(x^0)\Vert ^{\frac{p+1}{p-1}}+\varepsilon_{1}
 \\&\leq \fgam{\gf}{p,\varepsilon_0}{\gamma}(x^0)-\alpha \left(c_1-
2^{2-p}\mu^{p-1}c_2 
-\frac{2\mathcal{L}_p}{p+1} \alpha^{\frac{p-1}{2}}c_2^{\frac{p+1}{2}}\right)\Vert \nabla\fgam{\gf}{p,\varepsilon_0}{\gamma}(x^0)\Vert ^{\frac{p+1}{p-1}}+\varepsilon_{1}.
\end{align*}
Since $\alpha\leq \left(\frac{(p+1)\left(c_1-2^{2-p}\mu^{p-1}c_2\right)}{2c_2^{\frac{p+1}{2}}\mathcal{L}_p}\right)^{\frac{2}{p-1}}$, we obtain
$\fgam{\gf}{p,\varepsilon_1}{\gamma}(x^{1})\leq\fgam{\gf}{p,\varepsilon_0}{\gamma}(x^0)+\varepsilon_{1}$, leading to
\begin{align*}
\fgam{\gf}{p}{\gamma}(x^{1})&\leq\fgam{\gf}{p}{\gamma}(x^0)
+\varepsilon_0+\varepsilon_1\leq \fgam{\gf}{p}{\gamma}(x^0)
+2\overline{\varepsilon},
\end{align*}
i.e., $x^1\in \mathcal{S}_2$. 
Hence, $\fgam{\gf}{p}{\gamma}$ is differentiable at $x^1$.
 By repeating this approach, we obtain
\begin{align*}
\fgam{\gf}{p}{\gamma}(x^{k+1})&\leq\fgam{\gf}{p}{\gamma}(x^k)
+\varepsilon_k+\varepsilon_{k+1}
\\&\leq \fgam{\gf}{p}{\gamma}(x^0)+\varepsilon_0+2\left(\varepsilon_1+\ldots+\varepsilon_k\right)+\varepsilon_{k+1}
\\&\leq \fgam{\gf}{p}{\gamma}(x^0)+2\overline{\varepsilon}.
\end{align*}
Therefore, the sequence $\{x^k\}_{k\geq 0}\subseteq \mathcal{S}_2$ is well-defined.
\end{proof}

From the proof of Theorem~\ref{th:welldfalg}, we derive the inequality
 \begin{equation}\label{eq:discentforseq}
   \fgam{\gf}{p,\varepsilon_{k+1}}{\gamma}(x^{k+1})\leq \fgam{\gf}{p,\varepsilon_k}{\gamma}(x^k)
    -\alpha \left(c_1-2^{2-p}\mu^{p-1}c_2 -\frac{2\mathcal{L}_p}{p+1} \alpha^{\frac{p-1}{2}}c_2^{\frac{p+1}{2}}\right)\Vert\nabla\fgam{\gf}{p,\varepsilon_k}{\gamma}(x^k)\Vert^{\frac{p+1}{p-1}}
+\varepsilon_{k+1}.
 \end{equation}
  Due to the presence of $\varepsilon_{k+1}$ in \eqref{eq:discentforseq}, this does not necessarily guarantee a monotonic decrease. 
 However, it demonstrates that the value of 
 $\fgam{\gf}{p,\varepsilon_k}{\gamma}(x^k)+\varepsilon_{k+1}$
  can be reduced by
 \[\alpha \left(c_1-2^{2-p}\mu^{p-1}c_2 -\frac{2\mathcal{L}_p}{p+1} \alpha^{\frac{p-1}{2}}c_2^{\frac{p+1}{2}}\right)\Vert\nabla\fgam{\gf}{p,\varepsilon_k}{\gamma}(x^k)\Vert^{\frac{p+1}{p-1}},\]
  in the subsequent iteration.
 By analyzing the function
$\alpha\mapsto \alpha \left(c_1-2^{2-p}\mu^{p-1}c_2 -\frac{2\mathcal{L}_p}{p+1} \alpha^{\frac{p-1}{2}}c_2^{\frac{p+1}{2}}\right)$, 
we find the maximum value for $\alpha$ to be
 \[\varrho:=\min\left\{ \frac{\gamma^{\frac{2}{p-1}}}{c_2}, \left(\frac{c_1-2^{2-p}\mu^{p-1}c_2}{c_2^{\frac{p+1}{2}}\mathcal{L}_p}\right)^{\frac{2}{p-1}}\right\}.\]
 As $p>1$, it holds that
 \[\min\left\{ \frac{\gamma^{\frac{2}{p-1}}}{c_2}, \left(\frac{c_1-2^{2-p}\mu^{p-1}c_2}{c_2^{\frac{p+1}{2}}\mathcal{L}_p}\right)^{\frac{2}{p-1}}\right\}
 \leq \min\left\{ \frac{\gamma^{\frac{2}{p-1}}}{c_2}, \left(\frac{(p+1)\left(c_1-2^{2-p}\mu^{p-1}c_2\right)}{2c_2^{\frac{p+1}{2}}\mathcal{L}_p}\right)^{\frac{2}{p-1}}\right\}.\]
 Assume $\alpha= \varrho$. If we choose a scalar $\ov{L}_{k+1}<\mathcal{L}_p$  and consider the corresponding step-size
   \[
  \ov\varrho:=\min\left\{ \frac{\gamma^{\frac{2}{p-1}}}{c_2}, \left(\frac{c_1-2^{2-p}\mu^{p-1}c_2}{c_2^{\frac{p+1}{2}}\ov{L}_{k+1}}\right)^{\frac{2}{p-1}}\right\},
  \]
then it follows that $\varrho\leq \ov\varrho$. We now consider two cases: (i) $\varrho= \frac{\gamma^{\frac{2}{p-1}}}{c_2}$; (ii) $\varrho=  \left(\frac{c_1-2^{2-p}\mu^{p-1}c_2}{c_2^{\frac{p+1}{2}}\mathcal{L}_p}\right)^{\frac{2}{p-1}}$.
In Case~(i), $\varrho= \frac{\gamma^{\frac{2}{p-1}}}{c_2}$, we have $\varrho= \ov\varrho$, and 
 \[
 \frac{\varrho^{\frac{p-1}{2}}}{\ov\varrho^{\frac{p-1}{2}}}= 1>\frac{\ov{L}_{k+1}}{\mathcal{L}_p}.
 \]
In Case~(ii), $\varrho=  \left(\frac{c_1-2^{2-p}\mu^{p-1}c_2}{c_2^{\frac{p+1}{2}}\mathcal{L}_p}\right)^{\frac{2}{p-1}}$,
it holds that
   \[
   \frac{\varrho^{\frac{p-1}{2}}}{\ov\varrho^{\frac{p-1}{2}}}\geq   
  \frac{\frac{c_1-2^{2-p}\mu^{p-1}c_2}{c_2^{\frac{p+1}{2}}\mathcal{L}_p}}{\frac{c_1-2^{2-p}\mu^{p-1}c_2}{c_2^{\frac{p+1}{2}}\ov{L}_{k+1}}}
   =\frac{\ov{L}_{k+1}}{\mathcal{L}_p}.
   \]
Consequently, 
  \begin{equation}\label{eq:relbetalphbaralp}
\varrho\left(c_1-2^{2-p}\mu^{p-1}c_2 -\frac{2\mathcal{L}_p}{p+1}\varrho^{\frac{p-1}{2}}c_2^{\frac{p+1}{2}}\right)
   \leq  \ov\varrho\left(c_1-2^{2-p}\mu^{p-1}c_2 -\frac{2\ov{L}_{k+1}}{p+1}\ov\varrho^{\frac{p-1}{2}}c_2^{\frac{p+1}{2}}\right).
  \end{equation}
  This implies that by replacing $\mathcal{L}_p$ with $\ov{L}_{k+1}$ and choosing an appropriate step-size, we may achieve a greater decrease in $\fgam{\gf}{p,\varepsilon_k}{\gamma}(x^k)+\varepsilon_{k+1}$. Nevertheless, we cannot generally guarantee that
    \begin{equation*}
   \fgam{\gf}{p,\varepsilon_{k+1}}{\gamma}(x^{k+1})\leq \fgam{\gf}{p,\varepsilon_k}{\gamma}(x^k)
    -\ov\varrho\left(c_1-2^{2-p}\mu^{p-1}c_2 -\frac{2\mathcal{L}_p}{p+1}\ov\varrho^{\frac{p-1}{2}}c_2^{\frac{p+1}{2}}\right)\Vert\nabla\fgam{\gf}{p,\varepsilon_k}{\gamma}(x^k)\Vert^{\frac{p+1}{p-1}}
+\varepsilon_{k+1}.
 \end{equation*}
 Hence, we adopt a line search strategy, as outlined in Algorithm~\ref{alg:ingrad2}. The key idea is to initialize with a small value of $\ov{L}_{k+1}$ and verify whether condition \eqref{eq:discentforseq:2} holds. If the condition is satisfied, the inner iteration terminates. Otherwise, $\ov{L}_{k+1}$ is incrementally increased, the step-size is updated, and a new solution $\widehat{x}^{k+1}$ is computed, followed by re-evaluating the condition. Based on Theorem~\ref{th:welldfalg}, the algorithm will terminate as soon as $\ov{L}_{k+1} \geq \mathcal{L}_p$, ensuring the well-definedness of Algorithm~\ref{alg:ingrad2}. 
 This approach has the advantage of not requiring prior knowledge of the exact value of $\mathcal{L}_p$, which represents a significant improvement of Algorithm~\ref{alg:ingrad2} over Algorithm~\ref{alg:ingrad}. Various strategies for updating $\ov{L}_{k+1}$ will be discussed in Section~\ref{sec:numerical}.

\begin{algorithm}
\caption{Parameter-free HiGDA (Parameter-free H\"olderian inexact gradient descent algorithm)}\label{alg:ingrad2}
\begin{algorithmic}[1]
\State \textbf{Initialization} Start with $x^0\in \R^n$ and set $k=0$. Choose $\gamma>0$ and $r>0$ such that
Assumption~\ref{assum:rgamma2} holds. Pick $\mu, c_1, c_2>0$.
\While{stopping criteria do not hold}
\State Find an approximated solution for the proximal auxiliary problem; \Comment{lower-level}
\State\label{alg:itsopt:orac} Generate the inexact first- or second-order oracle $\mathcal{O}(\varphi_\gamma,x^k)$; \Comment{lower-level}
\State Choose $d^k\in \R^n$ such that satisfies \eqref{eq:th:welldfalg:dir};
\Repeat \label{alg:ingrad2:startinner}
\State Choose $\overline{L}_{k+1}$ and 
$\alpha_k=  \min\left\{ \frac{\gamma^{\frac{2}{p-1}}}{c_2}, \left(\frac{c_1-2^{2-p}\mu^{p-1}c_2}{c_2^{\frac{p+1}{2}}\ov{L}_{k+1}}\right)^{\frac{2}{p-1}}\right\}$;
\State Set $\widehat{x}^{k+1}=x^k+\alpha_k d^k$;
\Until \label{alg:ingrad2:endinner}{
  \begin{equation}\label{eq:discentforseq:2}
    \fgam{\gf}{p,\varepsilon_{k+1}}{\gamma}(\widehat{x}^{k+1})\leq \fgam{\gf}{p,\varepsilon_k}{\gamma}(x^k)
    -\alpha_k\left(c_1-2^{2-p}\mu^{p-1}c_2 -\frac{2\ov{L}_{k+1}}{p+1}\alpha_k^{\frac{p-1}{2}}c_2^{\frac{p+1}{2}}\right)\Vert\nabla\fgam{\gf}{p,\varepsilon_k}{\gamma}(x^k)\Vert^{\frac{p+1}{p-1}}
+\varepsilon_{k+1}.
 \end{equation}
}
\State $x^{k+1}=\widehat{x}^{k+1}$; $k=k+1$;
\EndWhile
\end{algorithmic}
\end{algorithm}

Well-definedness of Algorithm~\ref{alg:ingrad2}, immediately obtains from Theorem \ref{th:welldfalg}. In the rest of our analysis, we just discuss the results for 
Algorithm~\ref{alg:ingrad2} as Algorithm~\ref{alg:ingrad} is a special case of it. 



\subsection{{\bf Descent algorithm with inexact Armijo line search}}
\label{subsec:armijo}

This section addresses a descent algorithm with an inexact Armijo line search. Before presenting our novel Armijo-type algorithm, we show an inexact nonmonotone descent condition based on the directions satisfying \eqref{eq:disdirapp}; see, e.g., \cite{ahookhosh2017efficiency, grippo1986nonmonotone} and references therein for more details about nonmonotone line searches.
\begin{lemma}[Well-definedness of inexact Armijo rule]\label{lem:lem:disdir}
Let the assumptions of Lemma~\ref{lem:disdir} hold. If for a given $x^k\in U$ with $\nabla\fgam{\gf}{p}{\gamma}(x^k)\neq 0$ and a direction $d^k\in \R^n$,  either \eqref{eq:disdirapp} or \eqref{eq:dir:th:conv:alg:first:tem} satisfies, then for each $\lambda\in (0, 1)$, there exists some $\ov{\alpha}>0$ such that,
\begin{equation}\label{eq:strictineq}
 \fgam{\gf}{p,\varepsilon_{k+1}}{\gamma}(x^{k}+\alpha d^k)\leq \fgam{\gf}{p,\varepsilon_{k}}{\gamma}(x^{k})+\alpha \lambda \langle \nabla\fgam{\gf}{p}{\gamma}(x^k), d^k\rangle+\varepsilon_{k+1}, \qquad \forall \alpha\in (0, \ov{\alpha}).
 \end{equation}
\end{lemma}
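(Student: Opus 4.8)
The plan is to reduce the claimed inequality, which is phrased in terms of the inexact values $\fgam{\gf}{p,\varepsilon_k}{\gamma}$ and $\fgam{\gf}{p,\varepsilon_{k+1}}{\gamma}$, to a statement about the exact high-order Moreau envelope $\fgam{\gf}{p}{\gamma}$, for which weak smoothness and the descent-direction property are already available. The two bridges are: (i) since $\fgam{\gf}{p,\varepsilon}{\gamma}(z)$ is by definition \eqref{eq:approxFuncValueHOME} the value of the HOPE objective at the feasible point $\prox{\gf}{\gamma}{p,\varepsilon}(z)$, it dominates the infimum, so $\fgam{\gf}{p}{\gamma}(z)\le \fgam{\gf}{p,\varepsilon}{\gamma}(z)$; and (ii) the $\varepsilon$-optimality \eqref{eq:ep-approx:fun} gives the reverse slack $\fgam{\gf}{p,\varepsilon}{\gamma}(z)< \fgam{\gf}{p}{\gamma}(z)+\varepsilon$. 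Applying (i) at $x^k$ with level $\varepsilon_k$ and (ii) at $x^k+\alpha d^k$ with level $\varepsilon_{k+1}$, the target \eqref{eq:strictineq} follows once we prove the corresponding exact descent inequality
\[
\fgam{\gf}{p}{\gamma}(x^k+\alpha d^k)\le \fgam{\gf}{p}{\gamma}(x^k)+\alpha\lambda\langle \nabla\fgam{\gf}{p}{\gamma}(x^k), d^k\rangle,\qquad \forall \alpha\in(0,\ov{\alpha}).
\]

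First I would record that, under either \eqref{eq:disdirapp} or \eqref{eq:dir:th:conv:alg:first:tem}, Lemma~\ref{lem:disdir} guarantees that $d^k$ is a descent direction for the exact envelope, i.e.\ $\langle \nabla\fgam{\gf}{p}{\gamma}(x^k), d^k\rangle<0$; in particular $d^k\ne 0$. Next, since $x^k$ is interior to the region $\mb(0;r)$ on which $\fgam{\gf}{p}{\gamma}$ is $\mathcal{C}^{1}$ and weakly smooth (Corollary~\ref{cor:dif:weak}), for all sufficiently small $\alpha>0$ the segment $[x^k,x^k+\alpha d^k]$ stays inside $\mb(0;r)$, so the H\"olderian descent inequality \eqref{eq2:lem:ep-prox:p(1,2)} applies and yields
\[
\fgam{\gf}{p}{\gamma}(x^k+\alpha d^k)\le \fgam{\gf}{p}{\gamma}(x^k)+\alpha\langle \nabla\fgam{\gf}{p}{\gamma}(x^k), d^k\rangle+\frac{2\mathcal{L}_p}{p+1}\alpha^{\frac{p+1}{2}}\Vert d^k\Vert^{\frac{p+1}{2}}.
\]
The exact descent inequality then holds as soon as the remainder is absorbed by the Armijo gap, namely $\frac{2\mathcal{L}_p}{p+1}\alpha^{\frac{p-1}{2}}\Vert d^k\Vert^{\frac{p+1}{2}}\le (1-\lambda)\bigl(-\langle \nabla\fgam{\gf}{p}{\gamma}(x^k), d^k\rangle\bigr)$, which is possible because $p>1$ forces $\alpha^{\frac{p-1}{2}}\to 0$ as $\alpha\downarrow 0$ while the right-hand side is a fixed positive number. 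This produces the explicit threshold
\[
\ov{\alpha}:=\left(\frac{(1-\lambda)(p+1)\bigl(-\langle \nabla\fgam{\gf}{p}{\gamma}(x^k), d^k\rangle\bigr)}{2\mathcal{L}_p\,\Vert d^k\Vert^{\frac{p+1}{2}}}\right)^{\frac{2}{p-1}},
\]
possibly intersected with the largest step keeping $x^k+\alpha d^k$ in $\mb(0;r)$. I would then chain the pieces: $\fgam{\gf}{p,\varepsilon_{k+1}}{\gamma}(x^k+\alpha d^k)<\fgam{\gf}{p}{\gamma}(x^k+\alpha d^k)+\varepsilon_{k+1}\le \fgam{\gf}{p}{\gamma}(x^k)+\alpha\lambda\langle \nabla\fgam{\gf}{p}{\gamma}(x^k),d^k\rangle+\varepsilon_{k+1}\le \fgam{\gf}{p,\varepsilon_k}{\gamma}(x^k)+\alpha\lambda\langle \nabla\fgam{\gf}{p}{\gamma}(x^k),d^k\rangle+\varepsilon_{k+1}$, which is exactly \eqref{eq:strictineq}.

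The step I expect to be the main obstacle is the asymmetric handling of the two inexactness levels: the slack must be spent on the \emph{new} point (via \eqref{eq:ep-approx:fun} at level $\varepsilon_{k+1}$) while the \emph{old} inexact value must be used as an upper bound (via $\fgam{\gf}{p}{\gamma}(x^k)\le\fgam{\gf}{p,\varepsilon_k}{\gamma}(x^k)$), so that no spurious $\varepsilon_k$ appears and only a single $+\varepsilon_{k+1}$ term survives. A secondary technical point is keeping the iterate inside $\mb(0;r)$, where both weak smoothness and the very definition of the inexact oracle are valid; this is harmless since $x^k$ is interior to $\mb(0;r)$ and we only shrink $\ov{\alpha}$. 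As a remark, the weak-smoothness step can be replaced by bare Fr\'echet differentiability of $\fgam{\gf}{p}{\gamma}$ at $x^k$: the difference quotient tends to $\langle \nabla\fgam{\gf}{p}{\gamma}(x^k),d^k\rangle<\lambda\langle \nabla\fgam{\gf}{p}{\gamma}(x^k),d^k\rangle$, which already forces \eqref{eq:strictineq} for all small $\alpha$, albeit without the closed-form $\ov{\alpha}$.
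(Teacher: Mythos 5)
Your proposal is correct, and its outer skeleton is exactly the paper's: reduce \eqref{eq:strictineq} to an exact Armijo inequality for $\fgam{\gf}{p}{\gamma}$ via the two bridges $\fgam{\gf}{p}{\gamma}(x^k)\leq\fgam{\gf}{p,\varepsilon_k}{\gamma}(x^k)$ and $\fgam{\gf}{p,\varepsilon_{k+1}}{\gamma}(x^k+\alpha d^k)<\fgam{\gf}{p}{\gamma}(x^k+\alpha d^k)+\varepsilon_{k+1}$, with precisely the asymmetric handling of the two inexactness levels that you flag as the delicate point. Where you genuinely diverge is the middle step. The paper derives the exact inequality from bare Fr\'echet differentiability: the difference quotient converges to $\langle\nabla\fgam{\gf}{p}{\gamma}(x^k),d^k\rangle$, which is strictly below $\lambda\langle\nabla\fgam{\gf}{p}{\gamma}(x^k),d^k\rangle$ because this inner product is negative (Lemma~\ref{lem:disdir}) and $\lambda\in(0,1)$; this gives existence of $\ov{\alpha}$ with no extra regularity and no formula --- it is exactly the alternative you sketch in your closing remark. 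Your main argument instead invokes the H\"olderian descent inequality \eqref{eq2:lem:ep-prox:p(1,2)}, which is legitimate here since Assumption~\ref{assum:approx}~$\ref{assum:approx:aproxep}$ (in force through the hypotheses of Lemma~\ref{lem:disdir}) places $x^k\in\mb(0;r)$ with $\gamma$ chosen so that Corollary~\ref{cor:dif:weak} yields $\fgam{\gf}{p}{\gamma}\in\mathcal{C}^{1,\frac{p-1}{2}}_{\mathcal{L}_p}(\mb(0;r))$, and the segment $[x^k,x^k+\alpha d^k]$ stays in the convex open ball for small $\alpha$; your absorption computation and the resulting threshold $\ov{\alpha}$ are algebraically correct. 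What your route buys is a closed-form, quantitative $\ov{\alpha}$, which the paper's proof does not provide: this matters downstream, because Corollary~\ref{cor:conv:alg:first} must \emph{assume} that the Armijo step-sizes stay bounded away from zero, whereas your explicit bound, combined with \eqref{eq:dir:th:conv:alg:first:tem} and \eqref{eq:relapprox:b} (which for $\vartheta=\tfrac{2}{p-1}$ make the ratio $-\langle\nabla\fgam{\gf}{p}{\gamma}(x^k),d^k\rangle/\Vert d^k\Vert^{\frac{p+1}{2}}$ uniformly bounded below), is the natural way to verify that assumption rather than postulate it. Conversely, the paper's route is lighter, requiring only differentiability at the point rather than a H\"older gradient constant.
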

\begin{proof}
From Lemma \ref{lem:disdir}, we have $\langle \nabla\fgam{\gf}{p}{\gamma}(x^k), d^k\rangle<0$. Thus, for each $\lambda\in (0, 1)$,
we get
\[
\mathop{\bs\lim}\limits_{t\downarrow 0}\frac{\fgam{\gf}{p}{\gamma}(x^{k}+t d^k)-\fgam{\gf}{p}{\gamma}(x^{k})}{t}
=\langle \nabla\fgam{\gf}{p}{\gamma}(x^k), d^k\rangle< \lambda \langle \nabla\fgam{\gf}{p}{\gamma}(x^k), d^k\rangle.
\]
Hence, there exists some $\ov{\alpha}>0$ such that,
\[
\fgam{\gf}{p}{\gamma}(x^{k}+\alpha d^k)-\fgam{\gf}{p}{\gamma}(x^{k})<
\alpha\lambda \langle \nabla\fgam{\gf}{p}{\gamma}(x^k), d^k\rangle, \qquad \forall \alpha\in (0, \ov{\alpha}).
\]
Since $\fgam{\gf}{p,\varepsilon_{k+1}}{\gamma}(x^{k}+\alpha d^k)\leq \fgam{\gf}{p}{\gamma}(x^{k}+\alpha d^k)+\varepsilon_{k+1}$ and
$\fgam{\gf}{p}{\gamma}(x^{k})\leq \fgam{\gf}{p,\varepsilon_{k}}{\gamma}(x^{k})$, we have
\begin{align*}
 \fgam{\gf}{p,\varepsilon_{k+1}}{\gamma}(x^{k}+\alpha d^k)\leq \fgam{\gf}{p,\varepsilon_{k}}{\gamma}(x^{k})+\alpha \lambda \langle \nabla\fgam{\gf}{p}{\gamma}(x^k), d^k\rangle+\varepsilon_{k+1}, \qquad \forall \alpha\in (0, \ov{\alpha}),
\end{align*}
adjusting the claim.
\end{proof}

The following assumption is considered regarding Algorithm~\ref{alg:first}.
\begin{assumption}\label{assum:rgamma}
Considering the notations in Remark \ref{rem:levelsetch}, for a given
 $x^0\in \R^n$, we set $r=R$ and choose $\gamma\in \left(0, \ov{\sigma}\right)$ such that $\mathcal{S}_2\subseteq \mb(0; R)$ and
$\fgam{\gf}{p}{\gamma}\in\mathcal{C}^{1, \frac{p-1}{2}}_{\mathcal{L}_p}(\mb(0; r))$.
\end{assumption}

\begin{algorithm}
\caption{IDEALS (Inexact DEcent Armijo Line Search)}\label{alg:first}
\begin{algorithmic}[1]
\State \textbf{Initialization} Start with $x^0\in \R^n$, $\mu>0$, $\lambda\in (0, 1)$, $\upsilon\in (0, 1)$, and set $k=0$. Choose $\gamma>0$ and $r>0$ such that
Assumption~\ref{assum:rgamma} holds.
\While{stopping criteria do not hold}
\State Find an approximated solution for the proximal auxiliary problem; \Comment{lower-level}
\State\label{alg:itsopt:orac} Generate the inexact first- or second-order oracle $\mathcal{O}(\varphi_\gamma,x^k)$; \Comment{lower-level}
\State Choose $d^k\in \R^n$ such that satisfies \eqref{eq:dir:th:conv:alg:first:tem};
\State Set $\widehat{\alpha}_k=1$ and compute $\fgam{\gf}{p,\varepsilon_{k+1}}{\gamma}(x^{k}+\widehat{\alpha}_k d^k)$
\While{$\fgam{\gf}{p,\varepsilon_{k+1}}{\gamma}(x^{k}+\widehat{\alpha}_k  d^k)> \fgam{\gf}{p,\varepsilon_{k}}{\gamma}(x^{k})-\widehat{\alpha}_k \lambda 
\left(c_1-2^{2-p}\mu^{p-1}c_2\right)\Vert \nabla\fgam{\gf}{p,\varepsilon_k}{\gamma}(x^k)\Vert^{1+\vartheta}+\varepsilon_{k+1}$} \label{alg:first:inwh}
\State Set $\widehat{\alpha}_k=\upsilon\widehat{\alpha}_k$ and compute $\fgam{\gf}{p,\varepsilon_{k+1}}{\gamma}(x^{k}+\widehat{\alpha}_k d^k)$
\EndWhile \label{alg:first:inendwh}
\State  \label{alg:first:xk+1} Set $\alpha_k=\widehat{\alpha}_k$ and $x^{k+1}=x^k+\alpha_k d^k$, $k=k+1$.
\EndWhile
\end{algorithmic}
\end{algorithm}

\begin{theorem}[Well-definedness of Algorithm~\ref{alg:first}]\label{th:welldf:alg:first}
Let Assumptions~\ref{assum:approx}~and~\ref{assum:rgamma} hold.
If the function $\fgam{\gf}{p}{\gamma}$ is differentiable at point $x^k$ and $\mu<\left(\frac{c_1}{c_2 2^{2-p}}\right)^{\frac{1}{p-1}}$, the loop in Steps~\ref{alg:first:inwh}-\ref{alg:first:inendwh} of Algorithm~\ref{alg:first} is terminated after a finite number of backtracking steps. Additionally, $\fgam{\gf}{p}{\gamma}$ is differentiable at $x^{k+1}$  generated by Step \ref{alg:first:xk+1}. 
\end{theorem}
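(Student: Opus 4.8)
The plan is to argue in two stages, mirroring the structure of Theorem~\ref{th:welldfalg}: first that the backtracking loop in Steps~\ref{alg:first:inwh}--\ref{alg:first:inendwh} stops after finitely many trials, and then that the accepted iterate $x^{k+1}$ stays inside the differentiability ball $\mb(0;r)=\mb(0;R)$. For finite termination the key is to translate the algorithmic Armijo test, which is written in terms of the \emph{inexact} gradient norm $\Vert\nabla\fgam{\gf}{p,\varepsilon_k}{\gamma}(x^k)\Vert$, into the \emph{exact} descent estimate delivered by Lemma~\ref{lem:lem:disdir}. Since $d^k$ obeys \eqref{eq:dir:th:conv:alg:first:tem}, I would first insert the two direction conditions into the bound \eqref{eq:lem:disdir} established inside the proof of Lemma~\ref{lem:disdir} to obtain
\[
\langle \nabla\fgam{\gf}{p}{\gamma}(x^k), d^k\rangle \leq -\left(c_1 - 2^{2-p}\mu^{p-1}c_2\right)\Vert\nabla\fgam{\gf}{p,\varepsilon_k}{\gamma}(x^k)\Vert^{1+\vartheta},
\]
where the coefficient is strictly positive precisely because $\mu<\left(\frac{c_1}{c_2 2^{2-p}}\right)^{\frac{1}{p-1}}$.

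Next I would invoke Lemma~\ref{lem:lem:disdir}: as $x^k$ lies in the open set $\mb(0;r)$ on which $\fgam{\gf}{p}{\gamma}\in\mathcal{C}^1$ (Assumption~\ref{assum:rgamma}), for the prescribed $\lambda\in(0,1)$ there is $\ov\alpha>0$ with
\[
\fgam{\gf}{p,\varepsilon_{k+1}}{\gamma}(x^k+\alpha d^k)\leq \fgam{\gf}{p,\varepsilon_k}{\gamma}(x^k)+\alpha\lambda\langle\nabla\fgam{\gf}{p}{\gamma}(x^k), d^k\rangle+\varepsilon_{k+1},\qquad \forall\,\alpha\in(0,\ov\alpha).
\]
Substituting the previous directional estimate turns the right-hand side into exactly the stopping side of the inner while-test in Step~\ref{alg:first:inwh}. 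Since each failed trial rescales $\widehat\alpha_k$ by $\upsilon\in(0,1)$, after finitely many reductions we reach $\widehat\alpha_k\in(0,\ov\alpha)$, where the test is satisfied; this yields finite termination.

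For the differentiability claim I would show $x^{k+1}\in\mathcal{S}_2$ and then use $\mathcal{S}_2\subseteq\mb(0;R)=\mb(0;r)$ (Remark~\ref{rem:levelsetch} and Assumption~\ref{assum:rgamma}) together with $\fgam{\gf}{p}{\gamma}\in\mathcal{C}^1(\mb(0;r))$. At acceptance the Armijo inequality holds, and dropping its nonnegative descent term gives $\fgam{\gf}{p,\varepsilon_{k+1}}{\gamma}(x^{k+1})\leq\fgam{\gf}{p,\varepsilon_k}{\gamma}(x^k)+\varepsilon_{k+1}$. Propagating the inductive bound $\fgam{\gf}{p,\varepsilon_k}{\gamma}(x^k)\leq\fgam{\gf}{p}{\gamma}(x^0)+\sum_{j=0}^{k}\varepsilon_j$ (whose base case is \eqref{eq:ep-approx:fun} applied at $x^0$) one step forward, and using $\sum_{j=0}^{k+1}\varepsilon_j\leq\ov\varepsilon$, yields $\fgam{\gf}{p,\varepsilon_{k+1}}{\gamma}(x^{k+1})\leq\fgam{\gf}{p}{\gamma}(x^0)+\ov\varepsilon$. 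Because $\fgam{\gf}{p}{\gamma}(x^{k+1})\leq\fgam{\gf}{p,\varepsilon_{k+1}}{\gamma}(x^{k+1})$, this places $x^{k+1}$ in $\mathcal{S}_2$ and hence in the differentiability ball.

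The routine part is the algebraic passage from \eqref{eq:dir:th:conv:alg:first:tem} and \eqref{eq:lem:disdir} to the exact directional-derivative bound. The main obstacle is the second claim: because the inexact oracle injects an additive $\varepsilon_{k+1}$ at each step, monotone decrease of $\fgam{\gf}{p}{\gamma}$ is lost, so one cannot simply write $\fgam{\gf}{p}{\gamma}(x^{k+1})\leq\fgam{\gf}{p}{\gamma}(x^k)$. The care lies in the $\varepsilon$-bookkeeping — exploiting the summability $\ov\varepsilon=\sum_k\varepsilon_k<\infty$ from Assumption~\ref{assum:approx}~\ref{assum:approx:eps} and the sandwich $\fgam{\gf}{p}{\gamma}(x)\leq\fgam{\gf}{p,\varepsilon}{\gamma}(x)\leq\fgam{\gf}{p}{\gamma}(x)+\varepsilon$ — to keep the accumulated error below $2\ov\varepsilon$ uniformly in $k$, which is exactly what the enlarged threshold in the definition of $\mathcal{S}_2$ is designed to absorb.
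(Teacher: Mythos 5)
Your proposal is correct and follows essentially the same route as the paper's proof: finite termination of the backtracking loop via the directional estimate from \eqref{eq:lem:disdir} combined with Lemma~\ref{lem:lem:disdir}, and then an inductive $\varepsilon$-telescoping argument placing each iterate in $\mathcal{S}_2\subseteq\mb(0;r)$ where HOME is differentiable. The only (immaterial) difference is that you telescope the inexact values directly, obtaining the slightly tighter bound $\fgam{\gf}{p}{\gamma}(x^{k+1})\leq\fgam{\gf}{p}{\gamma}(x^0)+\ov{\varepsilon}$, whereas the paper converts to exact values at each step and uses the full $2\ov{\varepsilon}$ slack of $\mathcal{S}_2$.
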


\begin{proof}
Since $x^0\in \mb(0; R)\subseteq \mb(0; r)$, the function $\fgam{\gf}{p}{\gamma}$ is differentiable at $x^0$. Additionally, since  $d^0\in \R^n$ satisfies \eqref{eq:dir:th:conv:alg:first:tem}, proof of Lemmas~\ref{lem:disdir}~and~\ref{lem:lem:disdir} imply there exists some $\ov{\alpha}>0$ such that, for all  $\alpha\in (0, \ov{\alpha})$,
\begin{align*}
 \fgam{\gf}{p,\varepsilon_{1}}{\gamma}(x^{0}+\alpha d^0)&\leq \fgam{\gf}{p,\varepsilon_{0}}{\gamma}(x^{0})+\alpha \lambda \langle \nabla\fgam{\gf}{p}{\gamma}(x^0), d^0\rangle+\varepsilon_{1}
 \\&\leq \fgam{\gf}{p,\varepsilon_{0}}{\gamma}(x^{0})+\alpha \lambda\left( \Vert  \nabla\fgam{\gf}{p}{\gamma}(x^0) - \nabla\fgam{\gf}{p,\varepsilon_0}{\gamma}(x^0)\Vert \Vert d^0\Vert
 +\langle\nabla\fgam{\gf}{p,\varepsilon_0}{\gamma}(x^0), d^0\rangle\right)+\varepsilon_{1}
  \\&\leq \fgam{\gf}{p,\varepsilon_{0}}{\gamma}(x^{0})+\alpha \lambda\left(2^{2-p}\mu^{p-1} \Vert \nabla\fgam{\gf}{p,\varepsilon_0}{\gamma}(x^0)\Vert \Vert d^0\Vert
 +\langle\nabla\fgam{\gf}{p,\varepsilon_0}{\gamma}(x^0), d^0\rangle\right)+\varepsilon_{1}
 \\&\leq \fgam{\gf}{p,\varepsilon_{0}}{\gamma}(x^{0})-\alpha \lambda\left(c_1-2^{2-p}\mu^{p-1}c_2\right) \Vert \nabla\fgam{\gf}{p,\varepsilon_0}{\gamma}(x^0)\Vert^{1+\vartheta}+\varepsilon_{1}
\end{align*}
Hence,  the loop in Steps~\ref{alg:first:inwh}-\ref{alg:first:inendwh} terminates after a finite number of backtracking steps. Additionally, by fixing some  $\alpha_0\in (0, \ov{\alpha})$ and defining $x^1:=x^{0}+\alpha_0 d^0$, we have
\begin{align*}
  \fgam{\gf}{p,\varepsilon_{1}}{\gamma}(x^1)&\leq  
  \fgam{\gf}{p,\varepsilon_{0}}{\gamma}(x^{0})+\varepsilon_{1}.
\end{align*}
This implies
\begin{align*}
\fgam{\gf}{p}{\gamma}(x^{1})&\leq\fgam{\gf}{p}{\gamma}(x^0)
+\varepsilon_0+\varepsilon_1\leq \fgam{\gf}{p}{\gamma}(x^0)
+2\overline{\varepsilon}.
\end{align*}
Thus, $x^1\in \mathcal{S}_2$, i.e., $\Vert x^1\Vert<R$. Thus, $\fgam{\gf}{p}{\gamma}$ is differentiable at $x^1$.
 By repeating this approach, we obtain
\begin{align*}
\fgam{\gf}{p}{\gamma}(x^{k+1})&\leq\fgam{\gf}{p}{\gamma}(x^k)
+\varepsilon_k+\varepsilon_{k+1}
\\&\leq \fgam{\gf}{p}{\gamma}(x^0)+\varepsilon_0+2\left(\varepsilon_1+\ldots+\varepsilon_k\right)+\varepsilon_{k+1}
\\&\leq \fgam{\gf}{p}{\gamma}(x^0)+2\overline{\varepsilon}.
\end{align*}
Therefore, we obtain the sequence $\{x^k\}_{k\geq 0}\subseteq \mathcal{S}_2$, which shows well-definedness of Algorithm~\ref{alg:first}.
\end{proof}

\subsection{{\bf Subsequential convergence analysis}} \label{subsec:glconv}
We here investigate the subsequential convergence of the sequence generated by instances of Algorithm~\ref{alg:fram:nonmono} and its rate of convergence in terms of $\Vert \nabla\fgam{\gf}{p}{\gamma}(x^k)\Vert$ and $\Vert \nabla\fgam{\gf}{p,\varepsilon_k}{\gamma}(x^k)\Vert$.

In Algorithm~\ref{alg:ingrad2}, after each iteration, it holds that
  \begin{equation}\label{eq:discentforseq:glconv}
    \fgam{\gf}{p,\varepsilon_{k+1}}{\gamma}(x^{k+1})\leq \fgam{\gf}{p,\varepsilon_k}{\gamma}(x^k)
    -\alpha_k\left(c_1-2^{2-p}\mu^{p-1}c_2 -\frac{2\ov{L}_{k+1}}{p+1}\alpha_k^{\frac{p-1}{2}}c_2^{\frac{p+1}{2}}\right)\Vert\nabla\fgam{\gf}{p,\varepsilon_k}{\gamma}(x^k)\Vert^{\frac{p+1}{p-1}}
+\varepsilon_{k+1}.
 \end{equation}
By setting 
\[\varrho_1:=\min\left\{ \frac{\gamma^{\frac{2}{p-1}}}{c_2}, \left(\frac{c_1-2^{2-p}\mu^{p-1}c_2}{c_2^{\frac{p+1}{2}}\mathcal{L}_p}\right)^{\frac{2}{p-1}}\right\},\quad \widehat{\varrho}_1: =\varrho\left(c_1-2^{2-p}\mu^{p-1}c_2 -\frac{2\mathcal{L}_p}{p+1}\varrho_1^{\frac{p-1}{2}}c_2^{\frac{p+1}{2}}\right),
\]
the inequality \eqref{eq:relbetalphbaralp} leads to
\begin{align}\label{eq:th:conKL:a:tem}
\fgam{\gf}{p,\varepsilon_{k+1}}{\gamma}(x^{k+1})\leq \fgam{\gf}{p,\varepsilon_k}{\gamma}(x^k)
    -\widehat{\varrho}_1\Vert\nabla\fgam{\gf}{p,\varepsilon_k}{\gamma}(x^k)\Vert^{\frac{p+1}{p-1}}
+\varepsilon_{k+1}.
\end{align}
Moreover, if in Algorithm~\ref{alg:first} the sequence $\{\alpha_k\}_{k\in \Nz}$ is bounded away from zero by some $\varrho_2>0$
and we have
\begin{align}\label{eq:ineq:alg:first}
  \fgam{\gf}{p,\varepsilon_{k+1}}{\gamma}(x^{k+1})&\leq
 \fgam{\gf}{p,\varepsilon_{k}}{\gamma}(x^{k})-\alpha_k\lambda\left(c_1-2^{2-p}\mu^{p-1}c_2\right)\Vert \nabla\fgam{\gf}{p,\varepsilon_k}{\gamma}(x^k)\Vert^{1+\vartheta}
  \nonumber\\&\leq
 \fgam{\gf}{p,\varepsilon_{k}}{\gamma}(x^{k})-\varrho_2\lambda\left(c_1-2^{2-p}\mu^{p-1}c_2\right)\Vert \nabla\fgam{\gf}{p,\varepsilon_k}{\gamma}(x^k)\Vert^{1+\vartheta}.
\end{align}
Setting $\widehat{\varrho}_2:=\varrho_2\lambda\left(c_1-2^{2-p}\mu^{p-1}c_2\right)$ ensures
\begin{align}\label{eq:th:conKL:aa}
\fgam{\gf}{p,\varepsilon_{k+1}}{\gamma}(x^{k+1})\leq \fgam{\gf}{p,\varepsilon_k}{\gamma}(x^k)
    -\widehat{\varrho}_2\Vert\nabla\fgam{\gf}{p,\varepsilon_k}{\gamma}(x^k)\Vert^{1+\vartheta}
+\varepsilon_{k+1}.
\end{align}
A comparison between \eqref{eq:th:welldfalg:dir} and \eqref{eq:dir:th:conv:alg:first:tem}, and also between 
\eqref{eq:th:conKL:a:tem} and \eqref{eq:th:conKL:aa} motivates the quest for studying the convergence analysis for the general framework that was described in Algorithm~\ref{alg:fram:nonmono}.

\begin{theorem}[Convergence rate of Algorithm~\ref{alg:fram:nonmono}]\label{th:conv:alg:nonmono}
Let $\{x^k\}_{k\in \Nz}$ be generated by Algorithm~\ref{alg:fram:nonmono}. Then,
 $\Vert \nabla\fgam{\gf}{p,\varepsilon_k}{\gamma}(x^k)\Vert\to 0$ and $\Vert \nabla\fgam{\gf}{p}{\gamma}(x^k)\Vert\to 0$.
In addition, 
\begin{equation}\label{eq:conv:alg:nonmono1}
\min_{0\leq k\leq N} \Vert \nabla\fgam{\gf}{p,\varepsilon_k}{\gamma}(x^k)\Vert\leq \left(
\frac{\fgam{\gf}{p, \varepsilon_0}{\gamma}(x^0)-\fgam{\gf}{p}{\gamma}(x^*)+\overline{\varepsilon}}{(N+1)\widehat{\varrho}}
\right)^{\frac{1}{1+\vartheta}},
\end{equation}
and
\begin{equation}\label{eq:conv:alg:nonmono2}
\min_{0\leq k\leq N} \Vert \nabla\fgam{\gf}{p}{\gamma}(x^k)\Vert\leq (1+2^{2-p}\mu^{p-1}) \left(
\frac{\fgam{\gf}{p, \varepsilon_0}{\gamma}(x^0)-\fgam{\gf}{p}{\gamma}(x^*)+\overline{\varepsilon}}{(N+1)\widehat{\varrho}}
\right)^{\frac{1}{1+\vartheta}}.
\end{equation}
\end{theorem}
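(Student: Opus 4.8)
The plan is to work entirely from the generic one-step estimate \eqref{eq:genstract:ineq} that every instance of Algorithm~\ref{alg:fram:nonmono} satisfies, namely $\fgam{\gf}{p,\varepsilon_{k+1}}{\gamma}(x^{k+1})\le \fgam{\gf}{p,\varepsilon_k}{\gamma}(x^k)-\widehat{\varrho}\Vert\nabla\fgam{\gf}{p,\varepsilon_k}{\gamma}(x^k)\Vert^{1+\vartheta}+\varepsilon_{k+1}$. Rearranging this inequality and summing over $k=0,\dots,N$ makes the function-value terms telescope, so that $\widehat{\varrho}\sum_{k=0}^{N}\Vert\nabla\fgam{\gf}{p,\varepsilon_k}{\gamma}(x^k)\Vert^{1+\vartheta}\le \fgam{\gf}{p,\varepsilon_0}{\gamma}(x^0)-\fgam{\gf}{p,\varepsilon_{N+1}}{\gamma}(x^{N+1})+\sum_{k=0}^{N}\varepsilon_{k+1}$. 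The trailing sum of error terms is controlled by $\sum_{k=1}^{\infty}\varepsilon_k\le\overline{\varepsilon}$, which is finite by Assumption~\ref{assum:approx}~\ref{assum:approx:eps}.

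Next I would supply a uniform lower bound for the telescoped term $\fgam{\gf}{p,\varepsilon_{N+1}}{\gamma}(x^{N+1})$, and this is the only nonroutine ingredient. Two observations suffice. First, by its definition \eqref{eq:approxFuncValueHOME}, the inexact value $\fgam{\gf}{p,\varepsilon_{N+1}}{\gamma}(x^{N+1})$ is the proximal objective $\gf(\cdot)+\tfrac{1}{p\gamma}\Vert x^{N+1}-\cdot\Vert^p$ evaluated at a specific admissible point, hence it cannot lie below the infimum, giving $\fgam{\gf}{p,\varepsilon_{N+1}}{\gamma}(x^{N+1})\ge\fgam{\gf}{p}{\gamma}(x^{N+1})$. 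Second, since $x^*$ minimizes $\gf$, for every $x$ one has $\fgam{\gf}{p}{\gamma}(x)\ge\inf_y\gf(y)=\gf(x^*)\ge\fgam{\gf}{p}{\gamma}(x^*)$, where the last inequality uses the feasible choice $y=x^*$ in the infimum defining $\fgam{\gf}{p}{\gamma}(x^*)$. Thus $x^*$ is a global minimizer of HOME and $\fgam{\gf}{p}{\gamma}(x^{N+1})\ge\fgam{\gf}{p}{\gamma}(x^*)$. Combining the two observations yields $\fgam{\gf}{p,\varepsilon_{N+1}}{\gamma}(x^{N+1})\ge\fgam{\gf}{p}{\gamma}(x^*)$, and therefore $\widehat{\varrho}\sum_{k=0}^{N}\Vert\nabla\fgam{\gf}{p,\varepsilon_k}{\gamma}(x^k)\Vert^{1+\vartheta}\le\fgam{\gf}{p,\varepsilon_0}{\gamma}(x^0)-\fgam{\gf}{p}{\gamma}(x^*)+\overline{\varepsilon}$.

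From this finite uniform bound the conclusions follow quickly. Bounding the sum below by $(N+1)$ times its smallest term and taking the $(1+\vartheta)$-th root gives exactly \eqref{eq:conv:alg:nonmono1}. Letting $N\to\infty$ shows the series $\sum_{k}\Vert\nabla\fgam{\gf}{p,\varepsilon_k}{\gamma}(x^k)\Vert^{1+\vartheta}$ converges, so its general term tends to zero and $\Vert\nabla\fgam{\gf}{p,\varepsilon_k}{\gamma}(x^k)\Vert\to0$. To pass to the exact gradient I would invoke the inexactness estimate \eqref{eq:relapprox:b} together with the triangle inequality, obtaining the pointwise relation $\Vert\nabla\fgam{\gf}{p}{\gamma}(x^k)\Vert\le(1+2^{2-p}\mu^{p-1})\Vert\nabla\fgam{\gf}{p,\varepsilon_k}{\gamma}(x^k)\Vert$. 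This transfers the convergence to $\Vert\nabla\fgam{\gf}{p}{\gamma}(x^k)\Vert\to0$, and evaluating it at the index $k$ realizing the minimum in \eqref{eq:conv:alg:nonmono1} produces \eqref{eq:conv:alg:nonmono2}.

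I expect the main obstacle to be precisely the lower-bounding step of the second paragraph: one must be careful that the telescoped trailing term is controlled by the true optimal envelope value $\fgam{\gf}{p}{\gamma}(x^*)$ and not by a potentially smaller inexact quantity, which is why the two-sided comparison $\fgam{\gf}{p}{\gamma}(x^*)\le\fgam{\gf}{p}{\gamma}(x^{N+1})\le\fgam{\gf}{p,\varepsilon_{N+1}}{\gamma}(x^{N+1})$ and the identification of $x^*$ as a global minimizer of HOME are essential. Everything else is routine telescoping, a monotone estimate on the partial sums, and a single application of \eqref{eq:relapprox:b}.
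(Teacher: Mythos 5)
Your proposal is correct and follows essentially the same route as the paper's proof: rearrange the generic descent inequality \eqref{eq:genstract:ineq}, telescope over $k=0,\dots,N$, control the trailing term via the chain $\fgam{\gf}{p}{\gamma}(x^*)\leq \fgam{\gf}{p}{\gamma}(x^{N+1})\leq \fgam{\gf}{p,\varepsilon_{N+1}}{\gamma}(x^{N+1})$, and then pass to the exact gradient with \eqref{eq:relapprox:b}. The only difference is cosmetic: you explicitly justify that $x^*$ minimizes HOME (via $\fgam{\gf}{p}{\gamma}(x)\geq \gf(x^*)\geq \fgam{\gf}{p}{\gamma}(x^*)$), a step the paper uses without comment.
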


\begin{proof}
From \eqref{eq:genstract:ineq}, we obtain
\begin{align*}
\widehat{\varrho} \Vert \nabla\fgam{\gf}{p,\varepsilon_k}{\gamma}(x^k)\Vert^{1+\vartheta}\leq 
   \fgam{\gf}{p,\varepsilon_{k}}{\gamma}(x^{k})-\fgam{\gf}{p,\varepsilon_{k+1}}{\gamma}(x^{k+1})
+\varepsilon_{k+1}.
\end{align*}
Together with $\fgam{\gf}{p}{\gamma}(x^*)\leq \fgam{\gf}{p}{\gamma}(x^{N+1})\leq \fgam{\gf}{p, \varepsilon_{N+1}}{\gamma}(x^{N+1})$ for $N\in \Nz$, this ensures
\[
\sum_{k=0}^{N} \Vert \nabla\fgam{\gf}{p,\varepsilon_k}{\gamma}(x^k)\Vert^{1+\theta} \leq 
\frac{\fgam{\gf}{p, \varepsilon_0}{\gamma}(x^0)-\fgam{\gf}{p}{\gamma}(x^*)+\overline{\varepsilon}}{\widehat{\varrho}},
\]
i.e., $\Vert \nabla\fgam{\gf}{p,\varepsilon_k}{\gamma}(x^k)\Vert\to 0$ as $k\to \infty$. Then, \eqref{eq:relapprox:b} implies $\Vert \nabla\fgam{\gf}{p}{\gamma}(x^k)\Vert\to 0$, and \eqref{eq:conv:alg:nonmono1} holds.
Reusing \eqref{eq:relapprox:b}, we have
$\Vert  \nabla\fgam{\gf}{p}{\gamma}(x^k)\Vert\leq (1+2^{2-p}\mu^{p-1})\Vert \nabla\fgam{\gf}{p,\varepsilon_k}{\gamma}(x)\Vert$, i.e., $\Vert  \nabla\fgam{\gf}{p}{\gamma}(x^k)\Vert\to 0$ and
\[
 \min_{0\leq k\leq N} \Vert \nabla\fgam{\gf}{p}{\gamma}(x^k)\Vert\leq  (1+2^{2-p}\mu^{p-1})\min_{0\leq k\leq N} \Vert \nabla\fgam{\gf}{p,\varepsilon_k}{\gamma}(x^k)\Vert.
\]
Together with \eqref{eq:conv:alg:nonmono1}, this implies that \eqref{eq:conv:alg:nonmono2} holds.
\end{proof}

\begin{corollary}[Convergence rate of Algorithm~\ref{alg:ingrad2}]\label{cor:con1}
Let $\{x^k\}_{k\in \Nz}$ be generated by Algorithm~\ref{alg:ingrad2}.
Then, $\Vert \nabla\fgam{\gf}{p,\varepsilon_k}{\gamma}(x^k)\Vert\to 0$, $\Vert \nabla\fgam{\gf}{p}{\gamma}(x^k)\Vert\to 0$,
\begin{equation}\label{eq:th:con1}
\min_{0\leq k\leq N} \Vert \nabla\fgam{\gf}{p,\varepsilon_k}{\gamma}(x^k)\Vert\leq \left(
\frac{\fgamepsk{\gf}{p}{\gamma}(x^0)-\fgam{\gf}{p}{\gamma}(x^*)+\overline{\varepsilon}}{(N+1)\varrho\left(c_1-2^{2-p}\mu^{p-1}c_2 -\frac{2\mathcal{L}_p}{p+1}\varrho^{\frac{p-1}{2}}c_2^{\frac{p+1}{2}}\right)}
\right)^{\frac{p-1}{p+1}},
\end{equation}
and
\begin{equation}\label{eqb:th:con1}
 \min_{0\leq k\leq N} \Vert \nabla\fgam{\gf}{p}{\gamma}(x^k)\Vert\leq  (1+2^{2-p}\mu^{p-1}) \left(
\frac{\fgamepsk{\gf}{p}{\gamma}(x^0)-\fgam{\gf}{p}{\gamma}(x^*)+\overline{\varepsilon}}
{(N+1)\varrho\left(c_1-2^{2-p}\mu^{p-1}c_2 -\frac{2\mathcal{L}_p}{p+1}\varrho^{\frac{p-1}{2}}c_2^{\frac{p+1}{2}}\right)}
\right)^{\frac{p-1}{p+1}},
\end{equation}
for $\varrho:=\min\left\{ \frac{\gamma^{\frac{2}{p-1}}}{c_2}, \left(\frac{c_1-2^{2-p}\mu^{p-1}c_2}{c_2^{\frac{p+1}{2}}\mathcal{L}_p}\right)^{\frac{2}{p-1}}\right\}$.
\end{corollary}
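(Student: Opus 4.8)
The plan is to recognize Algorithm~\ref{alg:ingrad2} as a particular instance of the generic scheme in Algorithm~\ref{alg:fram:nonmono} and then invoke Theorem~\ref{th:conv:alg:nonmono} directly. Concretely, the goal is to verify that the iterates produced by Algorithm~\ref{alg:ingrad2} satisfy the master descent inequality \eqref{eq:genstract:ineq} with the specific choices $\vartheta=\frac{2}{p-1}$ (so that $1+\vartheta=\frac{p+1}{p-1}$) and $\widehat{\varrho}=\varrho\bigl(c_1-2^{2-p}\mu^{p-1}c_2-\frac{2\mathcal{L}_p}{p+1}\varrho^{\frac{p-1}{2}}c_2^{\frac{p+1}{2}}\bigr)$; once this identification is made, both limit statements and both rate bounds follow by substituting these values into Theorem~\ref{th:conv:alg:nonmono}.

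First I would recall that the well-definedness of Algorithm~\ref{alg:ingrad2}, obtained from Theorem~\ref{th:welldfalg}, guarantees that the inner loop in Steps~\ref{alg:ingrad2:startinner}--\ref{alg:ingrad2:endinner} terminates after finitely many backtracking steps, so that each accepted iterate $x^{k+1}$ satisfies the acceptance test \eqref{eq:discentforseq:2} with the adaptively chosen pair $(\ov{L}_{k+1},\alpha_k)$. The key step is then to replace these iteration-dependent quantities by the fixed pair $(\mathcal{L}_p,\varrho)$ with $\varrho:=\min\bigl\{\frac{\gamma^{2/(p-1)}}{c_2},(\frac{c_1-2^{2-p}\mu^{p-1}c_2}{c_2^{(p+1)/2}\mathcal{L}_p})^{2/(p-1)}\bigr\}$: using the monotonicity relation \eqref{eq:relbetalphbaralp}, which compares the decrease factor $\alpha\mapsto\alpha\bigl(c_1-2^{2-p}\mu^{p-1}c_2-\frac{2L}{p+1}\alpha^{\frac{p-1}{2}}c_2^{\frac{p+1}{2}}\bigr)$ evaluated at the admissible step-sizes, the acceptance inequality \eqref{eq:discentforseq:2} implies the uniform inequality \eqref{eq:th:conKL:a:tem}. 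This is exactly \eqref{eq:genstract:ineq} for Algorithm~\ref{alg:fram:nonmono} with the stated $\vartheta$ and $\widehat{\varrho}$.

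With this identification in hand, I would apply Theorem~\ref{th:conv:alg:nonmono} verbatim. The conclusions $\Vert\nabla\fgam{\gf}{p,\varepsilon_k}{\gamma}(x^k)\Vert\to 0$ and $\Vert\nabla\fgam{\gf}{p}{\gamma}(x^k)\Vert\to 0$ are immediate, while the generic bounds \eqref{eq:conv:alg:nonmono1} and \eqref{eq:conv:alg:nonmono2} become exactly \eqref{eq:th:con1} and \eqref{eqb:th:con1} once the exponent $\frac{1}{1+\vartheta}$ is rewritten as $\frac{p-1}{p+1}$ and $\widehat{\varrho}$ is expanded into the corresponding expression; the prefactor $1+2^{2-p}\mu^{p-1}$ in the second bound is inherited from inequality \eqref{eq:relapprox:b} exactly as in the proof of Theorem~\ref{th:conv:alg:nonmono}. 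The only genuinely substantive point is the passage from the adaptive acceptance test \eqref{eq:discentforseq:2} to the uniform descent \eqref{eq:th:conKL:a:tem} via \eqref{eq:relbetalphbaralp}; everything else is a matter of matching constants and exponents and quoting the general theorem, so the argument is short.
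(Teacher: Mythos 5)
Your proposal is correct and takes essentially the same route as the paper's own proof: Algorithm~\ref{alg:ingrad2} is treated as an instance of Algorithm~\ref{alg:fram:nonmono}, the acceptance test \eqref{eq:discentforseq:2} is upgraded to the uniform descent inequality \eqref{eq:th:conKL:a:tem} via the comparison \eqref{eq:relbetalphbaralp} (equivalently, the paper's observation that $\ov{L}_{k+1}\leq \mathcal{L}_p$ gives $\varrho\leq\alpha_k$), and Theorem~\ref{th:conv:alg:nonmono} is then invoked with $\vartheta=\frac{2}{p-1}$ and $\widehat{\varrho}=\varrho\bigl(c_1-2^{2-p}\mu^{p-1}c_2-\frac{2\mathcal{L}_p}{p+1}\varrho^{\frac{p-1}{2}}c_2^{\frac{p+1}{2}}\bigr)$. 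Like the paper, you implicitly restrict to the case $\ov{L}_{k+1}\leq\mathcal{L}_p$ when passing to the uniform constants, so this is a shared simplification rather than a gap relative to the paper's argument.
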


\begin{proof}
If $\ov{L}_{k+1}\leq \mathcal{L}_p$, then $\varrho\leq \alpha_k$ and
\begin{align*}
\fgam{\gf}{p,\varepsilon_{k+1}}{\gamma}(x^{k+1})&\leq \fgam{\gf}{p,\varepsilon_k}{\gamma}(x^k)
    -\alpha_k\left(c_1-2^{2-p}\mu^{p-1}c_2 -\frac{2\ov{L}_{k+1}}{p+1}\alpha_k^{\frac{p-1}{2}}c_2^{\frac{p+1}{2}}\right)\Vert\nabla\fgam{\gf}{p,\varepsilon_k}{\gamma}(x^k)\Vert^{\frac{p+1}{p-1}}
+\varepsilon_{k+1}
\\& \leq \fgam{\gf}{p,\varepsilon_k}{\gamma}(x^k) -
\varrho\left(c_1-2^{2-p}\mu^{p-1}c_2 -\frac{2\mathcal{L}_p}{p+1}\varrho^{\frac{p-1}{2}}c_2^{\frac{p+1}{2}}\right)\Vert\nabla\fgam{\gf}{p,\varepsilon_k}{\gamma}(x^k)\Vert^{\frac{p+1}{p-1}}
+\varepsilon_{k+1}.
\end{align*}
Setting $\vartheta=\frac{2}{p-1}$ and $\widehat{\varrho}:=\varrho\left(c_1-2^{2-p}\mu^{p-1}c_2 -\frac{2\mathcal{L}_p}{p+1}\varrho^{\frac{p-1}{2}}c_2^{\frac{p+1}{2}}\right)$
in Theorem \ref{th:conv:alg:nonmono}, the desired results hold.
\end{proof}

\begin{corollary}[Convergence rate of Algorithm~\ref{alg:first}]\label{cor:conv:alg:first}
Let $\{x^k\}_{k\in \Nz}$ be generated by Algorithm~\ref{alg:first} such that $\{\alpha_k\}_{k\in \Nz}$ is bounded away from zero by some $\ov{\alpha}>0$
and $\mu<\left(\frac{c_1}{c_2 2^{2-p}}\right)^{\frac{1}{p-1}}$. Then,
 $\Vert \nabla\fgam{\gf}{p,\varepsilon_k}{\gamma}(x^k)\Vert\to 0$, $\Vert \nabla\fgam{\gf}{p}{\gamma}(x^k)\Vert\to 0$, 
\begin{equation}\label{eq:conv:alg:first1}
\min_{0\leq k\leq N} \Vert \nabla\fgam{\gf}{p,\varepsilon_0}{\gamma}(x^k)\Vert\leq \left(
\frac{\fgam{\gf}{p, \varepsilon_0}{\gamma}(x^0)-\fgam{\gf}{p}{\gamma}(x^*)+\overline{\varepsilon}}{(N+1)\ov{\alpha} \lambda\left(c_1-2^{2-p}\mu^{p-1}c_2\right)}
\right)^{\frac{1}{1+\vartheta}},
\end{equation}
and
\begin{equation}\label{eq:conv:alg:first2}
\min_{0\leq k\leq N} \Vert \nabla\fgam{\gf}{p}{\gamma}(x^k)\Vert\leq (1+2^{2-p}\mu^{p-1}) \left(
\frac{\fgam{\gf}{p, \varepsilon_0}{\gamma}(x^0)-\fgam{\gf}{p}{\gamma}(x^*)+\overline{\varepsilon}}{(N+1)\ov{\alpha} \lambda\left(c_1-2^{2-p}\mu^{p-1}c_2\right)}
\right)^{\frac{1}{1+\vartheta}}.
\end{equation}
\end{corollary}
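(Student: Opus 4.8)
The plan is to recognize that Algorithm~\ref{alg:first} (IDEALS) is a concrete instance of the generic scheme in Algorithm~\ref{alg:fram:nonmono}, and then to invoke Theorem~\ref{th:conv:alg:nonmono} directly with an appropriately identified constant. First I would appeal to Theorem~\ref{th:welldf:alg:first}: since the standing hypothesis $\mu<\left(\frac{c_1}{c_2 2^{2-p}}\right)^{\frac{1}{p-1}}$ is assumed, the inner backtracking loop in Steps~\ref{alg:first:inwh}--\ref{alg:first:inendwh} terminates after finitely many steps, and $\fgam{\gf}{p}{\gamma}$ stays differentiable along the whole sequence with $\{x^k\}_{k\in\Nz}\subseteq\mathcal{S}_2\subseteq\mb(0;r)$. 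This guarantees that every $\nabla\fgam{\gf}{p}{\gamma}(x^k)$ and $\nabla\fgam{\gf}{p,\varepsilon_k}{\gamma}(x^k)$ is well defined, so all the quantities appearing in the claimed bounds make sense.

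Next I would extract the descent inequality in exactly the form required by Algorithm~\ref{alg:fram:nonmono}. Upon termination of the inner loop, the negation of the while-condition in Step~\ref{alg:first:inwh} yields precisely
\begin{align*}
\fgam{\gf}{p,\varepsilon_{k+1}}{\gamma}(x^{k+1})\leq \fgam{\gf}{p,\varepsilon_{k}}{\gamma}(x^{k})-\alpha_k\lambda\left(c_1-2^{2-p}\mu^{p-1}c_2\right)\Vert \nabla\fgam{\gf}{p,\varepsilon_k}{\gamma}(x^k)\Vert^{1+\vartheta}+\varepsilon_{k+1}.
\end{align*}
The coefficient $c_1-2^{2-p}\mu^{p-1}c_2$ is strictly positive thanks to the constraint on $\mu$, so the standing assumption $\alpha_k\geq\ov{\alpha}$ lets me replace $\alpha_k$ by its lower bound $\ov{\alpha}$ without reversing the inequality, producing exactly \eqref{eq:th:conKL:aa}, i.e.\ the generic descent estimate \eqref{eq:genstract:ineq} with $\widehat{\varrho}=\ov{\alpha}\lambda\left(c_1-2^{2-p}\mu^{p-1}c_2\right)$. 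Since the directions $d^k$ satisfy \eqref{eq:dir:th:conv:alg:first:tem} by construction of IDEALS and the step-sizes are bounded below by $\ov{\alpha}$ and above by $1$ (because $\widehat{\alpha}_k$ is initialized at $1$ and only decreased), IDEALS fulfills every structural requirement of Algorithm~\ref{alg:fram:nonmono}.

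Finally I would apply Theorem~\ref{th:conv:alg:nonmono} with this value of $\widehat{\varrho}$. Its conclusions immediately deliver $\Vert \nabla\fgam{\gf}{p,\varepsilon_k}{\gamma}(x^k)\Vert\to 0$ and $\Vert \nabla\fgam{\gf}{p}{\gamma}(x^k)\Vert\to 0$, while substituting $\widehat{\varrho}=\ov{\alpha}\lambda\left(c_1-2^{2-p}\mu^{p-1}c_2\right)$ into \eqref{eq:conv:alg:nonmono1} and \eqref{eq:conv:alg:nonmono2} reproduces the rates \eqref{eq:conv:alg:first1} and \eqref{eq:conv:alg:first2} verbatim. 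The main, and essentially only, obstacle is the bookkeeping in the second step: one must confirm that the Armijo exit condition, after lower-bounding $\alpha_k$ by $\ov{\alpha}$, matches \eqref{eq:genstract:ineq} with precisely the stated constant and the same exponent $1+\vartheta$ coming from \eqref{eq:dir:th:conv:alg:first:tem}, so that Theorem~\ref{th:conv:alg:nonmono} applies unchanged; every remaining conclusion is then a direct transcription.
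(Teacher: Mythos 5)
Your proposal is correct and follows essentially the same route as the paper's own proof: both extract the descent inequality from the negation of the Armijo exit condition in Step~\ref{alg:first:inwh}, use $\alpha_k\geq\ov{\alpha}$ together with the positivity of $c_1-2^{2-p}\mu^{p-1}c_2$ to lower-bound the decrease, and then invoke Theorem~\ref{th:conv:alg:nonmono} with $\widehat{\varrho}=\ov{\alpha}\lambda\left(c_1-2^{2-p}\mu^{p-1}c_2\right)$. Your extra bookkeeping (well-definedness via Theorem~\ref{th:welldf:alg:first} and verification of the structural requirements of Algorithm~\ref{alg:fram:nonmono}) is left implicit in the paper but is the same argument.
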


\begin{proof}
From  Step~\ref{alg:first:inwh} of Algorithm~\ref{alg:first}, we obtain
\begin{align*}
   \fgam{\gf}{p,\varepsilon_{k+1}}{\gamma}(x^{k+1})\leq \fgam{\gf}{p,\varepsilon_{k}}{\gamma}(x^{k})-\ov{\alpha}\lambda
    \left(c_1-2^{2-p}\mu^{p-1}c_2\right) \Vert \nabla\fgam{\gf}{p,\varepsilon_k}{\gamma}(x^k)\Vert^{1+\vartheta}
+\varepsilon_{k+1}.
\end{align*}
Setting $\widehat{\varrho}:=\ov{\alpha}\lambda\left(c_1-2^{2-p}\mu^{p-1}c_2\right)$ in Theorem \ref{th:conv:alg:nonmono}, the desired results are valid.
\end{proof}

From the proof of Theorems~\ref{th:welldfalg}~and~\ref{th:welldf:alg:first} and Step~\ref{alg:fram:nonmono:newpoint} of Algorithm~\ref{alg:fram:nonmono}, we have $\{x^k\}_{k\in \mathbb{N}_0} \subseteq \mathcal{S}_2$, where, based on 
Facts~\ref{th:level-bound+locally uniform}~\ref{level-bound+locally uniform:cononx} and~\ref{lem:hiordermor:coer}, $\mathcal{S}_2$ is a compact set. Consequently, the set of cluster points of $\{x^k\}_{k\in \mathbb{N}_0}$, denoted by $\Omega(x^k)$, is a nonempty and compact subset of $\mathcal{S}_2$. The subsequent result establishes subsequential convergence to a proximal fixed point for the sequence these algorithms generate.
We recall that $\ov{x}\in\R^n$ is a \textit{proximal fixed point}, denoted as $\ov{x}\in \bs{\rm Fix}(\prox{\gh}{\gamma}{p})$, if $\ov{x}\in \prox{\gh}{\gamma}{p}(\ov{x})$.

\begin{theorem}[Subsequential convergence]\label{th:comcon}
Let $\{x^k\}_{k\in \Nz}$ be the sequence generated by either Algorithm~\ref{alg:fram:nonmono}, Algorithm~\ref{alg:ingrad2}, or Algorithm~\ref{alg:first} under assumptions of Corollary~\ref{cor:conv:alg:first}. Then, every cluster point of this sequence is a proximal fixed point.
\end{theorem}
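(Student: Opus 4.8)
The plan is to reduce the claim to the vanishing of the HOME gradient along the sequence combined with the closed-form expression for that gradient. First I would record the confinement of the iterates: by the well-definedness results (Theorem~\ref{th:welldfalg}, Theorem~\ref{th:welldf:alg:first}, and Step~\ref{alg:fram:nonmono:newpoint} of Algorithm~\ref{alg:fram:nonmono}), the whole sequence satisfies $\{x^k\}_{k\in\Nz}\subseteq\mathcal{S}_2\subseteq\mb(0;r)$. Hence, by Corollary~\ref{cor:dif:weak}, $\fgam{\gf}{p}{\gamma}\in\mathcal{C}^1(\mb(0;r))$ and $\prox{\gf}{\gamma}{p}$ is single-valued and continuous there. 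In each of the three cases, the corresponding convergence-rate result (Theorem~\ref{th:conv:alg:nonmono}, Corollary~\ref{cor:con1}, or Corollary~\ref{cor:conv:alg:first}) already yields $\Vert\nabla\fgam{\gf}{p}{\gamma}(x^k)\Vert\to 0$.

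Next I would translate this into a statement about the proximal displacement. Writing $y^k:=\prox{\gf}{\gamma}{p}(x^k)$, which is well-defined and single-valued on $\mb(0;r)$, Fact~\ref{th:diffcharact} gives
\[
\nabla\fgam{\gf}{p}{\gamma}(x^k)=\tfrac{1}{\gamma}\Vert x^k-y^k\Vert^{p-2}(x^k-y^k),
\]
so that $\Vert\nabla\fgam{\gf}{p}{\gamma}(x^k)\Vert=\tfrac{1}{\gamma}\Vert x^k-y^k\Vert^{p-1}$. Since $p>1$ and the left-hand side tends to $0$, it follows that $\Vert x^k-y^k\Vert\to 0$.

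Finally, I would pass to a cluster point. Let $\ov{x}\in\Omega(x^k)$ and choose an infinite set $J\subseteq\mathbb{N}$ with $x^j\to\ov{x}$ as $j\to\infty$, $j\in J$. From $\Vert x^j-y^j\Vert\to 0$ we also obtain $y^j\to\ov{x}$. Invoking the closedness property of HOPE in Fact~\ref{th:level-bound+locally uniform}~\ref{level-bound+locally uniform2:conv} for $y^j\in\prox{\gf}{\gamma}{p}(x^j)$ with $x^j\to\ov{x}$, every cluster point of $\{y^j\}$ lies in $\prox{\gf}{\gamma}{p}(\ov{x})$; as $y^j\to\ov{x}$, this forces $\ov{x}\in\prox{\gf}{\gamma}{p}(\ov{x})$, i.e., $\ov{x}\in\bs{\rm Fix}(\prox{\gf}{\gamma}{p})$.

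The argument is short because the heavy lifting is done beforehand: the delicate ingredients are the single-valuedness and differentiability of the prox on $\mb(0;r)$ (so the closed-form gradient applies) and the a priori confinement $\{x^k\}\subseteq\mathcal{S}_2$. Given these, the only genuine step is converting $\Vert\nabla\fgam{\gf}{p}{\gamma}(x^k)\Vert\to 0$ into $\Vert x^k-y^k\Vert\to 0$ through the positive exponent $p-1$, followed by the closedness of $\prox{\gf}{\gamma}{p}$. I do not anticipate a substantial obstacle; the one point to handle cleanly is the degenerate case $x^k=y^k$, which is harmless since there $\nabla\fgam{\gf}{p}{\gamma}(x^k)=0$ and $x^k$ is already a proximal fixed point.
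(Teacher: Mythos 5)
Your proposal is correct and follows essentially the same route as the paper: vanishing gradient along the iterates, converted into vanishing proximal displacement via the closed-form gradient expression, then closedness of HOPE (Fact~\ref{th:level-bound+locally uniform}~\ref{level-bound+locally uniform2:conv}) at a cluster point. The only difference is cosmetic: the paper works with the \emph{inexact} quantities, deducing $\Vert x^k-\prox{\gf}{\gamma}{p,\varepsilon_k}(x^k)\Vert\to 0$ from $\Vert\nabla\fgam{\gf}{p,\varepsilon_k}{\gamma}(x^k)\Vert\to 0$ and then invoking $\delta_k\downarrow 0$ to pass to the exact prox, whereas you use $\Vert\nabla\fgam{\gf}{p}{\gamma}(x^k)\Vert\to 0$ and the exact prox directly---both conclusions are supplied by Theorem~\ref{th:conv:alg:nonmono} and Corollaries~\ref{cor:con1} and~\ref{cor:conv:alg:first}, so your shortcut is equally valid.
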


\begin{proof}
Combining \eqref{eq:ep-gard-approx1} with $\Vert \nabla\fgam{\gf}{p,\varepsilon_k}{\gamma}(x^k)\Vert\rightarrow 0$ leads to $\Vert x^k-\prox{\gf}{\gamma}{p, \varepsilon_k}(x^k)\Vert\to 0$.
Additionally, from \eqref{eq:ep-approx:dist} and $\delta_k\downarrow 0$, it holds that
$\Vert \prox{\gf}{\gamma}{p, \varepsilon_k}(x^k) - \prox{\gf}{\gamma}{p}(x^k)\Vert\to 0$.
Let $\widehat{x}\in \Omega(x^k)$ be an arbitrary cluster point, and let $\{x^j\}_{j\in J\subseteq \Nz}$ be a subsequence such that $x^j\to \widehat{x}$.
For this subsequence, we obtain
\[
\Vert \prox{\gf}{\gamma}{p}(x^j) - \widehat{x}\Vert \leq \Vert x^j -  \widehat{x}\Vert+ \Vert x^j -\prox{\gf}{\gamma}{p, \varepsilon_k}(x^j)  \Vert+\Vert  \prox{\gf}{\gamma}{p, \varepsilon_k}(x^j) - \prox{\gf}{\gamma}{p}(x^j)\Vert \to 0. 
\]
Therefore, $\prox{\gf}{\gamma}{p}(x^j)\to \widehat{x}$.
By Fact~\ref{th:level-bound+locally uniform}~$\ref{level-bound+locally uniform2:conv}$, it follows that 
$\widehat{x}\in \prox{\gf}{\gamma}{p}(\widehat{x})$, which implies $\widehat{x}\in \bs{\rm Fix}(\prox{\gh}{\gamma}{p})$.
\end{proof}

\subsection{{\bf Global and linear convergence}}\label{sec:globalLinConv}
In this section, we investigate the sequence's global convergence and linear convergence rate generated by instances of Algorithm~\ref{alg:fram:nonmono}. To this end, we focus on cost functions $\gf$ such that the corresponding envelope $\fgam{\gf}{p}{\gamma}$ satisfies the KL property. Additionally, reasonable conditions on the errors associated with finding proximal approximations are required. These conditions are summarized in the following assumption.

\begin{assumption}\label{assum:eps:weak} 
\begin{enumerate}[label=(\textbf{\alph*}), font=\normalfont\bfseries, leftmargin=0.7cm]
\item \label{assum:eps:weak:b} The function $\fgam{\gf}{p}{\gamma}$ satisfies the KL property on $\Omega(x^k)\subseteq \Dom{\partial \fgam{\gf}{p}{\gamma}}$
with a desingularizing function $\phi$  that possesses the quasi-additivity property described in 
Fact~\ref{lem:Uniformized KL property}.

\item \label{assum:eps:weak:c}  
For $w_k:=2\left(\frac{2^{2-p}}{\gamma}\right)^{1+\vartheta}\widehat{\varrho}\sum_{j=k}^{\infty}\delta_j^{(p-1)(1+\vartheta)}+2\sum_{j=k}^{\infty}\varepsilon_j$ with $k\in \Nz$, we assume $ \sum_{k=0}^{\infty} ([\phi'(w_k)]^{-1})^\frac{2}{p-1}<\infty$.
\end{enumerate}
\end{assumption}
By drawing inspiration from the proof of \cite[Theorem~45]{Kabgani24itsopt}, we establish the global convergence of the sequences generated by Algorithm~\ref{alg:fram:nonmono} under the KL property.

\begin{theorem}[Global convergence under the KL property for Algorithm~\ref{alg:fram:nonmono}]\label{th:conKL}
Let Assumption~\ref{assum:eps:weak} hold, and let $\{x^k\}_{k\in \Nz}$ be a sequence generated by Algorithm~\ref{alg:fram:nonmono}. 
Then, the sequence $\{x^k\}_{k\in \Nz}$  converges to a proximal fixed point.
\end{theorem}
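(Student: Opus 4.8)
The plan is to run the standard Kurdyka--\L{}ojasiewicz finite-length argument, adapted to the inexact and nonmonotone setting, as the reference to \cite[Theorem~45]{Kabgani24itsopt} already suggests. First I would repair the lack of monotonicity in \eqref{eq:genstract:ineq} by introducing the Lyapunov surrogate $F_k:=\fgam{\gf}{p,\varepsilon_k}{\gamma}(x^k)+\sum_{j\geq k+1}\varepsilon_j$. Since $\varepsilon_{k+1}+\sum_{j\geq k+2}\varepsilon_j=\sum_{j\geq k+1}\varepsilon_j$, inequality \eqref{eq:genstract:ineq} yields $F_{k+1}\leq F_k-\widehat{\varrho}\Vert\nabla\fgam{\gf}{p,\varepsilon_k}{\gamma}(x^k)\Vert^{1+\vartheta}$, so $\{F_k\}$ is non-increasing. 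Because $\{x^k\}\subseteq\mathcal{S}_2$ is bounded and $\fgam{\gf}{p}{\gamma}$ is continuous and bounded below by $\fgam{\gf}{p}{\gamma}(x^*)$ on the compact set $\mathcal{S}_2$, the sequence $\{F_k\}$ is bounded below and hence converges to some $F^*$. Using $0\leq\fgam{\gf}{p,\varepsilon_k}{\gamma}(x^k)-\fgam{\gf}{p}{\gamma}(x^k)<\varepsilon_k\to 0$ (from \eqref{eq:ep-approx:fun}) together with $\sum_{j\geq k+1}\varepsilon_j\to 0$, I would deduce $\fgam{\gf}{p}{\gamma}(x^k)\to F^*$; continuity then forces $\fgam{\gf}{p}{\gamma}\equiv F^*$ on the compact cluster set $\Omega(x^k)$, which lies in $\Dom{\partial\fgam{\gf}{p}{\gamma}}$ because $\fgam{\gf}{p}{\gamma}\in\mathcal{C}^{1}(\mb(0;r))$.

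Next I would dispose of the finite case. As $F_k\downarrow F^*$, if $F_{k_0}=F^*$ for some $k_0$, then $F_k=F^*$ for every $k\geq k_0$, which forces $\nabla\fgam{\gf}{p,\varepsilon_k}{\gamma}(x^k)=0$ and hence $d^k=0$ (via $\Vert d^k\Vert\leq c_2\Vert\nabla\fgam{\gf}{p,\varepsilon_k}{\gamma}(x^k)\Vert^{\vartheta}$), so the sequence is eventually constant, converges, and its limit is a proximal fixed point by Theorem~\ref{th:comcon}. Otherwise $a_k:=F_k-F^*>0$ for all $k$. I then invoke the uniformized KL property (Fact~\ref{lem:Uniformized KL property}) on $\Omega(x^k)$, obtaining $r,\eta>0$ and a desingularizing $\phi$ with the quasi-additivity property. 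Since $\dist(x^k,\Omega(x^k))\to 0$ and $\fgam{\gf}{p}{\gamma}(x^k)\to F^*$ with $\fgam{\gf}{p}{\gamma}(x^k)\neq F^*$, all sufficiently large iterates lie in the region where KL holds, and differentiability gives $\dist(0,\partial\fgam{\gf}{p}{\gamma}(x^k))=\Vert\nabla\fgam{\gf}{p}{\gamma}(x^k)\Vert$, whence $\phi'\!\big(\fgam{\gf}{p}{\gamma}(x^k)-F^*\big)\Vert\nabla\fgam{\gf}{p}{\gamma}(x^k)\Vert\geq 1$.

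By concavity of $\phi$ I would then write $\phi(a_k)-\phi(a_{k+1})\geq\phi'(a_k)(a_k-a_{k+1})\geq\widehat{\varrho}\,\phi'(a_k)\Vert\nabla\fgam{\gf}{p,\varepsilon_k}{\gamma}(x^k)\Vert^{1+\vartheta}$. The decisive step is to lower-bound $\phi'(a_k)$. I estimate $a_k\leq\big(\fgam{\gf}{p}{\gamma}(x^k)-F^*\big)+w_k$, where $w_k$ is the tail quantity of Assumption~\ref{assum:eps:weak}~\ref{assum:eps:weak:c} that dominates both the accumulated gradient error (controlled through \eqref{eq:relapprox:relgardgrad}, namely $\Vert\nabla\fgam{\gf}{p,\varepsilon_k}{\gamma}(x^k)-\nabla\fgam{\gf}{p}{\gamma}(x^k)\Vert\leq\frac{2^{2-p}}{\gamma}\delta_k^{p-1}$) and the function-value error. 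Since $\phi'$ is decreasing, $\phi'(a_k)\geq\phi'\!\big((\fgam{\gf}{p}{\gamma}(x^k)-F^*)+w_k\big)$, and the quasi-additivity of $\phi'$ splits $[\phi'(a_k)]^{-1}\leq c_\phi\big([\phi'(\fgam{\gf}{p}{\gamma}(x^k)-F^*)]^{-1}+[\phi'(w_k)]^{-1}\big)\leq c_\phi\big(\Vert\nabla\fgam{\gf}{p}{\gamma}(x^k)\Vert+[\phi'(w_k)]^{-1}\big)$, the last step using the KL inequality. Feeding $\Vert\nabla\fgam{\gf}{p}{\gamma}(x^k)\Vert\leq(1+2^{2-p}\mu^{p-1})\Vert\nabla\fgam{\gf}{p,\varepsilon_k}{\gamma}(x^k)\Vert$ from \eqref{eq:relapprox:b} and distinguishing the two cases $(1+2^{2-p}\mu^{p-1})\Vert\nabla\fgam{\gf}{p,\varepsilon_k}{\gamma}(x^k)\Vert\gtreqless[\phi'(w_k)]^{-1}$ produces a bound of the form $\Vert\nabla\fgam{\gf}{p,\varepsilon_k}{\gamma}(x^k)\Vert^{\vartheta}\leq C_1\big(\phi(a_k)-\phi(a_{k+1})\big)+C_2\big([\phi'(w_k)]^{-1}\big)^{\frac{2}{p-1}}$ for constants $C_1,C_2>0$.

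Summing over $k$, the telescoping term contributes $\phi(a_0)<\infty$ (as $a_k\to 0$ gives $\phi(a_k)\to 0$) and the error term is finite by Assumption~\ref{assum:eps:weak}~\ref{assum:eps:weak:c}, so $\sum_k\Vert\nabla\fgam{\gf}{p,\varepsilon_k}{\gamma}(x^k)\Vert^{\vartheta}<\infty$. Since $\Vert x^{k+1}-x^k\Vert=\alpha_k\Vert d^k\Vert\leq\varsigma c_2\Vert\nabla\fgam{\gf}{p,\varepsilon_k}{\gamma}(x^k)\Vert^{\vartheta}$, using $\alpha_k\leq\varsigma$ and $\Vert d^k\Vert\leq c_2\Vert\nabla\fgam{\gf}{p,\varepsilon_k}{\gamma}(x^k)\Vert^{\vartheta}$, the sequence $\{x^k\}$ has finite length, is therefore Cauchy, and converges to some $\overline{x}$; by Theorem~\ref{th:comcon}, $\overline{x}$ is a proximal fixed point. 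I expect the main obstacle to be the penultimate paragraph: reconciling the KL inequality, which is stated for the \emph{exact} envelope $\fgam{\gf}{p}{\gamma}$ and its gradient, with the descent that is only recorded in terms of $\fgam{\gf}{p,\varepsilon_k}{\gamma}$ and $\nabla\fgam{\gf}{p,\varepsilon_k}{\gamma}$. This mismatch is exactly what the quasi-additivity of $\phi'$ and the tailored tail quantity $w_k$ (with the summability exponent $\tfrac{2}{p-1}$) are designed to absorb, ensuring the error contributions remain summable.
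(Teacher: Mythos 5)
Your overall strategy is the same as the paper's: a tail-corrected Lyapunov sequence to restore monotone descent, constancy of $\fgam{\gf}{p}{\gamma}$ on the cluster set, the uniformized KL property with quasi-additivity of $\phi'$ to absorb the inexactness, and a finite-length/Cauchy conclusion via Theorem~\ref{th:comcon}. (The paper's Lyapunov is $\fgam{\gf}{p}{\gamma}(x^k)+v_k$, built on the \emph{exact} envelope plus a tail of both $\delta$- and $\varepsilon$-errors, whereas yours is $F_k=\fgam{\gf}{p,\varepsilon_k}{\gamma}(x^k)+\sum_{j\geq k+1}\varepsilon_j$; that difference is only bookkeeping.) There is, however, a genuine gap at the step where you invoke KL. From ``$a_k=F_k-F^*>0$ for all $k$'' you conclude that $\fgam{\gf}{p}{\gamma}(x^k)\to F^*$ \emph{with} $\fgam{\gf}{p}{\gamma}(x^k)\neq F^*$, and your quasi-additivity split $[\phi'(a_k)]^{-1}\leq c_\phi\bigl([\phi'(\fgam{\gf}{p}{\gamma}(x^k)-F^*)]^{-1}+[\phi'(w_k)]^{-1}\bigr)$ silently requires $\fgam{\gf}{p}{\gamma}(x^k)-F^*>0$. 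Neither follows: $a_k$ is inflated by the inexactness corrections, so $a_k>0$ is perfectly compatible with $\fgam{\gf}{p}{\gamma}(x^k)=F^*$, or even $\fgam{\gf}{p}{\gamma}(x^k)<F^*$, at infinitely many iterations (the exact envelope values are not monotone along the iterates and may dip below their limit). At such $k$ the quantity $\phi'\bigl(\fgam{\gf}{p}{\gamma}(x^k)-F^*\bigr)$ is not even defined (the desingularizing function lives on $[0,\eta)$ with $\phi'$ on $(0,\eta)$), the quasi-additivity property — stated only for positive arguments — cannot be applied, and the KL inequality itself is unavailable, since it requires $0<\vert\fgam{\gf}{p}{\gamma}(x^k)-F^*\vert<\eta$.

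The gap is local and fixable, and the fix is exactly the case distinction the paper makes. When $\fgam{\gf}{p}{\gamma}(x^k)-F^*\leq 0$, your own estimates (via \eqref{eq:ep-approx:fun}) give $a_k<\sum_{j\geq k}\varepsilon_j\leq w_k$, hence $[\phi'(a_k)]^{-1}\leq[\phi'(w_k)]^{-1}$ by monotonicity of $\phi'$, and the target bound $\Vert\nabla\fgam{\gf}{p,\varepsilon_k}{\gamma}(x^k)\Vert^{\vartheta}\leq C_1\bigl(\phi(a_k)-\phi(a_{k+1})\bigr)+C_2\bigl([\phi'(w_k)]^{-1}\bigr)^{\vartheta}$ follows with no appeal to KL or quasi-additivity at all; only when $\fgam{\gf}{p}{\gamma}(x^k)-F^*>0$ do you run the splitting-plus-KL argument. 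This is precisely the paper's Case~(i)/(ii) dichotomy leading to \eqref{eq:th:conKL:z2} and \eqref{eq:th:conKL:z3}, there phrased for $\fgam{\gf}{p}{\gamma}(x^k)-\fv$ after first inserting the absolute value through $\phi'\bigl(\fgam{\gf}{p}{\gamma}(x^k)-\fv+v_k\bigr)\geq\phi'\bigl(\vert\fgam{\gf}{p}{\gamma}(x^k)-\fv\vert+v_k\bigr)$. With that two-case repair inserted, the remainder of your argument — the Young-type case analysis, telescoping, summability from Assumption~\ref{assum:eps:weak}~\ref{assum:eps:weak:c}, finite length, and the identification of the limit as a proximal fixed point — is sound and matches the paper's proof.
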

\begin{proof}
From the inequality \eqref{eq:relapprox:relgardgrad}, for each $k\in \Nz$, we have
\[
\left\Vert \nabla\fgam{\gf}{p}{\gamma}(x^k)\right\Vert \leq \Vert \nabla\fgam{\gf}{p,\varepsilon_k}{\gamma}(x^k)\Vert+\frac{2^{2-p}}{\gamma}\delta_k^{p-1},
\qquad \Vert \nabla\fgam{\gf}{p,\varepsilon_k}{\gamma}(x^k)\Vert\leq  \left\Vert \nabla\fgam{\gf}{p}{\gamma}(x^k)\right\Vert+\frac{2^{2-p}}{\gamma}\delta_k^{p-1}.
\]
Using these inequalities, we conclude that
\begin{equation}\label{eq:th:conKL:ineqa:arm}
2^{-\vartheta}\left\Vert \nabla\fgam{\gf}{p}{\gamma}(x^k)\right\Vert^{1+\vartheta} \leq 
\Vert \nabla\fgam{\gf}{p,\varepsilon_k}{\gamma}(x^k)\Vert^{1+\vartheta}+\left(\frac{2^{2-p}}{\gamma}\right)^{1+\vartheta}\delta_k^{(p-1)(1+\vartheta)},
\end{equation}
and
\begin{equation}\label{eq:th:conKL:ineqb:arm}
2^{-\vartheta} \Vert \nabla\fgam{\gf}{p,\varepsilon_k}{\gamma}(x^k)\Vert^{1+\vartheta}\leq 
\left\Vert \nabla\fgam{\gf}{p}{\gamma}(x^k)\right\Vert^{1+\vartheta}+\left(\frac{2^{2-p}}{\gamma}\right)^{1+\vartheta}\delta_k^{(p-1)(1+\vartheta)}.
\end{equation}
In conjunction with \eqref{eq:ep-approx:fun}
the inequalities \eqref{eq:th:conKL:ineqa:arm} and \eqref{eq:th:conKL:ineqb:arm} lead to
\begin{align}\label{eq:th:conKL:a}
\fgam{\gf}{p}{\gamma}(x^{k+1})
&\leq \fgam{\gf}{p}{\gamma}(x^k)+\varepsilon_k -\widehat{\varrho}\Vert\nabla\fgam{\gf}{p,\varepsilon_k}{\gamma}(x^k)\Vert^{1+\vartheta}
+\varepsilon_{k+1}
\nonumber \\
&\leq \fgam{\gf}{p}{\gamma}(x^k)-\widehat{\varrho} 2^{-\vartheta}\left\Vert \nabla\fgam{\gf}{p}{\gamma}(x^k)\right\Vert^{1+\vartheta}
+\widehat{\varrho} \left(\frac{2^{2-p}}{\gamma}\right)^{1+\vartheta}\delta_k^{(p-1)(1+\vartheta)}+\varepsilon_k+\varepsilon_{k+1}
\nonumber \\
&= \fgam{\gf}{p}{\gamma}(x^k)-\widehat{\varrho} 2^{-(1+\vartheta)}\left\Vert \nabla\fgam{\gf}{p}{\gamma}(x^k)\right\Vert^{1+\vartheta}
-\widehat{\varrho} 2^{-(1+\vartheta)}\left\Vert \nabla\fgam{\gf}{p}{\gamma}(x^k)\right\Vert^{1+\vartheta}
+\widehat{\varrho} \left(\frac{2^{2-p}}{\gamma}\right)^{1+\vartheta}\delta_k^{(p-1)(1+\vartheta)}+\varepsilon_k+\varepsilon_{k+1}
\nonumber \\
&\leq \fgam{\gf}{p}{\gamma}(x^k)-\widehat{\varrho} 2^{-(1+\vartheta)}\left\Vert \nabla\fgam{\gf}{p}{\gamma}(x^k)\right\Vert^{1+\vartheta}
-\widehat{\varrho} 2^{-(1+\vartheta)}2^{-\vartheta} \Vert \nabla\fgam{\gf}{p,\varepsilon_k}{\gamma}(x^k)\Vert^{1+\vartheta}
\nonumber\\&\quad+ \widehat{\varrho} 2^{-(1+\vartheta)}\left(\frac{2^{2-p}}{\gamma}\right)^{1+\vartheta}\delta_k^{(p-1)(1+\vartheta)}+\widehat{\varrho} \left(\frac{2^{2-p}}{\gamma}\right)^{1+\vartheta}\delta_k^{{(p-1)(1+\vartheta)}}+\varepsilon_k+\varepsilon_{k+1}.
\end{align}
Let us define $v_k:=\widehat{c}\widehat{\varrho}\sum_{j=k}^{\infty}\delta_j^{(p-1)(1+\vartheta)}+\sum_{j=k}^{\infty}\varepsilon_j+\sum_{j=k+1}^{\infty}\varepsilon_j$,
where $\widehat{c}:=\left(1+ 2^{-(1+\vartheta)}\right)\left(\frac{2^{2-p}}{\gamma}\right)^{1+\vartheta}$.
Together with \eqref{eq:th:conKL:a}, this implies
\begin{equation}\label{eq:th:conKL:zz1}
\fgam{\gf}{p}{\gamma}(x^{k+1})+v_{k+1}\leq \fgam{\gf}{p}{\gamma}(x^k)+v_k -\widehat{\varrho} 2^{-(1+\vartheta)}\left\Vert \nabla\fgam{\gf}{p}{\gamma}(x^k)\right\Vert^{1+\vartheta}
-\widehat{\varrho} 2^{-(1+\vartheta)}2^{-\vartheta} \Vert \nabla\fgam{\gf}{p,\varepsilon_k}{\gamma}(x^k)\Vert^{1+\vartheta}.
\end{equation}
Thus, the sequence $\{\fgam{\gf}{p}{\gamma}(x^k)+v_k\}_{k\in \Nz}$ is non-increasing and bounded from below by $\gf^*$, i.e., there exists some $\fv\in \R$ such that
\begin{equation}\label{eq:th:conKL:zz1-1}
\lim_{k\to \infty}\fgam{\gf}{p}{\gamma}(x^k)=\lim_{k\to \infty}\fgam{\gf}{p}{\gamma}(x^k)+v_k=\fv.
\end{equation}
Assume that $\widehat{x}\in \Omega(x^k)$ is arbitrary and is regarding the subsequence $\{x^j\}_{j\in J\subseteq\Nz}$. Note that $\widehat{x}\in \bs{\rm Fix}(\prox{\gh}{\gamma}{p})$.
Moreover, from the proof of Theorem~\ref{th:comcon}, $\prox{\gf}{\gamma}{p}(x^j)\to \widehat{x}$ with $j\in J$, i.e., form the lsc property of $\gf$, the continuity of $\fgam{\gf}{p}{\gamma}$, and 
\[\gf(\prox{\gf}{\gamma}{p}(x^j))\leq \gf(\prox{\gf}{\gamma}{p}(x^j))+\frac{1}{p\gamma}\Vert x^j- \prox{\gf}{\gamma}{p}(x^j)\Vert^p=\fgam{\gf}{p}{\gamma}(x^j),\]
 it holds that
\begin{equation}\label{eqb:th:conKL:zz1-1}
\gf(\widehat{x})\leq
\liminf_{\scriptsize{\begin{matrix}j\to\infty\\j\in J \end{matrix}}}\gf(\prox{\gf}{\gamma}{p}(x^j))
\leq \limsup_{\scriptsize{\begin{matrix}j\to\infty\\j\in J \end{matrix}}}\gf(\prox{\gf}{\gamma}{p}(x^j))
\leq \limsup_{\scriptsize{\begin{matrix}j\to\infty\\j\in J \end{matrix}}} \fgam{\gf}{p}{\gamma}(x^j)=\fgam{\gf}{p}{\gamma}(\widehat{x})\leq \gf(\widehat{x}).
\end{equation}
Combining  \eqref{eq:th:conKL:zz1-1} and \eqref{eqb:th:conKL:zz1-1} leads to
\begin{equation}\label{eqc:th:conKL:zz1-1}
\gf(\widehat{x})=\fgam{\gf}{p}{\gamma}(\widehat{x})=\lim_{\scriptsize{\begin{matrix}j\to\infty\\j\in J \end{matrix}}}\fgam{\gf}{p}{\gamma}(x^j)=\lim_{k\to\infty}\fgam{\gf}{p}{\gamma}(x^k)=\fv,
\end{equation}
which means $\fgam{\gf}{p}{\gamma}$ is constant on $\Omega(x^k)$.
Hence, from Fact~\ref{lem:Uniformized KL property},
Step~\ref{alg:fram:nonmono:init} of Algorithm~\ref{alg:fram:nonmono}, and that $\Omega(x^k)\subseteq \mathcal{S}_2$,
there exist $\widehat{r}>0$, $\eta>0$, and a desingularizing function $\phi$  such that for all $x\in X$ with
\[
X:=\left\{x\in \R^n \mid \dist(x, \Omega(x^k))<\widehat{r}\right\}\cap\left\{x\in \R^n \mid 0<\vert \fgam{\gf}{p}{\gamma}(x) - \fgam{\gf}{p}{\gamma}(\widehat{x})\vert<\eta\right\}\subseteq \mb(0; r),
\]
we obtain
\begin{equation}\label{eq:th:globconv:klI:a}
\phi'\left(\vert \fgam{\gf}{p}{\gamma}(x) - \fgam{\gf}{p}{\gamma}(\widehat{x})\vert\right)\Vert \nabla \fgam{\gf}{p}{\gamma}(x)\Vert\geq 1.
\end{equation}
Since \eqref{eq:dir:th:conv:alg:first:tem} and Theorem~\ref{th:conv:alg:nonmono} yield $\Vert d^k\Vert\to 0$, 
and since $\alpha_k$ is bounded from above,
we have  $\Vert x^{k+1}- x^k\Vert=\alpha_k \Vert d^k\Vert\to 0$.  Together with \eqref{eqc:th:conKL:zz1-1}, for some $\widehat{k}>0$ and for each $k\geq \widehat{k}$, we have $\dist(x^k, \Omega(x^k))<\widehat{r}$ and 
$\vert\fgam{\gf}{p}{\gamma}(x^k)-\fgam{\gf}{p}{\gamma}(\widehat{x})\vert<\eta$.
Defining $\Delta_k:=\phi\left(\fgam{\gf}{p}{\gamma}(x^k)-\fv+v_k\right)$ and
utilizing concavity of $\phi$, monotonically decreasing of $\phi'$, and \eqref{eq:th:conKL:zz1} lead to
\begin{align}\label{eq:th:conKL:z1}
  \Delta_k-\Delta_{k+1} & \geq \phi'\left(\fgam{\gf}{p}{\gamma}(x^k)-\fv+v_k\right)\left[\fgam{\gf}{p}{\gamma}(x^k)+v_k-\fgam{\gf}{p}{\gamma}(x^{k+1})-v_{k+1}\right]
\nonumber \\&\geq \phi'\left(\vert\fgam{\gf}{p}{\gamma}(x^k)-\fv\vert+v_k\right)\left[\fgam{\gf}{p}{\gamma}(x^k)+v_k-\fgam{\gf}{p}{\gamma}(x^{k+1})-v_{k+1}\right]
\nonumber  \\&\geq  \phi'\left(\vert\fgam{\gf}{p}{\gamma}(x^k)-\fv\vert+v_k\right)
\left[\widehat{\varrho} 2^{-(1+\vartheta)}\left\Vert \nabla\fgam{\gf}{p}{\gamma}(x^k)\right\Vert^{1+\vartheta}
+\widehat{\varrho} 2^{-(1+\vartheta)}2^{-\vartheta} \Vert \nabla\fgam{\gf}{p,\varepsilon_k}{\gamma}(x^k)\Vert^{1+\vartheta}\right].
\end{align}
 Note that
since $\phi' > 0$,  this together \eqref{eq:th:conKL:z1} implies $\Delta_k - \Delta_{k+1} \geq 0$.
We consider two cases: (i) $\fgam{\gf}{p}{\gamma}(x^k)-\fv=0$; (ii) $\fgam{\gf}{p}{\gamma}(x^k)-\fv\neq 0$. 
In Case~(i), $\fgam{\gf}{p}{\gamma}(x^k)-\fv=0$, then 
from \eqref{eq:th:conKL:z1}, we have
\begin{align}\label{eq:th:conKL:z2}
\widehat{\varrho} 2^{-(1+\vartheta)}\left\Vert \nabla\fgam{\gf}{p}{\gamma}(x^k)\right\Vert^{1+\vartheta}
+\widehat{\varrho} 2^{-(1+\vartheta)}2^{-\vartheta} \Vert \nabla\fgam{\gf}{p,\varepsilon_k}{\gamma}(x^k)\Vert^{1+\vartheta}&\leq 
\left(\Delta_k-\Delta_{k+1} \right)\left(\phi'(v_k)\right)^{-1}\nonumber\\
&\leq \left(\Delta_k-\Delta_{k+1} \right)\left(\left\Vert \nabla\fgam{\gf}{p}{\gamma}(x^k)\right\Vert+\left(\phi'(v_k)\right)^{-1}\right).
\end{align}
In Case~(ii), $\fgam{\gf}{p}{\gamma}(x^k)-\fv\neq 0$, then \eqref{eq:th:conKL:z1} and quasi-additivity property, by increasing $\widehat{k}$ if necessary, imply the existence of some $c_\phi>0$ such that
\[
  \Delta_k-\Delta_{k+1}\geq  \frac{1}{c_\phi}\frac{1}{[\phi'\left(\vert\fgam{\gf}{p}{\gamma}(x^k)-\fv\vert\right)]^{-1}+\left(\phi'(v_k)\right)^{-1}}\left[\widehat{\varrho} 2^{-(1+\vartheta)}\left\Vert \nabla\fgam{\gf}{p}{\gamma}(x^k)\right\Vert^{1+\vartheta}
+\widehat{\varrho} 2^{-(1+\vartheta)}2^{-\vartheta} \Vert \nabla\fgam{\gf}{p,\varepsilon_k}{\gamma}(x^k)\Vert^{1+\vartheta}\right].
\]
Together with \eqref{eq:th:globconv:klI:a}, this ensures
\begin{align}\label{eq:th:conKL:z3}
\widehat{\varrho} 2^{-(1+\vartheta)}\left\Vert \nabla\fgam{\gf}{p}{\gamma}(x^k)\right\Vert^{1+\vartheta}
+\widehat{\varrho} 2^{-(1+\vartheta)}2^{-\vartheta} \Vert \nabla\fgam{\gf}{p,\varepsilon_k}{\gamma}(x^k)\Vert^{1+\vartheta}
&\leq c_\phi\left(\Delta_k-\Delta_{k+1} \right)\left([\phi'\left(\vert\fgam{\gf}{p}{\gamma}(x^k)-\fv\vert\right)]^{-1}+\left(\phi'(v_k)\right)^{-1}\right)
\nonumber\\&\leq c_\phi\left(\Delta_k-\Delta_{k+1} \right)\left(\left\Vert \nabla\fgam{\gf}{p}{\gamma}(x^k)\right\Vert+\left(\phi'(v_k)\right)^{-1}\right).
\end{align}
In both cases and by setting $\widehat{m}:=\bs\max\{1, c_\phi\}$,  for each $k\geq \widehat{k}$, we obtain
\begin{align}\label{eq:th:conKL:z4}
  \left\Vert \nabla\fgam{\gf}{p}{\gamma}(x^k)\right\Vert^{1+\vartheta} +2^{-\vartheta}
   \Vert \nabla\fgam{\gf}{p,\varepsilon_k}{\gamma}(x^k)\Vert^{1+\vartheta}
&\leq \widehat{m}\widehat{\varrho}^{-1} 2^{1+\vartheta}\left(\Delta_k-\Delta_{k+1} \right)\left(\left\Vert \nabla\fgam{\gf}{p}{\gamma}(x^k)\right\Vert+\left(\phi'(v_k)\right)^{-1}\right),
\end{align}
leading to
\[
2^{-\frac{1}{\vartheta}}\left(\Vert \nabla\fgam{\gf}{p}{\gamma}(x^k)\Vert^{\vartheta} +2^{-\frac{\vartheta^2}{1+\vartheta}}
\Vert \nabla\fgam{\gf}{p,\varepsilon_k}{\gamma}(x^k)\Vert^{\vartheta}\right)^{\frac{1+\vartheta}{\vartheta}}\leq\widehat{m}\widehat{\varrho}^{-1} 2^{1+\vartheta}\left(\Delta_k-\Delta_{k+1} \right)\left(\left\Vert \nabla\fgam{\gf}{p}{\gamma}(x^k)\right\Vert+\left(\phi'(v_k)\right)^{-1}\right).
\]
Accordingly, we come to
\[
\Vert \nabla\fgam{\gf}{p}{\gamma}(x^k)\Vert^{\vartheta}+2^{-\frac{\vartheta^2}{1+\vartheta}}
\Vert \nabla\fgam{\gf}{p,\varepsilon_k}{\gamma}(x^k)\Vert^{\vartheta}\leq
\left[\frac{\widehat{m}}{\widehat{\varrho}} 2^{\frac{\vartheta^2+\vartheta+1}{\vartheta}}\left(\Delta_k-\Delta_{k+1} \right)
\left(\left\Vert \nabla\fgam{\gf}{p}{\gamma}(x^k)\right\Vert+\left(\phi'(v_k)\right)^{-1}\right)\right]^{\frac{\vartheta}{1+\vartheta}}.
\]
We recall that if $a\geq 0$, $b\geq 0$ are non-negative numbers, and $q>1$ such that $\frac{\vartheta}{1+\vartheta}+\frac{1}{q}=1$, then via the Young inequality and \eqref{eq:intrp:p01},
we obtain
\[
(ab)^{\frac{\vartheta}{1+\vartheta}}\leq \left(\frac{a^{\frac{1+\vartheta}{\vartheta}}}{\frac{1+\vartheta}{\vartheta}}+\frac{b^q}{q}\right)^{\frac{\vartheta}{1+\vartheta}}\leq \frac{a}{(\frac{1+\vartheta}{\vartheta})^{\frac{\vartheta}{1+\vartheta}}}+\frac{b^{\vartheta}}{(1+\vartheta)^{\frac{\vartheta}{1+\vartheta}}}\leq a+b^{\vartheta}.
\]
If $\vartheta> 1$, then
\begin{align*}
\Vert \nabla\fgam{\gf}{p}{\gamma}(x^k)\Vert^{\vartheta}+2^{-\frac{\vartheta^2}{1+\vartheta}}
\Vert \nabla\fgam{\gf}{p,\varepsilon_k}{\gamma}(x^k)\Vert^{\vartheta}
&\leq
\left[\frac{\widehat{m}}{\widehat{\varrho}}  2^{\frac{\vartheta^2+\vartheta+1}{\vartheta}}\left(\Delta_k-\Delta_{k+1} \right)
\left(\left\Vert \nabla\fgam{\gf}{p}{\gamma}(x^k)\right\Vert+\left(\phi'(v_k)\right)^{-1}\right)\right]^{\frac{\vartheta}{1+\vartheta}}
\\
&=
\left[\frac{\widehat{m}}{\widehat{\varrho}}  2^{2+\vartheta}\left(\Delta_k-\Delta_{k+1} \right)
2^{\frac{1-\vartheta}{\vartheta}}\left(\left\Vert \nabla\fgam{\gf}{p}{\gamma}(x^k)\right\Vert+\left(\phi'(v_k)\right)^{-1}\right)\right]^{\frac{\vartheta}{1+\vartheta}}
\\
&\leq
\frac{\widehat{m}}{\widehat{\varrho}}  2^{2+\vartheta}\left(\Delta_k-\Delta_{k+1} \right)+
2^{1-\vartheta}\left(\left\Vert \nabla\fgam{\gf}{p}{\gamma}(x^k)\right\Vert+\left(\phi'(v_k)\right)^{-1}\right)^{\vartheta}
\\
&\leq
\frac{\widehat{m}}{\widehat{\varrho}} 2^{2+\vartheta}\left(\Delta_k-\Delta_{k+1} \right)+
\left\Vert \nabla\fgam{\gf}{p}{\gamma}(x^k)\right\Vert^{\vartheta}+\left(\left(\phi'(v_k)\right)^{-1}\right)^{\vartheta}.
\end{align*}
If $\vartheta\in(0, 1]$, then
\begin{align*}
\Vert \nabla\fgam{\gf}{p}{\gamma}(x^k)\Vert^{\vartheta}+2^{-\frac{\vartheta^2}{1+\vartheta}}
\Vert \nabla\fgam{\gf}{p,\varepsilon_k}{\gamma}(x^k)\Vert^{\vartheta}
&\leq
\left[\frac{\widehat{m}}{\widehat{\varrho}}  2^{\frac{\vartheta^2+\vartheta+1}{\vartheta}}\left(\Delta_k-\Delta_{k+1} \right)
\left(\left\Vert \nabla\fgam{\gf}{p}{\gamma}(x^k)\right\Vert+\left(\phi'(v_k)\right)^{-1}\right)\right]^{\frac{\vartheta}{1+\vartheta}}
\\
&\leq
\frac{\widehat{m}}{\widehat{\varrho}}  2^{\frac{\vartheta^2+\vartheta+1}{\vartheta}}\left(\Delta_k-\Delta_{k+1} \right)
+\left(\left\Vert \nabla\fgam{\gf}{p}{\gamma}(x^k)\right\Vert+\left(\phi'(v_k)\right)^{-1}\right)^{\vartheta}
\\
&\leq
\frac{\widehat{m}}{\widehat{\varrho}}  2^{\frac{\vartheta^2+\vartheta+1}{\vartheta}}\left(\Delta_k-\Delta_{k+1} \right)+
\left\Vert \nabla\fgam{\gf}{p}{\gamma}(x^k)\right\Vert^{\vartheta}+\left(\left(\phi'(v_k)\right)^{-1}\right)^{\vartheta}.
\end{align*}
Hence, for each $k \geq \widehat{k}$, we come to
\begin{equation}\label{eq:th:conKL:d}
\Vert \nabla\fgam{\gf}{p,\varepsilon_k}{\gamma}(x^k)\Vert^{\vartheta}
\leq  \varpi(p)
\left(\Delta_k-\Delta_{k+1} \right)+2^{\frac{\vartheta^2}{1+\vartheta}}\left(\left(\phi'(v_k)\right)^{-1}\right)^{\vartheta}.
\end{equation}
where $\varpi(p):=\frac{\widehat{m}}{\widehat{\varrho}} 2^{\frac{2\vartheta^2+3\vartheta+2}{1+\vartheta}}$ if $\vartheta>1$ and
$\varpi(p):=\frac{\widehat{m}}{\widehat{\varrho}} 2^{\frac{2\vartheta\left(\vartheta^2+\vartheta+1\right)+1}{\vartheta(1+\vartheta)}}$ if $\vartheta\in (0, 1]$.
Since $v_k\leq w_k$, we have $\phi'(w_k)\leq \phi'(v_k)$, i.e., $\left(\phi'(v_k)\right)^{-1}\leq \left(\phi'(w_k)\right)^{-1}$. Thus, from Assumption~\ref{assum:eps:weak}~$\ref{assum:eps:weak:c}$, we obtain
$\sum_{k=0}^{\infty}(\left(\phi'(v_k)\right)^{-1})^\vartheta<\infty$. 
Note that the continuity of $\phi$ and $\fgam{\gf}{p}{\gamma}(x^k) - \fv + v_k\to 0$ imply $\Delta_k \to 0$. Thus, from \eqref{eq:th:conKL:d}, we obtain
\begin{align*}
\sum_{k=\widehat{k}}^{\infty}\Vert x^{k+1}-x^k\Vert\leq \varsigma c_2
\sum_{k=\widehat{k}}^{\infty}\Vert \nabla\fgam{\gf}{p,\varepsilon_k}{\gamma}(x^k)\Vert^{\vartheta}&\leq \varsigma c_2\varpi(p)\sum_{k=\widehat{k}}^{\infty}\left(\Delta_k-\Delta_{k+1} \right)+\varsigma c_2 2^{\frac{\vartheta^2}{1+\vartheta}} \sum_{k=\widehat{k}}^{\infty}(\left(\phi'(v_k)\right)^{-1})^{\vartheta}
\\& =\varsigma c_2\varpi(p)\Delta_{\widehat{k}}+\varsigma c_2 2^{\frac{\vartheta^2}{1+\vartheta}} \sum_{k=\widehat{k}}^{\infty}(\left(\phi'(v_k)\right)^{-1})^{\vartheta}<\infty.
\end{align*}
This implies that the sequence $\{x^k\}_{k\in \Nz}$ is a Cauchy sequence and hence for the sequence $\{x^k\}_{k\in \Nz}$, we have  $x^k\to \widehat{x}$. Thus, the convergence of this sequence is global.
\end{proof}

\begin{corollary}[Global convergence under the KL property for Algorithms~\ref{alg:ingrad2}~and~\ref{alg:first}]\label{th:conKL:oth}
Let Assumption~\ref{assum:eps:weak} hold. Suppose that one of the following holds:
\begin{enumerate}[label=(\textbf{\alph*}), font=\normalfont\bfseries, leftmargin=0.7cm]
\item $\vartheta=\frac{2}{p-1}$ and $\{x^k\}_{k\in \Nz}$ is the sequence generated either by Algorithm~\ref{alg:ingrad2};
\item $\{x^k\}_{k\in \Nz}$ is the sequence generated by Algorithm~\ref{alg:first} under assumptions of Corollary~\ref{cor:conv:alg:first};
\end{enumerate}
Then, the sequence $\{x^k\}_{k\in \Nz}$  converges to a proximal fixed point.
\end{corollary}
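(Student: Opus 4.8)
The plan is to recognize that both Algorithm~\ref{alg:ingrad2} (with the choice $\vartheta=\frac{2}{p-1}$) and Algorithm~\ref{alg:first} are concrete instances of the generic scheme in Algorithm~\ref{alg:fram:nonmono}, so that the conclusion follows by a direct appeal to Theorem~\ref{th:conKL}. Accordingly, for each algorithm I would verify the four structural ingredients that the proof of Theorem~\ref{th:conKL} consumes: (i) the iterates remain in the compact set $\mathcal{S}_2$ on which $\fgam{\gf}{p}{\gamma}$ is continuously differentiable (well-definedness); (ii) the search directions satisfy the sufficient-descent conditions \eqref{eq:dir:th:conv:alg:first:tem}, which is exactly what yields $\Vert d^k\Vert\to 0$ and hence $\Vert x^{k+1}-x^k\Vert\to 0$; (iii) the step-sizes are bounded from above by some $\varsigma>0$; and (iv) the nonmonotone descent inequality \eqref{eq:genstract:ineq} holds with a fixed constant $\widehat{\varrho}>0$ and the declared exponent $\vartheta$.

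For assertion~(a) I would set $\vartheta=\frac{2}{p-1}$, so that $1+\vartheta=\frac{p+1}{p-1}$. Well-definedness and the inclusion $\{x^k\}_{k\in\Nz}\subseteq\mathcal{S}_2$ are supplied by Theorem~\ref{th:welldfalg}, whose well-definedness argument also covers Algorithm~\ref{alg:ingrad2} (of which Algorithm~\ref{alg:ingrad} is the constant-step-size case); the condition \eqref{eq:th:welldfalg:dir} imposed on $d^k$ coincides with \eqref{eq:dir:th:conv:alg:first:tem} for this $\vartheta$; the step-size obeys $\alpha_k\leq\frac{\gamma^{2/(p-1)}}{c_2}=:\varsigma$ through the first entry of its defining minimum; and the descent inequality \eqref{eq:genstract:ineq} is precisely the previously derived estimate \eqref{eq:th:conKL:a:tem}, with $\widehat{\varrho}=\widehat{\varrho}_1$, obtained from \eqref{eq:discentforseq:glconv} and \eqref{eq:relbetalphbaralp}. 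Thus every hypothesis of Theorem~\ref{th:conKL} is in force.

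For assertion~(b) I would instead invoke Theorem~\ref{th:welldf:alg:first} for well-definedness and $\{x^k\}_{k\in\Nz}\subseteq\mathcal{S}_2$; the directions satisfy \eqref{eq:dir:th:conv:alg:first:tem} by construction in Algorithm~\ref{alg:first}; the backtracking rule enforces $\alpha_k\leq 1=:\varsigma$; and the standing hypothesis of Corollary~\ref{cor:conv:alg:first} that $\{\alpha_k\}_{k\in\Nz}$ is bounded away from zero is exactly what guarantees a positive constant, namely that the descent inequality \eqref{eq:th:conKL:aa} holds with $\widehat{\varrho}=\widehat{\varrho}_2>0$. A second application of Theorem~\ref{th:conKL} then delivers the claimed global convergence to a proximal fixed point.

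I expect the only delicate point, and hence the main obstacle, to be the compatibility of the \L{}ojasiewicz-type summability condition in Assumption~\ref{assum:eps:weak}~\ref{assum:eps:weak:c}: it is phrased through the quantity $w_k$ in terms of the framework constants $\vartheta$ and $\widehat{\varrho}$, and one must read it with the $\vartheta$ and $\widehat{\varrho}$ belonging to the algorithm at hand. Since Assumption~\ref{assum:eps:weak} is imposed as a hypothesis of the corollary and is stated precisely in these framework terms, this matching is automatic and requires no separate derivation; the remainder reduces to the routine structural identifications above, so there is no genuinely new obstacle beyond careful bookkeeping of the constants.
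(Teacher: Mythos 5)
Your proposal is correct and follows exactly the route the paper intends: the corollary is stated without a separate proof precisely because Algorithm~\ref{alg:ingrad2} (with $\vartheta=\frac{2}{p-1}$, via \eqref{eq:th:conKL:a:tem} and Theorem~\ref{th:welldfalg}) and Algorithm~\ref{alg:first} (via \eqref{eq:th:conKL:aa} and Theorem~\ref{th:welldf:alg:first}) are identified as instances of the generic framework in Algorithm~\ref{alg:fram:nonmono}, to which Theorem~\ref{th:conKL} is then applied. Your bookkeeping of the constants $\widehat{\varrho}_1$, $\widehat{\varrho}_2$, the step-size bounds, and the matching of \eqref{eq:th:welldfalg:dir} with \eqref{eq:dir:th:conv:alg:first:tem} mirrors the discussion at the start of Subsection~\ref{subsec:glconv}, so nothing further is needed.
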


We establish linear convergence under the KL property with an explicit exponent and additional conditions on the errors used for prox approximations by following the idea in the proof of \cite[Theorem~48]{Kabgani24itsopt}. The following assumption outlines the constraints for error selection.

\begin{assumption}\label{assum:linconv:weak}
Let $\{\beta_k\}_{k\in \Nz}$ be a sequence of non-increasing positive scalars such that $\ov{\beta}:=\sum_{k=0}^{\infty} \beta_k<\infty$ and  $\widehat{\beta}:=\sum_{k=0}^{\infty} \beta_k^{\frac{1}{\vartheta(p-1)}}<\infty$. We select $\delta_k$ and $\varepsilon_k$ to satisfy the condition
\begin{equation}\label{eq:ep-approx:oper:b}
\varepsilon_k^{\frac{\vartheta}{1+\vartheta}}, \delta_k^{\vartheta(p-1)}\leq 
\mathop{\bs{\arg\min}}\limits_{0\leq i\leq j\leq k}\left\{\beta_j\Vert x^i - \prox{\gf}{\gamma}{p, \varepsilon_i}(x^i)\Vert^{\vartheta(p-1)}\right\}.
\end{equation}
\end{assumption}

\begin{theorem}[Linear convergence under the KL property]\label{th:linconv}
Let Assumptions~\ref{assum:eps:weak}~and~\ref{assum:linconv:weak} hold, and let $\{x^k\}_{k\in \Nz}$ be the sequence generated by Algorithm~\ref{alg:fram:nonmono}.
If $\fgam{\gf}{p}{\gamma}$ satisfies the KL property with an exponent $\theta=\frac{1}{1+\vartheta}$ on $\Omega(x^k)$, then the sequence  $\{x^k\}_{k\in \Nz}$ converges linearly to a proximal fixed point.
\end{theorem}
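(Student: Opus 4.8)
\emph{Proof idea.} The plan is to upgrade the global-convergence analysis of Theorem~\ref{th:conKL} into a geometric one, exploiting that the KL exponent $\theta=\frac{1}{1+\vartheta}$ is exactly matched to the $(1+\vartheta)$-power descent. Since Theorem~\ref{th:conKL} already yields $x^k\to\widehat{x}$ with $\widehat{x}\in\bs{\rm Fix}(\prox{\gf}{\gamma}{p})$, with $\fgam{\gf}{p}{\gamma}$ constant (equal to $\fv$) on $\Omega(x^k)$, and the monotone Lyapunov inequality \eqref{eq:th:conKL:zz1}, I would first freeze $r_k:=\fgam{\gf}{p}{\gamma}(x^k)-\fv$ and $E_k:=r_k+v_k\ge 0$, which is nonincreasing and tends to $0$. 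With $\theta=\frac{1}{1+\vartheta}$ the desingularizing function is the power $\phi(t)=ct^{\vartheta/(1+\vartheta)}$, so $(\phi'(t))^{-1}=\frac{1+\vartheta}{c\vartheta}\,t^{1/(1+\vartheta)}$, and the uniformized KL inequality \eqref{eq:th:globconv:klI:a} (valid for all $k\ge\widehat{k}$, since then $x^k$ lies in the set $X$) reads $\|\nabla\fgam{\gf}{p}{\gamma}(x^k)\|^{1+\vartheta}\ge C\,|r_k|$ with $C:=(\tfrac{1+\vartheta}{c\vartheta})^{1+\vartheta}$.

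Next I would convert \eqref{eq:th:conKL:zz1} into a perturbed geometric recursion. Dropping the nonnegative inexact-gradient term and inserting the KL bound gives $E_{k+1}\le E_k-\widehat{\varrho}2^{-(1+\vartheta)}C\,|r_k|$. Writing $|r_k|\ge E_k-v_k$ when $r_k\ge 0$, and noting that when $r_k<0$ one has $E_k\le v_k$ together with $E_{k+1}\le E_k$, both cases yield $E_{k+1}\le(1-b)E_k+b\,v_k$ for some $b\in(0,1)$ (shrinking the descent constant to keep it below $1$ if necessary). Thus, provided the error tail $v_k$ decays R-linearly, the standard scalar argument (treating the homogeneous geometric part and the forcing part separately, with the borderline equal-ratio case absorbed by an arbitrarily small inflation of the base) furnishes $E_k\le M q^{\,k}$ for some $q\in(0,1)$; consequently $|r_k|\le E_k+v_k$ and $\Delta_k=\phi(E_k)=cE_k^{\vartheta/(1+\vartheta)}$ also decay R-linearly.

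It remains to show that the error sequences $v_k$ and $w_k$ decay R-linearly; this is the substance of Assumption~\ref{assum:linconv:weak} and the main obstacle, because a naive bound only delivers summability, not a rate. The point is that \eqref{eq:ep-approx:oper:b} slaves the tolerances to the model, namely $\varepsilon_k^{\vartheta/(1+\vartheta)},\ \delta_k^{\vartheta(p-1)}\le\beta_j\|x^i-\prox{\gf}{\gamma}{p,\varepsilon_i}(x^i)\|^{\vartheta(p-1)}$, and through the identity $\|\nabla\fgam{\gf}{p,\varepsilon_i}{\gamma}(x^i)\|=\tfrac{1}{\gamma}\|x^i-\prox{\gf}{\gamma}{p,\varepsilon_i}(x^i)\|^{p-1}$ this makes $\varepsilon_k$ and $\delta_k^{(p-1)(1+\vartheta)}$ dominated, up to constants, by $\beta_k$-weighted powers of the inexact gradient norms already appearing in the telescoped estimate \eqref{eq:th:conKL:d}. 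Feeding these bounds back, together with $\ov{\beta}<\infty$ and $\widehat{\beta}=\sum_k\beta_k^{1/(\vartheta(p-1))}<\infty$, into the tails defining $v_k,w_k$ lets me close a self-referential geometric estimate for $\sum_{j\ge k}\|\nabla\fgam{\gf}{p,\varepsilon_j}{\gamma}(x^j)\|^{\vartheta}$; circularity is avoided because each tolerance is tied to strictly earlier residuals.

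Finally I would transfer the rate to the iterates. Summing \eqref{eq:th:conKL:d} from $k$ onward and using $\|x^{k+1}-x^k\|=\alpha_k\|d^k\|\le\varsigma c_2\|\nabla\fgam{\gf}{p,\varepsilon_k}{\gamma}(x^k)\|^{\vartheta}$ bounds the tail length $S_k:=\sum_{j\ge k}\|x^{j+1}-x^j\|$ by $\varsigma c_2\varpi(p)\Delta_k$ plus a constant multiple of $\sum_{j\ge k}((\phi'(v_j))^{-1})^{\vartheta}$. Since both $\Delta_k$ and $v_j^{\vartheta/(1+\vartheta)}$ are now R-linear, $S_k\le M' q'^{\,k}$ for some $q'\in(0,1)$, and $\|x^k-\widehat{x}\|\le S_k$ gives R-linear convergence of $\{x^k\}_{k\in\Nz}$ to the proximal fixed point $\widehat{x}$. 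The hardest step is the third one: making the inexactness budget $v_k,w_k$ provably geometric rather than merely summable, which is precisely why Assumption~\ref{assum:linconv:weak} couples the per-step tolerances to the observed residuals instead of prescribing them a priori.
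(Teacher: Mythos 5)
Your architecture is viable and uses the same key ingredients as the paper: the matched exponent $\theta=\frac{1}{1+\vartheta}$, the Lyapunov inequality \eqref{eq:th:conKL:zz1}, the coupling of tolerances to residuals in Assumption~\ref{assum:linconv:weak}, and the final transfer $\Vert x^k-\widehat{x}\Vert\leq \varsigma c_2\sum_{i\geq k}\Vert\nabla\fgam{\gf}{p,\varepsilon_i}{\gamma}(x^i)\Vert^{\vartheta}$. The soft spot is exactly the step you flag as hardest, and the way you phrase it would not close: you propose to \emph{first} prove that $v_k$ (and $w_k$) decay R-linearly and \emph{then} feed that rate into the recursion $E_{k+1}\leq(1-b)E_k+bv_k$. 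But $v_k$ is controlled, via \eqref{eq:ep-approx:oper:b}, by powers of the residuals $\Vert x^i-\prox{\gf}{\gamma}{p, \varepsilon_i}(x^i)\Vert$ with $i\leq j\leq k$ (the operative instance being $i=k$, the current one), so any standalone rate for $v_k$ presupposes a rate for the residuals --- which is what you are trying to prove; your appeal to ``strictly earlier residuals'' does not break this loop. The correct execution, and what the paper does, is absorption rather than an a priori rate: since every term of the tail with index $j\geq k$ is bounded by $\beta_j$ times the \emph{current} residual power, summing gives (the paper's \eqref{eq:th:linconv:k1})
\[
v_k^{\frac{\vartheta}{1+\vartheta}}\;\leq\;\mathrm{const}\cdot\ov{\beta}\,\Vert x^k-\prox{\gf}{\gamma}{p, \varepsilon_k}(x^k)\Vert^{\vartheta(p-1)}
\;=\;\mathrm{const}\cdot\ov{\beta}\,\gamma^{\vartheta}\Vert\nabla\fgam{\gf}{p,\varepsilon_k}{\gamma}(x^k)\Vert^{\vartheta},
\]
i.e.\ the entire error tail is dominated by the descent achieved at step $k$ itself. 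In your recursion this yields $v_k\lesssim E_k-E_{k+1}$, so the forcing term disappears and $E_{k+1}\leq\tfrac{1-b+bc}{1+bc}\,E_k$ follows with no induction and no separate rate for $v_k$; the rate of $v_k$ is then a consequence, not a prerequisite.

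For comparison, the paper never forms the energy recursion at all. It sets $B_k:=\sum_{i\geq k}\Vert\nabla\fgam{\gf}{p,\varepsilon_i}{\gamma}(x^i)\Vert^{\vartheta}$, so that $\Vert x^k-\widehat{x}\Vert\leq\varsigma c_2 B_k$, and uses the matched exponent together with the displayed absorption bound to prove two pointwise estimates: $\Delta_k\leq b_1\Vert x^k-\prox{\gf}{\gamma}{p, \varepsilon_k}(x^k)\Vert^{\vartheta(p-1)}$ and $[\phi'(v_k)]^{-1}\leq b_2\Vert\nabla\fgam{\gf}{p}{\gamma}(x^k)\Vert$; inserting the latter into \eqref{eq:th:conKL:z4} (with $\widehat{m}=1$, since the exponent case has $c_\phi=1$) gives $b_3\Vert\nabla\fgam{\gf}{p}{\gamma}(x^k)\Vert^{\vartheta}\leq\Delta_k-\Delta_{k+1}$. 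Telescoping and invoking the former estimate yields the self-bounding inequality $B_k\leq\mathrm{const}\cdot\Delta_k\leq b_4(B_k-B_{k+1})$, hence $B_{k+1}\leq(1-\tfrac{1}{b_4})B_k$, which is the asserted linear convergence directly. Both routes work once the absorption step replaces your ``rate for $v_k$ first'' step; the paper's version is shorter (one self-bounding inequality for the tail lengths, no separate transfer from values to gradients to iterates), while yours, repaired, makes the role of the matched KL exponent in the function-value decrease more explicit.
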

\begin{proof}
By Theorem~\ref{th:conKL},  the sequence  $\{x^k\}_{k\in \Nz}$ converges  to a proximal fixed point $\widehat{x}$. By defining
$B_k:=\sum_{i\geq k}\Vert \nabla\fgam{\gf}{p,\varepsilon_i}{\gamma}(x^i)\Vert^{\vartheta}$, we have
\begin{align*}
\Vert x^k - \widehat{x}\Vert\leq \sum_{i\geq k} \Vert x^{i+1} - x^i\Vert \leq \varsigma c_2 B_k.
\end{align*}
Hence, it is enough to show that the sequence $\{B_k\}_{k\in \Nz}$ converges at an asymptotically linear rate. Let $\phi(t) := c t^{1 - \theta}$, where $c>0$. 
From \eqref{eq:th:globconv:klI:a}, for enough large $\widehat{k}\in \mathbb{N}$ and for each $k\geq \widehat{k}$, we obtain
\begin{equation}\label{eq:th:linconv:d1}
\left[(1-\theta)c\Vert \nabla\fgam{\gf}{p}{\gamma}(x^k)\Vert\right]^{\frac{1}{\theta}}\geq \vert \fgam{\gf}{p}{\gamma}(x^k) - \fv\vert.
\end{equation}
Defining $\Delta_k := \phi\left(\fgam{\gf}{p}{\gamma}(x^k) - \fv + v_k\right)$, where $v_k$ is defined in the proof of Theorem \ref{th:conKL}, and applying \eqref{eq:intrp:p01} and \eqref{eq:th:linconv:d1}, we get
\begin{align}\label{eq:th:linconv:d2}
\Delta_k&= c (\fgam{\gf}{p}{\gamma}(x^k)-\fv+v_k)^{1 - \theta}
\leq c (\vert\fgam{\gf}{p}{\gamma}(x^k)-\fv\vert+v_k)^{1 - \theta}
\nonumber\\&\leq c \vert\fgam{\gf}{p}{\gamma}(x^k)-\fv\vert^{1 - \theta}+c v_k^{1 - \theta}
\leq c \left[(1-\theta)c\Vert \nabla\fgam{\gf}{p}{\gamma}(x^k)\Vert\right]^{\frac{1-\theta}{\theta}}+c v_k^{1 - \theta}
\nonumber\\
&\leq  c \left[(1-\theta)c\Vert \nabla\fgam{\gf}{p}{\gamma}(x^k)\Vert\right]^{\vartheta}+c v_k^{\frac{\vartheta}{1+\vartheta}}
\nonumber\\&=  c \left[\frac{(1-\theta)c}{\gamma}\right]^{\vartheta}\Vert x^k- \prox{\gf}{\gamma}{p}(x^k)\Vert^{\vartheta(p-1)}+c v_k^{\frac{\vartheta}{1+\vartheta}}.
\end{align}
Since $\frac{\vartheta}{1+\vartheta}<1$, it follows from \eqref{eq:intrp:p01} that
\begin{align*}
v_k^{\frac{\vartheta}{1+\vartheta}}&\leq\left[\widehat{c}\widehat{\varrho}\sum_{j=k}^{\infty}\delta_j^{(p-1)(1+\vartheta)}\right]^{\frac{\vartheta}{1+\vartheta}}
+\left[2\sum_{j=k}^{\infty}\varepsilon_j\right]^{\frac{\vartheta}{1+\vartheta}}
\leq
\left(\widehat{c}\widehat{\varrho}\right)^{\frac{\vartheta}{1+\vartheta}}\sum_{j=k}^{\infty}\delta_j^{\vartheta(p-1)}
+2^{\frac{\vartheta}{1+\vartheta}}\sum_{j=k}^{\infty}\varepsilon_j^{\frac{\vartheta}{1+\vartheta}}
\end{align*}
where $\widehat{c}:=\left(1+ 2^{-(1+\vartheta)}\right)\left(\frac{2^{2-p}}{\gamma}\right)^{1+\vartheta}$.
Moreover, from \eqref{eq:ep-approx:oper:b}, we have 
\[
\sum_{j=k}^{\infty}\delta_j^{\vartheta(p-1)}\leq \Vert x^k - \prox{\gf}{\gamma}{p, \varepsilon_k}({x}^k)\Vert^{\vartheta(p-1)} \sum_{j=k}^{\infty}\beta_j\leq \ov{\beta}\Vert x^k - \prox{\gf}{\gamma}{p, \varepsilon_k}({x}^k)\Vert^{\vartheta(p-1)},
\]
and
\[
\sum_{j=k}^{\infty}\varepsilon_j^{\frac{\vartheta}{1+\vartheta}}\leq \ov{\beta}\Vert x^k - \prox{\gf}{\gamma}{p, \varepsilon_k}({x}^k)\Vert^{\vartheta(p-1)}.
\]
These lead to
\begin{equation}\label{eq:th:linconv:k1}
v_k^{\frac{\vartheta}{1+\vartheta}}\leq \left(\left(\widehat{c}\widehat{\varrho}\right)^{\frac{\vartheta}{1+\vartheta}}+2^{\frac{\vartheta}{1+\vartheta}}\right)\ov{\beta}\Vert x^k -\prox{\gf}{\gamma}{p, \varepsilon_k}({x}^k)\Vert^{\vartheta(p-1)}.
\end{equation}
Together with \eqref{eq:th:linconv:d2}, this implies
\begin{align}\label{eq:th:linconv:d3}
\Delta_k&\leq  c  \left[\frac{(1-\theta)c}{\gamma}\right]^{\vartheta}\Vert x^k- \prox{\gf}{\gamma}{p}(x^k)\Vert^{\vartheta(p-1)}+c\left(\left(\widehat{c}\widehat{\varrho}\right)^{\frac{\vartheta}{1+\vartheta}}
+2^{\frac{\vartheta}{1+\vartheta}}\right)\ov{\beta}\Vert x^k -\prox{\gf}{\gamma}{p, \varepsilon_k}({x}^k)\Vert^{\vartheta(p-1)}
\nonumber\\&\overset{(i)}{\leq}  c \left(1+\widehat{\beta}\right)^{\vartheta(p-1)}\left[\frac{(1-\theta)c}{\gamma}\right]^{\vartheta}\Vert x^k-\prox{\gf}{\gamma}{p, \varepsilon_k}({x}^k)\Vert^{\vartheta(p-1)}+c\left(\left(\widehat{c}\widehat{\varrho}\right)^{\frac{\vartheta}{1+\vartheta}}+2^{\frac{\vartheta}{1+\vartheta}}\right)\ov{\beta}\Vert x^k -\prox{\gf}{\gamma}{p, \varepsilon_k}({x}^k)\Vert^{\vartheta(p-1)}
\nonumber\\&= b_1 \Vert x^k -\prox{\gf}{\gamma}{p, \varepsilon_k}({x}^k)\Vert^{\vartheta(p-1)},
\end{align}
where $(i)$ follows from the inequalities
\begin{align*}
\Vert x^k - \prox{\gf}{\gamma}{p}({x}^k)\Vert &\leq \Vert x^k -\prox{\gf}{\gamma}{p, \varepsilon_k}({x}^k)\Vert+\Vert\prox{\gf}{\gamma}{p, \varepsilon_k}({x}^k) - \prox{\gf}{\gamma}{p}({x}^k)\Vert
\\&\leq \Vert x^k - \prox{\gf}{\gamma}{p, \varepsilon_k}({x}^k)\Vert+\delta_k\leq (1+\widehat{\beta})\Vert x^k - \prox{\gf}{\gamma}{p, \varepsilon_k}({x}^k)\Vert,
\end{align*}
and $b_1:= 
 c \left(1+\widehat{\beta}\right)^{\vartheta(p-1)}\left[\frac{(1-\theta)c}{\gamma}\right]^{\vartheta}
 +c\left(\left(\widehat{c}\widehat{\varrho}\right)^{\frac{\vartheta}{1+\vartheta}}+2^{\frac{\vartheta}{1+\vartheta}}\right)\ov{\beta}
$.

In addition, since $\fgam{\gf}{p}{\gamma}$ satisfies the KL property with an exponent, we have $c_\phi=1$. Thus, we have
 $\widehat{m}:=\bs\max\{1, c_\phi\}=1$ in  \eqref{eq:th:conKL:z4}. Thus, from  \eqref{eq:th:conKL:z4}, we obtain
\begin{equation}\label{eq:th:conKL:z5}
 \left\Vert \nabla\fgam{\gf}{p}{\gamma}(x^k)\right\Vert^{1+\vartheta} \leq \widehat{\varrho}^{-1} 2^{1+\vartheta}\left(\Delta_k-\Delta_{k+1} \right)\left(\left\Vert \nabla\fgam{\gf}{p}{\gamma}(x^k)\right\Vert+[\phi'(v_k)]^{-1}\right).
\end{equation}
Moreover, from \eqref{eq:ep-approx:oper}, it holds that
\begin{align*}
\Vert x^k - \prox{\gf}{\gamma}{p,  \varepsilon_k}({x}^k)\Vert &\leq \Vert x^k -\prox{\gf}{\gamma}{p}({x}^k)\Vert+\Vert\prox{\gf}{\gamma}{p, \varepsilon_k}({x}^k) - \prox{\gf}{\gamma}{p}({x}^k)\Vert
\\&\leq \Vert x^k - \prox{\gf}{\gamma}{p}({x}^k)\Vert+\delta_k
\\
&\leq\Vert x^k - \prox{\gf}{\gamma}{p}({x}^k)\Vert+\mu \Vert x^k - \prox{\gf}{\gamma}{p,  \varepsilon_k}({x}^k)\Vert.
\end{align*}
Thus, $(1-\mu)\Vert x^k - \prox{\gf}{\gamma}{p,  \varepsilon_k}({x}^k)\Vert\leq \Vert x^k - \prox{\gf}{\gamma}{p}({x}^k)\Vert$. From \eqref{eq:th:linconv:k1}, we obtain
\begin{align}\label{eq:th:conKL:z6}
[\phi'(v_k)]^{-1}=\frac{1}{c(1-\theta)}v_k^{\theta}=\frac{1}{c(1-\theta)}v_k^{\frac{1}{1+\vartheta}}
&\leq \frac{\left(\left(\widehat{c}\widehat{\varrho}\right)^{\frac{\vartheta}{1+\vartheta}}+2^{\frac{\vartheta}{1+\vartheta}}\right)^{\frac{1}{\vartheta}}\ov{\beta}^{\frac{1}{\vartheta}}}{c(1-\theta)} \Vert x^k -\prox{\gf}{\gamma}{p, \varepsilon_k}({x}^k)\Vert^{p-1}
\nonumber
\\&\leq b_2 \left\Vert \nabla\fgam{\gf}{p}{\gamma}(x^k)\right\Vert,
\end{align}
with $b_2:=\frac{\gamma\left(\left(\widehat{c}\widehat{\varrho}\right)^{\frac{\vartheta}{1+\vartheta}}+2^{\frac{\vartheta}{1+\vartheta}}\right)^{\frac{p-1}{2}}\ov{\beta}^{\frac{p-1}{2}}}{c(1-\theta)(1-\mu)^{p-1}}$.
Combining \eqref{eq:th:conKL:z5} and \eqref{eq:th:conKL:z6}, lead to
\begin{align*}
  \left\Vert \nabla\fgam{\gf}{p}{\gamma}(x^k)\right\Vert^{1+\vartheta} \leq \widehat{\varrho}^{-1} 2^{1+\vartheta}\left(\Delta_k-\Delta_{k+1} \right)\left(\left(1+b_2\right)\left\Vert \nabla\fgam{\gf}{p}{\gamma}(x^k)\right\Vert\right),
\end{align*}
and thus,
\begin{align}\label{eq:th:conKL:z7}
 b_3   \left\Vert \nabla\fgam{\gf}{p}{\gamma}(x^k)\right\Vert^{\vartheta} \leq  \Delta_k-\Delta_{k+1},
\end{align}
with $b_3=\widehat{\varrho}2^{-(1+\vartheta)}\left(1+b_2\right)^{-1}$. 
Since  $(1-\mu)\Vert x^k - \prox{\gf}{\gamma}{p,  \varepsilon_k}({x}^k)\Vert\leq \Vert x^k - \prox{\gf}{\gamma}{p}({x}^k)\Vert$, we have
\[
(1-\mu) \Vert \nabla\fgam{\gf}{p,\varepsilon_i}{\gamma}(x^i)\Vert^{\frac{1}{p-1}}\leq \Vert \nabla\fgam{\gf}{p}{\gamma}(x^i)\Vert^{\frac{1}{p-1}}.
\]
Together with \eqref{eq:th:linconv:d3}, this ensures
\begin{align*}
  B_k=\sum_{i\geq k}\Vert \nabla\fgam{\gf}{p,\varepsilon_i}{\gamma}(x^i)\Vert^{\vartheta} &\leq \frac{1}{(1-\mu)^{\vartheta(p-1)}}\sum_{i\geq k}\Vert \nabla\fgam{\gf}{p}{\gamma}(x^i)\Vert^{\vartheta}
\leq \frac{1}{b_3(1-\mu)^{\vartheta(p-1)}}\sum_{i\geq k} \left(\Delta_i-\Delta_{i+1} \right)
\\
&\overset{(i)}{\leq} \frac{1}{b_3(1-\mu)^{\vartheta(p-1)}}\Delta_k\leq b_4 \Vert \nabla\fgam{\gf}{p,\varepsilon_k}{\gamma}(x^k)\Vert^{\vartheta}\leq  b_4 \left(B_k-B_{k+1}\right),
\end{align*}
where $b_4=\frac{b_1 \gamma^{\vartheta}}{b_3(1-\mu)^{\vartheta(p-1)}}$ and $(i)$  follows from the fact that $\phi$ is continuous and $\fgam{\gf}{p}{\gamma}(x^k) - \fv + v_k\to 0$, which implies $\Delta_k \to 0$.
Thus, we have $B_{k+1}\leq \left(1-\frac{1}{b_4}\right)B_k$, demonstrating the desired asymptotic $Q$-linear convergence of the sequence $\{B_k\}_{k\in \Nz}$.
\end{proof}

\begin{corollary}[Linear convergence under the KL property for Algorithms~\ref{alg:ingrad2}~and~\ref{alg:first}]\label{cor:linconv:gen}
Let Assumptions~\ref{assum:eps:weak}~and~\ref{assum:linconv:weak} hold.
Suppose that one of the following statements holds:
\begin{enumerate}[label=(\textbf{\alph*}), font=\normalfont\bfseries, leftmargin=0.7cm]
\item $\vartheta=\frac{2}{p-1}$ and $\{x^k\}_{k\in \Nz}$ is the sequence generated either by Algorithm~\ref{alg:ingrad2};
\item $\{x^k\}_{k\in \Nz}$ is the sequence generated by Algorithm~\ref{alg:first} under assumptions of Corollary~\ref{cor:conv:alg:first};
\end{enumerate}
Then, the sequence $\{x^k\}_{k\in \Nz}$  converges linearly to a proximal fixed point.
\end{corollary}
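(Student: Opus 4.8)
The plan is to recognize both cases as concrete instances of the generic scheme Algorithm~\ref{alg:fram:nonmono} and then invoke Theorem~\ref{th:linconv} essentially verbatim. For each algorithm I would verify the three ingredients that Theorem~\ref{th:linconv} requires through its reliance on Algorithm~\ref{alg:fram:nonmono}: (i) the search directions satisfy \eqref{eq:dir:th:conv:alg:first:tem} with the relevant exponent $\vartheta$; (ii) the step-sizes are bounded above and below by positive constants; and (iii) the generic descent inequality \eqref{eq:genstract:ineq} holds with that same $\vartheta$ and a fixed $\widehat{\varrho}>0$. Since $\{x^k\}_{k\in\Nz}\subseteq\mathcal{S}_2$ by Theorems~\ref{th:welldfalg} and~\ref{th:welldf:alg:first}, and since $\fgam{\gf}{p}{\gamma}$ is assumed to satisfy the KL property with exponent $\theta=\frac{1}{1+\vartheta}$ on $\Omega(x^k)$ (the standing hypothesis of Theorem~\ref{th:linconv}), the remaining hypotheses of that theorem will be in force once (i)--(iii) are checked.

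For case~(a), I would first note that the choice $\vartheta=\frac{2}{p-1}$ gives $1+\vartheta=\frac{p+1}{p-1}$, so the direction rule \eqref{eq:th:welldfalg:dir} employed in Algorithm~\ref{alg:ingrad2} is literally \eqref{eq:dir:th:conv:alg:first:tem}. The inner loop of Algorithm~\ref{alg:ingrad2} terminates no later than the first index with $\ov{L}_{k+1}\geq\mathcal{L}_p$ (the well-definedness argument of Theorem~\ref{th:welldfalg}), which confines $\alpha_k$ to the interval $[\varrho_1,\gamma^{\frac{2}{p-1}}/c_2]$; combining the per-iteration estimate \eqref{eq:discentforseq:glconv} with the comparison \eqref{eq:relbetalphbaralp} then produces exactly \eqref{eq:th:conKL:a:tem}, which is \eqref{eq:genstract:ineq} with $\widehat{\varrho}=\widehat{\varrho}_1$.

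For case~(b), the directions of Algorithm~\ref{alg:first} satisfy \eqref{eq:dir:th:conv:alg:first:tem} by construction, the hypothesis borrowed from Corollary~\ref{cor:conv:alg:first} guarantees that $\{\alpha_k\}_{k\in\Nz}$ is bounded away from zero by some $\varrho_2>0$, and the backtracking initialization $\widehat{\alpha}_k=1$ bounds it above by $1$. The Armijo test in Step~\ref{alg:first:inwh}, which halts after finitely many steps by Theorem~\ref{th:welldf:alg:first}, then yields \eqref{eq:ineq:alg:first} and hence \eqref{eq:th:conKL:aa}, i.e.\ \eqref{eq:genstract:ineq} with $\widehat{\varrho}=\widehat{\varrho}_2$. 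In both cases the reduction to Algorithm~\ref{alg:fram:nonmono} is complete, and Theorem~\ref{th:linconv} delivers the asserted $Q$-linear convergence to a proximal fixed point.

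I do not anticipate a genuinely hard step: the substantive analysis---the KL-based telescoping of $\Delta_k=\phi(\fgam{\gf}{p}{\gamma}(x^k)-\fv+v_k)$, the Young-inequality split, and the summability of $[\phi'(v_k)]^{-1}$ driven by Assumption~\ref{assum:linconv:weak}---already resides inside Theorem~\ref{th:linconv}. The only delicate point is the bookkeeping of constants, namely confirming that the identification $\vartheta=\frac{2}{p-1}$ aligns the direction and descent conditions of Algorithm~\ref{alg:ingrad2} with the generic ones and that the two step-size bounds hold; the corollary then follows by assembling the well-definedness and convergence-rate results (Theorems~\ref{th:welldfalg} and~\ref{th:welldf:alg:first}; Corollaries~\ref{cor:con1} and~\ref{cor:conv:alg:first}) and applying Theorem~\ref{th:linconv}.
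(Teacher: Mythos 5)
Your proof is correct and follows essentially the same route as the paper: the paper states this corollary without a separate proof, the intended argument being exactly your reduction --- the descent inequalities \eqref{eq:th:conKL:a:tem} (for Algorithm~\ref{alg:ingrad2}, with $\vartheta=\frac{2}{p-1}$) and \eqref{eq:th:conKL:aa} (for Algorithm~\ref{alg:first} under the hypotheses of Corollary~\ref{cor:conv:alg:first}) exhibit both methods as instances of Algorithm~\ref{alg:fram:nonmono}, after which Theorem~\ref{th:linconv} yields the linear convergence to a proximal fixed point. Your verification of the direction conditions, the step-size bounds, and the constants $\widehat{\varrho}_1,\widehat{\varrho}_2$ matches the identifications already carried out in Subsection~\ref{subsec:glconv}.
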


\section{Preliminary numerical experiments}
\label{sec:numerical}
This section presents preliminary numerical results for Algorithms~\ref{alg:ingrad2} and~\ref{alg:first}. First, we discuss the details of our implementation, including the computation of proximal approximations, parameter settings, search direction strategies, and implementation details for comparisons. We then describe the robust sparse recovery problem, formulate it as a nonsmooth weakly convex optimization problem, and showcase the performance of our proposed algorithms. Our preliminary numerical results indicate a promising performance of the proposed algorithms compared to several subgradient methods.

\subsection{{\bf Implementation issues}}\label{sub:impiss}
Here, we present details of our implementation for Algorithms~\ref{alg:ingrad2} and~\ref{alg:first} and the subgradient methods used in our comparison, where we summarize them as follows:
\begin{description}
\item [(i)] \textbf{(Approximation of HOPE)} 
 For the current point $x^k$, we solve $\mathop{\bs\min}\limits_{x\in \mathbb {R}^n} \Phi(x):=\gf(x)+\frac{1}{p\gamma}\Vert x - x^k\Vert^p$ via
the subgradient method with geometrically decaying step-sizes (SG-DSS), described in Algorithm~\ref{alg:SG-DSS} for obtaining an approximation for the proximity operator; see, e.g., \cite{Davis2018, Rahimi2024} for more details. For approximating the proximity operator, we stop this algorithm if the number of iterations exceeds 200 or $\Vert y^{k+1} - y^k\Vert<10^{-3}$.

\begin{algorithm}[H]
\caption{Generic SubGradient algorithm}\label{alg:SG-DSS}
\begin{algorithmic}[1]
\State \textbf{Initialization} Start with $y^0=x^k$ and $\alpha_0=0.95$. Set $k=0$.
\While{stopping criteria do not hold  }
\State Choose $\zeta^k\in \partial(\Phi(y^k))$;
\State Set $y^{k+1}=y^k -\alpha_k \frac{\zeta^k}{\Vert \zeta^k\Vert}$;
\State Set $k=k+1$ and $\alpha_k=\alpha_0^k$;
\EndWhile
\end{algorithmic}
\end{algorithm}

\item [(ii)] \textbf{(Parameters of  Algorithm~\ref{alg:ingrad2})} We set $c_1=c_2=1$, $\gamma=0.9$, and $\mu=0.9\left(\frac{1}{2^{2-p}}\right)^{\frac{1}{p-1}}$. 
Additionally, we select the sequence $\{\varepsilon_k=\frac{1}{(k+1)^2}\}_{k\in \Nz}$ for the update step in \eqref{eq:discentforseq:2}.
We set $d^k=-\Vert  \nabla \fgam{\gf}{p,\varepsilon_k}{\gamma}(x_{k}) \Vert^{\frac{3-p}{p-1}} \nabla \fgam{\gf}{p,\varepsilon_k}{\gamma}(x_{k})$ and compare the results of 
Algorithm~\ref{alg:ingrad2} for $p \in \{ 1.25, 1.5, 1.75, 2\}$. Steps~\ref{alg:ingrad2:startinner}-\ref{alg:ingrad2:endinner}  of the algorithm facilitate the inner iterations.
This algorithm begins with an initial value of $L_0 > 0$ and proceeds by multiplying a scalar $\ov{\upsilon}> 1$, with $i$ initialized to 0. We consider the subsequent three scenarios to select $\overline{L}_{k+1}$:
\begin{itemize}
  \item[$\bullet$] \textbf{S1 (Scenario 1)}: $\overline{L}_{k+1}=\ov{\upsilon}^i L_0$  and in each inner iteration, we update $i=i+1$;
  \item[$\bullet$] \textbf{S2 (Scenario 2)}: $\overline{L}_{k+1}=\ov{\upsilon}^i L_k$ with $L_k$ comes from the previous iteration by setting $L_k:=\overline{L}_{k}$, and in each inner iteration, we update $i=i+1$;
  \item[$\bullet$] \textbf{S3 (Scenario 3)}: $\overline{L}_{k+1}=\ov{\upsilon}^i \widehat{L}_k$ with $\widehat{L}_k$ comes from 
  \[
  \widehat{L}_k:=\frac{\left\Vert \nabla \fgam{\gf}{p,\varepsilon_k}{\gamma}(x_{k})-\nabla \fgam{\gf}{p,\varepsilon_{k-1}}{\gamma}(x_{k-1})\right\Vert }{\Vert x_{k}-x_{k-1}\Vert^{\frac{p-1}{2}}},
  \]
   and in each inner iteration, we update $i=i+1$. In our experiences, we set $\ov{\upsilon}=3$ and $L_0=10^{-3}$.
\end{itemize}

\item [(iii)] \textbf{(Parameters of  Algorithm~\ref{alg:first})} 
We set $c_1=c_2=1$ and $\mu=0.9\left(\frac{1}{2^{2-p}}\right)^{\frac{1}{p-1}}$. Additionally, $\lambda = 0.5$ and $\upsilon = 0.4$.
we choose $d^k=-\Vert  \nabla \fgam{\gf}{p,\varepsilon_k}{\gamma}(x_{k}) \Vert^{\omega} \nabla \fgam{\gf}{p,\varepsilon_k}{\gamma}(x_{k})$ and compare the results of Algorithm~\ref{alg:first} for $p \in \{ 1.25, 1.5, 1.75, 2\}$, for $\omega\in \{0, 1,  \ldots, 5\}$.

\item [(iv)] \textbf{(Implementation and comparisons)} 
We compare Algorithms~\ref{alg:ingrad2} and~\ref{alg:first} with SG-DSS described in Algorithm~\ref{alg:SG-DSS}.
Additionally, we compare the proposed methods with the subgradient method 
with constant step-size (\textit{SG-CSS}) for step-sizes $\alpha\in\{0.01, 0.1, 1\}$. 
All algorithms are stopped after 10 seconds on a laptop with a 12th Gen Intel$\circledR$~Core$^{\text{TM}}$ i7-12800H CPU (1.80 GHz) and 16 GB of RAM, using MATLAB Version R2022a.
\end{description}

\subsection{{\bf Application to robust sparse recovery}}\label{sec:sparseRec}
In numerous fields of science and engineering, we often encounter the challenge of recovering a high-dimensional signal from a limited set of noisy measurements. This leads to the problem of robust sparse recovery in which we aim to reconstruct a sparse signal $x\in \R^n$ from a measurement vector $y\in \R^m$ corrupted by noise $y\in \R^m$. This relation is commonly expressed by the linear inverse problem
\[
y = Ax+e,
\]
where $A\in \R^{m\times n}$ is a sensing matrix mapping the signal $x$ to the measurement space. Here, the crucial assumption is the sparsity of $x$, i.e., most components of $x$ are zero or close to zero on some basis. This sparsity assumption enables recovery even when the number of measurements $m$ is far smaller than the signal dimension $n$. This problem finds widespread applications, including compressed sensing signal and data acquisition \cite{Gurbuz2009Compressive,Li2013Compressed}, feature selection in machine learning \cite{Li2022Survey,zou2006adaptive}, robust face recognition \cite{Wright2009Robust}, and many others.

The robust sparse recovery is commonly addressed by formulating an optimization problem that balances fidelity to the measurements with the desired solution sparsity. A common approach involves minimizing a cost function with a data fidelity term (typically measuring the discrepancy between $Ax$ and $y$) and a regularization term that promotes sparsity, which the unconstrained optimization problem can address 
\[
   \mathop{\bs\min}\limits_{x\in \mathbb {R}^n}\ \frac{1}{2}\Vert Ax - y\Vert_2^2+\ov{\lambda} \phi(x),
\]
where $\phi: \R^n \to \R$  is a regularization (a penalty function) designed to induce sparsity (e.g., the $\ell_1$-norm), and $\ov{\lambda}>0$ is the regularization parameter (a Lagrange multiplier) that controls the trade-off between fidelity and sparsity. We refer the interested readers to \cite{Carrillo2016robust,Marques2019Review,Tropp2010Computational,Wen2017Robust,Wen18} for more details of this problem.

Although the above-mentioned problem has some benefits (e.g., the differentiability of the loss function), the $\ell_2$-norm loss $\Vert Ax - y\Vert_2^2$ is known to be sensitive to outliers \cite{Carrillo2016robust,Li20Low}. However, the $\ell_1$-norm loss is known to be more robust against outliers \cite{Candes2011Robust,Li20Low}. In addition, it has been known that nonconvex, sparsity-inducing penalty functions $\phi$, particularly weakly convex functions, can lead to more accurate signal recovery; cf.  \cite{Chen2014Convergence,Liu2018Robust,Yang2019Weakly,Wen2017Efficient}. Consequently, one can instead consider the optimization problem
\begin{equation}\label{eq:l1RSR}
    \mathop{\bs\min}\limits_{x\in \mathbb {R}^n}  \Vert Ax - y\Vert_1+\ov{\lambda} \phi(x):= \varphi(x),
\end{equation}
where $\phi(x) =\sum_{i=1}^{n} f(x_i)$, and $f: \R\to\R$ is a weakly convex function that serves as a sparsity-inducing penalty. Specifically, $f$
 is an even function, is not identically zero with $f(0)=0$, is non-decreasing on $[0, +\infty)$, and the function 
$t\mapsto\frac{f(t)}{t}$ is non-increasing on $(0, +\infty)$ 
 \cite{Chen2014Convergence,Gribonval2007Highly,Yang2019Weakly}. For a list of weakly convex functions satisfying these properties, see \cite[Table~I]{Chen2014Convergence}. Here, we are interested in the regularization of this type with
\[
f(t) = \left(2\sigma \vert t\vert - \sigma^2 t^2\right)\iota_{\vert t\vert\leq \frac{1}{\sigma}}+\iota_{\vert t\vert> \frac{1}{\sigma}},
\]
where for a set $C$, $\iota_C (x)=1$ if $x \in C$ and $\iota_{C} (x) = 0$ otherwise. 
This function, known as the clipped quadratic penalty, is $\rho$-weakly convex with $\rho = \sigma^2$ \cite{Chen2014Convergence}
and showed its efficiency in the sparse recovery; cf.  \cite{Shen2016Square,Yang2019Weakly}.

For our numerical experiments, we set the signal dimension to $n=1000$
 and the number of measurements to $m=500$. The sensing matrix $A$ is constructed with independent and identically distributed (i.i.d.) standard Gaussian entries following the distribution  $\mathcal{N}\left(0, \frac{1}{m}\right)$.
The true signal $x\in \R^n$ is designed to be $k_1$-sparse, and the noise vector $e$, which follows the distribution $\mathcal{N}\left(2, 1\right)$, is also $k_2$-sparse.
In our experiments, we set $k_2=30$. Furthermore, we initialize the signal estimate with the zero vector in $\R^n$ and set the parameter $\sigma=1$. In our comparisons, we consider the following algorithms:
\begin{itemize}
    \item[$\bullet$] SG-DSS: Subgradient algorithm (Algorithm~\ref{alg:SG-DSS}) with geometrically decaying step-size;
    \item[$\bullet$] SG-CSS: Subgradient algorithm (Algorithm~\ref{alg:SG-DSS}) with constant step-size;
    \item[$\bullet$] PFHiGDA: Parameter-free HiGDA (Algorithm~\ref{alg:ingrad2});
    \item[$\bullet$] IDEALS: Inexact descent Armijo line search (Algorithm~\ref{alg:first}).
\end{itemize}

We first aim to determine the optimal value for the parameter $p$ in Algorithm~\ref{alg:ingrad2}. We assume that the true signal 
$x\in \R^n$ is $k_1$-sparse with $k_1=50$. Then, we execute Algorithm~\ref{alg:ingrad2} for several values of $p \in \{ 1.25, 1.5, 1.75, 2\}$. 
As mentioned, there are three distinct scenarios for updating $\overline{L}_{k+1}$ within this algorithm. Subfigures~(a)-(d) of Figure~\ref{fig:alg:ingrad2:p} compares these scenarios for each value of $p$, where the results indicate that for $p\in \{1.25, 1.5\}$, Scenario~3 attains the best performance. Moreover, for
$p\in \{1.75, 2\}$, Scenario~1 is the most effective. As such, we adapt the best-performing scenario for each $p$ value, and compare the results in Subfigures~(e)-(f) of Figure~\ref{fig:alg:ingrad2:p}, implying that Algorithm~\ref{alg:ingrad2} achieves the most accurate signal recovery with $p=1.25$. Therefore, from now on, we fix $p=1.25$ for Algorithm~\ref{alg:ingrad2}.

\begin{figure}
    \centering
    \begin{subfigure}{0.42\textwidth}
        \centering
        \includegraphics[width=1.2\textwidth]{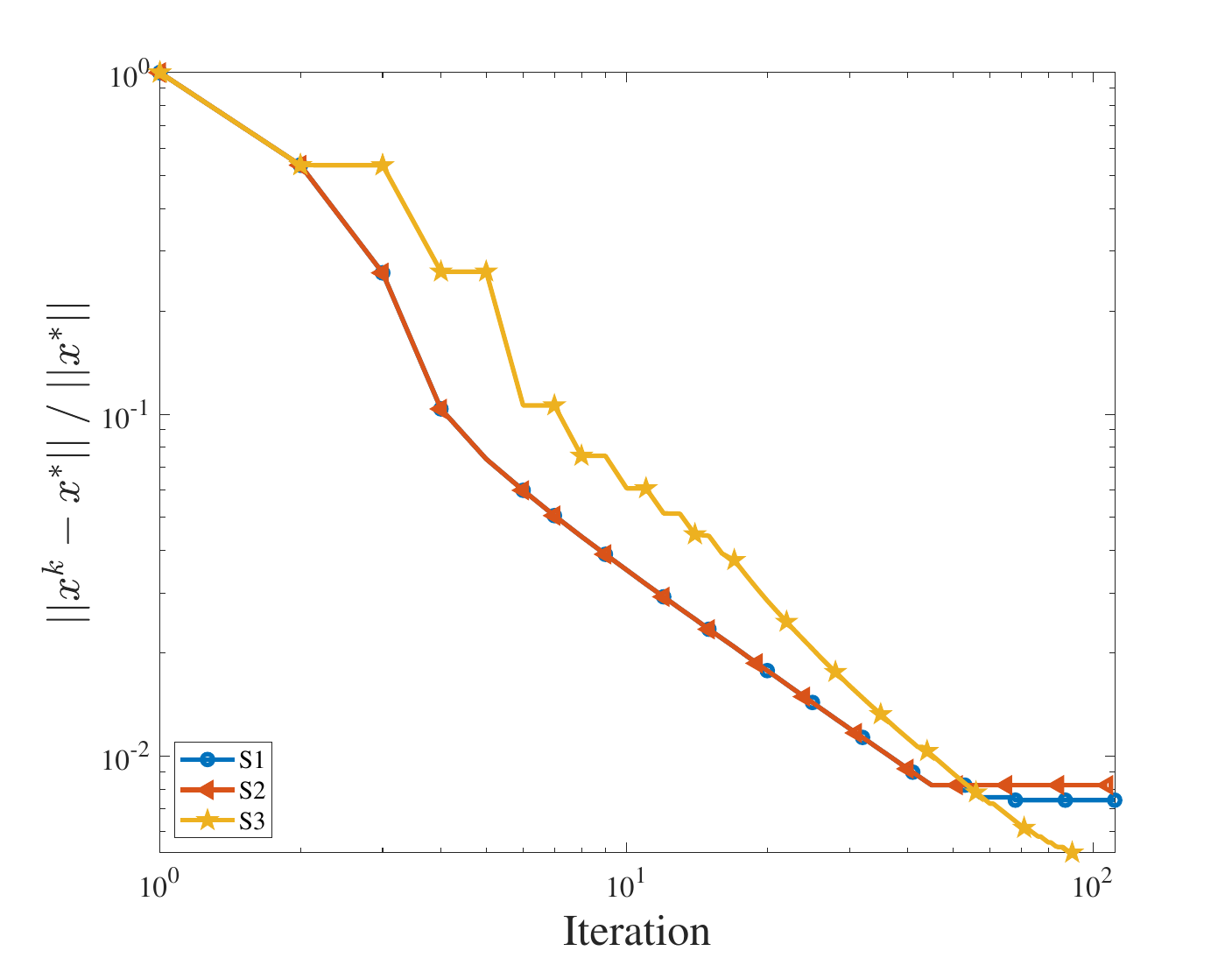}
        \caption{Relative error versus iterations for $p=1.25$}
    \end{subfigure}
    \qquad
    \begin{subfigure}{0.42\textwidth}
        \centering
        \includegraphics[width=1.2\textwidth]{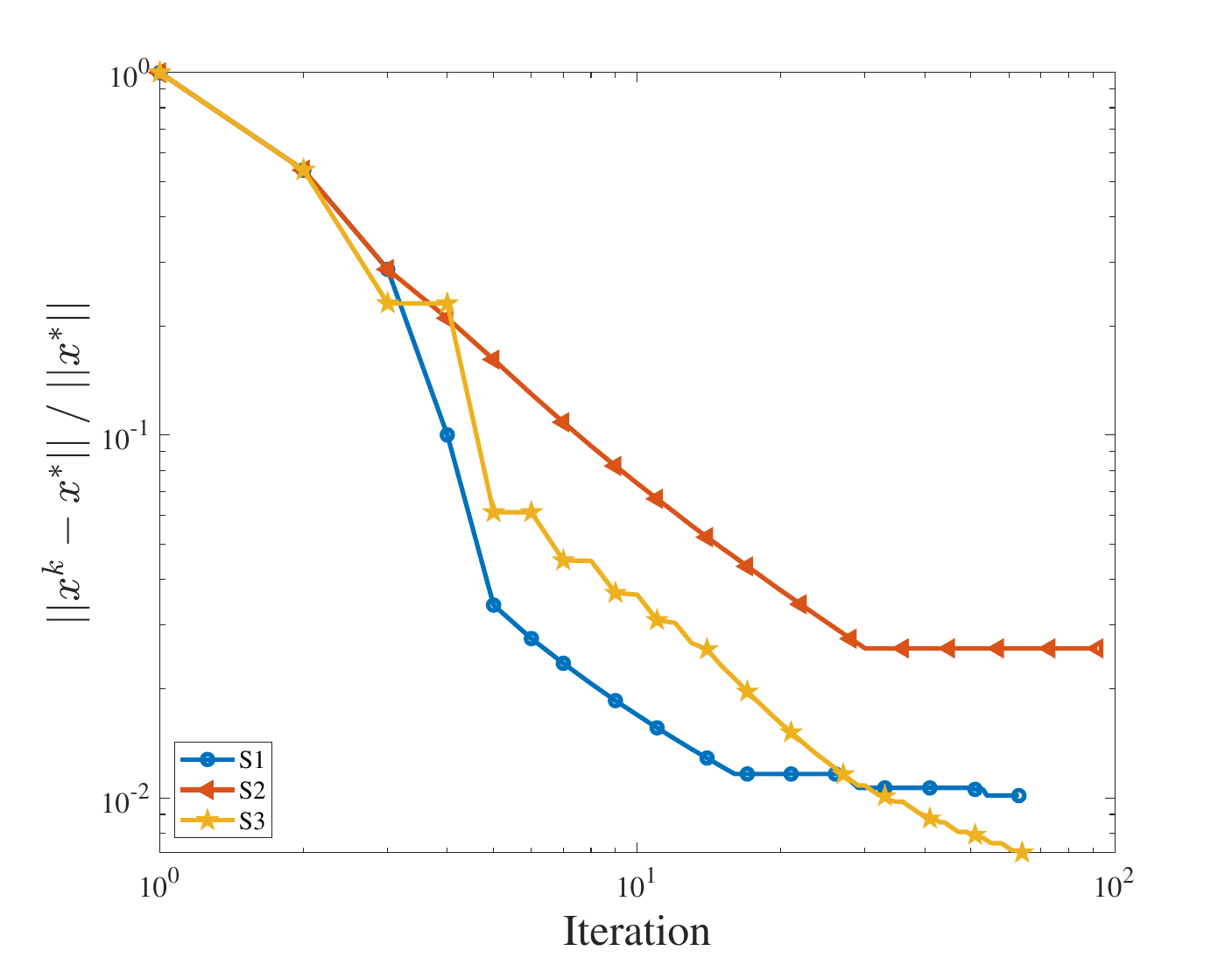}
        \caption{Relative error versus iterations for $p=1.5$}
    \end{subfigure}
    \vspace{1em}

    \begin{subfigure}{0.42\textwidth}
        \centering
        \includegraphics[width=1.2\textwidth]{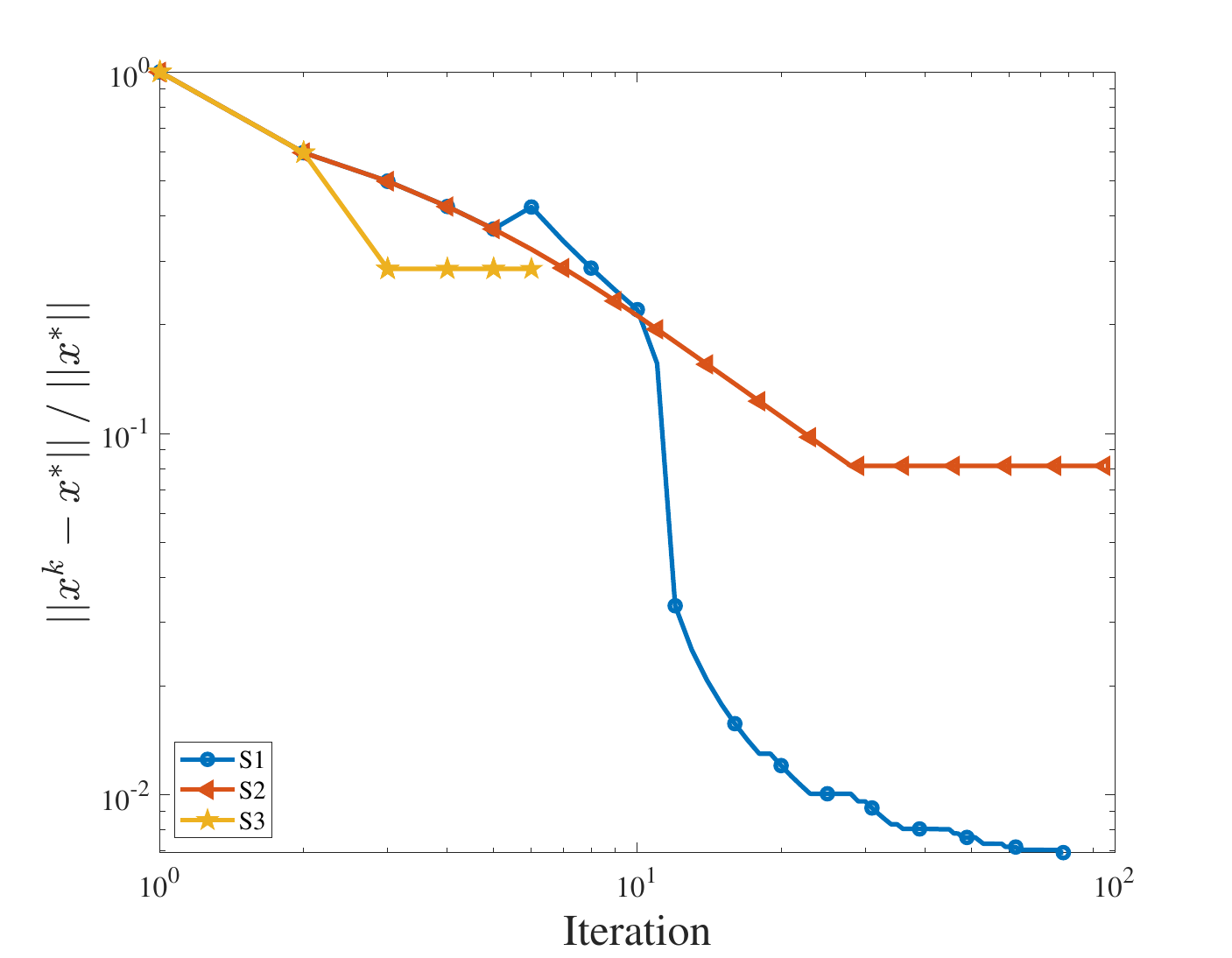}
        \caption{Relative error versus iterations for $p=1.75$}
    \end{subfigure}
         \qquad~
    \begin{subfigure}{0.42\textwidth}
        \centering
        \includegraphics[width=1.2\textwidth]{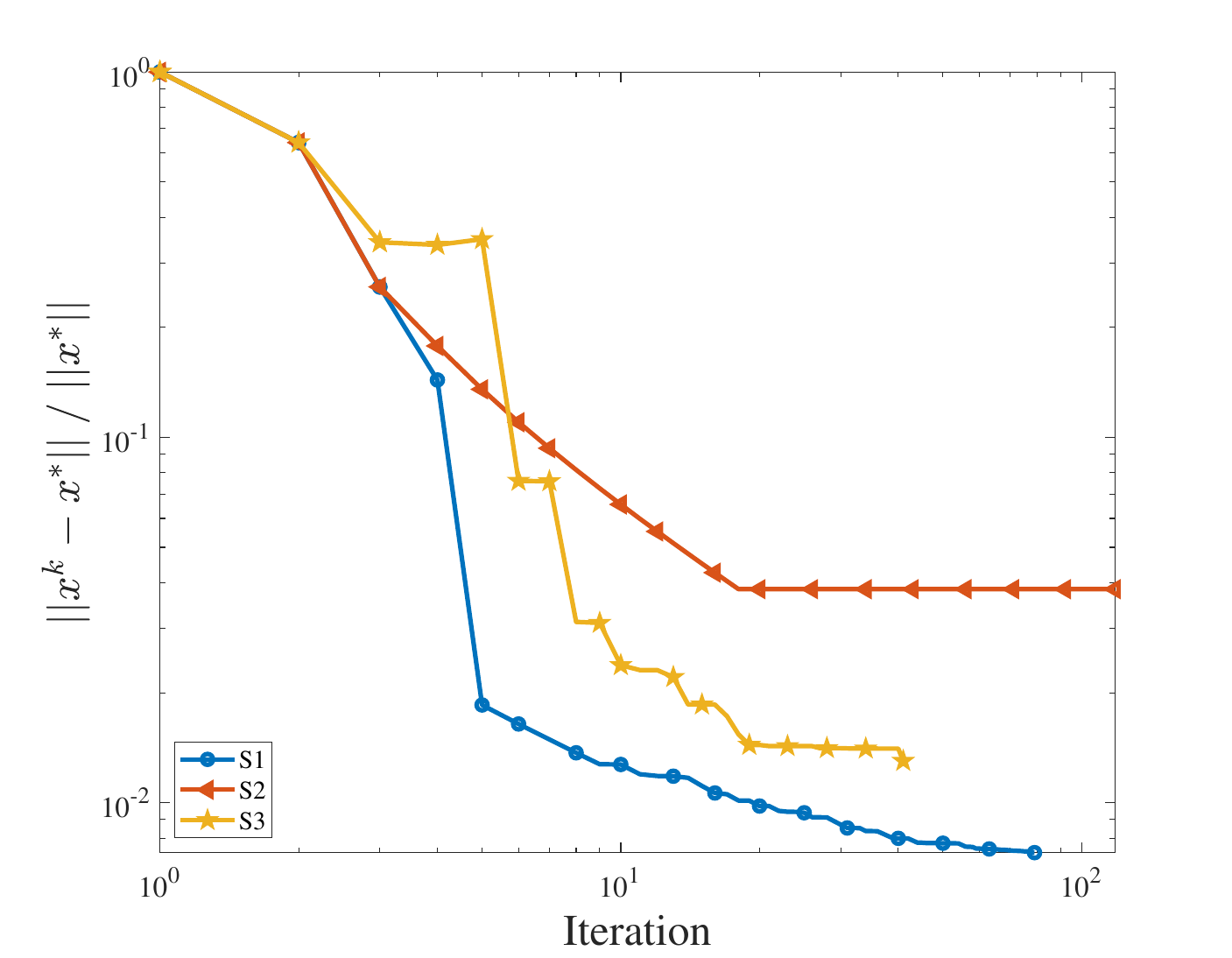}
        \caption{Relative error versus iterations for $p=2$}
    \end{subfigure}
    \vspace{1em}

    \begin{subfigure}{0.42\textwidth}
        \centering
        \includegraphics[width=1.2\textwidth]{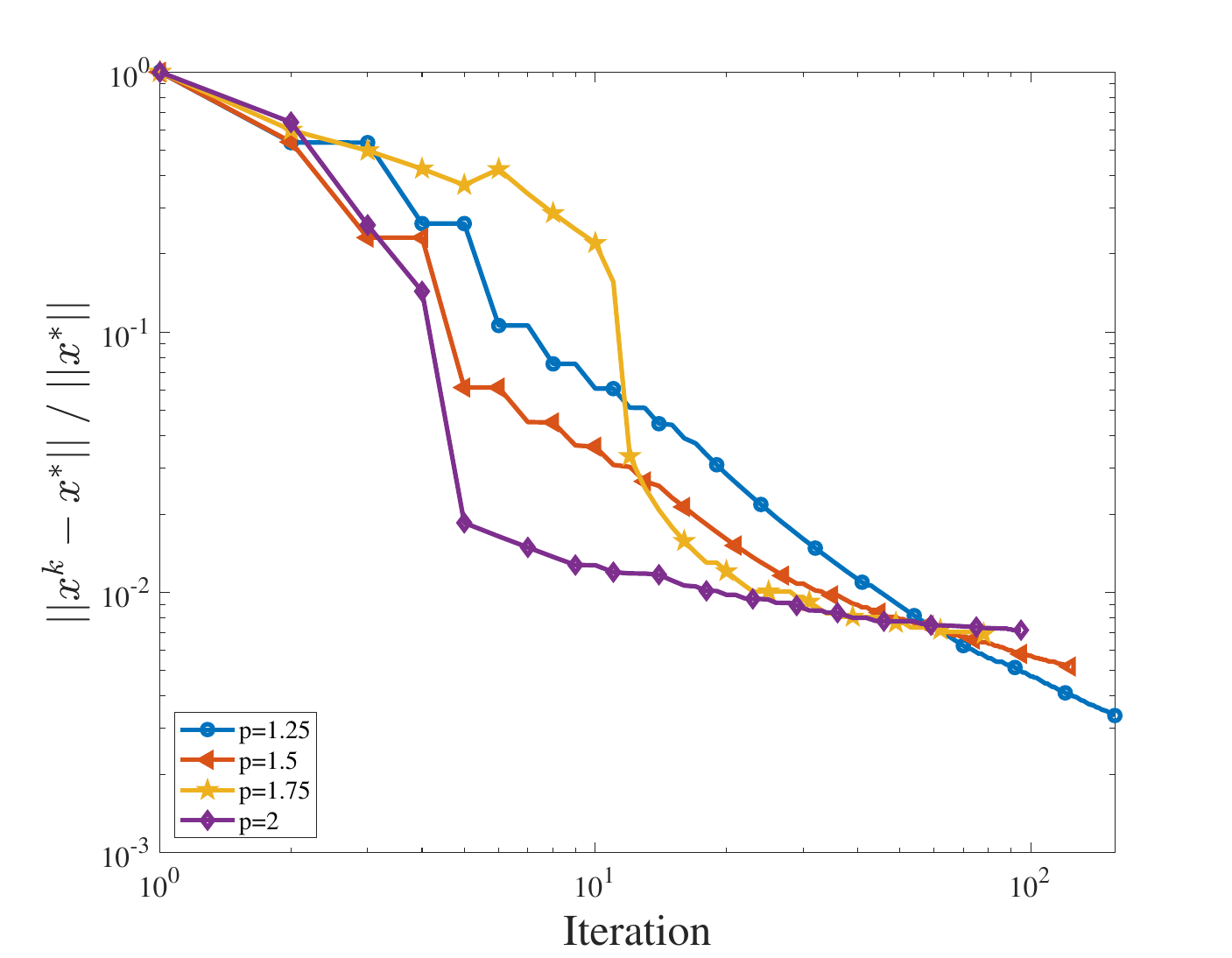}
        \caption{Relative errors versus iterations}
    \end{subfigure}
    \qquad\qquad
    \begin{subfigure}{0.42\textwidth}
        \centering
        \includegraphics[width=1.2\textwidth]{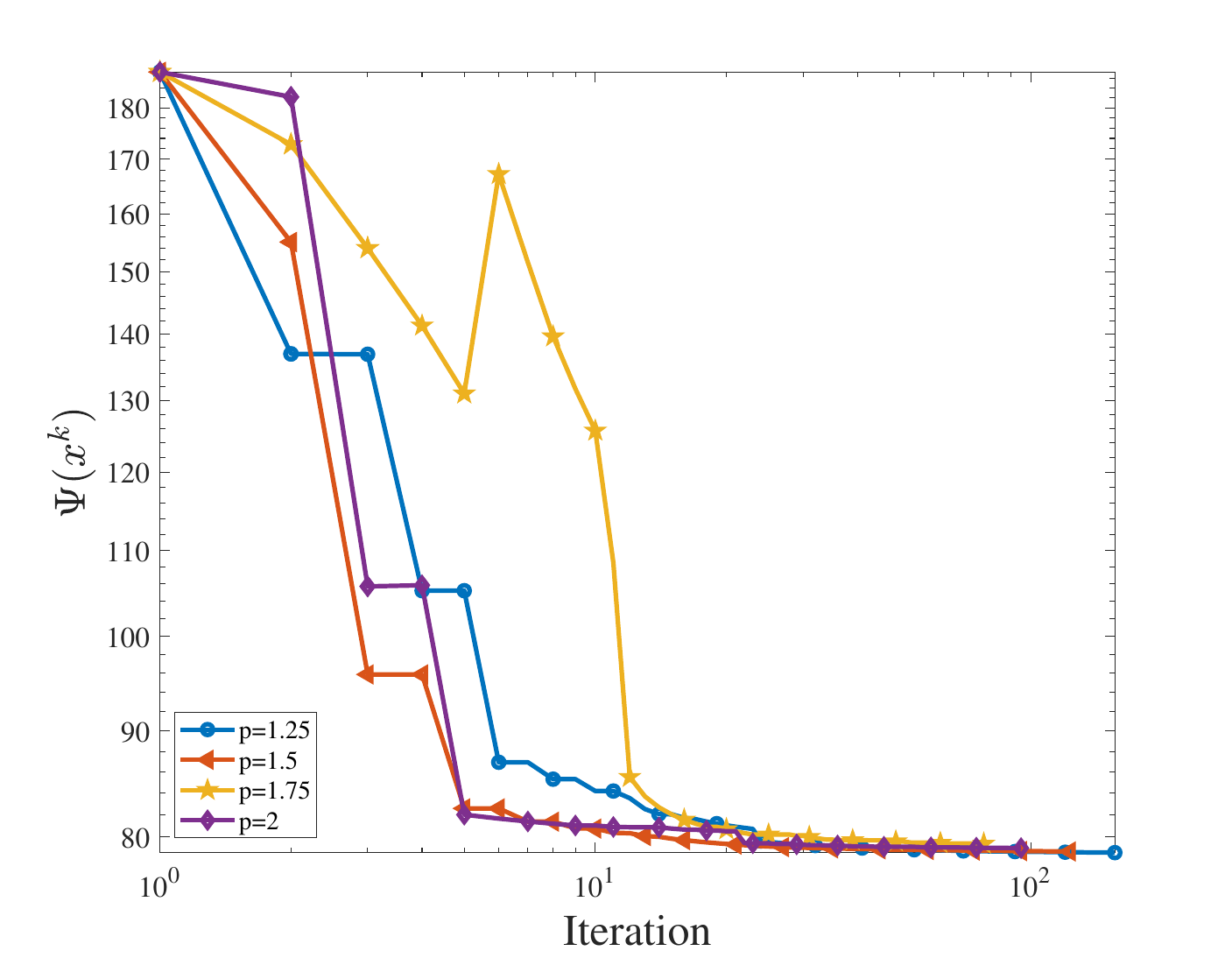}
        \caption{Function values $\Psi(x^k)$ versus iterations}
    \end{subfigure}
    \caption{The relative errors, and function values
            versus iterations for various values of $p$ in Algorithm~\ref{alg:ingrad2}}
    \label{fig:alg:ingrad2:p}
\end{figure}

For Algorithm~\ref{alg:first}, we use 
$
d^k=-\Vert  \nabla \fgam{\gf}{p,\varepsilon_k}{\gamma}(x_{k}) \Vert^{\omega} \nabla \fgam{\gf}{p,\varepsilon_k}{\gamma}(x_{k})
$
as a search direction and compare the performance of Algorithm~\ref{alg:first} for $p \in \{1.25, 1.5, 1.75, 2\}$ across different values of 
$\omega\in \{0, 1,  \ldots, 5\}$. The results of this analysis are depicted in Subfigures~(a)-(d) of Figure~\ref{fig:alg:first:p}, which shows that the best value for 
$\omega$ is approximately $\omega=\frac{2-p}{p-1}$ for all values of $p$. Subsequently, for all coming results concerning Algorithm~\ref{alg:first}, we set the search direction to
$
d^k = -\Vert  \nabla \fgam{\gf}{p,\varepsilon_k}{\gamma}(x_{k}) \Vert^{\frac{2-p}{p-1}} \nabla \fgam{\gf}{p,\varepsilon_k}{\gamma}(x_{k}).
$
Having established the optimal setting for the search direction, we compare the performance of Algorithm~\ref{alg:first} for different values of $p$, which is given in Subfigures~(e)-(f) Figure~\ref{fig:alg:first:p}. It is evident that Algorithm~\ref{alg:first} achieves the most accurate signal recovery with $p=1.25$. Consequently, in the remainder of our comparison, we set $p=1.25$ for Algorithm~\ref{alg:first}.

\begin{figure}
    \centering
    \begin{subfigure}{0.42\textwidth}
        \centering
        \includegraphics[width=1.2\textwidth]{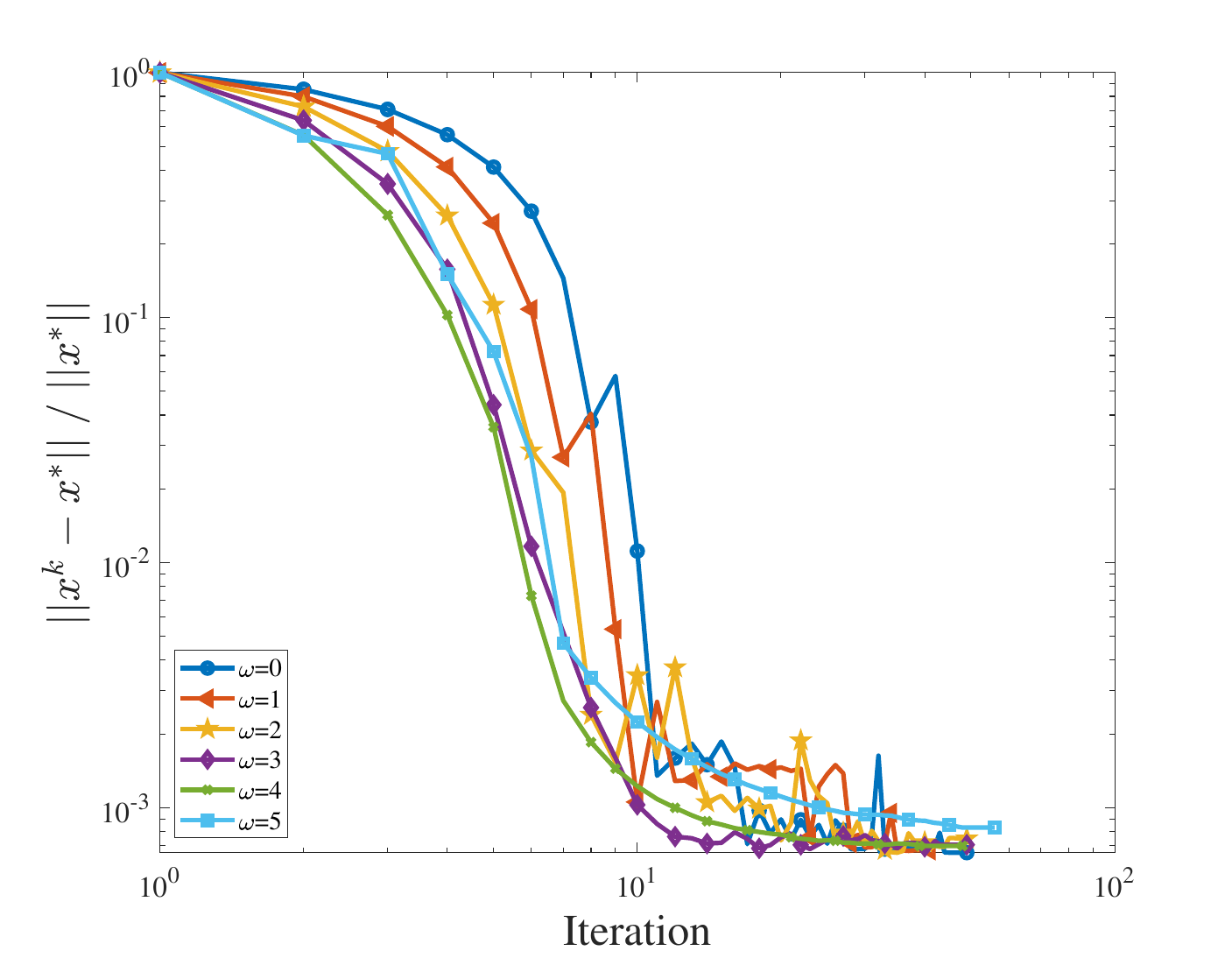}
        \caption{Relative error versus iterations for $p=1.25$}
    \end{subfigure}
    \qquad~
    \begin{subfigure}{0.42\textwidth}
        \centering
        \includegraphics[width=1.2\textwidth]{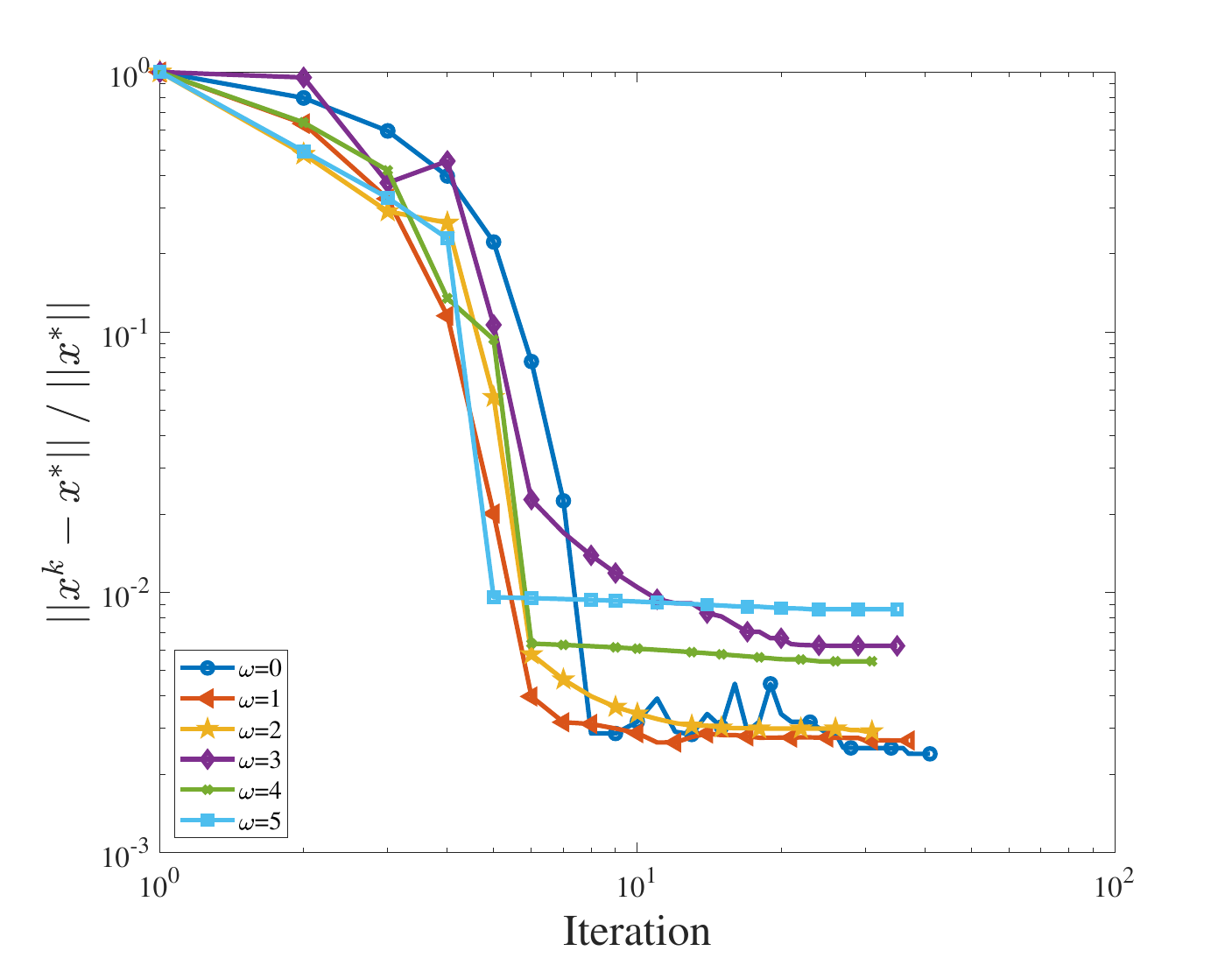}
        \caption{Relative error versus iterations for $p=1.5$}
    \end{subfigure}
    \vspace{1em}

    \begin{subfigure}{0.42\textwidth}
        \centering
        \includegraphics[width=1.2\textwidth]{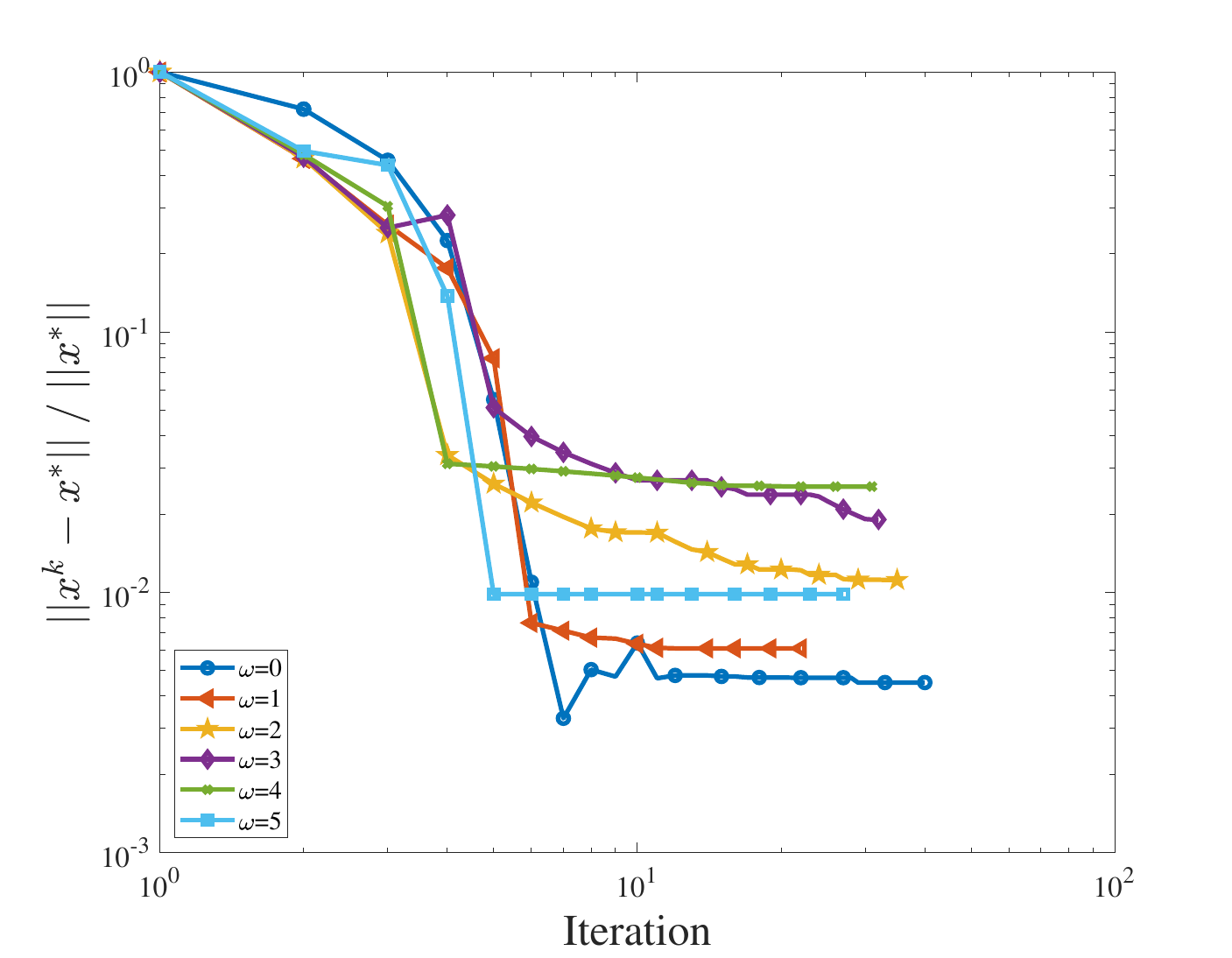}
        \caption{Relative error versus iterations for $p=1.75$}
    \end{subfigure}
         \qquad~
    \begin{subfigure}{0.42\textwidth}
        \centering
        \includegraphics[width=1.2\textwidth]{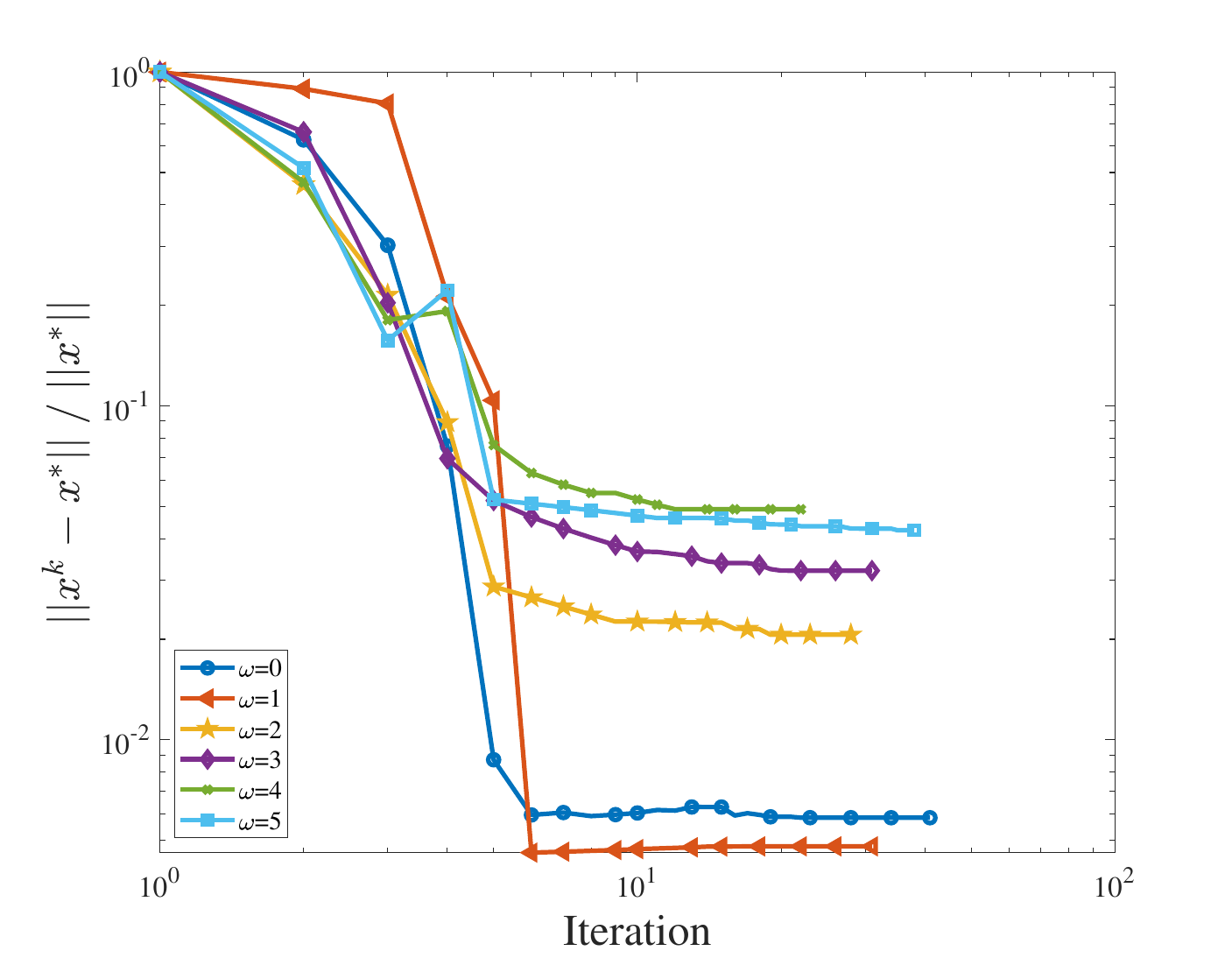}
        \caption{Relative error versus iterations for $p=2$}
    \end{subfigure}
    \vspace{1em}
    
    \begin{subfigure}{0.42\textwidth}
        \centering
        \includegraphics[width=1.2\textwidth]{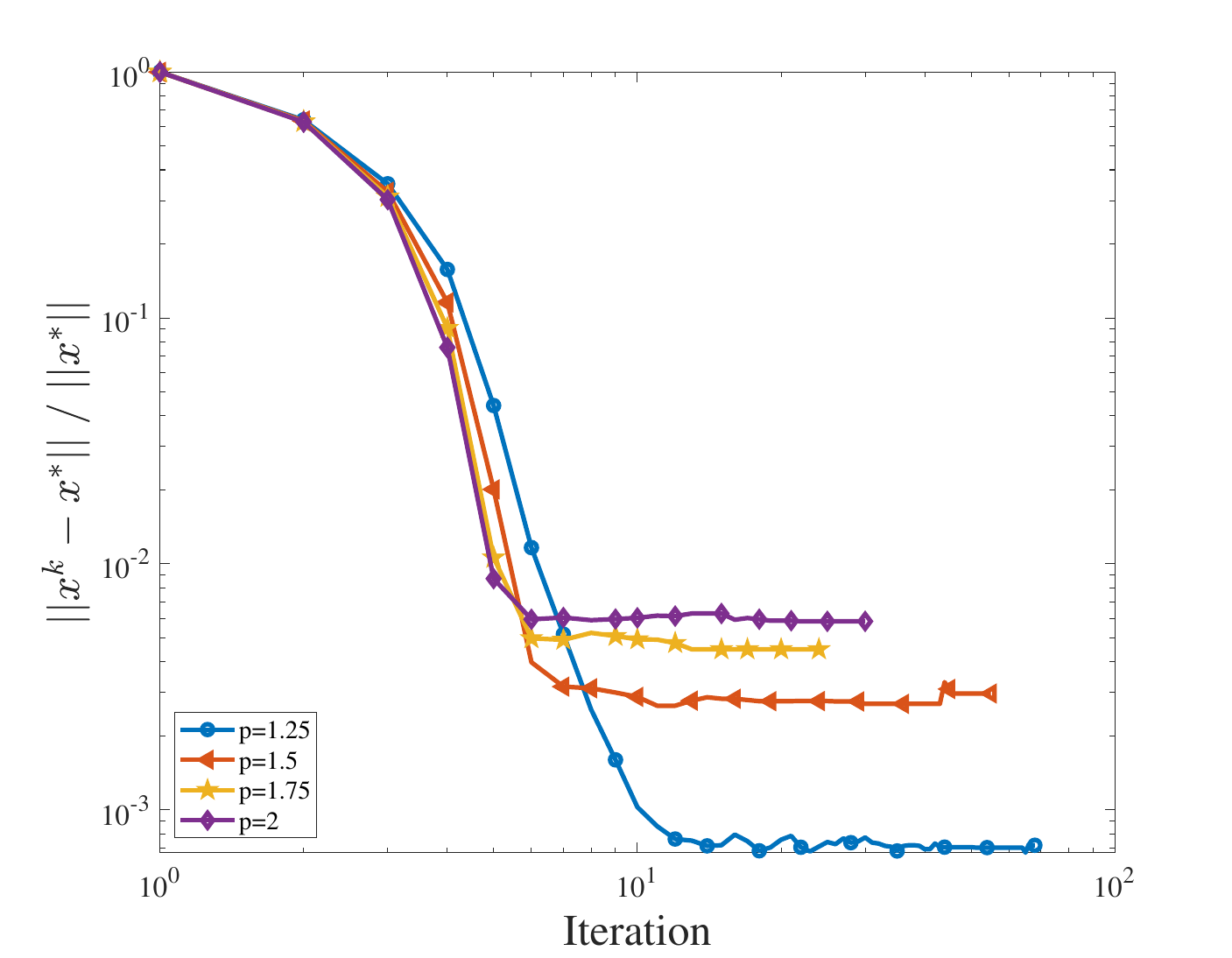}
        \caption{Relative errors versus iterations}
    \end{subfigure}
    \qquad~
    \begin{subfigure}{0.42\textwidth}
        \centering
        \includegraphics[width=1.2\textwidth]{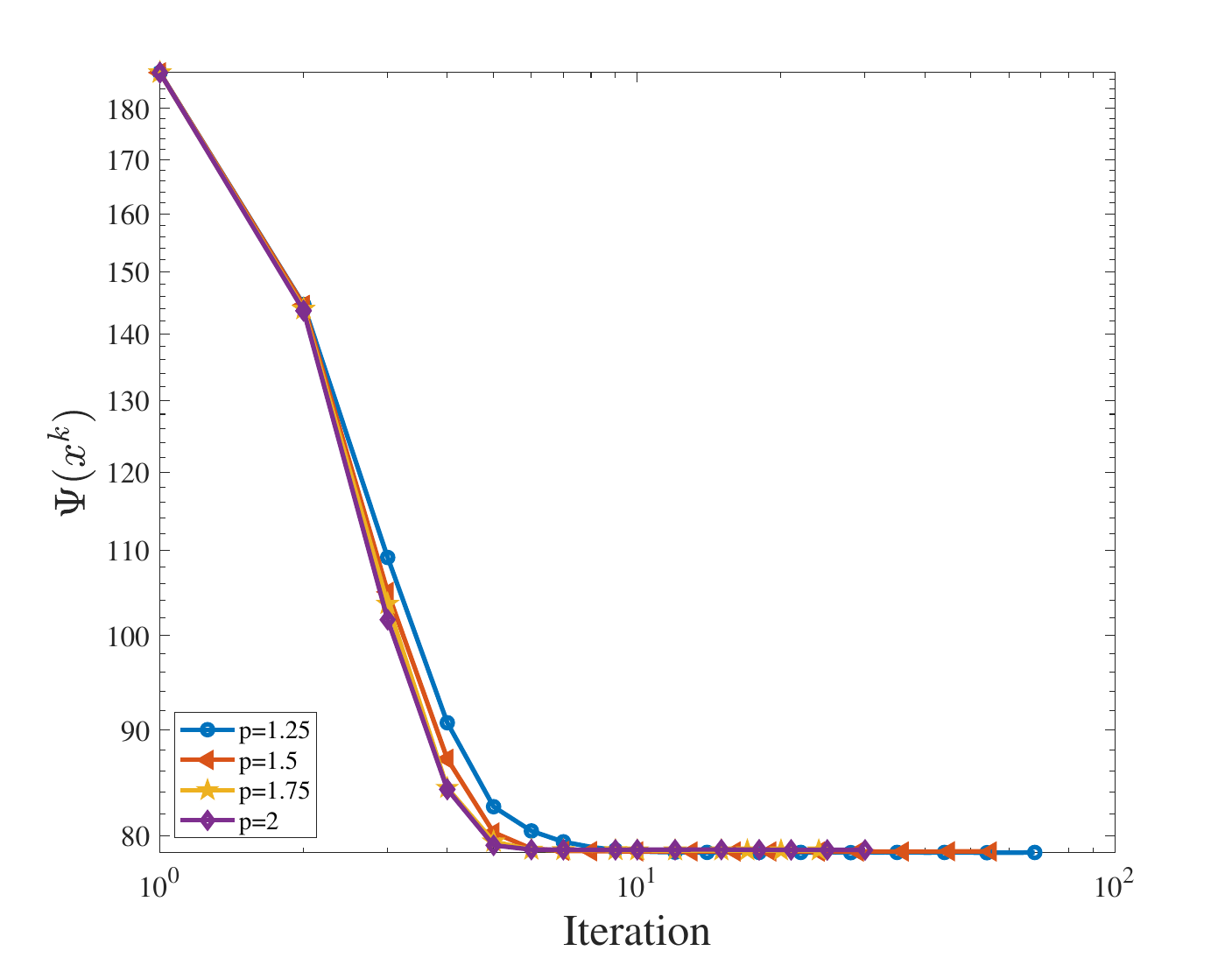}
        \caption{Function values $\Psi(x^k)$ versus iterations}
    \end{subfigure}
    \caption{The relative errors and Function values
            versus iterations for various values of $p$ in Algorithm~\ref{alg:first}}
    \label{fig:alg:first:p}
\end{figure}

At this point, we are prepared to compare the proposed algorithms with some subgradient methods. We here emphasize that we cannot consider the existing splitting algorithms in our comparison for dealing with the composite problem \eqref{eq:l1RSR} because of its lack of structure. As shown in Subfigures~(a)-(b) of Figure~\ref{fig:alg:all:prob}, Algorithm~\ref{alg:first} demonstrates a superior performance compared to Algorithm~\ref{alg:ingrad2}. Furthermore, Algorithm~\ref{alg:ingrad2}, while not outperforming Algorithm~\ref{alg:first}, still produces reasonable results compared to the other subgradient methods, highlighting both algorithms' potential for robust sparse recovery.
We conclude this section with our final experiment evaluating the recovery performance of the considered algorithms, where we run SG-CSS with the constant step-size $\alpha = 0.01$. We test these algorithms across a range of sparsity levels, $k_1$, specifically from 10 to 150 in increments of 10. Recovery is considered successful based on two criteria: first, when the relative error is less than $10^{-2}$, and second, with a stricter threshold of less than $10^{-3}$. To ensure the robustness of the results, we repeated each simulation 100 times and calculated the probability of successful recovery at each sparsity level for each algorithm.  Each algorithm is allocated a maximum execution time of 20 seconds per trial. The results, illustrated in Subfigures~(c)-(d) of Figure \ref{fig:alg:all:prob}, demonstrate that with the error tolerance of $10^{-2}$, SG-CSS ($\alpha = 0.01$) achieve the highest performance. However, when employing the stricter tolerance of $10^{-3}$, IDEALS outperforms PFHiGDA and SG-CSS regarding successful recovery probability.

\begin{figure}[H]
    \centering
    \begin{subfigure}{0.42\textwidth}
        \centering
        \includegraphics[width=1.2\textwidth]{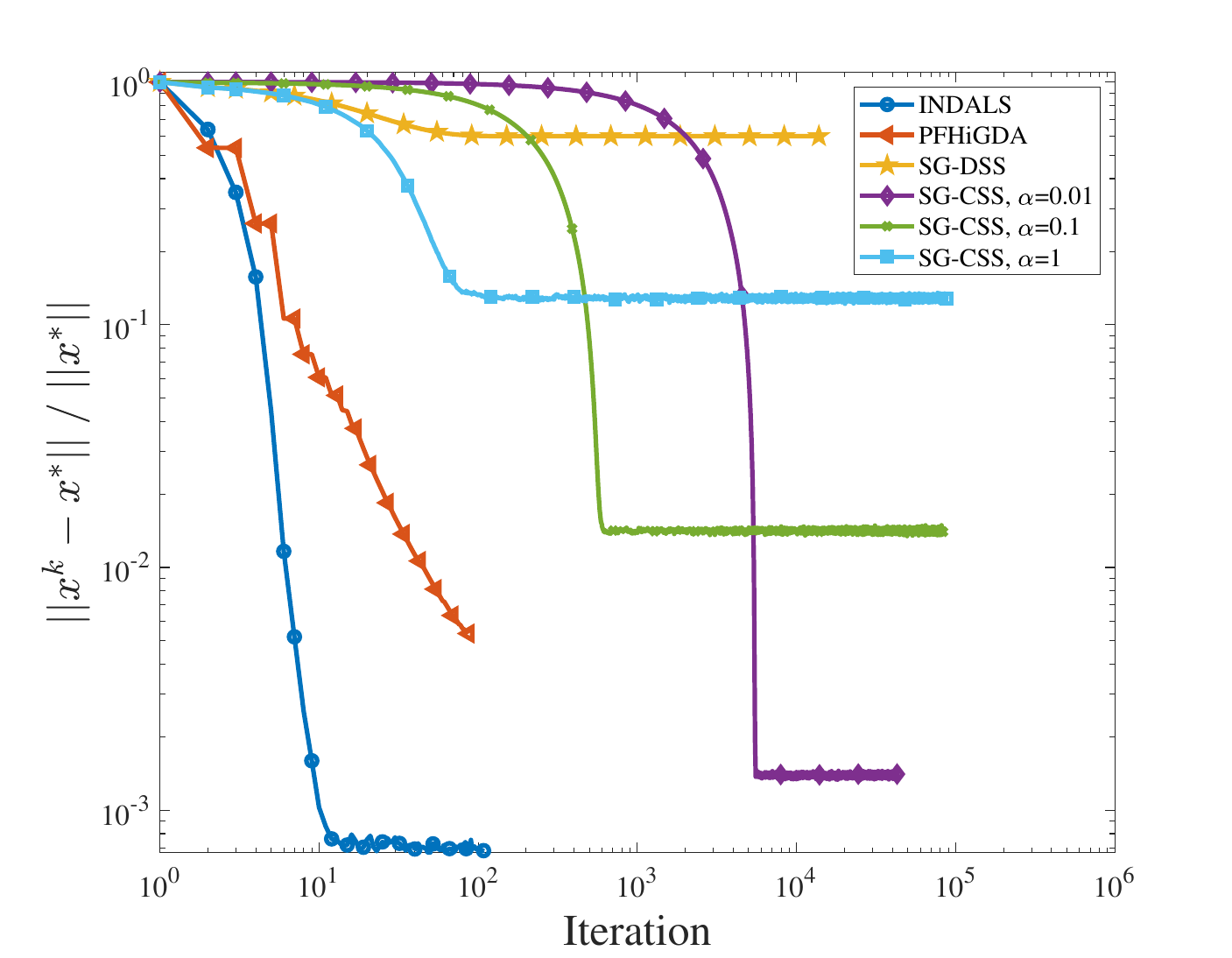}
        \caption{Relative errors versus iterations}
    \end{subfigure}
    \qquad~
    \begin{subfigure}{0.42\textwidth}
        \centering
        \includegraphics[width=1.2\textwidth]{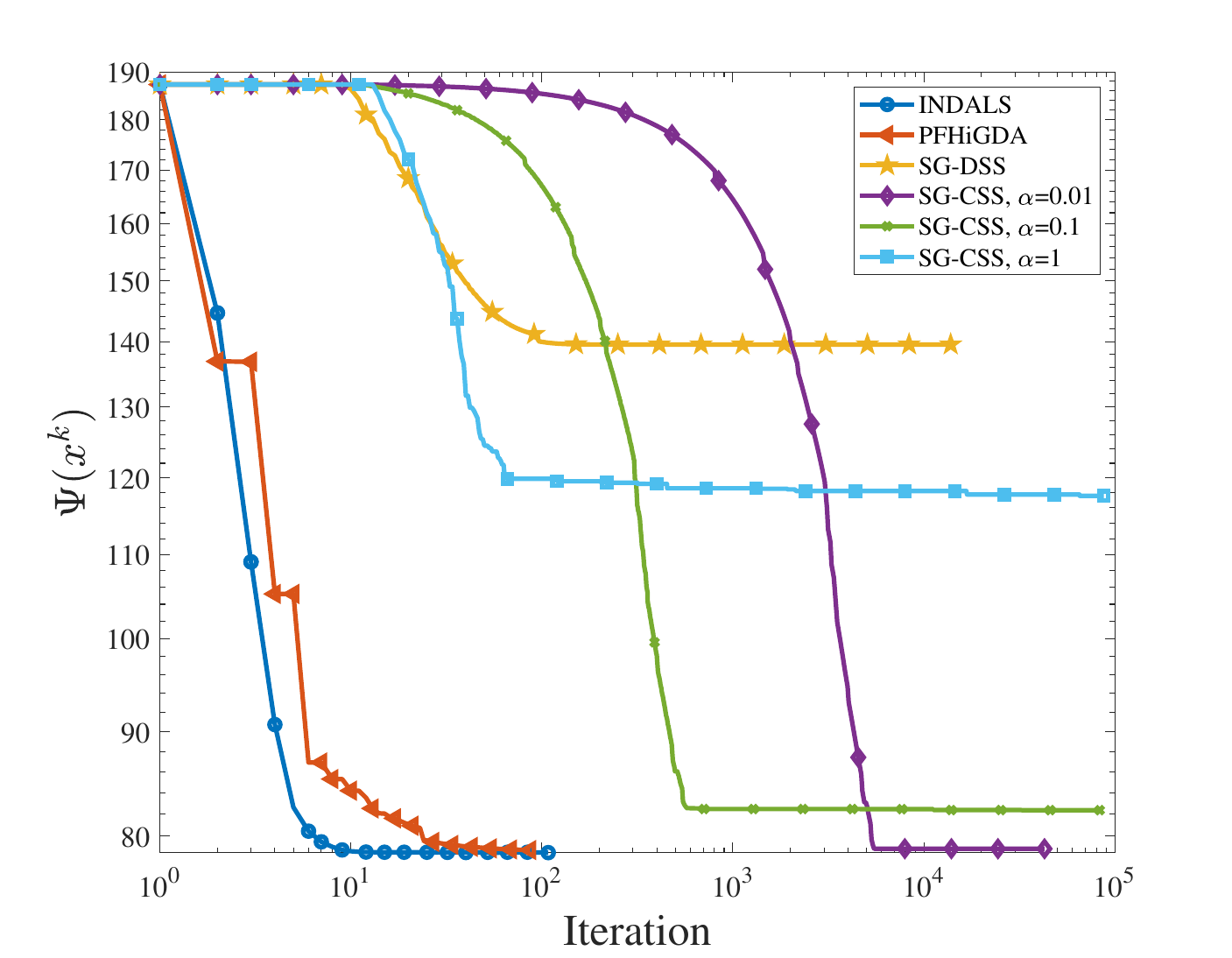}
        \caption{Function values $\Psi(x^k)$ versus iterations}
    \end{subfigure}

    \begin{subfigure}{0.42\textwidth}
        \centering
        \includegraphics[width=1.2\textwidth]{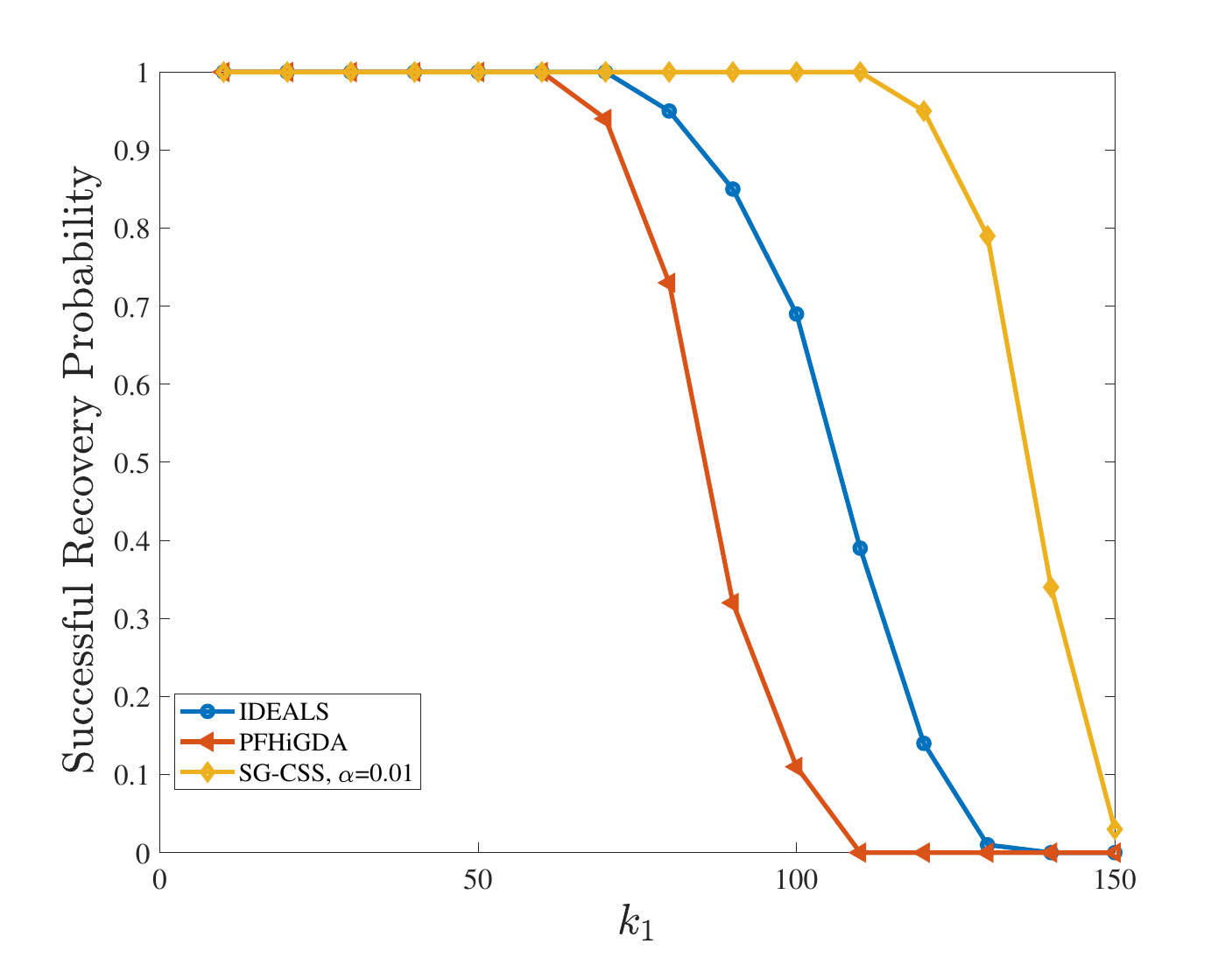}
        \caption{Successful recovery probability: relative error$<10^{-2}$}
    \end{subfigure}
    \qquad~
    \begin{subfigure}{0.42\textwidth}
        \centering
        \includegraphics[width=1.2\textwidth]{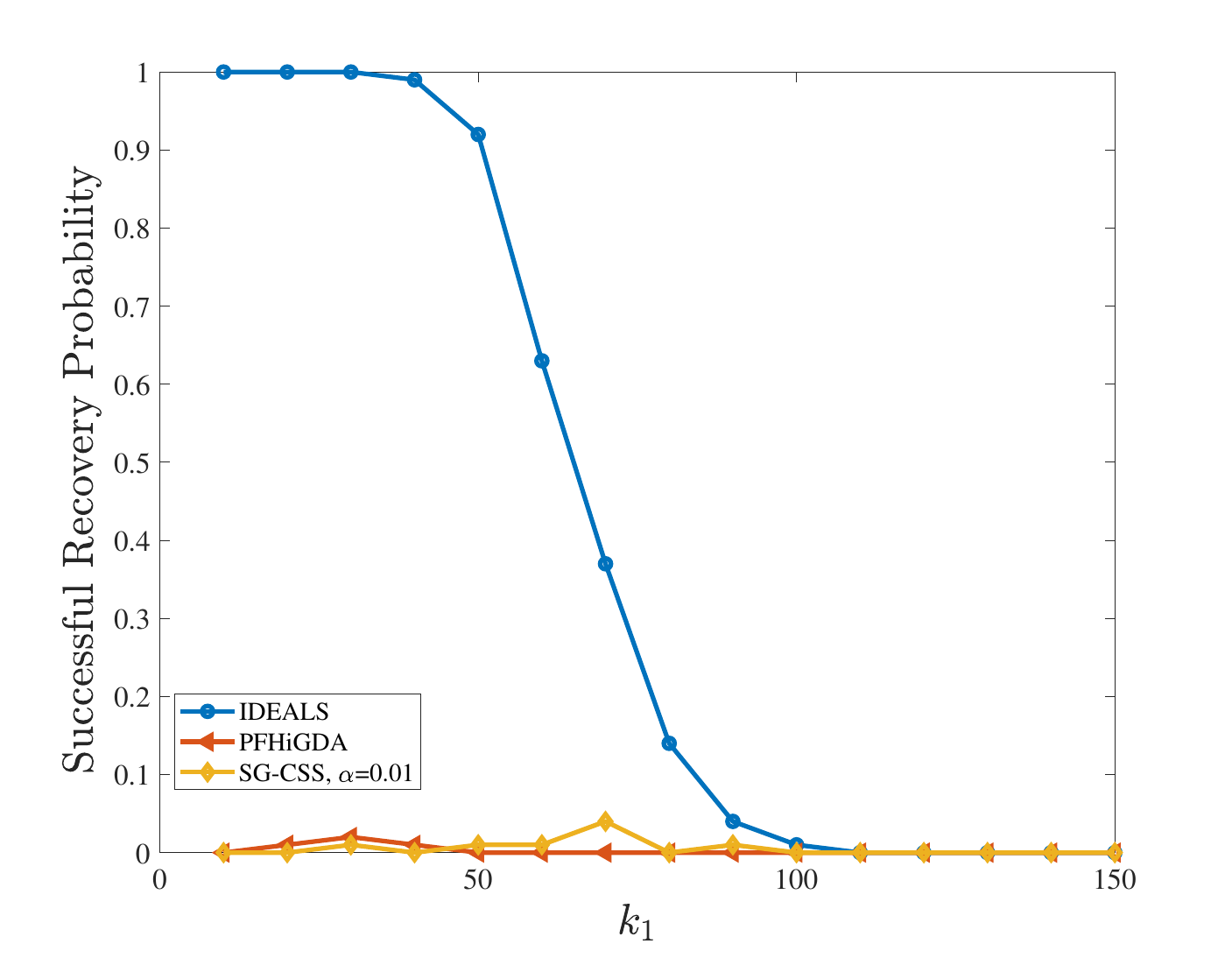}
        \caption{Successful recovery probability: relative error$<10^{-3}$}
    \end{subfigure}
    \caption{Subfigures~(a)-(b) stand for the relative errors, and function values
            versus iterations for considered algorithms, and Subfigures~(c)-(d) illustrate the successful recovery probability versus the sparsity of signal for various algorithms.}
   \label{fig:alg:all:prob}
\end{figure}

\section{Concluding remarks}\label{sec:conc}
In this work, we introduced ItsDEAL, an Inexact two-level smoothing DEscent ALgorithm, to address weakly convex optimization problems. Indeed, this framework leverages the high-order Moreau envelope (HOME) to create a suitable smooth approximation of such a nonsmooth and nonconvex cost function. Crucially, new results to address the differentiability and weak smoothness of HOME under weak convexity assumptions were derived, paving the way toward developing first- or second-order optimization methods. By solving the underlying proximity problem approximately, an inexact oracle for HOME is provided. Further, a novel descent condition using this inexact oracle was established that is particularly well-suited to select proper descent directions for the minimization of weakly convex functions.

A key strength of the ItsDEAL framework is its versatility. We have demonstrated its applicability by developing specific inexact first-order methods, namely, a H\"{o}lderian inexact gradient descent algorithm (HiGDA) with both fixed and dynamically adaptive step-sizes, and an inexact descent algorithm with Armijo line search (IDEALS). These algorithms are specifically tailored to minimize the smooth approximations generated by HOME using its inexact oracle.
The efficiency of the ItsDEAL framework was verified through numerical experiments on the robust sparse recovery application. The numerical results highlight a superior performance of the proposed algorithms compared to the classical subgradient methods.



\bibliographystyle{spbasic}
\bibliography{references}

\end{document}